\documentclass[12pt,thmsa]{article}
\usepackage{amssymb}
\usepackage{amsfonts}
\usepackage{amsmath}
\usepackage{cite}
\usepackage[top=3.8cm,bottom=3.8cm,textwidth=16cm,centering,a4paper]{geometry}

\setcounter{MaxMatrixCols}{10}

\numberwithin{equation}{section}
\setlength{\textheight}{246mm}\setlength{\textwidth}{181mm}
\setlength{\voffset}{-10mm} \setlength{\hoffset}{-10mm}
\setlength{\topmargin}{-0.2cm}
\newtheorem{theorem}{Theorem}[section]
\newtheorem{proposition}[theorem]{Proposition}
\newtheorem{corollary}[theorem]{Corollary}
\newtheorem{remark}{Remark}[section]
\newtheorem{lemma}[theorem]{Lemma}

\newenvironment{proof}[1][Proof]{\noindent\textbf{#1.} }{\hfill $\square$}

\begin{document}

\title{On the solvability of an indefinite nonlinear Kirchhoff equation via
associated eigenvalue problems}
\date{}
\author{Han-Su Zhang$^{a}$\thanks{%
~\textit{E-mail addresses}: hansu\_zhang@foxmail.com}, Tiexiang Li$^{a}$%
\thanks{%
~\textit{E-mail addresses}: txli@seu.edu.cn}, Tsung-fang Wu$^{b}$\thanks{%
~\textit{E-mail addresses}: tfwu@nuk.edu.tw} \\
{\footnotesize $^{a}$School of mathematics, Southeast University, Nanjing
211189, PR. China}\\
$^{{\footnotesize b}}${\footnotesize Department of Applied Mathematics,
National University of Kaohsiung, Kaohsiung 811, Taiwan}}
\maketitle

\begin{abstract}
We study the non-existence, existence and multiplicity of positive solutions
to the following nonlinear Kirchhoff equation:%
\begin{equation*}
\left\{
\begin{array}{l}
-M\left( \int_{\mathbb{R}^{3}}\left\vert \nabla u\right\vert ^{2}dx\right)
\Delta u+\mu V\left( x\right) u=Q(x)\left\vert u\right\vert ^{p-2}u+\lambda
f\left( x\right) u\text{ in }\mathbb{R}^{N}, \\
u\in H^{1}\left( \mathbb{R}^{N}\right) ,%
\end{array}%
\right.
\end{equation*}%
where $N\geq 3,2<p<2^{\ast }:=\frac{2N}{N-2},M\left( t\right) =at+b$ $\left(
a,b>0\right) ,$ the potential $V$ is a nonnegative function in $\mathbb{R}%
^{N}$ and the weight function $Q\in L^{\infty }\left( \mathbb{R}^{N}\right) $
with changes sign in $\overline{\Omega }:=\left\{ V=0\right\} .$ We mainly
prove the existence of at least two positive solutions in the cases that $%
\left( i\right) $ $2<p<\min \left\{ 4,2^{\ast }\right\} $ and $0<\lambda <%
\left[ 1-2\left[ \left( 4-p\right) /4\right] ^{2/p}\right] \lambda
_{1}\left( f_{\Omega }\right) ;$ $\left( ii\right) $ $p\geq 4,\lambda \geq
\lambda _{1}\left( f_{\Omega }\right) $ and near $\lambda _{1}\left(
f_{\Omega }\right) $ for $\mu >0$ sufficiently large, where $\lambda
_{1}\left( f_{\Omega }\right) $ is the first eigenvalue of $-\Delta $ in $%
H_{0}^{1}\left( \Omega \right) $ with weight function $f_{\Omega }:=f|_{%
\overline{\Omega }},$ whose corresponding positive principal eigenfunction
is denoted by $\phi _{1}.$ Furthermore, we also investigated the
non-existence and existence of positive solutions if $a,\lambda $ belongs to
different intervals.
\end{abstract}

\textbf{Keywords:} Nonlinear Kirchhoff equations; Nehari manifold;
Eigenvalue problem; Positive solution; Concentration-compactness principle. %
\vskip2mm \textbf{Mathematics Subject Classification 2010:} 35B38, 35B40,
35J20, 35J61

\section{Introduction}

In this paper we are concerned the following nonlinear Kirchhoff equation:%
\begin{equation}
\left\{
\begin{array}{l}
-M\left( \int_{\mathbb{R}^{3}}\left\vert \nabla u\right\vert ^{2}dx\right)
\Delta u+\mu V\left( x\right) u=g(x,u)\text{ in }\mathbb{R}^{N}, \\
u\in H^{1}\left( \mathbb{R}^{N}\right) ,%
\end{array}%
\right.  \label{1-1}
\end{equation}%
where $N\geq 3,g\in \mathbb{R}^{N}\times \mathbb{R}\rightarrow \mathbb{R}\,$
being continuous, $M(s)=as+b\,\left( a,b>0\right) $ and the parameter $\mu
>0.$ We assume that the potential function $V$ satisfies the following
conditions:

\begin{itemize}
\item[$\left( V_{1}\right) $] $V$ is a nonnegative continuous function on $%
\mathbb{R}^{N};$

\item[$\left( V_{2}\right) $] there exists $c>0$ such that the set $\left\{
V<c\right\} :=\left\{ x\in \mathbb{R}^{N}\ |\ V\left( x\right) <c\right\} $
is nonempty and has finite Lebesgue measure;

\item[$(V_{3})$] $\Omega =\mathrm{int}\left\{ x\in \mathbb{R}^{N}\ |\
V\left( x\right) =0\right\} $ is nonempty bounded domain and has a smooth
boundary with $\overline{\Omega }=\left\{ x\in \mathbb{R}^{N}\ |\ V\left(
x\right) =0\right\} .$
\end{itemize}

The hypotheses $\left( V_{1}\right) -\left( V_{3}\right) $ imply that $\mu V$
represents a potential well whose depth is controlled by $\mu .$ $\mu V$ is
called a steep potential well if $\mu $ is sufficiently large and one
expects to find solutions which localize near its bottom $\Omega .$ This
problem has found much interest after being first introduced by Bartch and
Wang \cite{BW} in the study of the existence of positive solutions for
nonlinear Schr\"{o}dinger equations and has been attracting much attention,
see \cite{AN1,BPW,BT,SZ,WZ} and the references therein.

Kirchhoff type equations, of the form similar to Equation $(\ref{1-1}),$
originate from physics. Indeed, if we set $V(x)\equiv 0$ and replace $%
\mathbb{R}^{N}$ by a bounded domain $\Omega \subset \mathbb{R}^{N}$ in
Equation $(\ref{1-1}),$ then it becomes the following Dirichlet problem of
Kirchhoff type:
\begin{equation}
\left\{
\begin{array}{ll}
-\left( a\int_{\Omega }|\nabla u|^{2}dx+b\right) \Delta u=g(x,u) & \text{ in
}\Omega , \\
u=0 & \ \text{on }\partial \Omega ,%
\end{array}%
\right.  \label{1-3}
\end{equation}%
which is analogous to the stationary case of equations that arise in the
study of string or membrane vibrations, namely,
\begin{equation}
u_{tt}-\left( a\int_{\Omega }|\nabla u|^{2}dx+b\right) \Delta u=g(x,u),
\label{1-2}
\end{equation}%
where$\ u$ denotes the displacement, $g$ is the external force and $b$ is
the initial tension while $a$ is related to the intrinsic properties of the
string (such as Young's modulus). Equation $(\ref{1-2})$ was first proposed
by Kirchhoff \cite{K} in 1883 to describe the transversal oscillations of a
stretched string, particularly, taking into account the subsequent change in
string length caused by oscillations. It is notable that Equation $(\ref{1-2}%
)$ is often referred to as being nonlocal because of the presence of the
integral over the domain $\Omega .$

After the pioneering work by Pohozaev \cite{P} and Lions \cite{L}, the
qualitative analysis of nontrivial solutions for the nonlinear Kirchhoff
type equations, similar to Equation $(\ref{1-1})$, has begun to receive much
attention in recent years. We refer the reader to \cite%
{AF,CKW,DPS,DS1,FIJ,G,HZ,I,JL,LY,SCWF,SW1,SW3,SW4,TC,WHL,XM} and the
references therein.

Let us briefly comment on some of the things that are relevant to our work.
In \cite{SW1} introduced the steep potential well $V$ to the Kirchhoff type
equations. When the potential $V$ satisfies the hypotheses $(V_{1})-(V_{3}),$
the following results were obtained.\newline
$(i)$ $N\geq 3:$ if $0<a<a^{\ast }$ and $\mu >0$ sufficiently large, then
Equation $(\ref{1-1})$ has at least one positive solution, when $g(x,u)\ $is
asymptotically linear at infinity on $u$ and $b\lambda _{1}^{(1)}<1;$\newline
$(ii)$ $N=3:$ if $0<a<\lambda _{1}^{(3)}$ and $\mu >0$ sufficiently large,
then Equation $(\ref{1-1})$ has at least one positive solution, when $%
g(x,u)\ $is asymptotically $3$-linear at infinity on $u;$\newline
$(iii)$ $N=3:$ for any $a>0$ and $\mu >0$ sufficiently large, Equation $(\ref%
{1-1})$ has at least one positive solution, when $g(x,u)\ $is asymptotically
$4$-linear at infinity on $u.$

After that, Xie and Ma \cite{XM} obtained the existence and concentration of
positive solutions for Equation $(\ref{1-1})$ with $N=3$ when potential $V$
satisfies conditions $(V_{1})-(V_{3})$ and nonlinearity $g$ satisfies the
following conditions:

\begin{itemize}
\item[$(G_{1})$] there exists $\rho >4$ such that $0<\rho G(x,u)\leq g(x,u)u$
for $u>0,$ where $G(x,u)=\int_{0}^{u}g(x,s)ds;$

\item[$(G_{2})$] $\frac{G(x,u)}{u^{3}}$ is increasing for $u>0.$
\end{itemize}

In our recent papers \cite{SCWF, SW4}, we concluded that when $N\geq 3$ and $%
g(x,u)$ is superlinear and subcritical on $u$, the geometric structure of
the functional $J$ related to Equation $(\ref{1-1})$ is known to have a
global minimum and a mountain pass, owing to the fourth power of the
nonlocal term. By using the standard variational methods, two different
positive solutions can be found, since some embedding inequalities are
proved with the help of the fact of $2^{\ast }:=\frac{2N}{N-2}\leq 4.$

In simple terms, when $g(x,u)=Q(x)|u|^{\,p-2}u$ and $Q\in L^{\infty }\left(
\mathbb{R}^{N}\right) $ is sign-changing, the current progress through the
above literature is as follows:

\begin{itemize}
\item[$\left( I\right) $] $N=3$ and $4<p<6:$ for any $a>0$ and $\mu >0$
sufficiently large, Equation $(\ref{1-1})$ has at least one positive
solution;

\item[$\left( II\right) $] $N=3$ and $2<p\leq 4:$ for $a>0$ small enough and
$\mu >0$ sufficiently large, Equation $(\ref{1-1})$ has at least one
positive solution;

\item[$\left( III\right) $] $N\geq 4$ and $2<p<2^{\ast }:$ for $a>0$ small
enough and $\mu >0$ sufficiently large, Equation $(\ref{1-1})$ has at least
two positive solution.
\end{itemize}

Motivated by these findings, we now extend the analysis to the Kirchhoff
type equation with combination of a superlinear term and a linear term, that
is $g(x,u)=Q(x)|u|^{\,p-2}u+\lambda f(x)u$. Our intension here is to
illustrate the difference in the solution behavior which arises from the
consideration of the nonlocal and eigenvalue problem effects. The problem we
consider is thus%
\begin{equation}
\left\{
\begin{array}{l}
-M\left( \int_{\mathbb{R}^{N}}\left\vert \nabla u\right\vert ^{2}dx\right)
\Delta u+\mu V\left( x\right) u=Q(x)\left\vert u\right\vert ^{p-2}u+\lambda
f\left( x\right) u\text{ in }\mathbb{R}^{N}, \\
u\in H^{1}\left( \mathbb{R}^{N}\right) ,%
\end{array}%
\right.  \tag{$E_{\mu ,\lambda }$}
\end{equation}%
where $N\geq 3,2<p<2^{\ast }:=\frac{2N}{N-2},M\left( t\right) =at+b$ $\left(
a,b>0\right) $ and the parameters $\mu ,\lambda >0.$ We are interested in
the case the weight functions $f$ and $Q\ $are sign-changing in $\mathbb{R}%
^{N},$ which is why we call indefinite nonlinear Kirchhoff equation in the
title.

To go further, let us give some notations first. For the sake of simplicity,
we always assume that $b=1$ in Equation $(E_{\mu ,\lambda }).$ Let $%
D^{1,2}\left( \mathbb{R}^{N}\right) $ be the completing of $C_{0}^{\infty
}\left( \mathbb{R}^{N}\right) $ with respect to the norm $\left\Vert
u\right\Vert _{D^{1,2}}^{2}=\int_{\mathbb{R}^{N}}\left\vert \nabla
u\right\vert ^{2}dx.$ Denote by $S_{p},S_{p}(\Omega )$ and $S$ the best
constants for the embeddings of $H^{1}(\mathbb{R}^{N})$ in $L^{p}(\mathbb{R}%
^{N}),H_{0}^{1}(\Omega )$ in $L^{p}(\Omega )$ and $D^{1,2}(\mathbb{R}^{N})$
in $L^{2^{\ast }}(\mathbb{R}^{N})$, respectively. We denote a strong
convergence by \textquotedblleft $\rightarrow $\textquotedblright\ and a
weak convergence by \textquotedblleft $\rightharpoonup $\textquotedblright .

Throughout this paper, $u\in H^{1}(\mathbb{R}^{N})$ is a solution of
Equation $(E_{\mu ,\lambda })$ if for any $v\in H^{1}(\mathbb{R}^{N})$ there
holds%
\begin{equation*}
M\left( \int_{\mathbb{R}^{N}}\left\vert \nabla u\right\vert ^{2}dx\right)
\int_{\mathbb{R}^{N}}\nabla u\nabla v+\mu \int_{\mathbb{R}^{N}}V\left(
x\right) uv=\int_{\mathbb{R}^{N}}\left( Q(x)\left\vert u\right\vert
^{p-2}uv+\lambda f\left( x\right) uv\right) dx.
\end{equation*}%
And $u$ is called a positive solution if $u$ is a solution and $u>0$ in $%
\mathbb{R}^{N}.$

It is well known that Equation $\left( E_{\mu ,\lambda }\right) $ is
variational, and its solutions correspond to the critical point of the
energy functional $J_{\mu ,\lambda }:X_{\mu }\rightarrow \mathbb{R}$%
\begin{equation*}
J_{\mu ,\lambda }\left( u\right) =\frac{a}{4}\left\Vert u\right\Vert
_{D^{1,2}}^{4}+\frac{1}{2}\left\Vert u\right\Vert _{\mu }^{2}-\frac{1}{p}%
\int_{\mathbb{R}^{N}}Q\left\vert u\right\vert ^{p}dx-\frac{\lambda }{2}\int_{%
\mathbb{R}^{N}}fu^{2}dx.
\end{equation*}%
where $\left\Vert u\right\Vert _{\mu }=\left[ \int_{\mathbb{R}^{N}}\left(
\left\vert \nabla u\right\vert ^{2}+\mu Vu^{2}\right) dx\right] ^{1/2}$ is
the standard norm in $X_{\mu }$ and $X_{\mu }$ is a subspace of $H^{1}\left(
\mathbb{R}^{N}\right) $ (see below). Thus, if $u$ is a critical point of $%
J_{\mu ,\lambda }$ on $X_{\mu },$ then $u$ is a solution of Equation $\left(
E_{\mu ,\lambda }\right) .$

Assume the following hypotheses $\left( D\right) :$

\begin{itemize}
\item[$\left( D_{1}\right) $] $f\in L^{N/2}\left( \mathbb{R}^{N}\right) $
which $f^{+}:=\max \left\{ f,0\right\} \not\equiv 0$ in $\Omega ;$

\item[$\left( D_{2}\right) $] $Q\in L^{\infty }\left( \mathbb{R}^{N}\right) $
which $Q^{+}:=\max \left\{ Q,0\right\} \not\equiv 0$ in\ $\Omega .$
\end{itemize}

\begin{remark}
\label{r1}Since $\left\{ f>0\right\} \cap \Omega $ has positive Lebesgue
measure, we\ can assume that $\lambda _{1}\left( f_{\Omega }\right) $ denote
the positive principal eigenvalue of the problem%
\begin{equation}
-\Delta u(x)=\lambda f_{\Omega }(x)u(x)\text{ for}\;x\in \Omega ;\text{ }%
u(x)=0\text{ for}\;x\in \partial \Omega ,  \label{2}
\end{equation}%
where $f_{\Omega }$ is a restriction of $f$ on $\overline{\Omega }.$
Clearly, $\lambda _{1}\left( f_{\Omega }\right) $ has a corresponding
positive principal eigenfunction $\phi _{1}$ with $\int_{\Omega }f_{\Omega
}\phi _{1}^{2}dx=1$ and $\int_{\Omega }\left\vert \nabla \phi
_{1}\right\vert ^{2}dx=\lambda _{1}\left( f_{\Omega }\right) .$
\end{remark}

We now summarize our main results as follows.

\begin{theorem}
\label{t1}Suppose that $N=3,4<p<6$ and conditions $\left( V_{1}\right)
-\left( V_{3}\right) $ and $\left( D_{1}\right) -\left( D_{2}\right) $ hold.
Then for each $a>0$ and $0<\lambda <\lambda _{1}\left( f_{\Omega }\right) ,$
Equation $\left( E_{\mu ,\lambda }\right) $ has a positive solution $u_{\mu
}^{-}$ satisfying $J_{\mu ,\lambda }\left( u_{\mu }^{-}\right) >0$ for $\mu
>0$ sufficiently large.
\end{theorem}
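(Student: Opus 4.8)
The plan is to find the positive solution $u_\mu^-$ via the Nehari manifold method, exploiting the variational structure of $J_{\mu,\lambda}$. Since $N=3$ and $4<p<6=2^\ast$, the nonlocal term $\frac{a}{4}\|u\|_{D^{1,2}}^4$ grows more slowly than the superlinear term $\frac{1}{p}\int_{\mathbb{R}^3}Q|u|^p\,dx$, so the functional has a mountain-pass geometry producing a solution of positive energy. First I would introduce the Nehari manifold
\begin{equation*}
\mathbf{N}_{\mu,\lambda}=\left\{u\in X_\mu\setminus\{0\}:\langle J'_{\mu,\lambda}(u),u\rangle=0\right\},
\end{equation*}
and analyze the fibering map $t\mapsto J_{\mu,\lambda}(tu)$ for $t>0$. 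Writing $\langle J'_{\mu,\lambda}(tu),tu\rangle=at^4\|u\|_{D^{1,2}}^4+t^2(\|u\|_\mu^2-\lambda\int fu^2)-t^p\int Q|u|^p$, the key observation is that $4<p$ makes the $t^p$ term dominant, so for $u$ with $\int Q|u|^p>0$ the fibering map has a unique positive critical point which is a local maximum. This splits $\mathbf{N}_{\mu,\lambda}$ into two components $\mathbf{N}^{\pm}_{\mu,\lambda}$ according to the sign of the second derivative, and $u_\mu^-$ will be sought on $\mathbf{N}^-_{\mu,\lambda}$ where the energy is positive.

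Next I would verify the hypothesis $0<\lambda<\lambda_1(f_\Omega)$ guarantees coercivity control on the quadratic part. The eigenvalue $\lambda_1(f_\Omega)$ is the first eigenvalue on $\overline\Omega$, and for $\mu$ large the steep potential well forces functions in $X_\mu$ to concentrate near $\Omega$; one shows that $\|u\|_\mu^2-\lambda\int_{\mathbb{R}^3} fu^2\,dx$ stays bounded below by a positive multiple of $\|u\|_\mu^2$ on the relevant set, using $\lambda<\lambda_1(f_\Omega)$ together with a Poincaré-type inequality valid for $\mu$ sufficiently large. This positivity is what keeps $\mathbf{N}^-_{\mu,\lambda}$ well-separated from the origin and ensures the minimization level $\alpha^-_{\mu,\lambda}:=\inf_{\mathbf{N}^-_{\mu,\lambda}}J_{\mu,\lambda}$ is strictly positive. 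I would then show $\mathbf{N}^-_{\mu,\lambda}\neq\emptyset$ by testing with a function supported where $Q^+\not\equiv0$ in $\Omega$ (guaranteed by $(D_2)$), and establish that a minimizing sequence on $\mathbf{N}^-_{\mu,\lambda}$ yields a nontrivial critical point.

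The main obstacle will be recovering compactness of a minimizing (Palais–Smale) sequence, since the domain is all of $\mathbb{R}^3$ and the embedding $X_\mu\hookrightarrow L^p(\mathbb{R}^3)$ is not compact for the critical-type exponents near $6$. I would attack this with the concentration-compactness principle, ruling out vanishing by the lower bound on the energy level and ruling out dichotomy/escape to infinity by exploiting the steep potential well: for $\mu$ large, $\mu V$ penalizes mass spreading into the region $\{V\geq c\}$, so the limit profile must live on $\Omega$ and the weak limit is a genuine nonzero critical point. A delicate point is controlling the nonlocal coefficient $M(\int|\nabla u|^2)$ along the sequence, since weak convergence of gradients does not immediately pass to the coefficient; I would handle this by showing the gradient norms converge (using the PS condition together with the structure of $\mathbf{N}^-$). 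Finally, positivity of $u_\mu^-$ follows by replacing $u$ with $|u|$ (the functional decreases or is unchanged) and applying the strong maximum principle, while $J_{\mu,\lambda}(u_\mu^-)=\alpha^-_{\mu,\lambda}>0$ comes from the earlier energy lower bound.
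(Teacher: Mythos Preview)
Your proposal is correct and follows essentially the same approach as the paper: Nehari manifold plus fibering maps, the eigenvalue condition $\lambda<\lambda_1(f_\Omega)$ to secure positivity of the quadratic part for $\mu$ large (via an approximate eigenvalue $\widetilde\lambda_{1,\mu}(f)\to\lambda_1(f_\Omega)^-$), minimization on $\mathbf{N}_{\mu,\lambda}^-$ with a strictly positive energy level, Ekeland to produce a Palais--Smale sequence, and the steep potential well to recover compactness. One small sharpening: in this regime the paper actually shows $\mathbf{N}_{\mu,\lambda}=\mathbf{N}_{\mu,\lambda}^-$ (both $\mathbf{N}_{\mu,\lambda}^+$ and $\mathbf{N}_{\mu,\lambda}^0$ are empty), so there is no genuine splitting into two components; and the compactness step is handled by a direct Brezis--Lieb argument exploiting the well (Proposition~\ref{p1}) rather than the full Lions concentration--compactness machinery you invoke, though the underlying mechanism is the same.
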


\begin{theorem}
\label{t2}Suppose that $N=3,$ $4<p<6,$ conditions $\left( V_{1}\right)
-\left( V_{3}\right) $ and $\left( D_{1}\right) -\left( D_{2}\right) $ hold
and $\int_{\Omega }Q\phi _{1}^{p}dx<0.$ Then for each $a>0$ there exists $%
\delta _{0}$ such that for every $\lambda _{1}\left( f_{\Omega }\right) \leq
\lambda <\lambda _{1}\left( f_{\Omega }\right) +\delta _{0},$ Equation $%
\left( E_{\mu ,\lambda }\right) $ has at least two positive solutions $%
u_{\mu }^{-}$ and $u_{\mu }^{+}$ satisfying $J_{\mu ,\lambda }\left( u_{\mu
}^{+}\right) <0<J_{\mu ,\lambda }\left( u_{\mu }^{-}\right) $ for $\mu >0$
sufficiently large.
\end{theorem}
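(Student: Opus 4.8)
The plan is to obtain the two solutions as minimizers of $J_{\mu,\lambda}$ over the two natural components of the Nehari manifold $\mathcal{N}_{\mu,\lambda}=\{u\in X_\mu\setminus\{0\}:\langle J_{\mu,\lambda}'(u),u\rangle=0\}$, exploiting that for $4<p<6$ the fibre map $\varphi_u(t):=J_{\mu,\lambda}(tu)$ is a combination of $t^4$, $t^2$ and $t^p$ whose shape is dictated by the sign of the quadratic coefficient $B(u):=\|u\|_\mu^2-\lambda\int_{\mathbb{R}^N}fu^2\,dx$ and of $C(u):=\int_{\mathbb{R}^N}Q|u|^p\,dx$. First I would record that at a fibre critical point $t_0>0$ one has $\varphi_u''(t_0)=(4-p)a\|u\|_{D^{1,2}}^4t_0^2+(2-p)B(u)$, and since $p>4$ both coefficients are negative; hence the component $\mathcal{N}^+$ of local minima forces $B(u)<0$ and carries negative energy, whereas $\mathcal{N}^-$ collects the fibre maxima and carries positive energy. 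I would then check that, for $\lambda$ close to $\lambda_1(f_\Omega)$ and $\mu$ large, the degenerate set reduces to $\mathcal{N}^0=\{0\}$, so that $\mathcal{N}_{\mu,\lambda}=\mathcal{N}^+\cup\mathcal{N}^-$, each piece is a genuine natural constraint, and any constrained minimizer is automatically a free critical point of $J_{\mu,\lambda}$.

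For the negative-energy solution $u_\mu^+$ the decisive point is to populate $\mathcal{N}^+$, i.e. to exhibit a direction with $B<0$ and $C<0$. I would use the positive principal eigenfunction $\phi_{1,\mu}$ of the steep-well weighted problem $-\Delta u+\mu Vu=\sigma f u$; its eigenvalue $\lambda_1(\mu)$ increases to $\lambda_1(f_\Omega)$ as $\mu\to\infty$ and satisfies $\lambda_1(\mu)<\lambda_1(f_\Omega)\le\lambda$, so that $B(\phi_{1,\mu})=(\lambda_1(\mu)-\lambda)\int f\phi_{1,\mu}^2<0$. This is exactly what rescues the endpoint $\lambda=\lambda_1(f_\Omega)$, where $\phi_1$ itself would only give $B=0$. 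Since $\phi_{1,\mu}\to\phi_1$ as $\mu\to\infty$, the hypothesis $\int_\Omega Q\phi_1^p\,dx<0$ yields $C(\phi_{1,\mu})<0$ for $\mu$ large; hence $\varphi_{\phi_{1,\mu}}$ dips strictly below zero and has a negative local minimum, giving $\mathcal{N}^+\neq\emptyset$ and $c^+:=\inf_{\mathcal{N}^+}J_{\mu,\lambda}<0$. Using the identity $J_{\mu,\lambda}(u)=(\tfrac14-\tfrac1p)a\|u\|_{D^{1,2}}^4+(\tfrac12-\tfrac1p)B(u)$ valid on $\mathcal{N}_{\mu,\lambda}$, I would show minimizing sequences are bounded and, invoking the compact embedding $X_\mu\hookrightarrow\hookrightarrow L^p(\mathbb{R}^N)$ furnished by $(V_1)$--$(V_3)$ for $\mu$ large, that they converge strongly; the limit is nonzero because $c^+<0$. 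Replacing $u$ by $|u|$ and applying the maximum principle gives a positive $u_\mu^+$ with $J_{\mu,\lambda}(u_\mu^+)<0$.

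For the positive-energy solution $u_\mu^-$ I would minimize $J_{\mu,\lambda}$ over $\mathcal{N}^-$, following the scheme of Theorem~\ref{t1}. One first checks that $\mathcal{N}^-$ is nonempty (any direction with $C(u)>0$ produces a fibre maximum) and that $c^-:=\inf_{\mathcal{N}^-}J_{\mu,\lambda}>0$, using a uniform lower bound $\|u\|_\mu\ge\rho>0$ on $\mathcal{N}^-$. The genuinely hard step is recovering compactness of a Palais--Smale minimizing sequence on the whole of $\mathbb{R}^N$: I would extract the limit $\|u_n\|_{D^{1,2}}^2\to A_0$ of the nonlocal coefficient, pass to the limit in the resulting frozen-coefficient problem, and apply the concentration-compactness principle to exclude vanishing and dichotomy; the steep potential well is precisely what prevents mass from escaping to infinity once $\mu$ is large, so the weak limit is a nontrivial critical point in $\mathcal{N}^-$ realizing $c^->0$. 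Positivity follows as before, and $J_{\mu,\lambda}(u_\mu^-)>0>J_{\mu,\lambda}(u_\mu^+)$ guarantees $u_\mu^-\neq u_\mu^+$.

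The main obstacle is this last compactness analysis for the $\mathcal{N}^-$ level. Here the nonlocal quartic term is a double-edged sword---it improves coercivity but makes the splitting in the concentration-compactness argument more delicate, since the limiting equation carries the frozen coefficient $aA_0+1$---so one must establish a strict energy threshold below which the Palais--Smale condition holds and then verify that $c^-$ stays beneath it. This is exactly where the smallness of $\delta_0$ (keeping $\lambda$ near $\lambda_1(f_\Omega)$) and the largeness of $\mu$ are consumed: $\delta_0$ keeps $c^-$ under the compactness threshold and keeps $\mathcal{N}^0=\{0\}$, while $\mu$ large simultaneously delivers the compact embedding, the sign $C(\phi_{1,\mu})<0$, and the gap $\lambda_1(\mu)<\lambda_1(f_\Omega)$ needed at the endpoint.
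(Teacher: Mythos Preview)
Your overall architecture---split the Nehari manifold, minimize on each piece, use the steep-well eigenfunction $\varphi_\mu$ to populate $\mathbf{N}_{\mu,\lambda}^{+}$---matches the paper exactly. But you have misdiagnosed where the work lies and, correspondingly, what $\delta_0$ is for.

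The central technical lemma in the paper is not a compactness threshold. It is the structural statement (Theorem~\ref{g4-1}) that for $\lambda_1(f_\Omega)\le\lambda<\lambda_1(f_\Omega)+\delta_0$ and $\mu$ large one has
\[
\overline{\Lambda_\mu^{-}}\cap\overline{\Theta_\mu^{+}}(p)=\emptyset,
\]
i.e.\ no unit-norm direction can simultaneously satisfy $B(u)\le 0$ and $C(u)\ge 0$. This is proved by contradiction via Theorem~\ref{g4-0}, forcing a limit to equal $k\phi_1$ and then invoking $\int_\Omega Q\phi_1^p\,dx<0$. This single separation condition is what drives everything you glossed over: it yields $\mathbf{N}_{\mu,\lambda}^{0}=\emptyset$, the uniform bound $\|u\|_\mu\ge c_0$ on $\mathbf{N}_{\mu,\lambda}^{-}$ (Lemma~\ref{g4-2}(i)), the uniform boundedness of $\mathbf{N}_{\mu,\lambda}^{+}$ (Lemma~\ref{g4-2}(iii)), and the boundedness of sublevel sets on $\mathbf{N}_{\mu,\lambda}^{-}$ together with $\inf_{\mathbf{N}_{\mu,\lambda}^{-}}J_{\mu,\lambda}>0$ (Lemma~\ref{t4-1}). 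Each of these is proved by contradiction: normalize, pass to a limit $v_0\in H_0^1(\Omega)$ via Theorem~\ref{g4-0}, and land in $\overline{\Lambda_\mu^{-}}\cap\overline{\Theta_\mu^{+}}(p)$. Your identity $J_{\mu,\lambda}(u)=(\tfrac14-\tfrac1p)a\|u\|_{D^{1,2}}^4+(\tfrac12-\tfrac1p)B(u)$ does \emph{not} by itself bound a minimizing sequence on $\mathbf{N}_{\mu,\lambda}^{+}$, since $B(u)<0$ there; you need Lemma~\ref{g4-2}(iii), and hence the separation condition, to close this.

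By contrast, compactness is easier than you suggest. The paper has no strict-energy threshold and no concentration--compactness dichotomy: Proposition~\ref{p1} shows directly that any \emph{bounded} (PS) sequence converges strongly for $\mu$ large, using the Brezis--Lieb splitting and the quantitative steep-well estimate $\int_{\mathbb{R}^3}|v_n|^p\,dx\le (\mu c)^{-(6-p)/4}S^{-3(p-2)/2}\|v_n\|_\mu^p+o(1)$. There is no need to freeze the nonlocal coefficient or to keep $c^{-}$ below a threshold; $\delta_0$ plays no role in compactness. Also, $X_\mu\hookrightarrow L^p(\mathbb{R}^3)$ is not compact for fixed $\mu$; what you get is the quantitative bound above, which suffices once you already know the sequence is bounded. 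So the obstacle you flagged is not the real one, and the step you treated as routine (boundedness on each component when $\lambda\ge\lambda_1(f_\Omega)$) is where the argument actually lives.
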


To consider the case $N=3$ and $p=4,$ we need the following minimum problem%
\begin{equation*}
\Gamma _{0}:=\sup_{u\in X}\frac{\int_{\mathbb{R}^{3}}Q|u|^{4}dx}{\left\Vert
u\right\Vert _{D^{1,2}}^{4}}>0.
\end{equation*}%
Then we have the following results.

\begin{theorem}
\label{t3}Suppose that $N=3,$ $p=4$ and conditions $\left( V_{1}\right)
-\left( V_{3}\right) $ and $\left( D_{1}\right) -\left( D_{2}\right) $ hold.
Then we have the following results.\newline
$\left( i\right) $ For each $0<a<\Gamma _{0}$ and $0<\lambda <\lambda
_{1}\left( f_{\Omega }\right) $, Equation $\left( E_{\mu ,\lambda }\right) $
has a positive solution $u_{\mu }^{-}$ satisfying $J_{\mu ,\lambda }\left(
u_{\mu }^{-}\right) >0$ for $\mu >0$ sufficiently large.\newline
$\left( ii\right) $ If $\Gamma _{0}<\infty ,$ then for each $a>\Gamma _{0}$
and $0<\lambda <\lambda _{1}\left( f_{\Omega }\right) ,$ Equation $\left(
E_{\mu ,\lambda }\right) $ does not admits nontrivial solution for $\mu >0$
sufficiently large.\newline
$\left( iii\right) $ If $\Gamma _{0}<\infty ,$ then for each $a>\Gamma _{0}$
and $\lambda \geq \lambda _{1}\left( f_{\Omega }\right) ,$ Equation $\left(
E_{\mu ,\lambda }\right) $ has a positive solution $u_{\mu }^{+}$ satisfying
$J_{\mu ,\lambda }\left( u_{\mu }^{+}\right) <0$ for $\mu >0$ sufficiently
large.
\end{theorem}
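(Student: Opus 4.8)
The plan is to exploit the algebraic coincidence at $N=3,\,p=4$: both the nonlocal term $\frac{a}{4}\left\Vert u\right\Vert _{D^{1,2}}^{4}$ and the nonlinearity $\frac14\int Q|u|^{4}dx$ are homogeneous of degree four, so the fibering map $t\mapsto J_{\mu ,\lambda }(tu)$ collapses to the biquadratic
\begin{equation*}
J_{\mu ,\lambda }(tu)=\frac{t^{4}}{4}A(u)+\frac{t^{2}}{2}B_{\mu ,\lambda }(u),
\end{equation*}
where $A(u):=a\left\Vert u\right\Vert _{D^{1,2}}^{4}-\int_{\mathbb{R}^{3}}Q|u|^{4}dx$ and $B_{\mu ,\lambda }(u):=\left\Vert u\right\Vert _{\mu }^{2}-\lambda \int_{\mathbb{R}^{3}}fu^{2}dx$. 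Since $\frac{d}{dt}J_{\mu ,\lambda }(tu)=t\left( A(u)t^{2}+B_{\mu ,\lambda }(u)\right) $, the entire critical-point structure is dictated by the signs of $A$ and $B_{\mu ,\lambda }$, and $\Gamma _{0}$, $\lambda _{1}(f_{\Omega })$ are precisely the thresholds governing them. By the definition of $\Gamma _{0}$ one has $\int Q|u|^{4}dx\leq \Gamma _{0}\left\Vert u\right\Vert _{D^{1,2}}^{4}$, so $a>\Gamma _{0}$ forces $A(u)\geq (a-\Gamma _{0})\left\Vert u\right\Vert _{D^{1,2}}^{4}>0$ for every $u\neq 0$, while $a<\Gamma _{0}$ produces a direction $w$ with $A(w)<0$. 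For the quadratic part I would first record, as a separate steep-well eigenvalue lemma, that the first eigenvalue $\lambda _{1}(\mu )$ of $-\Delta +\mu V$ relative to the weight $f$ increases to $\lambda _{1}(f_{\Omega })$ as $\mu \to \infty $ and stays strictly below it; consequently, for $\lambda <\lambda _{1}(f_{\Omega })$ and $\mu $ large the form $B_{\mu ,\lambda }$ is coercive, $B_{\mu ,\lambda }(u)\geq \gamma \left\Vert u\right\Vert _{\mu }^{2}$, whereas for $\lambda \geq \lambda _{1}(f_{\Omega })>\lambda _{1}(\mu )$ the principal eigenfunction $e_{\mu }$ satisfies $B_{\mu ,\lambda }(e_{\mu })=(\lambda _{1}(\mu )-\lambda )\int f e_{\mu }^{2}dx<0$.

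Granting this, part $(ii)$ is immediate: any solution satisfies $\langle J_{\mu ,\lambda }^{\prime }(u),u\rangle =A(u)+B_{\mu ,\lambda }(u)=0$, but for $a>\Gamma _{0}$ and $0<\lambda <\lambda _{1}(f_{\Omega })$ with $\mu $ large both summands are strictly positive on $X_{\mu }\setminus \{0\}$, so $u\equiv 0$. For part $(iii)$ I would minimize $J_{\mu ,\lambda }$ globally. Coercivity follows because the quartic $\frac{a-\Gamma _{0}}{4}\left\Vert u\right\Vert _{D^{1,2}}^{4}$ dominates the linear term after the Sobolev--H\"{o}lder bound $\int fu^{2}dx\leq \Vert f^{+}\Vert _{N/2}S^{-1}\left\Vert u\right\Vert _{D^{1,2}}^{2}$, which bounds the parabola in $\left\Vert u\right\Vert _{D^{1,2}}^{2}$ from below and, together with $\left\Vert u\right\Vert _{\mu }^{2}\geq \left\Vert u\right\Vert _{D^{1,2}}^{2}$, leaves $J_{\mu ,\lambda }(u)\geq \frac12\left\Vert u\right\Vert _{\mu }^{2}-C$; testing with $e_{\mu }$ (where $B_{\mu ,\lambda }<0$ and $A>0$) gives $\inf_{X_{\mu }}J_{\mu ,\lambda }\leq -B_{\mu ,\lambda }(e_{\mu })^{2}/(4A(e_{\mu }))<0$. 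A minimizing sequence is then bounded in $X_{\mu }$; passing to a weak limit, the compact embeddings $X_{\mu }\hookrightarrow L^{q}(\mathbb{R}^{3})$ $(2\leq q<2^{\ast })$ available for $\mu $ large let me pass $\int Q|u_{n}|^{4}dx$ and $\int fu_{n}^{2}dx$ to the limit, and weak lower semicontinuity of the norms yields a minimizer $u_{\mu }^{+}$, nontrivial because the infimum is negative. Replacing $u_{\mu }^{+}$ by $|u_{\mu }^{+}|$ and invoking the maximum principle gives positivity and $J_{\mu ,\lambda }(u_{\mu }^{+})<0$.

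For part $(i)$ the roles reverse: $B_{\mu ,\lambda }$ is coercive but $A$ is sign-changing, so the relevant solutions sit on the upper Nehari component $\mathcal{N}^{-}=\{u\neq 0:A(u)<0,\,B_{\mu ,\lambda }(u)>0\}$, on which $J_{\mu ,\lambda }(t_{-}(u)u)$ attains the value $-B_{\mu ,\lambda }(u)^{2}/(4A(u))>0$ at $t_{-}(u)=(-B_{\mu ,\lambda }(u)/A(u))^{1/2}$. I would set $c^{-}:=\inf_{\mathcal{N}^{-}}J_{\mu ,\lambda }$ and minimize there. Normalizing $\left\Vert u\right\Vert _{D^{1,2}}=1$ keeps $A(u)\in \lbrack a-\Gamma _{0},0)$ bounded away from $-\infty $ while coercivity bounds $B_{\mu ,\lambda }$ below, so $c^{-}>0$; a minimizing sequence is bounded and, via the concentration--compactness principle combined with the steep-well compact embeddings, converges to a nontrivial critical point $u_{\mu }^{-}$ with $J_{\mu ,\lambda }(u_{\mu }^{-})=c^{-}>0$, made positive as before.

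The principal obstacle throughout is compactness on the unbounded domain $\mathbb{R}^{3}$: minimizing and Palais--Smale sequences may a priori vanish or split, and it is exactly here that choosing $\mu $ large, so that $\mu V$ is a deep well restoring the compact embeddings $X_{\mu }\hookrightarrow L^{q}$, is indispensable; the nonlocal quartic term must moreover be controlled in the limit, though its weak lower semicontinuity suffices for the minimizations. In part $(i)$ the additional delicate points are establishing $c^{-}>0$, ensuring the weak limit stays inside $\mathcal{N}^{-}$ rather than degenerating onto $\{A=0\}$, and confirming that the constrained minimizer is a genuine free critical point, i.e.\ that $\mathcal{N}^{-}$ is a natural constraint.
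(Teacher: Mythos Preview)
Your approach matches the paper's: both exploit the biquadratic fibering map at $p=4$, reduce everything to the signs of $A(u)=-\Phi_4(u)$ and $B_{\mu,\lambda}(u)$, and invoke the steep-well eigenvalue approximation $\widetilde\lambda_{1,\mu}(f)\nearrow\lambda_1(f_\Omega)$ (Lemma~\ref{g3-0}). Part~(ii) is identical to the paper's. Part~(i) in both cases minimizes on $\mathbf N_{\mu,\lambda}^{-}$. For part~(iii) you minimize $J_{\mu,\lambda}$ globally on $X_\mu$ rather than on $\mathbf N_{\mu,\lambda}^{+}$; this is a legitimate simplification, since $a>\Gamma_0$ makes $J_{\mu,\lambda}$ coercive on the whole space, whereas the paper works on $\mathbf N_{\mu,\lambda}^{+}$ and needs a separate uniform-boundedness argument.

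The one genuine imprecision is your compactness mechanism. Under $(V_1)$--$(V_3)$ the embedding $X_\mu\hookrightarrow L^q(\mathbb R^3)$ is \emph{not} compact for any fixed $\mu$ (take $V\equiv 1$ outside a ball), so you cannot pass minimizing sequences to the limit by weak convergence alone. The paper's substitute is a quantitative Palais--Smale condition (Proposition~\ref{p1}): for a bounded PS-sequence one writes $v_n=u_n-u_0$, shows $\int|v_n|^p\le (\mu c)^{-\alpha}\|v_n\|_\mu^p+o(1)$ via the splitting $\{V\geq c\}\cup\{V<c\}$, and combines this with the identity $\|v_n\|_\mu^2+a\|u_n\|_{D^{1,2}}^2\|v_n\|_{D^{1,2}}^2=\int Q|v_n|^p+o(1)$ to force $\|v_n\|_\mu\to 0$ once $\mu$ is large. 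Your global minimization in~(iii) can be rescued by first applying Ekeland to obtain a PS-sequence at the infimum level and then invoking this PS-condition, but the ``compact embedding'' shortcut is not available.

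A minor point on part~(i): your argument for $c^{-}>0$ via the normalization $\|u\|_{D^{1,2}}=1$ and $A(u)\in[a-\Gamma_0,0)$ breaks down when $\Gamma_0=\infty$, which part~(i) does not exclude. The cleaner route, used by the paper (analogue of Lemma~\ref{g3-1}), is that on $\mathbf N_{\mu,\lambda}^{-}$ one has $J_{\mu,\lambda}(u)=\tfrac14 B_{\mu,\lambda}(u)\ge\tfrac{\gamma}{4}\|u\|_\mu^2$, while the Nehari identity $\gamma\|u\|_\mu^2\le B_{\mu,\lambda}(u)=-A(u)\le\int Q|u|^4\le C\|u\|_\mu^4$ (using~\eqref{11}) forces $\|u\|_\mu$ to be bounded below by a positive constant.
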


\begin{theorem}
\label{t4}Suppose that $N=3,$ $p=4$ and conditions $\left( V_{1}\right)
-\left( V_{3}\right) $ and $\left( D_{1}\right) -\left( D_{2}\right) $ hold.
Then for each $\lambda _{1}^{-2}\left( f_{\Omega }\right) \int_{\Omega
}Q\phi _{1}^{4}dx<a<\Gamma _{0}$ there exists $\delta _{0}$ such that for
every $\lambda _{1}\left( f_{\Omega }\right) \leq \lambda <\lambda
_{1}\left( f_{\Omega }\right) +\delta _{0},$ Equation $\left( E_{\mu
,\lambda }\right) $ has two positive solutions $u_{\mu }^{-}$ and $u_{\mu
}^{+}$ satisfying $J_{\mu ,\lambda }\left( u_{\mu }^{+}\right) <0<J_{\mu
,\lambda }\left( u_{\mu }^{-}\right) $ for $\mu >0$ sufficiently large.
\end{theorem}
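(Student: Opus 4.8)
The plan is to carry out a fibering/Nehari-manifold analysis of $J_{\mu,\lambda}$, which is natural here because $p=4$ makes the superlinear term quartic, of the same homogeneity as the Kirchhoff term. For $u\in X_{\mu}\setminus\{0\}$ set
\[
A(u):=a\left\Vert u\right\Vert _{D^{1,2}}^{4}-\int_{\mathbb{R}^{3}}Q\left\vert u\right\vert ^{4}dx,\qquad B(u):=\left\Vert u\right\Vert _{\mu }^{2}-\lambda \int_{\mathbb{R}^{3}}fu^{2}dx,
\]
so that $J_{\mu,\lambda}(tu)=\frac{t^{4}}{4}A(u)+\frac{t^{2}}{2}B(u)$ for $t>0$. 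The fibering map $t\mapsto J_{\mu,\lambda}(tu)$ then has a unique positive critical point exactly when $A(u)$ and $B(u)$ have opposite signs, located at $t(u)^{2}=-B(u)/A(u)$, where its second derivative equals $-2B(u)$ and its value equals $-B(u)^{2}/(4A(u))$. Accordingly I would decompose the Nehari manifold $\mathcal{N}_{\mu,\lambda}=\{u\neq 0:\langle J_{\mu,\lambda}'(u),u\rangle =0\}$ into $\mathcal{N}_{\mu,\lambda}^{+}$ (where $B<0<A$; local minima, negative energy) and $\mathcal{N}_{\mu,\lambda}^{-}$ (where $A<0<B$; local maxima, positive energy), seek $u_{\mu}^{+}$ on $\mathcal{N}_{\mu,\lambda}^{+}$ and $u_{\mu}^{-}$ on $\mathcal{N}_{\mu,\lambda}^{-}$, and first verify that the degenerate set is trivial so that both are smooth constraints whose constrained critical points are free critical points of $J_{\mu,\lambda}$.

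Second, I would prove non-emptiness together with the energy signs. Extending $\phi_{1}$ by zero gives $\phi_{1}\in X_{\mu}$ with $\left\Vert \phi_{1}\right\Vert _{D^{1,2}}^{4}=\lambda_{1}^{2}(f_{\Omega})$, $\int_{\mathbb{R}^{3}}f\phi_{1}^{2}=1$ and $\int_{\mathbb{R}^{3}}Q|\phi_{1}|^{4}=\int_{\Omega}Q\phi_{1}^{4}$; hence $B(\phi_{1})=\lambda_{1}(f_{\Omega})-\lambda\leq 0$ for $\lambda\geq\lambda_{1}(f_{\Omega})$, while the hypothesis $a>\lambda_{1}^{-2}(f_{\Omega})\int_{\Omega}Q\phi_{1}^{4}dx$ reads precisely $A(\phi_{1})>0$. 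Because for finite $\mu$ the first weighted eigenvalue of $-\Delta+\mu V$ on $X_{\mu}$ is strictly below $\lambda_{1}(f_{\Omega})$, there are directions with $B<0<A$, so $\mathcal{N}_{\mu,\lambda}^{+}\neq\emptyset$ and $\alpha_{\mu}^{+}:=\inf_{\mathcal{N}_{\mu,\lambda}^{+}}J_{\mu,\lambda}<0$. For $\mathcal{N}_{\mu,\lambda}^{-}$, the assumption $a<\Gamma_{0}$ supplies $w$ with $\int_{\mathbb{R}^{3}}Q|w|^{4}>a\left\Vert w\right\Vert_{D^{1,2}}^{4}$, i.e. $A(w)<0$; for $\lambda$ within a small $\delta_{0}$ of $\lambda_{1}(f_{\Omega})$ one keeps $B(w)>0$ by continuity in $\lambda$ from the regime $\lambda<\lambda_{1}(f_{\Omega})$ treated in Theorem \ref{t3}$(i)$, so $\mathcal{N}_{\mu,\lambda}^{-}\neq\emptyset$ and $\alpha_{\mu}^{-}:=\inf_{\mathcal{N}_{\mu,\lambda}^{-}}J_{\mu,\lambda}>0$.

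The decisive step --- and the main obstacle --- is compactness, above all on $\mathcal{N}_{\mu,\lambda}^{-}$. Since $a<\Gamma_{0}$, the functional $J_{\mu,\lambda}$ is unbounded below and the quartic Kirchhoff term competes at equal homogeneity with $\int Q|u|^{4}dx$, so along a minimizing sequence the non-compact embedding $H^{1}(\mathbb{R}^{3})\hookrightarrow L^{4}(\mathbb{R}^{3})$ permits vanishing or concentration. I would use Ekeland's variational principle on each piece to generate bounded Palais--Smale sequences, and recover strong convergence through the steep-well mechanism: as $\{V<c\}$ has finite measure, for $\mu$ large the contribution of any bounded sequence outside $\Omega$ is controlled, so weak limits of Palais--Smale sequences are strong and no mass escapes to infinity. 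The delicate point is to exclude internal concentration for the positive-energy sequence; here I would invoke the concentration--compactness principle and prove that $\alpha_{\mu}^{-}$ stays strictly below the critical threshold attached to $\Gamma_{0}$. This is exactly where $a>\lambda_{1}^{-2}(f_{\Omega})\int_{\Omega}Q\phi_{1}^{4}dx$ and the closeness of $\lambda$ to $\lambda_{1}(f_{\Omega})$ are used: they pin $\alpha_{\mu}^{-}$ below the loss-of-compactness level, uniformly for $\mu$ large and $\lambda\in[\lambda_{1}(f_{\Omega}),\lambda_{1}(f_{\Omega})+\delta_{0})$, forcing a nontrivial weak limit inside $\mathcal{N}_{\mu,\lambda}^{-}$. (Compactness for $u_{\mu}^{+}$ is comparatively routine, since its energy is negative and hence safely below threshold.)

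Finally, the minimizers $u_{\mu}^{\pm}$ are free critical points of $J_{\mu,\lambda}$ with $J_{\mu,\lambda}(u_{\mu}^{+})<0<J_{\mu,\lambda}(u_{\mu}^{-})$, so they are distinct and nontrivial. Replacing them by $|u_{\mu}^{\pm}|$, which changes neither the value of $J_{\mu,\lambda}$ nor the constraints, I may take $u_{\mu}^{\pm}\geq 0$, and the strong maximum principle then gives $u_{\mu}^{\pm}>0$; both are therefore positive solutions. As for the order of quantifiers, I would fix $\delta_{0}$ first so that the sign configuration of $A$ and $B$ (both $\mathcal{N}_{\mu,\lambda}^{\pm}$ nonempty and $\alpha_{\mu}^{-}$ below threshold) persists throughout $[\lambda_{1}(f_{\Omega}),\lambda_{1}(f_{\Omega})+\delta_{0})$, and choose $\mu$ large last to secure compactness. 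Since $u_{\mu}^{-}$ already appears for $\lambda<\lambda_{1}(f_{\Omega})$ in Theorem \ref{t3}$(i)$ whereas the negative-energy solution $u_{\mu}^{+}$ becomes available only once $\lambda\geq\lambda_{1}(f_{\Omega})$ opens up $\mathcal{N}_{\mu,\lambda}^{+}$, the genuinely new difficulty is to keep $\mathcal{N}_{\mu,\lambda}^{-}$ nonempty with $\alpha_{\mu}^{-}$ positive and subcritical in the narrow window just above $\lambda_{1}(f_{\Omega})$.
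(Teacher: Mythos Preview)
Your overall strategy---fibering, decompose the Nehari manifold into $\mathcal{N}_{\mu,\lambda}^{\pm}$, minimize on each piece---is exactly the paper's approach, and your identification of the sign configurations $B<0<A$ and $A<0<B$ matches the paper's Lemma~\ref{g2-3}. But you have misdiagnosed the main difficulty, and this matters because it is precisely where the hypotheses $a>\lambda_{1}^{-2}(f_{\Omega})\int_{\Omega}Q\phi_{1}^{4}dx$ and $\lambda<\lambda_{1}(f_{\Omega})+\delta_{0}$ actually enter.

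There is no ``critical threshold attached to $\Gamma_{0}$'' and no internal concentration to exclude: since $p=4<6=2^{\ast}$ the problem is subcritical, and the paper's Proposition~\ref{p1} shows that \emph{every} bounded (PS)~sequence is precompact once $\mu$ is large, purely by the steep-well mechanism you describe. Compactness is not the issue. What you gloss over---``generate bounded Palais--Smale sequences'' and ``verify that the degenerate set is trivial''---is in fact the heart of the argument. The paper proves (Theorem~\ref{g4-1}) that the hypothesis $a>\lambda_{1}^{-2}(f_{\Omega})\int_{\Omega}Q\phi_{1}^{4}dx$, which is equivalent to $\Phi_{4}(\phi_{1})<0$, forces the separation condition $\overline{\Lambda_{\mu}^{-}}\cap\overline{\Theta_{\mu}^{+}}(4)=\emptyset$ for $\lambda$ in a window $[\lambda_{1}(f_{\Omega}),\lambda_{1}(f_{\Omega})+\delta_{0})$ and $\mu$ large; this is proved by contradiction using the concentration result Theorem~\ref{g4-0} and the convergence $\varphi_{\mu}\to\phi_{1}$. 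The separation condition is what makes $\mathcal{N}_{\mu,\lambda}^{0}=\emptyset$, and---more importantly---it is the tool used repeatedly (analogues of Lemma~\ref{g4-2} and Lemma~\ref{t4-1}) to show that $\mathcal{N}_{\mu,\lambda}^{+}$ is uniformly bounded and that sublevel sets of $J_{\mu,\lambda}$ on $\mathcal{N}_{\mu,\lambda}^{-}$ are bounded. Without this, neither minimizing sequence is a priori bounded: on $\mathcal{N}_{\mu,\lambda}^{-}$ one has $J_{\mu,\lambda}(u)=\tfrac{1}{4}B(u)$, and nothing prevents $B(u_n)\to 0$ with $\|u_n\|_{\mu}\to\infty$ unless you can rule out limits landing in $\overline{\Lambda_{\mu}^{-}}\cap\overline{\Theta_{\mu}^{+}}(4)$.

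In short: keep your framework, but replace the concentration-compactness discussion by the separation argument of Theorem~\ref{g4-1}. That is where $\delta_{0}$ is produced and where the lower bound on $a$ is genuinely used; the upper bound $a<\Gamma_{0}$ is used only to make $\mathcal{N}_{\mu,\lambda}^{-}$ nonempty, as you correctly say.
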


To consider the case $2<p<\min \left\{ 4,2^{\ast }\right\} ,$ we first show
that the nonexistence of solutions.

\begin{theorem}
\label{t5-0}Suppose that $N\geq 4,$ $2<p<2^{\ast }$ and conditions $\left(
V_{1}\right) -\left( V_{3}\right) $ and $\left( D_{1}\right) -\left(
D_{2}\right) $ hold. Then for each $0<\lambda <\lambda _{1}\left( f_{\Omega
}\right) $ there exists%
\begin{equation*}
0<\overline{\mathbf{A}}_{\lambda }<\frac{1}{2}\left( \frac{\left( 4-p\right)
\lambda _{1}\left( f_{\Omega }\right) }{p\left( \lambda _{1}\left( f_{\Omega
}\right) -\lambda \right) }\right) ^{(4-p)/(p-2)}\left( \frac{\left\Vert
Q\right\Vert _{\infty }\left\vert \left\{ V<c\right\} \right\vert ^{\frac{%
2^{\ast }-p}{2^{\ast }}}}{S^{p}}\right) ^{2/(p-2)}
\end{equation*}%
such that for every $a>\overline{\mathbf{A}}_{\lambda },$ Equation $\left(
E_{\mu ,\lambda }\right) $ does not admits nontrivial solution for $\mu >0$
sufficiently large.
\end{theorem}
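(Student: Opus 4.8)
The plan is to assume a nontrivial solution $u$ of $(E_{\mu,\lambda})$ exists and to derive, for $a$ large and $\mu$ large, a contradiction from the single testing identity obtained by pairing the equation with $u$ itself. Writing $t:=\|u\|_{D^{1,2}}^2>0$ and recalling $b=1$ (so $M(s)=as+1$), this identity reads $a\,t^2+\|u\|_\mu^2=\int_{\mathbb{R}^N}Q|u|^p\,dx+\lambda\int_{\mathbb{R}^N}fu^2\,dx$. Hence the whole argument reduces to bounding the two terms on the right by powers of $t$ with explicit constants and then showing the resulting scalar inequality has no positive root.

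For the linear term I would invoke the steep–potential–well eigenvalue comparison: the first eigenvalue $\lambda_1(\mu)$ of $-\Delta+\mu V$ relative to the weight $f$ on $X_\mu$ satisfies $\int fu^2\le \lambda_1(\mu)^{-1}\|u\|_\mu^2$, with $\lambda_1(\mu)\to\lambda_1(f_\Omega)$ as $\mu\to\infty$. Since $\lambda<\lambda_1(f_\Omega)$, for $\mu$ large one gets $\lambda\int fu^2\le \tfrac{\lambda}{\lambda_1(\mu)}\|u\|_\mu^2$ with $\gamma_\mu:=1-\lambda/\lambda_1(\mu)>0$ and $\gamma_\mu\to(\lambda_1(f_\Omega)-\lambda)/\lambda_1(f_\Omega)$. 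Moving this term to the left and using $\|u\|_\mu^2\ge\|u\|_{D^{1,2}}^2=t$ yields $a\,t^2+\gamma_\mu t\le \int_{\mathbb{R}^N}Q|u|^p\,dx$.

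For the nonlinear term I would split $\int Q|u|^p$ over $\{V<c\}$ and $\{V\ge c\}$. On $\{V<c\}$, which has finite measure by $(V_2)$, Hölder with exponents $2^*/p$ and $2^*/(2^*-p)$ followed by the Sobolev inequality $\|u\|_{2^*}\le S^{-1}\|u\|_{D^{1,2}}$ gives $\int_{\{V<c\}}Q|u|^p\le \tfrac{\|Q\|_\infty|\{V<c\}|^{(2^*-p)/2^*}}{S^p}\,t^{p/2}$, which is precisely the constant appearing in the stated bound. On $\{V\ge c\}$ one has $\int_{\{V\ge c\}}u^2\le(\mu c)^{-1}\|u\|_\mu^2$, so interpolating there between $L^2$ and $L^{2^*}$ produces a remainder carrying a prefactor $(\mu c)^{-\theta}\to0$; thus this part is negligible for $\mu$ large and may be absorbed. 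Collecting everything, a nontrivial solution forces $a\,t^2+\gamma_\mu t\le K\,t^{p/2}$ with $K=\|Q\|_\infty|\{V<c\}|^{(2^*-p)/2^*}S^{-p}$ (up to the vanishing remainder), i.e. after dividing by $t$, $a\,t+\gamma_\mu\le K\,t^{(p-2)/2}$.

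Finally, since $N\ge4$ forces $2<p<2^*\le4$, the exponent $(p-2)/2$ lies in $(0,1)$, so the right-hand side is concave while the left is an increasing affine function with positive intercept $\gamma_\mu$. A tangent-line (equivalently, an optimized weighted Young) computation shows this inequality admits a positive root $t$ only when $a$ does not exceed an explicit critical value; taking $\overline{\mathbf{A}}_\lambda$ to be this threshold (which, after inserting the limits $\gamma_\mu\to(\lambda_1(f_\Omega)-\lambda)/\lambda_1(f_\Omega)$ and the vanishing remainder, is dominated by the displayed expression) guarantees that for every $a>\overline{\mathbf{A}}_\lambda$ no positive $t$ exists, whence $u\equiv0$. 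I expect the main obstacle to be the $\{V\ge c\}$ remainder: one must show it is genuinely absorbable uniformly in $u$ so that the scalar inequality, and hence the explicit constant $\overline{\mathbf{A}}_\lambda$, survives the passage $\mu\to\infty$, and that the loss incurred in replacing $\gamma_\mu$ and the remainder by their limits still leaves the threshold strictly below the stated upper bound.
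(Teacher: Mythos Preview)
Your proposal is correct and follows essentially the same strategy as the paper: both exploit the Nehari identity for a hypothetical nontrivial solution, apply the eigenvalue comparison $\|u\|_\mu^2-\lambda\int fu^2\ge \gamma_\mu\|u\|_\mu^2$ with $\gamma_\mu\to(\lambda_1(f_\Omega)-\lambda)/\lambda_1(f_\Omega)$, bound $\int Q|u|^p$ via interpolation and Sobolev, and deduce that the resulting scalar relation admits no solution once $a$ exceeds an explicit threshold.

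The packaging differs. The paper phrases everything through the fibering maps $h_u(t)=J_{\mu,\lambda}(tu)$: for each direction $u$ it solves the system $h_u'(t)=h_u''(t)=0$ to isolate the unique $a(u)$ at which the map has an inflection, so that for $a>a(u)$ no critical point exists along that ray; then $\overline{\mathbf A}_\lambda=\sup_u a(u)$ is bounded using the global interpolation estimate $\int|u|^p\le S^{-p}|\{V<c\}|^{(2^*-p)/2^*}\|u\|_\mu^\alpha\|u\|_{D^{1,2}}^\beta$. Keeping both norms separate is what makes the role of $N\ge4$ transparent: after cancellation one is left with $(\|u\|_{D^{1,2}}/\|u\|_\mu)^{N-4}\le1$. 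You instead collapse immediately to the single variable $t=\|u\|_{D^{1,2}}^2$ via $\|u\|_\mu^2\ge t$ and then optimize the one–variable inequality; this is slightly lossier and forces you to treat the $\{V\ge c\}$ piece as a remainder. That remainder can indeed be absorbed (a weighted Young inequality turns it into $\epsilon\|u\|_\mu^2+C_\epsilon(\mu c)^{-\sigma}t^{2^*/2}$, and $2^*/2\le 2$ for $N\ge4$), but the paper sidesteps this entirely by never splitting. Both routes land on the same threshold up to the stated upper bound.
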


To prove the existence of positive solution, we need the following
conditions:

\begin{itemize}
\item[$\left( D_{3}\right) $] There exist two numbers $c_{\ast },R_{\ast }>0$
such that%
\begin{equation*}
\left\vert x\right\vert ^{p-2}Q\left( x\right) \leq c_{\ast }\left[ V\left(
x\right) \right] ^{4-p}\text{ for all }\left\vert x\right\vert >R_{\ast }.
\end{equation*}

\item[$\left( D_{4}\right) $] $\left\vert \left\{ V<c\right\} \right\vert
^{\left( 6-p\right) /6}\leq \frac{S^{p}Q_{\Omega ,\min }}{S_{p}^{p}\left(
\Omega \right) \left\Vert Q\right\Vert _{\infty }},$ where $Q_{\Omega ,\min
}=\inf_{x\in \overline{\Omega }}Q\left( x\right) >0.$
\end{itemize}

Then we have the following results.

\begin{theorem}
\label{t5}Suppose that $N=3,$ $2<p<4$ and conditions $\left( V_{1}\right)
-\left( V_{3}\right) $ and $\left( D_{1}\right) -\left( D_{3}\right) $ hold.
Then we have the following results.\newline
$\left( i\right) $ There exists $a_{0}>0$ such that for every $0<a<a_{0}$
and $0<\lambda <\lambda _{1}\left( f_{\Omega }\right) $, Equation $\left(
E_{\mu ,\lambda }\right) $ has a positive solution $u_{\mu }^{+}$ satisfying
$J_{\mu ,\lambda }\left( u_{\mu }^{+}\right) <0$ for $\mu >0$ sufficiently
large.\newline
$\left( ii\right) $ For each $\lambda \geq \lambda _{1}\left( f_{\Omega
}\right) $ and $a>0$, Equation $\left( E_{\mu ,\lambda }\right) $ has a
positive solution $u_{\mu }^{+}$ satisfying $J_{\mu ,\lambda }\left( u_{\mu
}^{+}\right) <0$ for $\mu >0$ sufficiently large.
\end{theorem}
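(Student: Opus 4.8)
The plan is to obtain $u_{\mu}^{+}$ as a global minimizer of $J_{\mu,\lambda}$ on $X_{\mu}$, exploiting that for $2<p<4$ the nonlocal quartic term $\frac{a}{4}\Vert u\Vert_{D^{1,2}}^{4}$ governs the growth of the functional. First I would check that $J_{\mu,\lambda}$ is coercive and bounded below on $X_{\mu}$ for $\mu$ large and every $a>0$. The only terms of the wrong sign are $-\frac{1}{p}\int_{\mathbb{R}^{3}}Q|u|^{p}\,dx$ and $-\frac{\lambda}{2}\int_{\mathbb{R}^{3}}fu^{2}\,dx$. Since $f\in L^{3/2}$, Hölder and the embedding $D^{1,2}\hookrightarrow L^{2^{\ast}}$ bound the latter by $C\Vert u\Vert_{D^{1,2}}^{2}$, which is absorbed by the quartic term. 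For the former, splitting $\mathbb{R}^{3}=\{V<c\}\cup\{V\geq c\}$, using $|\{V<c\}|<\infty$, the estimate $\Vert u\Vert_{2}^{2}\leq\frac{1}{\mu c}\Vert u\Vert_{\mu}^{2}+C|\{V<c\}|^{2/3}\Vert u\Vert_{D^{1,2}}^{2}$, and the interpolation $\Vert u\Vert_{p}\leq\Vert u\Vert_{2}^{1-\theta}\Vert u\Vert_{6}^{\theta}$ with $\theta=\frac{3}{2}-\frac{3}{p}$, one sees that $\int Q|u|^{p}\,dx$ grows like $\Vert u\Vert_{\mu}^{(6-p)/2}\Vert u\Vert_{D^{1,2}}^{p\theta}$; as $(6-p)/2<2$ and the total $D^{1,2}$-degree equals $p<4$, this is dominated by $\frac{1}{2}\Vert u\Vert_{\mu}^{2}+\frac{a}{4}\Vert u\Vert_{D^{1,2}}^{4}$. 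In case $(i)$ the assumption $\lambda<\lambda_{1}(f_{\Omega})$ additionally makes the quadratic part positive definite for large $\mu$, reinforcing coercivity when $a$ is small.

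Next I would set $\alpha_{\mu,\lambda}=\inf_{X_{\mu}}J_{\mu,\lambda}$ and show $\alpha_{\mu,\lambda}<0$ by a fibering computation along a well-chosen direction. In case $(i)$, since $Q^{+}\not\equiv0$ in $\Omega$ by $(D_{2})$, I pick $\varphi\in C_{0}^{\infty}(\Omega)$ with $\int_{\Omega}Q|\varphi|^{p}\,dx>0$; writing $\psi(t)=J_{\mu,\lambda}(t\varphi)=\frac{a}{4}At^{4}+\frac{B}{2}t^{2}-\frac{C}{p}t^{p}$ with $A=\Vert\varphi\Vert_{D^{1,2}}^{4}$, $C=\int_{\Omega}Q|\varphi|^{p}\,dx>0$ and $B=\Vert\varphi\Vert_{D^{1,2}}^{2}-\lambda\int f\varphi^{2}\,dx$, one checks $\min_{t>0}\psi(t)<0$ exactly when $\frac{B}{4-p}\bigl(\frac{2B(p-2)}{aA(4-p)}\bigr)^{(2-p)/2}<\frac{C}{p}$; since the left-hand side tends to $0$ as $a\to0^{+}$, there is $a_{0}>0$ for which this holds whenever $0<a<a_{0}$. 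In case $(ii)$ I take $\varphi=\phi_{1}$, so that $B=\lambda_{1}(f_{\Omega})-\lambda\leq0$: for $\lambda>\lambda_{1}(f_{\Omega})$ the negative $t^{2}$-coefficient forces $\psi(t)<0$ for small $t$ and every $a>0$, while the borderline $\lambda=\lambda_{1}(f_{\Omega})$ needs separate care, being immediate from $\psi(t)=\frac{a}{4}\lambda_{1}(f_{\Omega})^{2}t^{4}-\frac{t^{p}}{p}\int_{\Omega}Q\phi_{1}^{p}\,dx$ when $\int_{\Omega}Q\phi_{1}^{p}\,dx>0$ and otherwise requiring a perturbation of $\phi_{1}$ within $X_{\mu}$.

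The main obstacle is to show the infimum is attained. Given a minimizing sequence $\{u_{n}\}$, coercivity gives boundedness in $X_{\mu}$ and, after passing to a subsequence, $u_{n}\rightharpoonup u_{\mu}$ in $X_{\mu}$. The functionals $u\mapsto\frac{a}{4}\Vert u\Vert_{D^{1,2}}^{4}$ and $u\mapsto\frac{1}{2}\Vert u\Vert_{\mu}^{2}$ are weakly lower semicontinuous, so everything reduces to passing to the limit in the two lower-order integrals, and here the sign-changing $Q$ on all of $\mathbb{R}^{3}$ is the real difficulty, since compactness may be lost at spatial infinity. The term $\int fu_{n}^{2}\,dx$ converges because $f\in L^{3/2}$. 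For $\int Q|u_{n}|^{p}\,dx$ I would argue on $\{V<c\}$ via the finite measure and the Rellich theorem, obtaining strong $L^{p}$-convergence there, and on $\{V\geq c\}$ I would invoke the decay hypothesis $(D_{3})$, $|x|^{p-2}Q(x)\leq c_{\ast}V(x)^{4-p}$ for $|x|>R_{\ast}$, together with the steep well: $(D_{3})$ bounds the tail $\int_{|x|>R_{\ast}}Q|u_{n}|^{p}\,dx$ by a multiple of $\mu\int Vu_{n}^{2}\,dx\leq\Vert u_{n}\Vert_{\mu}^{2}$ that becomes uniformly small as $\mu\to\infty$, preventing mass escape. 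Combining these yields $\int Q|u_{n}|^{p}\,dx\to\int Q|u_{\mu}|^{p}\,dx$ and hence $J_{\mu,\lambda}(u_{\mu})\leq\alpha_{\mu,\lambda}<0$, so $u_{\mu}\neq0$ realizes the infimum and is a critical point of $J_{\mu,\lambda}$.

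Finally, since $J_{\mu,\lambda}(|u_{\mu}|)\leq J_{\mu,\lambda}(u_{\mu})$, the minimizer may be taken nonnegative; elliptic regularity and the strong maximum principle then give $u_{\mu}>0$ in $\mathbb{R}^{3}$, and setting $u_{\mu}^{+}:=u_{\mu}$ we obtain $J_{\mu,\lambda}(u_{\mu}^{+})=\alpha_{\mu,\lambda}<0$ for $\mu$ sufficiently large, as claimed. The two delicate points, which I expect to require the most work, are the uniform-in-$\mu$ tail control through $(D_{3})$ in the attainment step and the treatment of the borderline $\lambda=\lambda_{1}(f_{\Omega})$ in the energy estimate.
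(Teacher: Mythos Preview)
Your strategy differs from the paper's in a fundamental way, and the difference matters. The paper does \emph{not} minimize $J_{\mu,\lambda}$ over all of $X_\mu$; it minimizes on the Nehari submanifold $\mathbf{N}_{\mu,\lambda}^{+}$, shows (Proposition~\ref{p2}) that $\mathbf{N}_{\mu,\lambda}^{+}$ is uniformly bounded, and then applies Ekeland together with the Palais--Smale compactness of Proposition~\ref{p1}. The boundedness of $\mathbf{N}_{\mu,\lambda}^{+}$ is exactly where hypothesis $(D_3)$ enters: it is combined with the Nehari constraint $h''_u(1)>0$ (inequality~\eqref{8-14}) and the Hardy/CKN inequality to control the tail of $\int Q|u|^p\,dx$ in terms of $\|u\|_{D^{1,2}}$. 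By contrast, the compactness step (Proposition~\ref{p1}) uses only $(D_1)$--$(D_2)$ and the steep-well estimate~\eqref{11}; it does not require $(D_3)$. So the role you assign to $(D_3)$ --- tail control for weak lower semicontinuity --- is not where the paper needs it, and the place where it is genuinely needed (a priori boundedness) is not covered by your argument.

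Your coercivity claim on $X_\mu$ is the real gap. From interpolation you get
\[
\int_{\mathbb{R}^3} Q|u|^p\,dx \;\le\; C\,\|u\|_\mu^{(6-p)/2}\,\|u\|_{D^{1,2}}^{3(p-2)/2},
\]
and you assert this is dominated by $\tfrac12\|u\|_\mu^2+\tfrac{a}{4}\|u\|_{D^{1,2}}^4$ because ``$(6-p)/2<2$ and the total $D^{1,2}$-degree equals $p<4$''. But a Young-type bound $X^{\alpha}Y^{\beta}\le \varepsilon X^{2}+C_\varepsilon Y^{4}$ requires $\tfrac{\alpha}{2}+\tfrac{\beta}{4}\le 1$, whereas here $\tfrac{(6-p)/2}{2}+\tfrac{3(p-2)/2}{4}=\tfrac{p+6}{8}>1$ for every $p>2$; splitting with exponent $4/(6-p)$ on the $\|u\|_\mu$ factor instead produces an uncontrolled $\|u\|_{D^{1,2}}^{6}$. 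Concretely, along sequences with $\|u\|_{D^{1,2}}\sim\|u\|_\mu^{s}$ for $s$ between $\tfrac12$ and $\tfrac{6-p}{14-3p}$ the exponent on the right of the interpolation bound exceeds both $2$ and $4s$, so the quoted inequality alone does not force $J_{\mu,\lambda}(u)\to+\infty$. The paper sidesteps this completely by never asserting global coercivity: the Nehari constraint supplies the extra relation~\eqref{8-14} between $\|u\|_\mu^2$, $\|u\|_{D^{1,2}}^4$ and $\int fu^2$ that, together with $(D_3)$, closes the estimate. If you want to salvage a global-minimization proof, you would first have to project a minimizing sequence onto $\mathbf{N}_{\mu,\lambda}^{+}$ via fibering (since each $h_u(t)\to+\infty$ as $t\to\infty$, the ray-minimum lies on $\mathbf{N}_{\mu,\lambda}^{+}\cup\mathbf{N}_{\mu,\lambda}^{0}$) and then invoke Proposition~\ref{p2} anyway --- at which point you are back to the paper's argument. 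Your remark that the borderline $\lambda=\lambda_1(f_\Omega)$ needs separate care is apt; the paper handles it through Lemma~\ref{g3-0} and the eigenfunction $\varphi_\mu$ with $\widetilde\lambda_{1,\mu}(f)<\lambda_1(f_\Omega)$, which makes the quadratic coefficient strictly negative for every $\lambda\ge\lambda_1(f_\Omega)$.
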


\begin{theorem}
\label{t5-2}Suppose that $N\geq 4,$ $2<p<2^{\ast }$ and conditions $\left(
V_{1}\right) -\left( V_{3}\right) $ and $\left( D_{1}\right) -\left(
D_{2}\right) $ hold. Then we have the following results.\newline
$\left( i\right) $ There exists $a_{0}>0$ such that for every $0<a<a_{0}$
and $0<\lambda <\lambda _{1}\left( f_{\Omega }\right) $, Equation $\left(
E_{\mu ,\lambda }\right) $ has a positive solution $u_{\mu }^{+}$ satisfying
$J_{\mu ,\lambda }\left( u_{\mu }^{+}\right) <0$ for $\mu >0$ sufficiently
large.\newline
$\left( ii\right) $ For each $a>0$ and $\lambda \geq \lambda _{1}\left(
f_{\Omega }\right) $, Equation $\left( E_{\mu ,\lambda }\right) $ has a
positive solution $u_{\mu }^{+}$ satisfying $J_{\mu ,\lambda }\left( u_{\mu
}^{+}\right) <0$ for $\mu >0$ sufficiently large.
\end{theorem}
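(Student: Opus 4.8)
The plan is to produce $u_{\mu }^{+}$ as a global minimizer of $J_{\mu ,\lambda }$ on the whole space $X_{\mu }$, with strictly negative critical value; such a minimizer automatically lies in the lower part of the Nehari manifold, whence the notation $u_{\mu}^{+}$. The structural fact driving everything is that for $N\geq 4$ one has $2^{\ast }\leq 4$, so $2<p<2^{\ast }\leq 4$; this is exactly why condition $(D_{3})$, indispensable in the borderline dimension $N=3$ of Theorem \ref{t5}, is not needed here.

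First I would show that $J_{\mu ,\lambda }$ is coercive and bounded below on $X_{\mu }$ for every $a>0$, $\lambda >0$ and $\mu >0$. Writing $\int_{\mathbb{R}^{N}}fu^{2}\leq \Vert f\Vert _{N/2}\Vert u\Vert _{2^{\ast }}^{2}\leq C\Vert u\Vert _{D^{1,2}}^{2}$ (H\"{o}lder and $D^{1,2}\hookrightarrow L^{2^{\ast }}$) and $\int_{\mathbb{R}^{N}}Q|u|^{p}\leq \Vert Q\Vert _{\infty }\Vert u\Vert _{p}^{p}$, and then interpolating $\Vert u\Vert _{p}$ between $\Vert u\Vert _{2}$ and $\Vert u\Vert _{2^{\ast }}$ (the latter controlled by $\Vert u\Vert _{D^{1,2}}$, the former by $\Vert u\Vert _{\mu }$ through $(V_{2})$), one checks that both lower order terms grow strictly slower than $\frac{a}{4}\Vert u\Vert _{D^{1,2}}^{4}+\frac{1}{2}\Vert u\Vert _{\mu }^{2}$ precisely because $2<p<2^{\ast }\leq 4$. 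Consequently $J_{\mu ,\lambda }(u)\rightarrow +\infty $ as $\Vert u\Vert _{\mu }\rightarrow \infty $, and any minimizing sequence for $m_{\mu ,\lambda }:=\inf_{X_{\mu }}J_{\mu ,\lambda }$ is bounded in $X_{\mu }$.

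The next step is to verify $m_{\mu ,\lambda }<0$, which splits exactly as in the statement. For part $(i)$ I would use $(D_{2})$ to fix once and for all a function $w\in C_{0}^{\infty }(\Omega )$ with $\int_{\Omega }Q|w|^{p}>0$; since $\mathrm{supp}\,w\subset \Omega =\{V=0\}$, the fibering map $t\mapsto J_{\mu ,\lambda }(tw)=\frac{a}{4}\Vert w\Vert _{D^{1,2}}^{4}t^{4}+\frac{1}{2}\left( \int_{\Omega }|\nabla w|^{2}-\lambda \int_{\Omega }f_{\Omega }w^{2}\right) t^{2}-\frac{1}{p}\left( \int_{\Omega }Q|w|^{p}\right) t^{p}$ is independent of $\mu $, and because $p>2$ its infimum over $t>0$ is negative as soon as the quartic coefficient $a$ is small; this defines a $\mu $-independent threshold $a_{0}>0$. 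For part $(ii)$ I would instead test with the positive first eigenfunction $\psi _{\mu }$ of the weighted problem $-\Delta u+\mu Vu=\lambda _{1}(\mu )fu$ on $X_{\mu }$: the standard steep-well fact that $\lambda _{1}(\mu )$ increases to $\lambda _{1}\left( f_{\Omega }\right) $ as $\mu \rightarrow \infty $ while remaining strictly below it for finite $\mu $ gives $\Vert \psi _{\mu }\Vert _{\mu }^{2}-\lambda \int f\psi _{\mu }^{2}=\left( \lambda _{1}(\mu )-\lambda \right) \int f\psi _{\mu }^{2}<0$ for every $\lambda \geq \lambda _{1}\left( f_{\Omega }\right) $. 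Hence the $t^{2}$-coefficient along $\psi _{\mu }$ is negative and, since $p>2$, $J_{\mu ,\lambda }(t\psi _{\mu })<0$ for small $t>0$, for every $a>0$. This is the point that lets the delicate equality case $\lambda =\lambda _{1}\left( f_{\Omega }\right) $ go through with no sign assumption on $\int_{\Omega }Q\phi _{1}^{p}$, in contrast with Theorems \ref{t2} and \ref{t4}.

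Finally I would run the compactness argument. Given a bounded minimizing sequence $u_{n}\rightharpoonup u$ in $X_{\mu }$, the steep-potential-well compactness (valid for $\mu $ large by $(V_{2})$, since $\{V<c\}$ has finite measure and $p<2^{\ast }$) yields $u_{n}\rightarrow u$ strongly in $L^{s}(\mathbb{R}^{N})$ for $2\leq s<2^{\ast }$, so that $\int Q|u_{n}|^{p}\rightarrow \int Q|u|^{p}$ and $\int fu_{n}^{2}\rightarrow \int fu^{2}$; combined with the weak lower semicontinuity of $\Vert \cdot \Vert _{D^{1,2}}^{4}$ and $\Vert \cdot \Vert _{\mu }^{2}$ this gives $J_{\mu ,\lambda }(u)\leq \liminf J_{\mu ,\lambda }(u_{n})=m_{\mu ,\lambda }<0$, so $u\neq 0$ is a minimizer and hence a weak solution. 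Since $J_{\mu ,\lambda }(|u|)=J_{\mu ,\lambda }(u)$, the minimizer may be taken nonnegative, and elliptic regularity together with the strong maximum principle upgrade it to the desired positive solution $u_{\mu }^{+}$ with $J_{\mu ,\lambda }(u_{\mu }^{+})<0$. I expect the genuine obstacle to be this compactness step---recovering on $\mathbb{R}^{N}$ the Sobolev compactness lost at infinity by means of the depth of the well---and, secondarily, making the negative-energy construction uniform in $\mu $ at the threshold $\lambda =\lambda _{1}\left( f_{\Omega }\right) $, which is precisely why the $\mu $-dependent eigenfunction $\psi _{\mu }$, rather than $\phi _{1}$, must be used.
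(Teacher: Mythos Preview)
Your route is genuinely different from the paper's: you minimize $J_{\mu,\lambda}$ globally on $X_\mu$, using the key observation that $2<p<2^\ast\le 4$ for $N\ge 4$ to get coercivity, whereas the paper restricts to the Nehari submanifold $\mathbf{N}_{\mu,\lambda}^{+}$ and shows it is uniformly bounded (Proposition~\ref{p3}), then runs Ekeland plus the Palais--Smale machinery of Proposition~\ref{p1}. Your coercivity argument is correct in spirit and is what makes global minimization available here (and not in the $N=3$ companion Theorem~\ref{t5}); your choice of the $\mu$-dependent eigenfunction $\psi_\mu$ in part~(ii) is also a good one, since $\phi_1$ gives a vanishing quadratic coefficient at $\lambda=\lambda_1(f_\Omega)$ and then the fibering argument along $\phi_1$ can fail when $\int_\Omega Q\phi_1^p\le 0$.

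However, your compactness step contains a genuine gap. For \emph{fixed} $\mu$, the embedding $X_\mu\hookrightarrow L^s(\mathbb{R}^N)$ is not compact under $(V_1)$--$(V_2)$: take $V\equiv 1$ outside a ball and $V\equiv 0$ inside, and translate a fixed bump to infinity; the $X_\mu$-norm stays bounded while no $L^s$-subsequence converges. So from a raw minimizing sequence you cannot conclude $\int Q|u_n|^p\to\int Q|u|^p$, and since $Q$ changes sign you cannot fall back on Fatou/weak lower semicontinuity for $J_{\mu,\lambda}$ either. The fix is to insert Ekeland's principle on $X_\mu$ (legitimate by your coercivity) to upgrade the minimizing sequence to a bounded Palais--Smale sequence, and then argue exactly as in Proposition~\ref{p1}: the identity $\langle J_{\mu,\lambda}'(u_n),u_n\rangle=o(1)$ together with the Brezis--Lieb splitting yields $\|v_n\|_\mu^2+a\|u_n\|_{D^{1,2}}^2\|v_n\|_{D^{1,2}}^2-\int Q|v_n|^p=o(1)$ for $v_n=u_n-u$, and the interpolation estimate $\int|v_n|^p\le (\mu c)^{-(2^\ast-p)(N-2)/4}S^{-N(p-2)/2}\|v_n\|_\mu^p+o(1)$ (which uses only $(V_2)$ and $p<2^\ast$) absorbs the last term for $\mu$ large. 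With that modification your global-minimization proof goes through and is somewhat more direct than the paper's Nehari-manifold argument.
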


\begin{theorem}
\label{t6}Suppose that $N\geq 3,$ $2<p<\min \left\{ 4,2^{\ast }\right\} $
and conditions $\left( V_{1}\right) -\left( V_{3}\right) ,\left(
D_{1}\right) -\left( D_{2}\right) $ and $\left( D_{4}\right) $ hold. Then
there exists $a_{0}>0$ such that for every $0<a<a_{0}$ and $0<\lambda <\left[
1-2\left( \frac{4-p}{4}\right) ^{2/p}\right] \lambda _{1}\left( f_{\Omega
}\right) $, Equation $\left( E_{\mu ,\lambda }\right) $ has a positive
solution $u_{\mu }^{-}$ satisfying $J_{\mu ,\lambda }\left( u_{\mu
}^{-}\right) >0$ for $\mu >0$ sufficiently large.
\end{theorem}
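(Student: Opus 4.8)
The plan is to realize $u_{\mu}^{-}$ as a minimizer of $J_{\mu,\lambda}$ on the ``upper'' branch $\mathbf{N}^{-}$ of the Nehari manifold, and to recover compactness on the whole of $\mathbb{R}^{N}$ by exploiting the steep potential well as $\mu\to\infty$. Set $\mathbf{N}_{\mu,\lambda}=\{u\in X_{\mu}\setminus\{0\}:\langle J_{\mu,\lambda}'(u),u\rangle=0\}$ and study the fibering map
\[
\psi_{u}(t):=J_{\mu,\lambda}(tu)=\frac{a}{4}t^{4}\|u\|_{D^{1,2}}^{4}+\frac{t^{2}}{2}\Big(\|u\|_{\mu}^{2}-\lambda\!\int_{\mathbb{R}^{N}}\!fu^{2}\,dx\Big)-\frac{t^{p}}{p}\int_{\mathbb{R}^{N}}Q|u|^{p}\,dx .
\]
Since $2<p<4$, whenever $\int_{\mathbb{R}^{N}}Q|u|^{p}\,dx>0$ the function $\psi_{u}$ has exactly two positive critical points: a local maximum $t^{-}(u)$ (giving a point of $\mathbf{N}^{-}$, where $\psi_{u}''<0$) and a local minimum $t^{+}(u)$ (giving a point of $\mathbf{N}^{+}$). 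I would decompose $\mathbf{N}_{\mu,\lambda}$ by the sign of $\psi_{u}''(1)=(4-p)a\|u\|_{D^{1,2}}^{4}-(p-2)\big(\|u\|_{\mu}^{2}-\lambda\int_{\mathbb{R}^{N}}fu^{2}\,dx\big)$, verify using $(D_{2})$ (so $Q^{+}\not\equiv 0$ in $\Omega$) that $\mathbf{N}^{-}\neq\emptyset$, and show that for $a<a_{0}$ one has $\mathbf{N}^{0}=\{0\}$, so that $\mathbf{N}^{-}$ is a $C^{1}$ constraint manifold. A useful structural fact, obtained by eliminating $\int Q|u|^{p}$ via the Nehari relation $\int Q|u|^{p}=a\|u\|_{D^{1,2}}^{4}+\|u\|_{\mu}^{2}-\lambda\int fu^{2}$, is that on $\mathbf{N}^{-}$ the bound $a\|u\|_{D^{1,2}}^{4}<\tfrac{p-2}{4-p}\big(\|u\|_{\mu}^{2}-\lambda\int fu^{2}\big)$ holds, whence $J_{\mu,\lambda}(u)>\frac{(4-p)a}{4p}\|u\|_{D^{1,2}}^{4}>0$; this already encodes the asserted sign $J_{\mu,\lambda}(u_{\mu}^{-})>0$.

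Next I would set $\alpha_{\mu,\lambda}^{-}=\inf_{\mathbf{N}^{-}}J_{\mu,\lambda}$ and establish that it is positive and that $J_{\mu,\lambda}$ is coercive on $\mathbf{N}^{-}$. Coercivity follows from $J_{\mu,\lambda}(u)\ge\frac{p-2}{4p}\big(\|u\|_{\mu}^{2}-\lambda\int fu^{2}\big)$ together with a lower bound $\|u\|_{\mu}^{2}-\lambda\int_{\mathbb{R}^{N}}fu^{2}\,dx\ge c\,\|u\|_{\mu}^{2}$ with $c>0$. The latter is where the eigenvalue problem of Remark~\ref{r1} enters: the Poincaré–type estimate $\int_{\Omega}f u^{2}\le\lambda_{1}(f_{\Omega})^{-1}\int_{\Omega}|\nabla u|^{2}$ combined with the steep-well control of the contribution of $\{V>0\}$ (which tends to $0$ as $\mu\to\infty$) gives $\lambda\int_{\mathbb{R}^{N}}fu^{2}\le\frac{\lambda}{\lambda_{1}(f_{\Omega})}\|u\|_{\mu}^{2}+o_{\mu}(1)$, so coercivity holds once $\lambda<\lambda_{1}(f_{\Omega})$. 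The sharper constant $\big[1-2((4-p)/4)^{2/p}\big]\lambda_{1}(f_{\Omega})$ is what I would use in addition to force the minimax value $\alpha_{\mu,\lambda}^{-}$ to lie strictly below the compactness threshold of the following step; it arises from optimizing the fiber $t\mapsto\psi_{\phi_{1}}(t)$ through the principal eigenfunction direction, where balancing the $t^{2}$ and $t^{p}$ contributions produces precisely the factor $2((4-p)/4)^{2/p}$.

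The decisive and hardest step is compactness. By Ekeland's variational principle there is a minimizing sequence $(u_{n})\subset\mathbf{N}^{-}$ that is a Palais–Smale sequence for $J_{\mu,\lambda}$ at level $\alpha_{\mu,\lambda}^{-}$; coercivity makes it bounded in $X_{\mu}$. The obstruction is loss of mass at infinity, compounded by the nonlocal quartic term and the sign-changing weight $Q$. I would handle it by the steep potential well: for $\mu$ large the penalty $\mu\int V u_{n}^{2}$ confines the $L^{2}$–mass to a neighborhood of $\Omega=\{V=0\}$, on which the embedding into $L^{p}$ is compact, so the concentration–compactness alternative rules out vanishing and dichotomy. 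Condition $(D_{4})$, which controls $Q_{\Omega,\min}$ and $\|Q\|_{\infty}$ against the embedding constants $S,S_{p}(\Omega)$ and the measure $|\{V<c\}|$, is exactly what keeps $\alpha_{\mu,\lambda}^{-}$ below the level at which the weak limit could degenerate. One then concludes strong convergence $u_{n}\to u_{\mu}^{-}$ with $u_{\mu}^{-}\in\mathbf{N}^{-}$ and $J_{\mu,\lambda}(u_{\mu}^{-})=\alpha_{\mu,\lambda}^{-}$. Because $\mathbf{N}^{0}=\{0\}$, the Lagrange multiplier in $J_{\mu,\lambda}'(u_{\mu}^{-})=\theta\,G'(u_{\mu}^{-})$ (with $G(u)=\langle J_{\mu,\lambda}'(u),u\rangle$) must vanish, since pairing with $u_{\mu}^{-}$ gives $\theta\,\psi_{u_{\mu}^{-}}''(1)=0$ and $\psi_{u_{\mu}^{-}}''(1)<0$ on $\mathbf{N}^{-}$; hence $u_{\mu}^{-}$ is a genuine critical point, i.e. a solution.

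Finally, positivity is routine: replacing $u_{\mu}^{-}$ by $|u_{\mu}^{-}|$ leaves $\|u\|_{D^{1,2}},\|u\|_{\mu},\int Q|u|^{p}$ and $\int fu^{2}$ unchanged, so the minimizer may be taken nonnegative, and elliptic regularity together with the strong maximum principle (applied after absorbing the zeroth–order terms) yields $u_{\mu}^{-}>0$ in $\mathbb{R}^{N}$; the sign $J_{\mu,\lambda}(u_{\mu}^{-})=\alpha_{\mu,\lambda}^{-}>0$ was recorded in the first two steps. I expect the main difficulty to be entirely in the compactness analysis—simultaneously taming the nonlocal quartic term, the indefinite weight $Q$, and the failure of compact embedding on $\mathbb{R}^{N}$—which is precisely what the smallness of $a$, condition $(D_{4})$, and the largeness of $\mu$ are arranged to control.
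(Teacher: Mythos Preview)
Your overall architecture (minimize on $\mathbf{N}_{\mu,\lambda}^{-}$, use Ekeland, recover compactness via the steep well, then kill the Lagrange multiplier) is the right one, but two of your structural claims are false and this breaks the argument. First, it is not true that $\psi_{u}$ has exactly two positive critical points whenever $\int Q|u|^{p}>0$: for $2<p<4$ this happens only when $a$ lies below the value $a(u)=\tfrac{p-2}{4-p}\big(\tfrac{4-p}{2}\big)^{2/(p-2)}\overline{A}_{\lambda}(u)$ (see the computation around \eqref{15-2}--\eqref{15-1}). Second, and more seriously, your claim that $\mathbf{N}_{\mu,\lambda}^{0}=\{0\}$ for small $a$ fails. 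Since $Q$ changes sign in $\Omega$, the quantity $\overline{A}_{\lambda}(u)$ takes arbitrarily small positive values, so for every $a>0$ there exist $u$ with $a(u)=a$ and hence points in $\mathbf{N}_{\mu,\lambda}^{0}$. Consequently $\mathbf{N}_{\mu,\lambda}^{-}$ need not be closed, and your Lagrange multiplier step (``$\theta\,\psi''_{u}(1)=0$ with $\psi''<0$'') is not justified: a minimizing sequence in $\mathbf{N}_{\mu,\lambda}^{-}$ could converge to a point of $\mathbf{N}_{\mu,\lambda}^{0}$.

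The paper circumvents this by a sublevel--set decomposition that you are missing. For $u\in\mathbf{N}_{\mu,\lambda}$ with $J_{\mu,\lambda}(u)<\tfrac{p-2}{4p}C(p)K^{p}(\mu)$, inequality \eqref{6-4} forces $\|u\|_{\mu}$ into one of two disjoint intervals, splitting the sublevel set as $\mathbf{N}_{\mu,\lambda}^{(1)}\cup\mathbf{N}_{\mu,\lambda}^{(2)}$ separated by a norm gap. Condition $(D_{4})$ is then used \emph{not} for compactness but to prove $\mathbf{N}_{\mu,\lambda}^{(1)}\subset\mathbf{N}_{\mu,\lambda}^{-}$ (it is exactly what makes the inequality $h_{u}''(1)<0$ hold on the small--norm piece); compactness itself is Proposition~\ref{p1} and needs only $(V_{1})$--$(V_{2})$, $(D_{1})$--$(D_{2})$. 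Finally, the restriction $\lambda<\big[1-2((4-p)/4)^{2/p}\big]\lambda_{1}(f_{\Omega})$ does not come from fibering through $\phi_{1}$: the correct test function is the positive ground state $w_{\lambda,\Omega}$ of the limiting Schr\"odinger problem $(E_{\lambda,\Omega})$ on $\Omega$, and the constant arises (Theorem~\ref{t6-1}(i)) by comparing $J_{\mu,\lambda}(t_{a}^{-}w_{\lambda,\Omega})$ with the threshold $\tfrac{p-2}{4p}C(p)K^{p}(\mu)$ as $a\to0^{+}$. Once the minimizing sequence is known to sit in $\mathbf{N}_{\mu,\lambda}^{(1)}$ with a uniform norm bound, your remaining steps (Ekeland, Proposition~\ref{p1}, positivity) go through as written.
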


Combining the theorems \ref{t5}, \ref{t6} results, we have the following
multiplicity result.

\begin{corollary}
Suppose that $N=3,$ $2<p<4$ and conditions $\left( V_{1}\right) -\left(
V_{3}\right) $ and $\left( D_{1}\right) -\left( D_{4}\right) $ hold. Then
there exists $a_{0}>0$ such that for every $0<a<a_{0}$ and $0<\lambda <\left[
1-2\left( \frac{4-p}{4}\right) ^{2/p}\right] \lambda _{1}\left( f_{\Omega
}\right) $, Equation $\left( E_{\mu ,\lambda }\right) $ has two positive
solutions $u_{\mu }^{-}$ and $u_{\mu }^{+}$ satisfying $J_{\mu ,\lambda
}\left( u_{\mu }^{+}\right) <0<J_{\mu ,\lambda }\left( u_{\mu }^{-}\right) $
for $\mu >0$ sufficiently large.
\end{corollary}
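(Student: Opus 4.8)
The plan is to produce the two solutions directly by invoking Theorems \ref{t5} and \ref{t6}, whose hypotheses the corollary has been arranged to meet simultaneously, so that no further analysis is required. First I would note that the standing assumptions $(V_{1})$--$(V_{3})$ and $(D_{1})$--$(D_{4})$ contain everything each theorem needs: Theorem \ref{t5} uses $(V_{1})$--$(V_{3})$ and $(D_{1})$--$(D_{3})$, while Theorem \ref{t6} uses $(V_{1})$--$(V_{3})$, $(D_{1})$--$(D_{2})$ and $(D_{4})$, and both lists are subsumed by $(D_{1})$--$(D_{4})$. Writing $a_{0}^{(5)}$ and $a_{0}^{(6)}$ for the thresholds furnished by Theorems \ref{t5}$(i)$ and \ref{t6}, I would set $a_{0}:=\min\{a_{0}^{(5)},a_{0}^{(6)}\}$, so that both theorems are active for every $0<a<a_{0}$.

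Next I would verify that the admissible $\lambda$-window of the corollary lies inside the windows of both theorems. An elementary computation shows that for $2<p<4$ the coefficient $\kappa:=1-2\left(\frac{4-p}{4}\right)^{2/p}$ satisfies $0<\kappa<1$, so
\begin{equation*}
\left(0,\ \kappa\,\lambda_{1}(f_{\Omega})\right)\subset\left(0,\ \lambda_{1}(f_{\Omega})\right).
\end{equation*}
Hence any $\lambda$ with $0<\lambda<\kappa\lambda_{1}(f_{\Omega})$ both falls in the range $0<\lambda<\lambda_{1}(f_{\Omega})$ required by Theorem \ref{t5}$(i)$ and coincides with the range required by Theorem \ref{t6}. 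Consequently, for every $0<a<a_{0}$, every $0<\lambda<\kappa\lambda_{1}(f_{\Omega})$, and all $\mu$ larger than the two lower bounds supplied by the theorems, Theorem \ref{t5}$(i)$ yields a positive solution $u_{\mu}^{+}$ with $J_{\mu,\lambda}(u_{\mu}^{+})<0$, while Theorem \ref{t6} yields a positive solution $u_{\mu}^{-}$ with $J_{\mu,\lambda}(u_{\mu}^{-})>0$.

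Finally, distinctness comes for free: because $J_{\mu,\lambda}(u_{\mu}^{+})<0<J_{\mu,\lambda}(u_{\mu}^{-})$, the two solutions take different energy values and therefore cannot coincide, which establishes the claimed multiplicity. There is no genuine analytic difficulty here, since all the substantive work is already carried out in Theorems \ref{t5} and \ref{t6}; the only bookkeeping that requires care is taking the common threshold $a_{0}$ to be the minimum of the two individual thresholds, taking $\mu$ beyond both ``sufficiently large'' bounds at once, and recognising that the tighter window of Theorem \ref{t6} is the one that governs the corollary.
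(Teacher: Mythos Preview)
Your proposal is correct and follows exactly the approach the paper indicates: the corollary is stated in the paper simply as ``Combining the theorems \ref{t5}, \ref{t6} results,'' and your write-up supplies precisely that combination together with the routine bookkeeping (intersecting the $a_{0}$-thresholds, nesting of the $\lambda$-windows, and taking $\mu$ large enough for both). There is nothing to add or correct.
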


Combining the theorems \ref{t5-2}, \ref{t6} results, we have the following
multiplicity result.

\begin{corollary}
Suppose that $N\geq 4,$ $2<p<2^{\ast }$ and conditions $\left( V_{1}\right)
-\left( V_{3}\right) ,\left( D_{1}\right) -\left( D_{2}\right) $ and $\left(
D_{4}\right) $ hold. Then there exists $a_{0}>0$ such that for every $%
0<a<a_{0}$ and $0<\lambda <\left[ 1-2\left( \frac{4-p}{4}\right) ^{2/p}%
\right] \lambda _{1}\left( f_{\Omega }\right) $, Equation $\left( E_{\mu
,\lambda }\right) $ has two positive solutions $u_{\mu }^{-}$ and $u_{\mu
}^{+}$ satisfying $J_{\mu ,\lambda }\left( u_{\mu }^{+}\right) <0<J_{\mu
,\lambda }\left( u_{\mu }^{-}\right) $ for $\mu >0$ sufficiently large.
\end{corollary}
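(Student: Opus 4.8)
The plan is to obtain the two solutions directly from the existence results already proved and then to separate them by the sign of their energies; the corollary is essentially an assembly of Theorems \ref{t5-2}(i) and \ref{t6}. First I would check that the present hypotheses suffice to invoke both theorems. The assumptions $(V_1)$--$(V_3)$, $(D_1)$--$(D_2)$ and $(D_4)$ are exactly those required by Theorem \ref{t6}, while Theorem \ref{t5-2} needs only the weaker subset $(V_1)$--$(V_3)$ and $(D_1)$--$(D_2)$; hence both are applicable. Moreover, for $N\geq 4$ one has $2^{\ast}=\tfrac{2N}{N-2}\leq 4$, so $\min\{4,2^{\ast}\}=2^{\ast}$ and the range $2<p<\min\{4,2^{\ast}\}$ appearing in Theorem \ref{t6} coincides with the range $2<p<2^{\ast}$ stated in the corollary.

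Second, I would reconcile the parameter windows. Let $a_{0}^{(1)}$ be the threshold furnished by Theorem \ref{t5-2}(i) and $a_{0}^{(2)}$ the threshold from Theorem \ref{t6}, and set $a_{0}:=\min\{a_{0}^{(1)},a_{0}^{(2)}\}>0$. For the $\lambda$-interval, observe that since $2\left(\tfrac{4-p}{4}\right)^{2/p}>0$ we have $\left[1-2\left(\tfrac{4-p}{4}\right)^{2/p}\right]\lambda_{1}\left(f_{\Omega}\right)<\lambda_{1}\left(f_{\Omega}\right)$. Consequently every $\lambda$ with $0<\lambda<\left[1-2\left(\tfrac{4-p}{4}\right)^{2/p}\right]\lambda_{1}\left(f_{\Omega}\right)$ automatically satisfies $0<\lambda<\lambda_{1}\left(f_{\Omega}\right)$, the window required by Theorem \ref{t5-2}(i). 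Thus, fixing any $0<a<a_{0}$ and any $\lambda$ in the smaller interval, both theorems apply simultaneously.

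Third, I would invoke the two theorems to produce, for $\mu$ large, a positive solution $u_{\mu}^{+}$ with $J_{\mu,\lambda}\left(u_{\mu}^{+}\right)<0$ from Theorem \ref{t5-2}(i), and a positive solution $u_{\mu}^{-}$ with $J_{\mu,\lambda}\left(u_{\mu}^{-}\right)>0$ from Theorem \ref{t6}. Since each theorem guarantees its conclusion only for $\mu$ beyond some threshold, say $\mu_{1}$ and $\mu_{2}$ respectively, I would take $\mu>\max\{\mu_{1},\mu_{2}\}$ so that both solutions coexist. Finally, the strict inequality $J_{\mu,\lambda}\left(u_{\mu}^{+}\right)<0<J_{\mu,\lambda}\left(u_{\mu}^{-}\right)$ shows $u_{\mu}^{+}\neq u_{\mu}^{-}$, yielding two distinct positive solutions.

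As for difficulties, there is no genuine analytic obstacle here: all the substantive work (the Nehari/fibering analysis, the concentration-compactness arguments, and the energy estimates forcing the signs of $J_{\mu,\lambda}$) has already been carried out in Theorems \ref{t5-2} and \ref{t6}. The only point needing care is the bookkeeping of the thresholds, namely unifying the two $a$-bounds by a minimum and the two $\mu$-bounds by a maximum, and verifying that the narrower $\lambda$-window of Theorem \ref{t6} sits inside that of Theorem \ref{t5-2}(i); the latter is immediate from the elementary inequality noted above.
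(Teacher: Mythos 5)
Your proposal is correct and follows exactly the route the paper intends: the corollary is stated as an immediate combination of Theorem \ref{t5-2}(i) (giving $u_{\mu}^{+}$ with negative energy) and Theorem \ref{t6} (giving $u_{\mu}^{-}$ with positive energy), with the solutions distinguished by the sign of $J_{\mu,\lambda}$. Your bookkeeping of the thresholds (minimum of the two $a_{0}$'s, maximum of the two $\mu$-thresholds, and the observation that the $\lambda$-window of Theorem \ref{t6} lies inside $\left(0,\lambda_{1}\left(f_{\Omega}\right)\right)$ and that $\min\{4,2^{\ast}\}=2^{\ast}$ for $N\geq 4$) is precisely what is needed.
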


To study the mainly Theorems, we shall establish their result by considering
minimization on two distinct components of the Nehari manifold corresponding
to Equation $\left( E_{\mu ,\lambda }\right) $. We are likewise interested
in the conditions of $M$ and $g$ that subsequently gives rise to the
non-existence and existence of positive solutions. Our focus here, however,
is on a given set of $M$ and $g$ so that it is possible to examine in detail
the number of solutions admitted subject to the variations of parameters
imbedded in these functions. A similar analysis has been carried out on
other elliptic equations with interesting results. Amann and Lopez-Gomez
\cite{ALG}, Binding \emph{et. al.} \cite{BDH1,BDH2}, and Brown and Zhang
\cite{BZ}, for example, studied the following semilinear boundary value
problem:

\begin{equation}
\left\{
\begin{array}{l}
-\Delta u=\lambda f_{\Omega }\left( x\right) u+b(x)\left\vert u\right\vert
^{p-2}u\text{ in }\Omega ; \\
u=0\text{ on }\partial \Omega ,%
\end{array}%
\right.  \label{1}
\end{equation}%
where $\Omega $ is a bounded domain with smooth boundary in $\mathbb{R}%
^{N},\lambda >0$ is a real parameter, $2<p<2^{\ast }$ and $f_{\Omega },b:%
\overline{\Omega }\rightarrow \mathbb{R}$ are smooth functions which change
sign in $\overline{\Omega }.$ In \cite{BDH1,BDH2} by using variational
methods, in Brown and Zhang \cite{BZ} by using Nehari manifold and fibrering
maps, and in Amann and Lopez-Gomez \cite{ALG} by using global bifurcation
theory. The existence and multiplicity results can be summarized as follows.
It is known that

\begin{itemize}
\item[$\left( A\right) $] there exists a positive solution to Equation $%
\left( \ref{1}\right) $ whenever $0<\lambda <\lambda _{1}\left( f_{\Omega
}\right) ;$

\item[$\left( B\right) $] if $\int_{\Omega }b\phi _{1}^{p}dx<0$, there
exists $\delta _{0}>0$ such that Equation $\left( \ref{1}\right) $ has at
least two positive solutions whenever $\lambda _{1}\left( f_{\Omega }\right)
<\lambda <\lambda _{1}\left( f_{\Omega }\right) +\delta _{0}.$
\end{itemize}

Results $(A)$ and $(B)$ can be understood in term of global bifurcation
theory as the sign of $\int_{\Omega }b\phi _{1}^{p}dx$ determines the
direction of bifurcation from the branch of zero solutions at the
bifurcation point at $\lambda =\lambda _{1}\left( f_{\Omega }\right) $ so
that bifurcation is to the left when $\int_{\Omega }b\phi _{1}^{p}dx>0$ and
to the right when $\int_{\Omega }b\phi _{1}^{p}dx<0$; the corresponding
bifurcation diagrams are shown in Fig.1 of \cite{BZ}. Furthermore, some
who's been done for this type of problem in $\mathbb{R}^{N}.$ We are only
aware of the works Chabrowski and Costa \cite{CC} and Costa and Tehrani \cite%
{CT} which also studied existence and multiplicity of positive solutions for
Schr\"{o}dinger type equations in $\mathbb{R}^{N}$%
\begin{equation}
-\Delta _{p}u=\lambda \widehat{f}\left( x\right) u+\widetilde{Q}%
(x)\left\vert u\right\vert ^{p-2}u\text{ in }\mathbb{R}^{N},  \label{5}
\end{equation}%
where $\lambda $ is a real parameter and $p<q<Np/(N-p)$ and $1<p<N.$ The
functions $\widetilde{f}$ and $\widetilde{Q}$ denote sign-changing
potentials such that $\widetilde{f}\in L^{N/p}(\mathbb{R}^{N})\cap L^{\infty
}(\mathbb{R}^{N})$ and $\widetilde{Q}\in L^{\infty }(\mathbb{R}^{N}).$ Let $%
\lambda _{1}\left( \widetilde{f}\right) $ denote the lowest positive
eigenvalue of $-\Delta _{p}$ and let $\varphi _{1}>0$ be the associated
eigenfunction. When $p=2,$ \cite{CT} by using the Mountain-Pass Theorem and
variational methods which under a slightly more general assumption on the
nonlinearity appearing on the right-hand side of $\left( \ref{5}\right) $.
However, in order to apply their result to Equation $\left( \ref{5}\right) $
they needed a \textquotedblleft thickness\textquotedblright\ condition on
the set $\Omega _{o}=\left\{ x:\widetilde{Q}\left( x\right) =0\right\} .$
\cite{CC} by using Nehari manifold and fibrering maps which under a limits
condition $\lim_{\left\vert x\right\vert \rightarrow \infty }\widetilde{Q}%
\left( x\right) =\widetilde{Q}_{\infty }<0.$ Their main result is almost the
same as in results $(A)$ and $(B)$ above. However, the principal eigenvalue
and eigenfunction are replaced by the problem $-\Delta u(x)=\lambda
\widetilde{f}(x)u(x)$ for$\;x\in \mathbb{R}^{N}.$

The approach to Equation $\left( E_{\mu ,\lambda }\right) $ has been
inspired by the papers of \cite{BZ,CC} without any condition on $\Omega _{o}$
or $\lim_{\left\vert x\right\vert \rightarrow \infty }Q\left( x\right)
=Q_{\infty }<0.$ Moreover, since Equation $\left( E_{\mu ,\lambda }\right) $
is on $\mathbb{R}^{N}$, its variational setting is characterized by a lack
of compactness. To overcome this difficulty we apply a simplified version of
the steep well method of \cite{BW} and concentration compactness principle
of \cite{Li}. Furthermore, the first eigenvalue of problem $-\Delta u+\mu
V\left( x\right) u=\lambda f\left( x\right) u$ in $\mathbb{R}^{N}$ is less
than $\lambda _{1}\left( f_{\Omega }\right) ,$ which indicates that the
original method at \cite{BZ,CC} cannot be directly applied, thus we provide
an approximation estimate of eigenvalue to prove that our main results.

The plan of the paper is as follows. In Section 2, we discuss the Nehari
manifold and examine carefully the connection between the Nehari manifold
and the fibrering maps. In Section 3, we establish the non-emptiness of
submanifolds and the proofs of the main theorems are given in the remaining
sections. In section 4, we discuss the Nehari manifold when $4<p<6.$ In
particular, we prove that Theorems \ref{t1}, \ref{t2}$.$ In Section 5, we
discuss the case when $p=4$ and prove that Theorems \ref{t3}, \ref{t4}. In
section 6, we discuss the case when $p<4$ and prove that Theorems \ref{t5}, %
\ref{t5-2} and \ref{t6}.

\section{Preliminaries}

In this section, we give the variational setting for Equation $\left( E_{\mu
,\lambda }\right) .$ Let%
\begin{equation*}
X=\left\{ u\in H^{1}\left( \mathbb{R}^{N}\right) \ |\ \int_{\mathbb{R}%
^{N}}Vu^{2}dx<\infty \right\}
\end{equation*}%
be equipped with the inner product and norm%
\begin{equation*}
\left\langle u,v\right\rangle =\int_{\mathbb{R}^{N}}\nabla u\nabla v+Vuvdx,\
\left\Vert u\right\Vert =\left\langle u,u\right\rangle ^{1/2}.
\end{equation*}%
For $\mu >0,$ we also need the following inner product and norm%
\begin{equation*}
\left\langle u,v\right\rangle _{\mu }=\int_{\mathbb{R}^{N}}\nabla u\nabla
v+\mu Vuvdx,\ \left\Vert u\right\Vert _{\mu }=\left\langle u,u\right\rangle
_{\mu }^{1/2}.
\end{equation*}%
It is clear that $\left\Vert \cdot \right\Vert \leq \left\Vert \cdot
\right\Vert _{\mu }$ for $\mu \geq 1$ and set $X_{\mu }=\left( X,\left\Vert
\cdot \right\Vert _{\mu }\right) .$ It follows from conditions $\left(
V_{1}\right) $ and $\left( V_{2}\right) $ and similar to the argument in
\cite{SW1}, one has%
\begin{equation*}
\int_{\mathbb{R}^{N}}(\left\vert \nabla u\right\vert ^{2}+u^{2})dx\leq
\left( 1+S^{-2}\left\vert \left\{ V<c\right\} \right\vert ^{\frac{2}{N}%
}\right) \left\Vert u\right\Vert _{\mu }^{2}
\end{equation*}%
for all $\mu \geq \mu _{0}:=\frac{S^{2}}{c}\left\vert \left\{ V<c\right\}
\right\vert ^{-\frac{2}{N}}$, which implies that the imbedding $X_{\mu
}\hookrightarrow H^{1}(\mathbb{R}^{N})$ is continuous. Moreover, for any $%
r\in \lbrack 2,2^{\ast }],$ there holds
\begin{equation}
\int_{\mathbb{R}^{N}}\left\vert u\right\vert ^{r}dx\leq S^{-r}\left\vert
\left\{ V<c\right\} \right\vert ^{\frac{2^{\ast }-r}{2^{\ast }}}\left\Vert
u\right\Vert _{\mu }^{r}\text{ for }\mu \geq \mu _{0}.  \label{11}
\end{equation}

Because the energy functional $J_{\mu ,\lambda }$ is not bounded below on $%
X, $ it is useful to consider the functional on the Nehari manifold (see
\cite{N})
\begin{equation*}
\mathbf{N}_{\mu ,\lambda }=\left\{ u\in X\setminus \left\{ 0\right\}
:\left\langle J_{\mu ,\lambda }^{\prime }\left( u\right) ,u\right\rangle
=0\right\} .
\end{equation*}%
Thus, $u\in \mathbf{N}_{\mu ,\lambda }$ if and only if
\begin{equation*}
a\left\Vert u\right\Vert _{D^{1,2}}^{4}+\left\Vert u\right\Vert _{\mu
}^{2}=\int_{\mathbb{R}^{N}}Q\left\vert u\right\vert ^{p}dx+\lambda \int_{%
\mathbb{R}^{N}}fu^{2}dx.
\end{equation*}%
Note that $\mathbf{N}_{\mu ,\lambda }$ contains every nonzero solution of
Equation $\left( E_{\mu ,\lambda }\right) .$ It is useful to understand $%
\mathbf{N}_{\mu ,\lambda }$ in terms of the stationary points of mappings of
the form $h_{u}(t)=J_{\mu ,\lambda }(tu)(t>0).$ Such a map is known as the
fibrering map. It was introduced by Dr\'{a}bek and Pohozaev \cite{DP}, and
further discussed by Brown and Zhang \cite{BZ}. It is clear that, if $u$ is
a local minimizer of $J_{\mu ,\lambda }$, then $h_{u}$ has a local minimum
at $t=1$. Thus, $tu\in \mathbf{N}_{\mu ,\lambda }$ if and only if $%
h_{u}^{\prime }(t)=0$ for $u\in X\setminus \{0\}.$ Thus points in $\mathbf{N}%
_{\mu ,\lambda }$ correspond to stationary points of the maps $h_{u}$ and so
it is natural to divide $\mathbf{N}_{\mu ,\lambda }$ into three subsets $%
\mathbf{N}_{\mu ,\lambda }^{+}$, $\mathbf{N}_{\mu ,\lambda }^{-}$ and $%
\mathbf{N}_{\mu ,\lambda }^{0}$ corresponding to local minima, local maxima
and points of inflexion of fibrering maps. We have%
\begin{equation*}
h_{u}^{\prime }(t)=at^{3}\left\Vert u\right\Vert _{D^{1,2}}^{4}+t\left(
\left\Vert u\right\Vert _{\mu }^{2}-\lambda \int_{\mathbb{R}%
^{N}}fu^{2}dx\right) -t^{p-1}\int_{\mathbb{R}^{N}}Q\left\vert u\right\vert
^{p}dx
\end{equation*}%
and%
\begin{equation*}
h_{u}^{\prime \prime }(t)=3at^{2}\left\Vert u\right\Vert
_{D^{1,2}}^{4}+\left( \left\Vert u\right\Vert _{\mu }^{2}-\lambda \int_{%
\mathbb{R}^{N}}fu^{2}dx\right) -\left( p-1\right) t^{p-2}\int_{\mathbb{R}%
^{N}}Q\left\vert u\right\vert ^{p}dx.
\end{equation*}%
Hence if we define%
\begin{gather*}
\mathbf{N}_{\mu ,\lambda }^{+}=\left\{ u\in \mathbf{N}_{\mu ,\lambda
}:h_{u}^{\prime \prime }(1)>0\right\} ; \\
\mathbf{N}_{\mu ,\lambda }^{0}=\left\{ u\in \mathbf{N}_{\mu ,\lambda
}:h_{u}^{\prime \prime }(1)=0\right\} ; \\
\mathbf{N}_{\mu ,\lambda }^{-}=\left\{ u\in \mathbf{N}_{\mu ,\lambda
}:h_{u}^{\prime \prime }(1)<0\right\} ,
\end{gather*}%
which indicates that for $u\in \mathbf{N}_{\mu ,\lambda }$, we have $%
h_{u}^{\prime }(1)=0$ and $u\in \mathbf{N}_{\mu ,\lambda }^{+},\mathbf{N}%
_{\mu ,\lambda }^{0},\mathbf{N}_{\mu ,\lambda }^{-}$ if $h_{u}^{\prime
\prime }(1)>0,h_{u}^{\prime \prime }(1)=0,h_{u}^{\prime \prime }(1)<0$,
respectively. Note that for all $u\in \mathbf{N}_{\mu ,\lambda },$
\begin{eqnarray}
h_{u}^{\prime \prime }(1) &=&-\left( p-2\right) \left( \left\Vert
u\right\Vert _{\mu }^{2}-\lambda \int_{\mathbb{R}^{N}}fu^{2}dx\right)
-a\left( p-4\right) \left\Vert u\right\Vert _{D^{1,2}}^{4}  \notag \\
&=&2a\left\Vert u\right\Vert _{D^{1,2}}^{4}-\left( p-2\right) \int_{\mathbb{R%
}^{N}}Q\left\vert u\right\vert ^{p}dx  \notag \\
&=&-2\left( \left\Vert u\right\Vert _{\mu }^{2}-\lambda \int_{\mathbb{R}%
^{N}}fu^{2}dx\right) -\left( p-4\right) \int_{\mathbb{R}^{N}}Q\left\vert
u\right\vert ^{p}dx.  \label{2.2}
\end{eqnarray}%
Now, we define%
\begin{equation*}
\Lambda _{\mu }^{+}=\left\{ u\in X:\left\Vert u\right\Vert _{\mu
}=1,\left\Vert u\right\Vert _{\mu }^{2}-\lambda \int_{\mathbb{R}%
^{N}}fu^{2}dx>0\right\}
\end{equation*}%
and $\Lambda _{\mu }^{-}$ and $\Lambda _{\mu }^{0}$ similarly by replacing $%
> $ by $<$ and $=$ respectively. We also define%
\begin{equation*}
\Theta _{\mu }^{+}\left( p\right) =\left\{ u\in X:\left\Vert u\right\Vert
_{\mu }=1,\Phi _{p}\left( u\right) >0\right\}
\end{equation*}%
and $\Theta _{\mu }^{-}\left( p\right) $ and $\Theta _{\mu }^{0}\left(
p\right) $ analogously, where
\begin{equation*}
\Phi _{p}\left( u\right) =\left\{
\begin{array}{ll}
\int_{\mathbb{R}^{N}}Q|u|^{p}dx & \text{ for }2<p<2^{\ast }\text{ and }p\neq
4, \\
\int_{\mathbb{R}^{N}}Q|u|^{p}dx-a\left\Vert u\right\Vert _{D^{1,2}}^{4} &
\text{for }p=4.%
\end{array}%
\right.
\end{equation*}%
Thus, if $u\in \Lambda _{\mu }^{+}\cap \Theta _{\mu }^{+}\left( p\right) $
and $p\geq 4,h_{u}(t)>0$ for $t$ small and positive but $h_{u}(t)\rightarrow
-\infty $ as $t\rightarrow \infty ;$ also $h_{u}(t)$ has a unique (maximum)
stationary point $t_{\max }(u)$ and $t_{\max }(u)u\in \mathbf{N}_{\mu
,\lambda }^{-}$. Similarly, if $u\in \Lambda _{\mu }^{-}\cap \Theta _{\mu
}^{-}\left( p\right) $ and $2<p<2^{\ast },h_{u}(t)<0$ for $t$ small and
positive, $h_{u}(t)\rightarrow \infty $ as $t\rightarrow \infty $ and $%
h_{u}(t)$ has a unique minimum $t_{\min }(u)$ so that $t_{\min }(u)u\in
\mathbf{N}_{\mu ,\lambda }^{+}$. Finally, if $u\in \Lambda _{\mu }^{+}\cap
\Theta _{\mu }^{-}\left( p\right) $, $h_{u}$ is strictly increasing for all $%
t>0.$ Thus, we have the following results.

\begin{lemma}
\label{g2-2}Suppose that $N=3$ and $4<p<6.$ If $\overline{\Lambda _{\mu }^{-}%
}\cap \overline{\Theta _{\mu }^{+}}\left( p\right) =\emptyset $ and $u\in
X_{\mu }\backslash \{0\}$, then\newline
$(i)$ a multiple of $u$ lies is $\mathbf{N}_{\mu ,\lambda }^{-}$ if and only
if $\frac{u}{\Vert u\Vert _{\mu }}$ lies in $\Lambda _{\mu }^{+}\cap \Theta
_{\mu }^{+}\left( p\right) ;$\newline
$(ii)$ a multiple of $u$ lies is $\mathbf{N}_{\mu ,\lambda }^{+}$ if and
only if $\frac{u}{\Vert u\Vert _{\mu }}$ lies in $\Lambda _{\mu }^{-}\cap
\Theta _{\mu }^{-}\left( p\right) ;$\newline
$(iii)$ when $u\in \Lambda _{\mu }^{+}\cap \Theta _{\mu }^{-}\left( p\right)
$, no multiple of $u$ lies in $\mathbf{N}_{\mu ,\lambda }.$
\end{lemma}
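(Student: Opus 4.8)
The plan is to collapse all three statements into the behaviour of a single real function of $t>0$ and then run a sign analysis governed by the hypothesis $\overline{\Lambda _{\mu }^{-}}\cap \overline{\Theta _{\mu }^{+}}\left( p\right) =\emptyset$. First I would use homogeneity to reduce to the unit sphere. Since $\Lambda _{\mu }^{\pm }$ and $\Theta _{\mu }^{\pm }(p)$ are defined on $\{\Vert u\Vert _{\mu }=1\}$, and since for $w=u/\Vert u\Vert _{\mu }$ one has $tu=(t\Vert u\Vert _{\mu })w$, the assertion ``a multiple of $u$ lies in $\mathbf{N}_{\mu ,\lambda }^{\pm }$'' is equivalent to ``$sw\in \mathbf{N}_{\mu ,\lambda }^{\pm }$ for some $s>0$''; hence it suffices to argue for $w$ with $\Vert w\Vert _{\mu }=1$. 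Writing $A=\Vert w\Vert _{D^{1,2}}^{4}>0$, $B=\Vert w\Vert _{\mu }^{2}-\lambda \int_{\mathbb{R}^{3}}fw^{2}dx$ and $C=\int_{\mathbb{R}^{3}}Q|w|^{p}dx$, I would record that the sign of $B$ decides membership of $w$ in $\Lambda _{\mu }^{+},\Lambda _{\mu }^{0},\Lambda _{\mu }^{-}$, and, because $p\neq 4$, the sign of $C$ decides membership in $\Theta _{\mu }^{+},\Theta _{\mu }^{0},\Theta _{\mu }^{-}$.

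The one-variable reduction is to factor the fibering derivative as $h_{w}^{\prime }(t)=t\,\psi (t)$ with $\psi (t)=aAt^{2}+B-Ct^{p-2}$, so that for $t>0$ the points $tw\in \mathbf{N}_{\mu ,\lambda }$ are exactly the positive zeros of $\psi $. Differentiating gives $h_{w}^{\prime \prime }(t)=\psi (t)+t\psi ^{\prime }(t)$, so at any zero $t_{0}$ of $\psi $ one has $\mathrm{sign}\,h_{w}^{\prime \prime }(t_{0})=\mathrm{sign}\,\psi ^{\prime }(t_{0})$. Since $t_{0}w\in \mathbf{N}_{\mu ,\lambda }^{\pm }$ is decided by the sign of the second derivative of the fibering map of $t_{0}w$ at $1$, which equals $t_{0}^{2}h_{w}^{\prime \prime }(t_{0})$, the sign of $\psi ^{\prime }(t_{0})$ classifies $t_{0}w$ into $\mathbf{N}_{\mu ,\lambda }^{+}$ or $\mathbf{N}_{\mu ,\lambda }^{-}$ via the definitions in $(\ref{2.2})$.

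The core is then a case analysis on $B,C$, in which $4<p<6$ enters through $p-2>2$ and $p-3>1$. (1) If $B>0,C>0$ (that is, $w\in \Lambda _{\mu }^{+}\cap \Theta _{\mu }^{+}(p)$), then $\psi (0^{+})=B>0$ and $\psi (t)\to -\infty $ as $t\to \infty $; solving $\psi ^{\prime }(t)=0$ yields the single positive root $t^{\ast }$ with $(t^{\ast })^{p-4}=2aA/[(p-2)C]$ (unique since $p-4>0$), whence $\psi $ increases then decreases and has a unique zero $t_{0}>t^{\ast }$ with $\psi ^{\prime }(t_{0})<0$, giving $t_{0}w\in \mathbf{N}_{\mu ,\lambda }^{-}$ and no multiple in $\mathbf{N}_{\mu ,\lambda }^{+}$. (2) If $B<0,C<0$ (that is, $w\in \Lambda _{\mu }^{-}\cap \Theta _{\mu }^{-}(p)$), then $\psi (t)=aAt^{2}+|C|t^{p-2}+B$ is strictly increasing from $B<0$ to $+\infty $, so it has a unique zero $t_{0}$ with $\psi ^{\prime }(t_{0})>0$, giving $t_{0}w\in \mathbf{N}_{\mu ,\lambda }^{+}$ and no multiple in $\mathbf{N}_{\mu ,\lambda }^{-}$. (3) If $B>0$ and $C\leq 0$ (that is, $w\in \Lambda _{\mu }^{+}\cap (\Theta _{\mu }^{-}(p)\cup \Theta _{\mu }^{0}(p))$), then $\psi (t)>0$ for all $t>0$, so $h_{w}$ is strictly increasing and no multiple of $w$ lies in $\mathbf{N}_{\mu ,\lambda }$.

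Finally I would invoke the hypothesis to close the logic. The only sign patterns left out of (1)--(3) are those with $B\leq 0$ and $C\geq 0$, which is exactly $\overline{\Lambda _{\mu }^{-}}\cap \overline{\Theta _{\mu }^{+}}(p)$; this is empty by assumption, so every unit vector falls into one of the three cases and the outcomes are mutually exclusive. Reading off: a multiple of $u$ lies in $\mathbf{N}_{\mu ,\lambda }^{-}$ precisely in case (1), i.e.\ iff $w\in \Lambda _{\mu }^{+}\cap \Theta _{\mu }^{+}(p)$, which is (i); a multiple lies in $\mathbf{N}_{\mu ,\lambda }^{+}$ precisely in case (2), which is (ii); and case (3) is (iii). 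I expect the main obstacle to be case (1): one must prove that the critical point of $\psi $ is \emph{unique} (otherwise a single direction could furnish both an $\mathbf{N}_{\mu ,\lambda }^{+}$ and an $\mathbf{N}_{\mu ,\lambda }^{-}$ multiple, destroying the equivalences), and it is precisely the dangerous region $\Lambda _{\mu }^{-}\cap \Theta _{\mu }^{+}(p)$ --- where $\psi (0^{+})<0$, $\psi (\infty )=-\infty $ and $\psi $ may attain a positive maximum, hence admit two zeros --- that the hypothesis is designed to eliminate.
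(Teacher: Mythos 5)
Your proof is correct and follows essentially the same route as the paper, which justifies this lemma by exactly the fibering-map sign analysis you carry out (writing $h_{w}^{\prime }(t)=t\psi (t)$ and observing that $\psi $ is unimodal with a unique zero of negative slope when $B>0,C>0$, strictly increasing with a unique zero of positive slope when $B<0,C<0$, and everywhere positive when $B>0,C\leq 0$) in the paragraph immediately preceding the statement, your write-up being in fact more careful about the ``only if'' directions and the role of the hypothesis $\overline{\Lambda _{\mu }^{-}}\cap \overline{\Theta _{\mu }^{+}}\left( p\right) =\emptyset $. One small bookkeeping slip: your cases (1)--(3) together with the excluded region $\left\{ B\leq 0,\ C\geq 0\right\} $ do not exhaust all sign patterns, since $B=0,\ C<0$ (i.e.\ $\Lambda _{\mu }^{0}\cap \Theta _{\mu }^{-}\left( p\right) $) is left over; but there $\psi (t)=aAt^{2}+|C|t^{p-2}>0$ for all $t>0$, so no multiple of $w$ lies in $\mathbf{N}_{\mu ,\lambda }$ and the stated equivalences are unaffected.
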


\begin{lemma}
\label{g2-3}Suppose that $N=3$ and $p=4.$ If $u\in X_{\mu }\backslash \{0\}$%
, then\newline
$(i)$ a multiple of $u$ lies is $\mathbf{N}_{\mu ,\lambda }^{-}$ if and only
if $\frac{u}{\Vert u\Vert _{\mu }}$ lies in $\Lambda _{\mu }^{+}\cap \Theta
_{\mu }^{+}\left( p\right) ;$\newline
$(ii)$ a multiple of $u$ lies is $\mathbf{N}_{\mu ,\lambda }^{+}$ if and
only if $\frac{u}{\Vert u\Vert _{\mu }}$ lies in $\Lambda _{\mu }^{-}\cap
\Theta _{\mu }^{-}\left( p\right) ;$\newline
$(iii)$ when $u\in \Lambda _{\mu }^{+}\cap \Theta _{\mu }^{-}\left( p\right)
$ or $\Lambda _{\mu }^{-}\cap \Theta _{\mu }^{+}\left( p\right) $, no
multiple of $u$ lies in $\mathbf{N}_{\mu ,\lambda }.$
\end{lemma}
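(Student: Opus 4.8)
The plan is to read off everything from the fibering map $h_u(t)=J_{\mu,\lambda}(tu)$, which in the case $p=4$ collapses to a pure quartic in $t$ with only even powers. First I would compute, for $u\in X_\mu\setminus\{0\}$ and $t>0$,
\[
h_u(t)=\frac{t^2}{2}\left(\|u\|_\mu^2-\lambda\int_{\mathbb{R}^N}fu^2dx\right)-\frac{t^4}{4}\left(\int_{\mathbb{R}^N}Q|u|^4dx-a\|u\|_{D^{1,2}}^4\right),
\]
the crucial point being that when $p=4$ the nonlocal term $\tfrac{a}{4}t^4\|u\|_{D^{1,2}}^4$ and the reaction term $\tfrac{t^4}{4}\int Q|u|^4dx$ carry the same power $t^4$ and hence merge into the single coefficient $\Phi_4(u)$. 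Setting $A:=\|u\|_\mu^2-\lambda\int fu^2dx$ and $B:=\Phi_4(u)=\int Q|u|^4dx-a\|u\|_{D^{1,2}}^4$, this reads simply $h_u(t)=\tfrac{A}{2}t^2-\tfrac{B}{4}t^4$.

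Next I would reduce to the normalized function $w=u/\|u\|_\mu$. Since $A$ is $2$-homogeneous and $B$ is $4$-homogeneous in $u$, passing from $u$ to $w$ only divides $A$ by $\|u\|_\mu^2$ and $B$ by $\|u\|_\mu^4$, so the signs of $A$ and $B$ are preserved; these signs are exactly what decide membership of $w$ in $\Lambda_\mu^{\pm}$ (the sign of $A$) and in $\Theta_\mu^{\pm}(4)$ (the sign of $B$). Because $tu$ and $(t/\|u\|_\mu)w$ trace the same ray, it suffices to study $h_w$ with $\|w\|_\mu=1$.

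Then I would differentiate: $h_w'(t)=t(A-Bt^2)$ and $h_w''(t)=A-3Bt^2$. A positive stationary point exists if and only if $A-Bt^2=0$ is solvable for $t>0$, i.e. if and only if $A/B>0$, in which case $t_\ast=\sqrt{A/B}$ is the unique such point and $h_w''(t_\ast)=A-3A=-2A$. All three assertions drop out of this one identity. If $w\in\Lambda_\mu^+\cap\Theta_\mu^+(4)$ (so $A,B>0$), then $t_\ast w\in\mathbf{N}_{\mu,\lambda}^-$ since $h_w''(t_\ast)=-2A<0$; if $w\in\Lambda_\mu^-\cap\Theta_\mu^-(4)$ (so $A,B<0$), then $t_\ast w\in\mathbf{N}_{\mu,\lambda}^+$ since $h_w''(t_\ast)=-2A>0$; and if $A,B$ have opposite signs (the cases $w\in\Lambda_\mu^+\cap\Theta_\mu^-(4)$ or $w\in\Lambda_\mu^-\cap\Theta_\mu^+(4)$), then $A-Bt^2$ never vanishes for $t>0$, so $h_w$ is strictly monotone and no multiple of $u$ lies in $\mathbf{N}_{\mu,\lambda}$, giving $(iii)$. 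For the converse directions of $(i)$ and $(ii)$, I would observe that $t_\ast w\in\mathbf{N}_{\mu,\lambda}$ forces $A=Bt_\ast^2$, so $A$ and $B$ share a sign, and the type is recovered from $\mathrm{sign}\,h_w''(t_\ast)=-\mathrm{sign}\,A$; the degenerate possibility $B=0$ is ruled out because it would force $A=0$ and hence $h_w''(t_\ast)=0$, contradicting strict membership in $\mathbf{N}_{\mu,\lambda}^{\pm}$.

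I do not expect a genuine obstacle here. Unlike Lemma \ref{g2-2} for $4<p<6$, the case $p=4$ yields an even quartic fibering map with at most one positive critical point, which is precisely why no auxiliary hypothesis such as $\overline{\Lambda_\mu^-}\cap\overline{\Theta_\mu^+}(p)=\emptyset$ is required. The only point demanding care is the bookkeeping: correctly tracking the homogeneity under $u\mapsto u/\|u\|_\mu$ and treating the degenerate sign cases so that the equivalences in $(i)$ and $(ii)$ are exact in both directions rather than only sufficient.
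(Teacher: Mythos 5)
Your proof is correct and follows essentially the same route as the paper, which establishes Lemmas \ref{g2-2}--\ref{g2-6} by the fibering-map discussion immediately preceding them (the paper gives no separate proof of Lemma \ref{g2-3}, precisely because for $p=4$ the map $h_u$ reduces to the even quartic $\tfrac{A}{2}t^2-\tfrac{B}{4}t^4$ you write down). Your explicit computation of $t_\ast=\sqrt{A/B}$ and the identity $h_w''(t_\ast)=-2A$, together with the treatment of the degenerate case $B=0$, is a complete and accurate filling-in of the argument the paper leaves implicit.
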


\begin{lemma}
\label{g2-6}Suppose that $N\geq 3$ and $2<p<\min \left\{ 4,2^{\ast }\right\}
.$ If $u\in X_{\mu }\backslash \{0\}$, then\newline
$(i)$ if $\frac{u}{\Vert u\Vert _{\mu }}$ lies in $\overline{\Lambda _{\mu
}^{-}}\cap \Theta _{\mu }^{+}\left( p\right) $ or $\Lambda _{\mu }^{-}\cap
\overline{\Theta _{\mu }^{-}}\left( p\right) ,$ then a multiple of $u$ lies
is $\mathbf{N}_{\mu ,\lambda }^{+};$\newline
$(ii)$ when $u\in \Lambda _{\mu }^{+}\cap \Theta _{\mu }^{-}\left( p\right) $%
, no multiple of $u$ lies in $\mathbf{N}_{\mu ,\lambda }.$
\end{lemma}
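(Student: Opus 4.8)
The plan is to reduce everything to an elementary analysis of the fibering map $h_{u}(t)=J_{\mu,\lambda}(tu)$, $t>0$. Since the defining (in)equalities of $\Lambda_{\mu}^{\pm}$ and $\Theta_{\mu}^{\pm}\left(p\right)$ involve only the signs of $\left\Vert u\right\Vert_{\mu}^{2}-\lambda\int_{\mathbb{R}^{N}}fu^{2}dx$ and of $\Phi_{p}(u)=\int_{\mathbb{R}^{N}}Q|u|^{p}dx$ (recall $p\neq4$), both of which are invariant under $u\mapsto su$ for $s>0$, I would first normalize so that $\left\Vert u\right\Vert_{\mu}=1$; any multiple of the original vector is then a multiple of this normalized one. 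Setting $A=a\left\Vert u\right\Vert_{D^{1,2}}^{4}$, $B=\left\Vert u\right\Vert_{\mu}^{2}-\lambda\int_{\mathbb{R}^{N}}fu^{2}dx$ and $C=\Phi_{p}(u)$, the derivative factors as
\[
h_{u}^{\prime}(t)=t\left(At^{2}+B-Ct^{p-2}\right)=:t\,g(t),
\]
so that for $t>0$ one has $tu\in\mathbf{N}_{\mu,\lambda}$ if and only if $g(t)=0$. Differentiating $h_{u}^{\prime}(t)=tg(t)$ gives $h_{u}^{\prime\prime}(t)=g(t)+tg^{\prime}(t)$, hence at any zero $t_{1}$ of $g$ the sign of $h_{u}^{\prime\prime}(t_{1})$ equals that of $g^{\prime}(t_{1})$; this is precisely what decides membership in $\mathbf{N}_{\mu,\lambda}^{+}$, $\mathbf{N}_{\mu,\lambda}^{0}$ or $\mathbf{N}_{\mu,\lambda}^{-}$. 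Note also $A>0$, since $u\neq0$ forces $\left\Vert u\right\Vert_{D^{1,2}}>0$.

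For part $(i)$ I would dispose of the two regions in turn. On $\overline{\Lambda_{\mu}^{-}}\cap\Theta_{\mu}^{+}\left(p\right)$ one has $B\leq0$ and $C>0$. Because $0<p-2<2$, the term $-Ct^{p-2}$ dominates $At^{2}$ as $t\to0^{+}$, so $g(t)<0$ for small $t$, while $At^{2}$ dominates as $t\to\infty$, giving $g(t)\to+\infty$; a short computation of $g^{\prime}$ shows that $g$ is strictly decreasing then strictly increasing, with a unique interior minimizer $t_{\ast}$ at which $g(t_{\ast})<0$. Consequently $g$ has exactly one positive zero $t_{1}>t_{\ast}$, lying on the increasing branch, so $g^{\prime}(t_{1})>0$ and $t_{1}u\in\mathbf{N}_{\mu,\lambda}^{+}$. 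On $\Lambda_{\mu}^{-}\cap\overline{\Theta_{\mu}^{-}}\left(p\right)$ one has $B<0$ and $C\leq0$, whence $g^{\prime}(t)=2At-C(p-2)t^{p-3}>0$ for every $t>0$; thus $g$ increases strictly from $B<0$ to $+\infty$ and again has a single zero $t_{1}$ with $g^{\prime}(t_{1})>0$, giving $t_{1}u\in\mathbf{N}_{\mu,\lambda}^{+}$.

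Part $(ii)$ is then immediate: on $\Lambda_{\mu}^{+}\cap\Theta_{\mu}^{-}\left(p\right)$ we have $B>0$ and $C<0$, so every term in $g(t)=At^{2}+B-Ct^{p-2}$ is strictly positive. Hence $h_{u}^{\prime}(t)=t\,g(t)>0$ for all $t>0$, the fibering map $h_{u}$ is strictly increasing on $(0,\infty)$, and no positive multiple of $u$ can lie on $\mathbf{N}_{\mu,\lambda}$.

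The step requiring the most care is the treatment of the boundary cases concealed in the closures, namely $B=0$ in the first region and $C=0$ in the second: there I must verify that the unique zero $t_{1}$ still lands \emph{strictly} on the increasing branch, so that $h_{u}^{\prime\prime}(t_{1})>0$ strictly and $t_{1}u$ belongs to $\mathbf{N}_{\mu,\lambda}^{+}$ rather than to the degenerate set $\mathbf{N}_{\mu,\lambda}^{0}$. In the first region this reduces to $g(t_{\ast})<0$, which follows from $At_{\ast}^{4-p}=C(p-2)/2<C$ (as $(p-2)/2<1$ when $p<4$) together with $B\leq0$; in the second region $g^{\prime}>0$ everywhere renders the point moot. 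It is worth emphasizing that the single-well shape of $g$, and hence the very existence of the Nehari multiple in the first region, relies on $A>0$ together with $p-2<2$: without the nonlocal Kirchhoff term ($A=0$) the function $g=B-Ct^{p-2}$ would stay negative and produce no zero, so the conclusion genuinely exploits both $p<4$ and the nonlocal structure.
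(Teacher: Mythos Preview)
Your argument is correct and follows exactly the fibering-map approach that the paper sets up in the paragraph preceding the lemma; the paper itself states this lemma without proof, relying on the reader to carry out precisely the analysis of $h_u'(t)=t\,g(t)$ that you have written out. Your careful treatment of the boundary cases $B=0$ and $C=0$, and your observation that the conclusion in the first region genuinely requires the Kirchhoff term $A>0$ (so that $g(t)\to+\infty$), go beyond what the paper makes explicit and are welcome additions.
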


The following Lemma shows that minimizers on $\mathbf{N}_{\mu ,\lambda }$
are critical points for $J_{\mu ,\lambda }$ in $X.$

\begin{lemma}
\label{g2-1}Suppose that $u_{0}$ is a local minimizer for $J_{\mu ,\lambda }$
on $\mathbf{N}_{\mu ,\lambda }$ and that $u_{0}\notin \mathbf{N}_{\mu
,\lambda }^{0}$. Then $J_{\mu ,\lambda }^{\prime }(u_{0})=0.$
\end{lemma}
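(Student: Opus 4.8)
The plan is to realize $\mathbf{N}_{\mu ,\lambda }$ as a nondegenerate level set and apply the Lagrange multiplier rule. Introduce the constraint functional
\begin{equation*}
\Psi _{\mu ,\lambda }\left( u\right) =\left\langle J_{\mu ,\lambda }^{\prime }\left( u\right) ,u\right\rangle =a\left\Vert u\right\Vert _{D^{1,2}}^{4}+\left\Vert u\right\Vert _{\mu }^{2}-\int_{\mathbb{R}^{N}}Q\left\vert u\right\vert ^{p}dx-\lambda \int_{\mathbb{R}^{N}}fu^{2}dx,
\end{equation*}
so that $\mathbf{N}_{\mu ,\lambda }=\{u\in X\setminus \{0\}:\Psi _{\mu ,\lambda }(u)=0\}$, and note that $\Psi _{\mu ,\lambda }(u)=h_{u}^{\prime }(1)$ since $h_{u}^{\prime }(t)=\langle J_{\mu ,\lambda }^{\prime }(tu),u\rangle $. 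Each term of $\Psi _{\mu ,\lambda }$ is of class $C^{1}$ on $X_{\mu }$, using the embedding $(\ref{11})$ to control the $L^{p}$ and $L^{2}$ integrals, so $\Psi _{\mu ,\lambda }$ is a $C^{1}$ functional.

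First I would record the identity $\langle \Psi _{\mu ,\lambda }^{\prime }(u_{0}),u_{0}\rangle =h_{u_{0}}^{\prime \prime }(1)$ valid for $u_{0}\in \mathbf{N}_{\mu ,\lambda }$. This follows by differentiating the scalar map $t\mapsto \Psi _{\mu ,\lambda }(tu_{0})=\langle J_{\mu ,\lambda }^{\prime }(tu_{0}),tu_{0}\rangle =t\,h_{u_{0}}^{\prime }(t)$ at $t=1$: the chain rule gives $\langle \Psi _{\mu ,\lambda }^{\prime }(u_{0}),u_{0}\rangle =h_{u_{0}}^{\prime }(1)+h_{u_{0}}^{\prime \prime }(1)$, and since $h_{u_{0}}^{\prime }(1)=0$ for a point of $\mathbf{N}_{\mu ,\lambda }$, the claimed identity follows. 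By hypothesis $u_{0}\notin \mathbf{N}_{\mu ,\lambda }^{0}$, so $h_{u_{0}}^{\prime \prime }(1)\neq 0$; in particular $\langle \Psi _{\mu ,\lambda }^{\prime }(u_{0}),u_{0}\rangle \neq 0$, whence $\Psi _{\mu ,\lambda }^{\prime }(u_{0})\neq 0$. This is exactly the nondegeneracy needed to ensure that $\mathbf{N}_{\mu ,\lambda }$ is a $C^{1}$ manifold in a neighborhood of $u_{0}$ and that the Lagrange multiplier theorem is applicable.

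Next, since $u_{0}$ is a local minimizer of $J_{\mu ,\lambda }$ restricted to $\mathbf{N}_{\mu ,\lambda }$, the Lagrange multiplier theorem produces a scalar $\theta \in \mathbb{R}$ with $J_{\mu ,\lambda }^{\prime }(u_{0})=\theta \,\Psi _{\mu ,\lambda }^{\prime }(u_{0})$ in the dual of $X_{\mu }$. Pairing this identity with $u_{0}$ gives
\begin{equation*}
0=\left\langle J_{\mu ,\lambda }^{\prime }(u_{0}),u_{0}\right\rangle =\theta \left\langle \Psi _{\mu ,\lambda }^{\prime }(u_{0}),u_{0}\right\rangle =\theta \,h_{u_{0}}^{\prime \prime }(1),
\end{equation*}
where the first equality is just $u_{0}\in \mathbf{N}_{\mu ,\lambda }$. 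Because $h_{u_{0}}^{\prime \prime }(1)\neq 0$, we are forced to conclude $\theta =0$, and therefore $J_{\mu ,\lambda }^{\prime }(u_{0})=0$, as required.

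The only genuinely delicate point is the nondegeneracy step: one must confirm both that $\Psi _{\mu ,\lambda }$ is $C^{1}$ and that $\Psi _{\mu ,\lambda }^{\prime }(u_{0})\neq 0$, since otherwise the multiplier rule could fail or $\theta $ would not be forced to vanish. The hypothesis $u_{0}\notin \mathbf{N}_{\mu ,\lambda }^{0}$ is precisely what delivers this through the identity $\langle \Psi _{\mu ,\lambda }^{\prime }(u_{0}),u_{0}\rangle =h_{u_{0}}^{\prime \prime }(1)$; everything else is a routine application of constrained critical point theory.
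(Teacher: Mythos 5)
Your proof is correct and follows exactly the route the paper intends: the paper omits the proof and refers to Brown and Zhang \cite[Theorem 2.3]{BZ}, whose argument is precisely this Lagrange multiplier computation, with the identity $\langle \Psi _{\mu ,\lambda }^{\prime }(u_{0}),u_{0}\rangle =h_{u_{0}}^{\prime \prime }(1)$ (equivalently, formula $(\ref{2.2})$) supplying the nondegeneracy that forces the multiplier to vanish.
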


\begin{proof}
The proof of Lemma \ref{g2-1} is essentially same as that in Brown and Zhang
\cite[Theorem 2.3]{BZ} (or see Binding et al. \cite{BDH1}), so we omit it
here.
\end{proof}

Finally, we investigate the compactness condition for the functional $J_{\mu
,\lambda }.$ Here we call that a $C^{1}$-functional $J_{\mu ,\lambda }$
satisfies Palais-Smale condition at level $\beta $ ((PS)$_{\beta }$%
-condition for short) in $\mathbf{N}_{\mu ,\lambda },$ if any sequence $%
\{u_{n}\}\subset \mathbf{N}_{\mu ,\lambda }$ is uniformly bounded which
satisfy $J_{\mu ,\lambda }\left( u_{n}\right) =\beta +o\left( 1\right) $ and
$J_{\mu ,\lambda }^{\prime }\left( u_{n}\right) =o\left( 1\right) $ has a
convergent subsequence.

\begin{proposition}
\label{p1}Suppose that conditions $(V_{1})-(V_{2})$ and $(D_{1})-(D_{2})$
hold. Then there exists $\widehat{D}_{0}\in \mathbb{R}$ independent of $\mu $
such that $J_{\mu ,\lambda }$ satisfies (PS)$_{\beta }$--condition in $%
\mathbf{N}_{\mu ,\lambda }$ with $\beta <\widehat{D}_{0}$ for $\mu >0$
sufficiently large.
\end{proposition}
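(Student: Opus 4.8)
The plan is to start from an arbitrary sequence $\{u_{n}\}\subset \mathbf{N}_{\mu ,\lambda }$ that is bounded in $X_{\mu }$ and satisfies $J_{\mu ,\lambda }(u_{n})=\beta +o(1)$, $J_{\mu ,\lambda }^{\prime }(u_{n})=o(1)$, and to extract a strongly convergent subsequence; the only enemy is the lack of compactness of $X_{\mu }\hookrightarrow L^{r}(\mathbb{R}^{N})$ coming from the unbounded region where $V>0$. Passing to a subsequence I may assume $u_{n}\rightharpoonup u_{\mu }$ in $X_{\mu }$, $u_{n}\rightarrow u_{\mu }$ in $L_{\mathrm{loc}}^{r}$ for $2\leq r<2^{\ast }$, $u_{n}\rightarrow u_{\mu }$ a.e., and $\Vert u_{n}\Vert _{D^{1,2}}^{2}\rightarrow A^{2}$ for some $A\geq 0$. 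Testing $J_{\mu ,\lambda }^{\prime }(u_{n})=o(1)$ against a fixed $\varphi \in C_{0}^{\infty }(\mathbb{R}^{N})$ and letting $n\rightarrow \infty $, the Kirchhoff coefficient converges, $M(\Vert u_{n}\Vert _{D^{1,2}}^{2})\rightarrow aA^{2}+1>0$, and one finds that $u_{\mu }$ solves the limiting equation with $M$ frozen at $aA^{2}+1$; testing this limit equation against $u_{\mu }$ gives
\begin{equation*}
aA^{2}\Vert u_{\mu }\Vert _{D^{1,2}}^{2}+\Vert u_{\mu }\Vert _{\mu }^{2}=\int_{\mathbb{R}^{N}}Q|u_{\mu }|^{p}dx+\lambda \int_{\mathbb{R}^{N}}fu_{\mu }^{2}dx.
\end{equation*}
I postpone the identification $A^{2}=\Vert u_{\mu }\Vert _{D^{1,2}}^{2}$ until strong convergence is in hand.

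Set $v_{n}=u_{n}-u_{\mu }\rightharpoonup 0$. By weak convergence in the Hilbert space $X_{\mu }$ and the Brezis--Lieb lemma (using $Q\in L^{\infty }$ and $2<p<2^{\ast }$), I split $\Vert u_{n}\Vert _{\mu }^{2}$, $\int Q|u_{n}|^{p}dx$ and $\int fu_{n}^{2}dx$ into the contributions of $u_{\mu }$ and $v_{n}$ up to $o(1)$, while $\Vert u_{n}\Vert _{D^{1,2}}^{2}=\Vert u_{\mu }\Vert _{D^{1,2}}^{2}+\Vert v_{n}\Vert _{D^{1,2}}^{2}+o(1)$. Since $f\in L^{N/2}(\mathbb{R}^{N})$, the compactness of multiplication by $f$ against $v_{n}\rightharpoonup 0$ yields $\int fv_{n}^{2}dx\rightarrow 0$. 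Subtracting the displayed limit identity from the exact Nehari relation $\langle J_{\mu ,\lambda }^{\prime }(u_{n}),u_{n}\rangle =0$ and letting $n\rightarrow \infty $ then gives
\begin{equation*}
aA^{2}\lim_{n}\Vert v_{n}\Vert _{D^{1,2}}^{2}+\lim_{n}\Vert v_{n}\Vert _{\mu }^{2}=\lim_{n}\int_{\mathbb{R}^{N}}Q|v_{n}|^{p}dx,
\end{equation*}
so, as the Kirchhoff term on the left is nonnegative, $\lim_{n}\Vert v_{n}\Vert _{\mu }^{2}\leq \Vert Q\Vert _{\infty }\lim_{n}\int |v_{n}|^{p}dx$.

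The heart of the matter is a steep-well estimate for the escaping part. Splitting $\int |v_{n}|^{p}dx$ over $\{V<c\}$ and $\{V\geq c\}$: on the finite-measure set $\{V<c\}$ the a.e. convergence $v_{n}\rightarrow 0$ with boundedness in $L^{2^{\ast }}$ forces $\int_{\{V<c\}}|v_{n}|^{p}dx\rightarrow 0$ by the Vitali theorem, whereas on $\{V\geq c\}$ the bound $\int_{\{V\geq c\}}v_{n}^{2}dx\leq (\mu c)^{-1}\Vert v_{n}\Vert _{\mu }^{2}$ combined with H\"{o}lder interpolation against (\ref{11}) gives $\int_{\{V\geq c\}}|v_{n}|^{p}dx\leq K\mu ^{-\gamma }\Vert v_{n}\Vert _{\mu }^{p}$, with $\gamma =(2^{\ast }-p)/(2^{\ast }-2)>0$ and $K$ depending only on $S$, $c$ and $p$. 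Writing $L=\lim_{n}\Vert v_{n}\Vert _{\mu }^{2}$, the previous inequality self-improves to $L\leq \Vert Q\Vert _{\infty }K\mu ^{-\gamma }L^{p/2}$, which forces the dichotomy: either $L=0$, or $L\geq (\mu ^{\gamma }/(\Vert Q\Vert _{\infty }K))^{2/(p-2)}$, a lower bound that tends to $+\infty $ as $\mu \rightarrow \infty $.

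It remains to rule out the lower alternative by means of the level restriction. Using the Nehari energy representation $J_{\mu ,\lambda }(u)=a\frac{p-4}{4p}\Vert u\Vert _{D^{1,2}}^{4}+\frac{p-2}{2p}(\Vert u\Vert _{\mu }^{2}-\lambda \int fu^{2}dx)$ valid on $\mathbf{N}_{\mu ,\lambda }$, I fix $\widehat{D}_{0}$ (assembled only from $S$, $\Vert Q\Vert _{\infty }$, $|\{V<c\}|$, $p$ and $\lambda _{1}(f_{\Omega })$, hence independent of $\mu $) so that for $\beta <\widehat{D}_{0}$ the bound $\sup_{n}\Vert u_{n}\Vert _{\mu }\leq M(\beta )$ can be taken independent of $\mu $; then $\sup_{n}\Vert v_{n}\Vert _{\mu }\leq 2M(\beta )$ and, for $\mu $ large, $(\mu ^{\gamma }/(\Vert Q\Vert _{\infty }K))^{2/(p-2)}>4M(\beta )^{2}$, so only $L=0$ survives. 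Hence $v_{n}\rightarrow 0$ strongly in $X_{\mu }$, $u_{n}\rightarrow u_{\mu }$, and in particular $A^{2}=\Vert u_{\mu }\Vert _{D^{1,2}}^{2}$, so $u_{\mu }$ is a genuine critical point and the (PS)$_{\beta }$-condition holds. I expect this last step to be the real obstacle: making $\widehat{D}_{0}$ and the accompanying a priori bound $M(\beta )$ genuinely independent of $\mu $, which requires the eigenvalue approximation relating the first eigenvalue of $-\Delta +\mu V$ with weight $f$ to $\lambda _{1}(f_{\Omega })$, and handling the unfavorable sign of the coefficient $a\frac{p-4}{4p}$ when $2<p<4$, where the nonlocal quartic term enters $J_{\mu ,\lambda }$ negatively and threatens the coercivity needed to bound $\Vert u_{n}\Vert _{\mu }$.
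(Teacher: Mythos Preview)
Your argument is essentially the same as the paper's: pass to a weak limit, freeze the Kirchhoff coefficient at its limit value, use Brezis--Lieb to split off the remainder $v_n=u_n-u_\mu$, combine the Nehari identity for $u_n$ with the limit equation for $u_\mu$ to obtain $\Vert v_n\Vert_\mu^2 + a\Vert u_n\Vert_{D^{1,2}}^2\Vert v_n\Vert_{D^{1,2}}^2 = \int Q|v_n|^p + o(1)$, and then kill the right-hand side with the steep-well interpolation estimate $\int |v_n|^p \le C\mu^{-\gamma}\Vert v_n\Vert_\mu^p + o(1)$. Your dichotomy formulation is just a repackaging of the paper's final inequality.

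The concern you flag in the last paragraph is, however, misplaced. Reread the paper's definition of the (PS)$_\beta$-condition in $\mathbf N_{\mu,\lambda}$ stated just before the proposition: the sequence is \emph{assumed} uniformly bounded as part of the hypothesis. Hence there is no need to manufacture an a priori bound $M(\beta)$ from the level, no need for the eigenvalue approximation at this stage, and no issue with the sign of $a\frac{p-4}{4p}$ when $2<p<4$. The bound $d_0$ on $\Vert u_n\Vert_\mu$ is given, and since the steep-well factor $(1/\mu c)^{(2^\ast-p)(N-2)/(2p)}$ tends to $0$ as $\mu\to\infty$ while $d_0$ is fixed, the coefficient in front of $\Vert v_n\Vert_\mu^2$ becomes positive for $\mu$ large, forcing $v_n\to 0$. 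The constant $\widehat D_0$ in the statement plays no active role in the proof; boundedness independent of $\mu$ is established separately in each application (Lemmas~\ref{g3-1}, \ref{g4-2}, \ref{t4-1}, Propositions~\ref{p2}, \ref{p3}), not here.
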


\begin{proof}
Let $\{u_{n}\}\subset \mathbf{N}_{\mu ,\lambda }$ be a (PS)$_{\beta }$%
--sequence for $J_{\mu ,\lambda }$ with $\beta <\widehat{D}_{0}.$ Since $%
\left\{ u_{n}\right\} \subset X_{\mu }$ is uniformly bounded, i.e., there
exists $d_{0}>0$ such that
\begin{equation}
\left\Vert u_{n}\right\Vert _{\mu }<d_{0}.  \label{8-4}
\end{equation}%
Then there exist a subsequence $\{u_{n}\}$ and $u_{0}$ in $X_{\mu }$ such
that%
\begin{eqnarray*}
u_{n} &\rightharpoonup &u_{0}\text{ weakly in }X_{\mu }; \\
u_{n} &\rightarrow &u_{0}\text{ strongly in }L_{loc}^{r}(\mathbb{R}^{N})%
\text{ for }2\leq r<2^{\ast }.
\end{eqnarray*}%
Then by condition $\left( D_{1}\right) ,$%
\begin{equation}
\lim_{n\rightarrow \infty }\int_{\mathbb{R}^{N}}fu_{n}^{2}dx=\int_{\mathbb{R}%
^{N}}fu_{0}^{2}dx.  \label{8-1}
\end{equation}%
Now, we prove that $u_{n}\rightarrow u_{0}$ strongly in $X_{\mu }.$ Let $%
v_{n}=u_{n}-u_{0}.$ By $(\ref{8-4})$ one has%
\begin{equation*}
\left\Vert u_{0}\right\Vert _{\mu }\leq \liminf_{n\rightarrow \infty
}\left\Vert u_{n}\right\Vert _{\mu }\leq d_{0},
\end{equation*}%
leading to%
\begin{equation}
\left\Vert v_{n}\right\Vert _{\mu }=\left\Vert u_{n}-u_{0}\right\Vert _{\mu
}\leq 2d_{0}.  \label{8-5}
\end{equation}%
It follows from the condition $(V_{1})$ that%
\begin{equation*}
\int_{\mathbb{R}^{N}}v_{n}^{2}dx=\int_{\left\{ V\geq c\right\}
}v_{n}^{2}dx+\int_{\left\{ V<c\right\} }v_{n}^{2}dx\leq \frac{1}{\mu c}%
\left\Vert v_{n}\right\Vert _{\mu }^{2}+o\left( 1\right) ,
\end{equation*}%
which implies that
\begin{eqnarray}
\int_{\mathbb{R}^{N}}\left\vert v_{n}\right\vert ^{p}dx &\leq &\left( \frac{1%
}{\mu c}\left\Vert v_{n}\right\Vert _{\mu }^{2}\right) ^{\frac{2^{\ast }-p}{%
2^{\ast }-2}}\left( S^{-2^{\ast }}\left\Vert v_{n}\right\Vert
_{D^{1,2}}^{2^{\ast }}\right) ^{\frac{p-2}{2^{\ast }-2}}+o(1)  \notag \\
&\leq &\left( \frac{1}{\mu c}\right) ^{\frac{\left( 2^{\ast }-p\right)
\left( N-2\right) }{4}}S^{-\frac{N\left( p-2\right) }{2}}\left\Vert
v_{n}\right\Vert _{\mu }^{p}+o(1),  \label{8-6}
\end{eqnarray}%
where we have used the H\"{o}lder and Sobolev inequalities. Moreover, by
Brezis-Lieb Lemma \cite{BL}, we have%
\begin{equation}
\int_{\mathbb{R}^{N}}Q\left\vert v_{n}\right\vert ^{p}dx=\int_{\mathbb{R}%
^{N}}Q\left\vert u_{n}\right\vert ^{p}dx-\int_{\mathbb{R}^{N}}Q\left\vert
u_{0}\right\vert ^{p}dx+o(1).  \label{8-7}
\end{equation}%
Since the sequence $\left\{ u_{n}\right\} $ is bounded in $X_{\mu },$ there
exists a constant $A>0$ such that%
\begin{equation*}
\int_{\mathbb{R}^{N}}\left\vert \nabla u_{n}\right\vert ^{2}dx\rightarrow A%
\text{ as }n\rightarrow \infty .
\end{equation*}%
It indicates that for any $\varphi \in C_{0}^{\infty }(\mathbb{R}^{N}),$%
\begin{eqnarray*}
o(1) &=&\left\langle J_{\mu ,\lambda }^{\prime }\left( u_{n}\right) ,\varphi
\right\rangle \\
&\rightarrow &\int_{\mathbb{R}^{N}}\nabla u_{0}\nabla \varphi dx+\int_{%
\mathbb{R}^{N}}\mu Vu_{0}\varphi dx+aA\int_{\mathbb{R}^{N}}\nabla
u_{0}\nabla \varphi dx \\
&&-\int_{\mathbb{R}^{N}}fu_{0}\varphi dx-\int_{\mathbb{R}^{N}}Q\left\vert
u_{0}\right\vert ^{p-2}u_{0}\varphi dx\text{ as }n\rightarrow \infty ,
\end{eqnarray*}%
which shows that%
\begin{equation}
\left\Vert u_{0}\right\Vert _{\mu }^{2}+aA\left\Vert u_{0}\right\Vert
_{D^{1,2}}^{2}-\int_{\mathbb{R}^{N}}fu_{0}^{2}dx-\int_{\mathbb{R}%
^{N}}Q\left\vert u_{0}\right\vert ^{p}dx=0.  \label{8-8}
\end{equation}%
Note that%
\begin{equation}
\left\Vert u_{n}\right\Vert _{\mu }^{2}+a\left\Vert u_{n}\right\Vert
_{D^{1,2}}^{4}-\int_{\mathbb{R}^{N}}fu_{n}^{2}dx-\int_{\mathbb{R}%
^{N}}Q\left\vert u_{n}\right\vert ^{p}dx=0.  \label{8-9}
\end{equation}%
Then by $\left( \ref{8-1}\right) $ and $(\ref{8-6})-(\ref{8-9})$ one has%
\begin{eqnarray}
o\left( 1\right) &=&\left\Vert v_{n}\right\Vert _{\mu }^{2}+a\left\Vert
u_{n}\right\Vert _{D^{1,2}}^{4}-aA\left\Vert u_{0}\right\Vert
_{D^{1,2}}^{2}-\int_{\mathbb{R}^{N}}Q\left\vert v_{n}\right\vert ^{p}dx
\notag \\
&=&\left\Vert v_{n}\right\Vert _{\mu }^{2}+a\left\Vert u_{n}\right\Vert
_{D^{1,2}}^{2}\left( \left\Vert u_{n}\right\Vert _{D^{1,2}}^{2}-\left\Vert
u_{0}\right\Vert _{D^{1,2}}^{2}\right) -\int_{\mathbb{R}^{N}}Q\left\vert
v_{n}\right\vert ^{p}dx  \notag \\
&=&\left\Vert v_{n}\right\Vert _{\mu }^{2}+a\left\Vert u_{n}\right\Vert
_{D^{1,2}}^{2}\left\Vert v_{n}\right\Vert _{D^{1,2}}^{2}-\int_{\mathbb{R}%
^{N}}Q\left\vert v_{n}\right\vert ^{p}dx.  \label{8-10}
\end{eqnarray}%
It follows from $(\ref{11}),(\ref{8-5}),(\ref{8-6}),(\ref{8-10})$ that%
\begin{eqnarray*}
o\left( 1\right) &=&\left\Vert v_{n}\right\Vert _{\mu }^{2}+a\left\Vert
u_{n}\right\Vert _{D^{1,2}}^{2}\left\Vert v_{n}\right\Vert
_{D^{1,2}}^{2}-\int_{\mathbb{R}^{N}}Q\left\vert v_{n}\right\vert ^{p}dx \\
&\geq &\left\Vert v_{n}\right\Vert _{\mu }^{2}-Q_{\max }\left( \int_{\mathbb{%
R}^{N}}|v_{n}|^{p}dx\right) ^{\frac{p-2}{p}}\left( \int_{\mathbb{R}%
^{N}}|v_{n}|^{p}dx\right) ^{\frac{2}{p}} \\
&\geq &\left[ 1-Q_{\max }\left[ \frac{(2d_{0})^{\frac{p}{2}}\left\vert
\left\{ V<c\right\} \right\vert ^{\frac{2^{\ast }-p}{2^{\ast }}}}{S^{p}}%
\right] ^{\frac{p-2}{p}}\left( \frac{1}{\mu c}\right) ^{\frac{\left( 2^{\ast
}-p\right) \left( N-2\right) }{2p}}S^{-\frac{N\left( p-2\right) }{p}}\right]
\left\Vert v_{n}\right\Vert _{\mu }^{2}+o\left( 1\right) ,
\end{eqnarray*}%
which implies that $v_{n}\rightarrow 0$ strongly in $X_{\mu }$ for $\mu >0$
sufficiently large. Consequently, this completes the proof.
\end{proof}

\section{Non-emptiness of submanifolds}

First, we need the following result.

\begin{theorem}
\label{g4-0}Let $\mu _{n}\rightarrow \infty $ as $n\rightarrow \infty $ and $%
\{v_{n}\}\subset X$ with $\Vert v_{n}\Vert _{\mu _{n}}\leq c_{0}$ for some $%
c_{0}>0.$ Then there exist subsequence $\left\{ v_{n}\right\} $ and $%
v_{0}\in H_{0}^{1}\left( \Omega \right) $ such that $v_{n}\rightharpoonup
v_{0}$ in $X$ and $v_{n}\rightarrow v_{0}$ in $L^{r}\left( \mathbb{R}%
^{N}\right) $ for all $2\leq r<2^{\ast }.$
\end{theorem}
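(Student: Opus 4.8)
The plan is to combine the uniform bound $\|v_n\|_{\mu_n}\le c_0$ with the steep-well hypotheses $(V_1)$--$(V_3)$ in the spirit of the Bartsch--Wang scheme, splitting the argument into three parts: extraction of a weak limit, identification of that limit inside $H_0^1(\Omega)$, and an upgrade from local to global $L^r$ convergence.

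First I would note that for $n$ large we have $\mu_n\ge 1$, so $\|v_n\|\le\|v_n\|_{\mu_n}\le c_0$; thus $\{v_n\}$ is bounded in the Hilbert space $X$, and by the continuous embedding $X_\mu\hookrightarrow H^1(\mathbb{R}^N)$ recorded in Section 2 it is bounded in $H^1(\mathbb{R}^N)$ as well. Reflexivity then yields a subsequence with $v_n\rightharpoonup v_0$ in $X$ (hence in $H^1(\mathbb{R}^N)$), while the Rellich--Kondrachov theorem gives $v_n\to v_0$ in $L^r_{loc}(\mathbb{R}^N)$ for $2\le r<2^\ast$.

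To place $v_0$ in $H_0^1(\Omega)$ I would exploit the potential term. Since $\int_{\mathbb{R}^N}V v_n^2\,dx\le \mu_n^{-1}\|v_n\|_{\mu_n}^2\le c_0^2/\mu_n\to 0$, and since $V$ is continuous (hence bounded on every ball $B_R$) while $v_n\to v_0$ in $L^2(B_R)$, passing to the limit gives $\int_{B_R}V v_0^2\,dx=0$ for every $R$, so $\int_{\mathbb{R}^N}V v_0^2\,dx=0$. Because $V\ge 0$ and $\overline{\Omega}=\{V=0\}$ by $(V_3)$, this forces $v_0=0$ a.e. on $\mathbb{R}^N\setminus\overline{\Omega}$; as $v_0\in H^1(\mathbb{R}^N)$ vanishes outside the bounded domain $\Omega$ with smooth boundary, we conclude $v_0\in H_0^1(\Omega)$.

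The main obstacle is the \emph{global} strong convergence, for which I would first establish $v_n\to v_0$ in $L^2(\mathbb{R}^N)$ by a tightness argument. Splitting $\mathbb{R}^N\setminus B_R$ into $\{V\ge c\}$ and $\{V<c\}$: on the former, $\int_{\{V\ge c\}}v_n^2\,dx\le c^{-1}\int V v_n^2\,dx\le c_0^2/(c\mu_n)\to 0$; on the latter I would use H\"older together with the $L^{2^\ast}$-bound coming from $H^1$-boundedness, so that $\int_{\{V<c\}\setminus B_R}v_n^2\,dx\le |\{V<c\}\setminus B_R|^{(2^\ast-2)/2^\ast}\sup_n\|v_n\|_{2^\ast}^2$, which is small uniformly in $n$ once $R$ is large, because $(V_2)$ makes $|\{V<c\}|$ finite and hence $|\{V<c\}\setminus B_R|\to 0$. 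Combining these two tails with the tail of $v_0\in L^2$ and the local convergence on $B_R$ yields $v_n\to v_0$ in $L^2(\mathbb{R}^N)$. Finally, for $2<r<2^\ast$ I would interpolate, $\|v_n-v_0\|_r\le \|v_n-v_0\|_2^{1-\theta}\|v_n-v_0\|_{2^\ast}^{\theta}$ with $\tfrac1r=\tfrac{1-\theta}2+\tfrac{\theta}{2^\ast}$, and invoke the uniform $L^{2^\ast}$-bound to conclude $v_n\to v_0$ in $L^r(\mathbb{R}^N)$ for all $2\le r<2^\ast$. The delicate point throughout is that the control is only on $\|v_n\|_{\mu_n}$ with $\mu_n\to\infty$, which is exactly what forces the potential mass $\int V v_n^2$ to vanish and thereby drives both the localization of $v_0$ and the tightness on $\{V\ge c\}$.
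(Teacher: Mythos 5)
Your proposal is correct, and it reaches the conclusion by a genuinely different route for the key compactness step. The identification of the weak limit is essentially the same in both arguments (you pass to the limit in $\int_{B_R}Vv_n^2\,dx$ using local $L^2$ convergence and the continuity of $V$, whereas the paper applies Fatou's lemma directly to $\int_{\mathbb{R}^N}Vv_n^2\,dx$; both give $\int Vv_0^2\,dx=0$ and hence $v_0\in H_0^1(\Omega)$ via $(V_3)$). The difference is in the global strong convergence: the paper argues by contradiction with the Lions vanishing lemma, producing concentration balls $B(x_n,R_0)$ with $|x_n|\to\infty$ and deriving $\mu_n c\int_{B(x_n,R_0)\cap\{V\ge c\}}(v_n-v_0)^2\,dx\to\infty$, contradicting $\|v_n\|_{\mu_n}\le c_0$; you instead prove tightness directly, controlling the mass on $\{V\ge c\}$ by $c^{-1}\mu_n^{-1}\|v_n\|_{\mu_n}^2\to 0$ and the mass on $\{V<c\}\setminus B_R$ by H\"older against the uniform $L^{2^\ast}$ bound together with $|\{V<c\}\setminus B_R|\to 0$ from $(V_2)$, then interpolate. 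Both arguments ultimately rest on the same two facts ($\int Vv_n^2\to 0$ and $|\{V<c\}|<\infty$), but your version is more elementary, avoids the concentration-compactness machinery, and handles the endpoint $r=2$ cleanly, which the vanishing-lemma route strictly speaking only delivers for $2<r<2^\ast$ and would require the same tail estimate to cover. The paper's version is shorter to state and is the standard steep-potential-well argument in the literature. Either proof is acceptable.
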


\begin{proof}
Since $\left\Vert v_{n}\right\Vert \leq \left\Vert v_{n}\right\Vert _{\mu
_{n}}\leq c_{0}.$ We may assume that there exists $v_{0}\in X$ such that
\begin{eqnarray*}
v_{n} &\rightharpoonup &v_{0}\text{ in }X, \\
v_{n} &\rightarrow &v_{0}\text{ a.e. in }\mathbb{R}^{N}, \\
v_{n} &\rightarrow &v_{0}\text{ in }L_{loc}^{r}\left( \mathbb{R}^{N}\right)
\text{ for }2\leq r<2^{\ast }.
\end{eqnarray*}%
By Fatou's Lemma, we have
\begin{equation*}
\int_{\mathbb{R}^{N}}Vv_{0}^{2}dx\leq \liminf_{n\rightarrow \infty }\int_{%
\mathbb{R}^{N}}Vv_{n}^{2}dx\leq \liminf_{n\rightarrow \infty }\frac{%
\left\Vert v_{n}\right\Vert _{\mu _{n}}^{2}}{\mu _{n}}=0,
\end{equation*}%
this implies that $\int_{\mathbb{R}^{N}}Vv_{0}^{2}dx=0$ or $v_{0}=0$ a.e. in
$\mathbb{R}^{N}\setminus \overline{\Omega }$ and $v_{0}\in H_{0}^{1}\left(
\Omega \right) $ by condition $\left( V_{3}\right) .$ We now show that $%
v_{n}\rightarrow v_{0}$ in $L^{p}\left( \mathbb{R}^{N}\right) .$ Suppose on
the contrary. Then by Lions vanishing lemma (see \cite[Lemma I.1]{Li} or
\cite[Lemma 1.21]{Wi}), there exist $d_{0}>0,R_{0}>0$ and $x_{n}\in \mathbb{R%
}^{N}$ such that
\begin{equation*}
\int_{B\left( x_{n},R_{0}\right) }\left( v_{n}-v_{0}\right) ^{2}dx\geq d_{0}.
\end{equation*}%
Moreover, $x_{n}\rightarrow \infty $, and hence, $\left\vert B\left(
x_{n},R_{0}\right) \cap \left\{ x\in \mathbb{R}^{N}:V<c\right\} \right\vert
\rightarrow 0$. By the H\"{o}lder inequality, we have
\begin{equation*}
\int_{B\left( x_{n},R_{0}\right) \cap \left\{ V<c\right\} }\left(
v_{n}-v_{0}\right) ^{2}dx\rightarrow 0.
\end{equation*}%
Consequently,
\begin{eqnarray*}
c_{0} &\geq &\left\Vert v_{n}\right\Vert _{\mu _{n}}^{2}\geq \mu
_{n}c\int_{B\left( x_{n},R_{0}\right) \cap \left\{ V\geq c\right\}
}v_{n}^{2}dx=\mu _{n}c\int_{B\left( x_{n},R_{0}\right) \cap \left\{ V\geq
c\right\} }\left( v_{n}-v_{0}\right) ^{2}dx \\
&=&\mu _{n}c\left( \int_{B\left( x_{n},R_{0}\right) }\left(
v_{n}-v_{0}\right) ^{2}dx-\int_{B\left( x_{n},R_{0}\right) \cap \left\{
V<c\right\} }\left( v_{n}-v_{0}\right) ^{2}dx\right) \\
&\rightarrow &\infty ,
\end{eqnarray*}%
which a contradiction. Thus, $v_{n}\rightarrow v_{0}$ in $L^{r}\left(
\mathbb{R}^{N}\right) $ for all $2\leq r<2^{\ast }.$ This completes the
proof.
\end{proof}

Next, we consider the following eigenvalue problem%
\begin{equation}
-\Delta u(x)+\mu V\left( x\right) u\left( x\right) =\lambda f(x)u(x)\text{
for}\;x\in \mathbb{R}^{N}.  \label{4}
\end{equation}%
We can approach this problem by a direct method and attempt to obtain
nontrivial solutions of problem $\left( \ref{4}\right) $ as relative minima
of the functional%
\begin{equation*}
I_{\mu }\left( u\right) =\frac{1}{2}\int_{\mathbb{R}^{N}}\left\vert \nabla
u\right\vert ^{2}+\mu Vu^{2}dx,
\end{equation*}%
on the unit sphere in $\mathbb{B}=\left\{ u\in X:\int_{\mathbb{R}%
^{N}}fu^{2}dx=1\right\} .$ Equivalently, we may seek to minimize a quotient
as follows%
\begin{equation}
\widetilde{\lambda }_{1,\mu }\left( f\right) =\inf_{u\in X\backslash \left\{
0\right\} }\frac{\int_{\mathbb{R}^{N}}\left\vert \nabla u\right\vert
^{2}+\mu Vu^{2}dx}{\int_{\mathbb{R}^{N}}fu^{2}dx}.  \label{3}
\end{equation}%
Then, by $\left( \ref{11}\right) ,$
\begin{equation*}
\frac{\int_{\mathbb{R}^{N}}\left\vert \nabla u\right\vert ^{2}+\mu Vu^{2}dx}{%
\int_{\mathbb{R}^{N}}fu^{2}dx}\geq \frac{S^{2}}{\left\Vert f\right\Vert
_{\infty }\left\vert \left\{ V<c\right\} \right\vert ^{\frac{2}{3}}}\text{
for all }\mu \geq \mu _{0},
\end{equation*}%
this implies that $\widetilde{\lambda }_{1,\mu }\left( f\right) \geq \frac{%
S^{2}}{\left\Vert f\right\Vert _{\infty }\left\vert \left\{ V<c\right\}
\right\vert ^{\frac{2}{3}}}>0.$ Moreover, by condition $\left( V_{3}\right)
, $%
\begin{equation*}
\inf_{u\in X\backslash \left\{ 0\right\} }\frac{\int_{\mathbb{R}%
^{N}}\left\vert \nabla u\right\vert ^{2}+\mu Vu^{2}dx}{\int_{\mathbb{R}%
^{N}}fu^{2}dx}\leq \inf_{u\in H_{0}^{1}\left( \Omega \right) \backslash
\left\{ 0\right\} }\frac{\int_{\mathbb{R}^{N}}\left\vert \nabla u\right\vert
^{2}+\mu Vu^{2}dx}{\int_{\mathbb{R}^{N}}fu^{2}dx}=\inf_{u\in H_{0}^{1}\left(
\Omega \right) \backslash \left\{ 0\right\} }\frac{\int_{\Omega }\left\vert
\nabla u\right\vert ^{2}}{\int_{\Omega }f_{\Omega }u^{2}dx},
\end{equation*}%
which indicates that $\widetilde{\lambda }_{1,\mu }\left( f\right) \leq
\lambda _{1}\left( f\right) $ for all $\mu \geq \mu _{0}.$ Then we have the
following results.

\begin{lemma}
\label{g3-0}For each $\mu \geq \mu _{0}$ there exists a positive function $%
\varphi _{\mu }\in X$ with $\int_{\mathbb{R}^{N}}f\varphi _{\mu }^{2}dx=1$
such that
\begin{equation*}
\widetilde{\lambda }_{1,\mu }\left( f\right) =\int_{\mathbb{R}%
^{N}}\left\vert \nabla \varphi _{\mu }\right\vert ^{2}+\mu V\varphi _{\mu
}^{2}dx<\lambda _{1}\left( f_{\Omega }\right) .
\end{equation*}%
Furthermore, $\widetilde{\lambda }_{1,\mu }\left( f\right) \rightarrow
\lambda _{1}^{-}\left( f_{\Omega }\right) $ and $\varphi _{\mu }\rightarrow
\phi _{1}$ as $\mu \rightarrow \infty ,$ where $\phi _{1}$ is positive
principal eigenfunction of problem $\left( \ref{2}\right) .$
\end{lemma}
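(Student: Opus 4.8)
The plan is to handle the three assertions separately: attainment by a positive minimizer via the direct method, the strict inequality by a boundary (Hopf) computation, and the asymptotics as $\mu\to\infty$ via Theorem \ref{g4-0}.

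First I would establish existence by the direct method. Fix $\mu\ge\mu_0$ and take a minimizing sequence $\{u_n\}\subset X$ with $\int_{\mathbb{R}^N}fu_n^2\,dx=1$ and $\|u_n\|_\mu^2\to\widetilde{\lambda}_{1,\mu}(f)$. The positive lower bound recorded just before the lemma shows $\{u_n\}$ is bounded in $X_\mu$, so after passing to a subsequence $u_n\rightharpoonup\varphi_\mu$ in $X_\mu$. Because $f\in L^{N/2}(\mathbb{R}^N)$ by $(D_1)$, the map $u\mapsto\int_{\mathbb{R}^N}fu^2\,dx$ is weakly continuous on $X_\mu$ (split into a ball $B_R$, where local compactness applies, and its complement, where $\|f\|_{L^{N/2}(B_R^c)}\to0$ controls the tail through H\"older and $(\ref{11})$, exactly as in $(\ref{8-1})$); hence $\int f\varphi_\mu^2\,dx=1$ and $\varphi_\mu\ne0$. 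Weak lower semicontinuity of $\|\cdot\|_\mu$ then forces $\|\varphi_\mu\|_\mu^2=\widetilde{\lambda}_{1,\mu}(f)$, so $\varphi_\mu$ is a minimizer. Replacing $\varphi_\mu$ by $|\varphi_\mu|$, which has the same norm and constraint value, I may assume $\varphi_\mu\ge0$; the Euler--Lagrange equation $-\Delta\varphi_\mu+\mu V\varphi_\mu=\widetilde{\lambda}_{1,\mu}(f)f\varphi_\mu$ together with the strong maximum principle, applied to $-\Delta\varphi_\mu+(\mu V+\widetilde{\lambda}_{1,\mu}(f)f^-)\varphi_\mu\ge0$, upgrades this to $\varphi_\mu>0$ on $\mathbb{R}^N$.

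The strict inequality is the delicate point and the main obstacle. The bound $\widetilde{\lambda}_{1,\mu}(f)\le\lambda_1(f_\Omega)$ follows by testing with the zero-extension of $\phi_1$, since $V\equiv0$ on $\Omega$ and $\phi_1\equiv0$ off $\overline{\Omega}$ make its Rayleigh quotient exactly $\lambda_1(f_\Omega)$. To make it strict I would show that this extension is not a critical point of the quotient on $X$: for $\psi\in C_0^\infty(\mathbb{R}^N)$ a direct computation of the first variation gives $\langle\phi_1,\psi\rangle_\mu-\lambda_1(f_\Omega)\int f\phi_1\psi\,dx=\int_{\partial\Omega}\frac{\partial\phi_1}{\partial\nu}\psi\,dS$, after integrating by parts over $\Omega$, using $V\phi_1\equiv0$ and $-\Delta\phi_1=\lambda_1(f_\Omega)f_\Omega\phi_1$ in $\Omega$. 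By Hopf's lemma $\partial\phi_1/\partial\nu<0$ on $\partial\Omega$, so choosing $\psi>0$ near $\partial\Omega$ makes this variation negative; hence the quotient of $\phi_1+t\psi$ falls below $\lambda_1(f_\Omega)$ for small $t>0$, giving $\widetilde{\lambda}_{1,\mu}(f)<\lambda_1(f_\Omega)$. Equivalently, were equality to hold, the zero-extended $\phi_1$ would be a second positive minimizer and, by simplicity of the principal eigenvalue, proportional to the everywhere-positive $\varphi_\mu$, contradicting $\phi_1\equiv0$ off $\overline{\Omega}$.

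Finally, for the asymptotics I would pass to the limit $\mu\to\infty$. Since $\|\cdot\|_\mu$ is nondecreasing in $\mu$, the quantity $\widetilde{\lambda}_{1,\mu}(f)$ is nondecreasing and, being bounded above by $\lambda_1(f_\Omega)$, converges to some $L\le\lambda_1(f_\Omega)$. As $\|\varphi_\mu\|_\mu^2=\widetilde{\lambda}_{1,\mu}(f)\le\lambda_1(f_\Omega)$ stays bounded, Theorem \ref{g4-0} yields, along a subsequence, $\varphi_\mu\rightharpoonup v_0\in H_0^1(\Omega)$ in $X$ and $\varphi_\mu\to v_0$ in $L^r$ for $2\le r<2^*$; hence $\int f v_0^2\,dx=1$, so $v_0\ne0$ and $v_0\ge0$. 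Weak lower semicontinuity gives $\int_\Omega|\nabla v_0|^2\,dx\le\liminf\int_{\mathbb{R}^N}|\nabla\varphi_\mu|^2\,dx\le L$, while admissibility of $v_0$ for $(\ref{2})$ gives $\lambda_1(f_\Omega)\le\int_\Omega|\nabla v_0|^2\,dx$. Thus $L=\lambda_1(f_\Omega)$ and $v_0$ is a nonnegative normalized principal eigenfunction, so $v_0=\phi_1$; the same squeeze forces $\int|\nabla\varphi_\mu|^2\,dx\to\int|\nabla\phi_1|^2\,dx$ and $\mu\int V\varphi_\mu^2\,dx\to0$, whence $\varphi_\mu\to\phi_1$ strongly in $X$. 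Since the limit is independent of the subsequence, the whole family converges and $\widetilde{\lambda}_{1,\mu}(f)\to\lambda_1(f_\Omega)$ from below.
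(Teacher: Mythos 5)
Your existence step and your treatment of the limit $\mu\to\infty$ follow the paper's proof essentially verbatim: direct method on the constrained minimization, weak continuity of $u\mapsto\int fu^2\,dx$ from $(D_1)$, strong convergence of the minimizing sequence by ruling out strict inequality in lower semicontinuity, and then Theorem \ref{g4-0} plus the squeeze $\lambda_1(f_\Omega)\le\int_\Omega|\nabla v_0|^2\,dx\le L\le\lambda_1(f_\Omega)$ for the asymptotics (your added observations that $\widetilde{\lambda}_{1,\mu}(f)$ is monotone in $\mu$ and that $\mu\int V\varphi_\mu^2\,dx\to 0$ are also in, or immediately adjacent to, the paper's argument). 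Where you genuinely diverge is the strict inequality $\widetilde{\lambda}_{1,\mu}(f)<\lambda_1(f_\Omega)$. The paper disposes of this in one line via the Harnack inequality of Trudinger: if equality held, the zero-extension of $\phi_1$ would be a nonnegative, nontrivial minimizer, hence a solution of $(\ref{4})$ on all of $\mathbb{R}^N$, and Harnack forces such a solution to be strictly positive everywhere, contradicting $\phi_1\equiv 0$ off $\overline{\Omega}$. Your primary argument instead computes the first variation of the Rayleigh quotient at the extended $\phi_1$ and exhibits a boundary term $\int_{\partial\Omega}\frac{\partial\phi_1}{\partial\nu}\psi\,dS<0$ via Hopf's lemma; this is a nice quantitative alternative, but note that under $(D_1)$ alone ($f\in L^{N/2}$) the eigenfunction $\phi_1$ need not be $C^1$ up to $\partial\Omega$, so the classical Hopf lemma and the boundary integration by parts are not justified by the stated hypotheses. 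Your closing "equivalently" remark — equality would make the zero-extended $\phi_1$ a second nonnegative minimizer, proportional to the everywhere-positive $\varphi_\mu$ by simplicity, a contradiction — is in substance the paper's Harnack argument and requires no boundary regularity, so it is the version you should promote to the main line; with that substitution the proof is complete and matches the paper.
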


\begin{proof}
Let $\left\{ u_{n}\right\} \subset X$ with $\int_{\mathbb{R}%
^{N}}fu_{n}^{2}dx=1$ be a minimizing sequence of $\left( \ref{3}\right) ,$
that is%
\begin{equation*}
\int_{\mathbb{R}^{N}}\left\vert \nabla u_{n}\right\vert ^{2}+\mu
Vu_{n}^{2}dx\rightarrow \widetilde{\lambda }_{1,\mu }\left( f\right) \text{
as }n\rightarrow \infty .
\end{equation*}%
Since $\widetilde{\lambda }_{1,\mu }\left( f\right) \leq \lambda _{1}\left(
f_{\Omega }\right) $ for all $\mu \geq \mu _{0},$ there exists $C_{0}>0$
independent of $\mu $ such that $\left\Vert u_{n}\right\Vert _{\mu }\leq
C_{0}.$ Thus, there exist a subsequence $\left\{ u_{n}\right\} $ and $%
\varphi _{\mu }\in X$ such that
\begin{eqnarray*}
u_{n} &\rightharpoonup &\varphi _{\mu }\text{ in }X_{\mu }, \\
u_{n} &\rightarrow &\varphi _{\mu }\text{ a.e. in }\mathbb{R}^{N}, \\
u_{n} &\rightarrow &\varphi _{\mu }\text{ in }L_{loc}^{r}\left( \mathbb{R}%
^{N}\right) \text{ for }2\leq r<2^{\ast }.
\end{eqnarray*}%
Moreover, by condition $\left( D_{1}\right) ,$%
\begin{equation*}
\int_{\mathbb{R}^{N}}fu_{n}^{2}dx\rightarrow \int_{\mathbb{R}^{N}}f\varphi
_{\mu }^{2}dx=1.
\end{equation*}%
Now we show that $u_{n}\rightarrow \varphi _{\mu }$ in $X_{\mu }.$ Suppose
on the contrary. Then
\begin{equation*}
\int_{\mathbb{R}^{N}}\left\vert \nabla \varphi _{\mu }\right\vert ^{2}+\mu
V\varphi _{\mu }^{2}dx<\liminf_{n\rightarrow \infty }\int_{\mathbb{R}%
^{N}}\left\vert \nabla u_{n}\right\vert ^{2}+\mu Vu_{n}^{2}dx=\widetilde{%
\lambda }_{1,\mu }\left( f\right) ,
\end{equation*}%
which is impossible. Thus, $u_{n}\rightarrow \varphi _{\mu }$ in $X_{\mu },$
which implies that $\int_{\mathbb{R}^{N}}f\varphi _{\mu }^{2}dx=1$ and $%
\int_{\mathbb{R}^{N}}\left\vert \nabla \varphi _{\mu }\right\vert ^{2}+\mu
V\varphi _{\mu }^{2}dx=\widetilde{\lambda }_{1,\mu }\left( f\right) .$ Since
$\left\vert \varphi _{\mu }\right\vert \in X$ and
\begin{equation*}
\widetilde{\lambda }_{1,\mu }\left( f\right) =\int_{\mathbb{R}%
^{N}}\left\vert \nabla \varphi _{\mu }\right\vert ^{2}+\mu V\varphi _{\mu
}^{2}dx=\int_{\mathbb{R}^{N}}\left\vert \nabla \left\vert \varphi _{\mu
}\right\vert \right\vert ^{2}+\mu V\left\vert \varphi _{\mu }\right\vert
^{2}dx,
\end{equation*}%
by the maximum principle, we may assume that $\varphi _{\mu }$ is positive
eigenfunction of problem $\left( \ref{4}\right) .$ Moreover, by the Harnack
inequality due to Trudinger \cite{Tr}, we must have $\widetilde{\lambda }%
_{1,\mu }\left( f\right) <\lambda _{1}\left( f_{\Omega }\right) .$ Now, by
the definition of $\widetilde{\lambda }_{1,\mu }\left( f\right) $, there
holds $\widetilde{\lambda }_{1,\mu _{1}}\left( f\right) \leq \widetilde{%
\lambda }_{1,\mu _{2}}\left( f\right) $ for $\mu _{1}<\mu _{2}.$ Hence, for
any sequence $\mu _{n}\rightarrow \infty ,$ let $\varphi _{n}:=\varphi _{\mu
_{n}}$ be the minimizer of $\widetilde{\lambda }_{1,\mu _{n}}\left( f\right)
$. Then $\int_{\mathbb{R}^{N}}f\varphi _{n}^{2}dx=1$ and%
\begin{equation*}
\widetilde{\lambda }_{1,\mu _{n}}\left( f\right) =\int_{\mathbb{R}%
^{N}}\left\vert \nabla \varphi _{n}\right\vert ^{2}+\mu _{n}V\varphi
_{n}^{2}dx<\lambda _{1}\left( f_{\Omega }\right) ,
\end{equation*}%
that
\begin{equation*}
\widetilde{\lambda }_{1,\mu _{n}}\left( f\right) \rightarrow d_{0}\leq
\lambda _{1}\left( f_{\Omega }\right) \text{ for some }d_{0}>0
\end{equation*}%
and
\begin{equation*}
\left\Vert \varphi _{n}\right\Vert \leq \left\Vert \varphi _{n}\right\Vert
_{\mu _{n}}\leq \sqrt{\lambda _{1}\left( f_{\Omega }\right) },\text{ for }n%
\text{ sufficiently large.}
\end{equation*}%
Thus, by Theorem \ref{g4-0}, we may assume that there exists $\varphi
_{0}\in H_{0}^{1}\left( \Omega \right) $ such that $\varphi
_{n}\rightharpoonup \varphi _{0}$ in $X$ and $\varphi _{n}\rightarrow
\varphi _{0}$ in $L^{r}\left( \mathbb{R}^{N}\right) $ for all $2\leq
r<2^{\ast }.$ Then%
\begin{equation*}
\int_{\Omega }\left\vert \nabla \varphi _{0}\right\vert ^{2}dx\leq
\liminf_{n\rightarrow \infty }\int_{\mathbb{R}^{N}}\left\vert \nabla \varphi
_{n}\right\vert ^{2}+\mu _{n}V\varphi _{n}^{2}dx=d_{0}
\end{equation*}%
and
\begin{equation*}
\lim_{n\rightarrow \infty }\int_{\mathbb{R}^{N}}f\varphi
_{n}^{2}dx=\int_{\Omega }f_{\Omega }\varphi _{0}^{2}dx=1.
\end{equation*}%
Since $d_{0}\leq \lambda _{1}\left( f_{\Omega }\right) $ and $\lambda
_{1}\left( f_{\Omega }\right) $ is positive principal eigenvalue of problem $%
\left( \ref{2}\right) .$ Thus, we must has $\int_{\Omega }\left\vert \nabla
\varphi _{0}\right\vert ^{2}dx=\lambda _{1}\left( f_{\Omega }\right) $ and $%
\varphi _{0}=\phi _{1}$ a positive principal eigenfunction of problem $%
\left( \ref{2}\right) ,$ which completes the proof.
\end{proof}

By Lemma \ref{g3-0}, for each $0<\lambda <\lambda _{1}\left( f_{\Omega
}\right) $ there exists $\overline{\mu }_{0}\left( \lambda \right) \geq \mu
_{0}$\ with $\overline{\mu }_{0}\left( \lambda \right) \rightarrow \infty $
as $\lambda \rightarrow \lambda _{1}^{-}\left( f_{\Omega }\right) $ such
that for every $\mu >\overline{\mu }_{0}\left( \lambda \right) ,$ there
holds $\lambda <\widetilde{\lambda }_{1,\mu }\left( f\right) <\lambda
_{1}\left( f_{\Omega }\right) ,$ which indicates that%
\begin{equation}
\left\Vert u\right\Vert _{\mu }^{2}-\lambda \int_{\mathbb{R}%
^{N}}fu^{2}dx\geq \frac{\widetilde{\lambda }_{1,\mu }\left( f\right)
-\lambda }{\widetilde{\lambda }_{1,\mu }\left( f\right) }\left\Vert
u\right\Vert _{\mu }^{2}\text{ for all }u\in X.  \label{10}
\end{equation}%
Moreover, it is easy to show that%
\begin{equation*}
h_{u}^{\prime \prime }(1)=-\left( p-2\right) \left( \left\Vert u\right\Vert
_{\mu }^{2}-\lambda \int_{\mathbb{R}^{N}}fu^{2}dx\right) -\left( p-4\right)
\left\Vert u\right\Vert _{D^{1,2}}^{4}<0
\end{equation*}%
for all $4\leq p<6$ and $u\in \mathbf{N}_{\mu ,\lambda }.$ Furthermore, we
have the following results.

\begin{lemma}
\label{g2-4}Suppose that $N=3,4\leq p<6$ and $\Gamma _{0}=\infty $ (if $%
p=4). $ Then for each $a>0$ and $0<\lambda <\lambda _{1}\left( f_{\Omega
}\right) $, there holds $\mathbf{N}_{\mu ,\lambda }=\mathbf{N}_{\mu ,\lambda
}^{-}$ and $\mathbf{N}_{\mu ,\lambda }^{-}=\{t_{\max }(u)u:u\in \Theta _{\mu
}^{+}\left( p\right) \}$ for $\mu >0$ sufficiently large.
\end{lemma}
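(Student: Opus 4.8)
The goal is to show that for $N=3$, $4 \le p < 6$ (with $\Gamma_0 = \infty$ when $p=4$), and $0 < \lambda < \lambda_1(f_\Omega)$, the Nehari manifold reduces to $\mathbf{N}_{\mu,\lambda} = \mathbf{N}_{\mu,\lambda}^-$, and this set is exactly $\{t_{\max}(u)u : u \in \Theta_\mu^+(p)\}$, for $\mu$ large. The plan is to work through three claims in sequence: first, that $\mathbf{N}_{\mu,\lambda}^0 = \emptyset$ and $\mathbf{N}_{\mu,\lambda}^+ = \emptyset$ (so everything sits in $\mathbf{N}_{\mu,\lambda}^-$); second, that $\mathbf{N}_{\mu,\lambda}^-$ is parametrized by the fibering-map maxima $t_{\max}(u)u$; and third, to identify the correct index set as $\Theta_\mu^+(p)$. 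I would fix $\mu > \overline{\mu}_0(\lambda)$ so that the coercivity estimate $(\ref{10})$ is available throughout.

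\textbf{Step 1: collapsing the decomposition.} The key computational input is the displayed identity just before the lemma, namely that for all $u \in \mathbf{N}_{\mu,\lambda}$,
\begin{equation*}
h_u''(1) = -(p-2)\left(\|u\|_\mu^2 - \lambda \int_{\mathbb{R}^N} f u^2\, dx\right) - (p-4)\|u\|_{D^{1,2}}^4 < 0
\end{equation*}
whenever $4 \le p < 6$. For the inequality I would argue term-by-term: since $4 \le p < 6$ we have $p-2 > 0$ and $p-4 \ge 0$; the estimate $(\ref{10})$ together with $\lambda < \widetilde{\lambda}_{1,\mu}(f)$ forces $\|u\|_\mu^2 - \lambda \int f u^2\,dx > 0$ for every nonzero $u$, so the first bracketed term is strictly negative, and the second is $\le 0$. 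Hence $h_u''(1) < 0$ for all $u \in \mathbf{N}_{\mu,\lambda}$, which by the very definitions of $\mathbf{N}_{\mu,\lambda}^{\pm,0}$ gives $\mathbf{N}_{\mu,\lambda} = \mathbf{N}_{\mu,\lambda}^-$ with $\mathbf{N}_{\mu,\lambda}^0 = \mathbf{N}_{\mu,\lambda}^+ = \emptyset$. The case $p = 4$ needs a separate remark: here the $\Phi_p$ functional carries the extra $-a\|u\|_{D^{1,2}}^4$ term, and the hypothesis $\Gamma_0 = \infty$ guarantees that $\int_{\mathbb{R}^N} Q|u|^4\,dx - a\|u\|_{D^{1,2}}^4$ can be made positive, so that the sign analysis of $h_u''(1)$ via the second line of $(\ref{2.2})$, $h_u''(1) = 2a\|u\|_{D^{1,2}}^4 - (p-2)\int Q|u|^p\,dx$, still yields strict negativity on $\mathbf{N}_{\mu,\lambda}$.

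\textbf{Step 2: the fibering-map parametrization.} Next I would invoke the fibering-map analysis already recorded in the preliminaries: for $u$ with $\|u\|_\mu = 1$ lying in $\Lambda_\mu^+ \cap \Theta_\mu^+(p)$ and $p \ge 4$, the map $h_u(t)$ satisfies $h_u(t) > 0$ for small $t > 0$, $h_u(t) \to -\infty$ as $t \to \infty$, and possesses a \emph{unique} stationary point $t_{\max}(u)$ with $t_{\max}(u)u \in \mathbf{N}_{\mu,\lambda}^-$. Since $(\ref{10})$ shows that $\Lambda_\mu^- = \Lambda_\mu^0 = \emptyset$ once $\lambda < \widetilde{\lambda}_{1,\mu}(f)$ — every nonzero $u$ has $\|u\|_\mu^2 - \lambda\int f u^2 > 0$ — the condition $u \in \Lambda_\mu^+$ is automatic, and the relevant index set collapses to $\Theta_\mu^+(p)$. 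Combining with Lemma~\ref{g2-2} (for $4 < p < 6$) or Lemma~\ref{g2-3} (for $p = 4$), every element of $\mathbf{N}_{\mu,\lambda}^-$ is a multiple of some $u/\|u\|_\mu \in \Theta_\mu^+(p)$, and conversely each such $u$ produces exactly one point $t_{\max}(u)u$; this gives the claimed identification $\mathbf{N}_{\mu,\lambda}^- = \{t_{\max}(u)u : u \in \Theta_\mu^+(p)\}$.

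\textbf{The main obstacle.} The routine part is the sign bookkeeping; the delicate point is ensuring the hypotheses genuinely deliver $\lambda < \widetilde{\lambda}_{1,\mu}(f)$ for $\mu$ large. Lemma~\ref{g3-0} shows $\widetilde{\lambda}_{1,\mu}(f) \to \lambda_1^-(f_\Omega)$ from below as $\mu \to \infty$, so for any fixed $\lambda < \lambda_1(f_\Omega)$ one can only guarantee $\lambda < \widetilde{\lambda}_{1,\mu}(f)$ once $\mu$ exceeds the threshold $\overline{\mu}_0(\lambda)$, and this threshold blows up as $\lambda \uparrow \lambda_1(f_\Omega)$. Thus the phrase ``$\mu > 0$ sufficiently large'' must be interpreted as $\mu > \overline{\mu}_0(\lambda)$, and I would make this dependence explicit so that the coercivity $(\ref{10})$ — which underwrites both the emptiness of $\Lambda_\mu^-$ and the strict negativity of $h_u''(1)$ — is valid. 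With that threshold fixed, the two steps above close the argument.
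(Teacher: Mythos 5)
Your argument is correct and is essentially the paper's own proof, which compresses the same reasoning into two lines: the estimate $(\ref{10})$ (valid for $\mu >\overline{\mu }_{0}\left( \lambda \right) $) forces $\Lambda _{\mu }^{-}\cup \Lambda _{\mu }^{0}=\emptyset $, hence $h_{u}^{\prime \prime }(1)<0$ on all of $\mathbf{N}_{\mu ,\lambda }$ for $4\leq p<6$ and the fibering-map analysis yields $\mathbf{N}_{\mu ,\lambda }=\mathbf{N}_{\mu ,\lambda }^{-}=\{t_{\max }(u)u:u\in \Theta _{\mu }^{+}\left( p\right) \}$. One small inaccuracy worth noting: your "separate remark" for $p=4$ misattributes the role of $\Gamma _{0}=\infty $ — the strict negativity of $h_{u}^{\prime \prime }(1)$ at $p=4$ already follows from the first line of $(\ref{2.2})$ together with $(\ref{10})$ (the $(p-4)$-term vanishes), whereas $\Gamma _{0}=\infty $ is what guarantees $\Theta _{\mu }^{+}\left( 4\right) \neq \emptyset $ for every $a>0$, i.e. that the parametrized set is not vacuous.
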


\begin{proof}
By $\left( \ref{10}\right) ,$ $\Lambda _{\mu }^{+}\neq \emptyset $ and $%
\Lambda _{\mu }^{-}\cup \Lambda _{\mu }^{0}=\emptyset ,$ this implies that
the submanifolds $\mathbf{N}_{\mu ,\lambda }^{+}$ and $\mathbf{N}_{\mu
,\lambda }^{0}$ are empty and
\begin{equation*}
\mathbf{N}_{\mu ,\lambda }=\mathbf{N}_{\mu ,\lambda }^{-}=\{t_{\max
}(u)u:u\in \Theta _{\mu }^{+}\left( p\right) \}
\end{equation*}%
for $\mu >0$ sufficiently large. This completes the proof.
\end{proof}

\begin{lemma}
\label{g2-5}Suppose that $N=3,p=4$ and $\Gamma _{0}<+\infty $ (if $p=4$)$.$
Then we have the following results.\newline
$\left( i\right) $ For each $a<\Gamma _{0}$ and $0<\lambda <\lambda
_{1}\left( f_{\Omega }\right) $, there holds $\mathbf{N}_{\mu ,\lambda }=%
\mathbf{N}_{\mu ,\lambda }^{-}$ and $\mathbf{N}_{\mu ,\lambda
}^{-}=\{t_{\max }(u)u:u\in \Theta _{\mu }^{+}\left( p\right) \}$ for $\mu >0$
sufficiently large.\newline
$\left( ii\right) $ For each $a>\Gamma _{0}$ and $0<\lambda <\lambda
_{1}\left( f_{\Omega }\right) ,$ there holds $\mathbf{N}_{\mu ,\lambda
}=\emptyset $ for $\mu >0$ sufficiently large.\newline
$\left( iii\right) $ For each $a>\Gamma _{0}$ and $\lambda \geq \lambda
_{1}\left( f_{\Omega }\right) ,$ there holds $\mathbf{N}_{\mu ,\lambda }=%
\mathbf{N}_{\mu ,\lambda }^{+}$ and $\mathbf{N}_{\mu ,\lambda
}^{+}=\{t_{\min }(u)u:u\in \Theta _{\mu }^{-}\left( p\right) \}$ for $\mu >0$
sufficiently large.
\end{lemma}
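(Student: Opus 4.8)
The plan is to push everything through the signs of the two coefficients of the fibering map, which for $p=4$ collapses to a biquadratic. Writing $\alpha(u):=a\left\Vert u\right\Vert_{D^{1,2}}^{4}-\int_{\mathbb{R}^{3}}Q|u|^{4}dx=-\Phi_{p}(u)$ and $\beta(u):=\left\Vert u\right\Vert_{\mu}^{2}-\lambda\int_{\mathbb{R}^{3}}fu^{2}dx$, one has
\[
h_{u}(t)=\tfrac{t^{4}}{4}\alpha(u)+\tfrac{t^{2}}{2}\beta(u),\qquad h_{u}^{\prime}(t)=t\bigl(t^{2}\alpha(u)+\beta(u)\bigr),
\]
so on $\mathbf{N}_{\mu,\lambda}$ the identity $h_{u}^{\prime}(1)=0$ reads $\alpha(u)+\beta(u)=0$, whence $h_{u}^{\prime\prime}(1)=3\alpha(u)+\beta(u)=2\alpha(u)=-2\beta(u)$. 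I would fix the two signs by two independent tools. First, the definition of $\Gamma_{0}$ gives $\int Q|u|^{4}dx\leq\Gamma_{0}\left\Vert u\right\Vert_{D^{1,2}}^{4}$, hence $\alpha(u)\geq(a-\Gamma_{0})\left\Vert u\right\Vert_{D^{1,2}}^{4}$; thus $a>\Gamma_{0}$ forces $\alpha(u)>0$ for every $u\neq 0$, i.e.\ $\Theta_{\mu}^{+}(4)=\emptyset$ and $\Theta_{\mu}^{-}(4)$ is the whole unit sphere, while $a<\Gamma_{0}$ produces some $u$ with $\alpha(u)<0$, i.e.\ $\Theta_{\mu}^{+}(4)\neq\emptyset$. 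Second, $(\ref{10})$ together with Lemma \ref{g3-0} controls $\beta$: for $0<\lambda<\lambda_{1}(f_{\Omega})$ and $\mu$ large one has $\beta(u)>0$ for all $u\neq 0$ (so $\Lambda_{\mu}^{-}=\Lambda_{\mu}^{0}=\emptyset$), whereas for $\lambda\geq\lambda_{1}(f_{\Omega})$ the strict inequality $\widetilde{\lambda}_{1,\mu}(f)<\lambda_{1}(f_{\Omega})\leq\lambda$ makes the normalized minimizer $\varphi_{\mu}/\left\Vert\varphi_{\mu}\right\Vert_{\mu}$ lie in $\Lambda_{\mu}^{-}$, while points with $\int fu^{2}dx\leq 0$ keep $\Lambda_{\mu}^{+}\neq\emptyset$.

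With the signs in hand each case is immediate. For $(i)$ the relation $\beta>0$ everywhere gives $h_{u}^{\prime\prime}(1)=-2\beta(u)<0$ on $\mathbf{N}_{\mu,\lambda}$, so $\mathbf{N}_{\mu,\lambda}^{0}=\mathbf{N}_{\mu,\lambda}^{+}=\emptyset$ and $\mathbf{N}_{\mu,\lambda}=\mathbf{N}_{\mu,\lambda}^{-}$; Lemma \ref{g2-3}$(i)$ together with $\Lambda_{\mu}^{+}$ being the full sphere then identifies $\mathbf{N}_{\mu,\lambda}^{-}$ with $\{t_{\max}(u)u:u\in\Theta_{\mu}^{+}(4)\}$, where $t_{\max}(u)$ is the unique positive root of $t^{2}=-\beta(u)/\alpha(u)$ (which exists since $\alpha(u)<0<\beta(u)$ there), and $\Theta_{\mu}^{+}(4)\neq\emptyset$ because $a<\Gamma_{0}$. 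For $(ii)$ both $\alpha(u)>0$ and $\beta(u)>0$ for all $u\neq 0$, so the defining identity $\alpha(u)+\beta(u)=0$ of $\mathbf{N}_{\mu,\lambda}$ can never hold; hence $\mathbf{N}_{\mu,\lambda}=\emptyset$.

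For $(iii)$, $\alpha>0$ everywhere gives $h_{u}^{\prime\prime}(1)=2\alpha(u)>0$ on $\mathbf{N}_{\mu,\lambda}$, so $\mathbf{N}_{\mu,\lambda}^{0}=\mathbf{N}_{\mu,\lambda}^{-}=\emptyset$ and $\mathbf{N}_{\mu,\lambda}=\mathbf{N}_{\mu,\lambda}^{+}$; by Lemma \ref{g2-3}$(ii)$ this set is $\{t_{\min}(u)u:u\in\Lambda_{\mu}^{-}\cap\Theta_{\mu}^{-}(4)\}$, with $t_{\min}(u)$ the unique positive root of $t^{2}=-\beta(u)/\alpha(u)$. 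Since $a>\Gamma_{0}$ makes $\Theta_{\mu}^{-}(4)$ the whole sphere and $t_{\min}(u)$ exists precisely when $\beta(u)<0$, i.e.\ exactly for $u\in\Lambda_{\mu}^{-}$, this coincides with $\{t_{\min}(u)u:u\in\Theta_{\mu}^{-}(4)\}$ as stated, and it is nonempty because $\Lambda_{\mu}^{-}\neq\emptyset$.

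The existence, uniqueness, and dependence on $u$ of $t_{\max}$ and $t_{\min}$ are the routine part, already recorded in the fibering-map discussion preceding Lemma \ref{g2-2} and requiring nothing beyond solving $t^{2}=-\beta(u)/\alpha(u)$. The genuinely delicate point, which I expect to be the crux rather than the manifold bookkeeping, is the uniform-in-$\mu$ control of the sign of $\beta$: this is exactly what Lemma \ref{g3-0} supplies, since it both guarantees $\widetilde{\lambda}_{1,\mu}(f)<\lambda_{1}(f_{\Omega})$ for all admissible $\mu$ and yields $\widetilde{\lambda}_{1,\mu}(f)\rightarrow\lambda_{1}^{-}(f_{\Omega})$, so that in $(i)$ one may take $\mu$ so large that $\lambda<\widetilde{\lambda}_{1,\mu}(f)$ (forcing $\beta>0$ via $(\ref{10})$), while in $(iii)$ the strict gap $\widetilde{\lambda}_{1,\mu}(f)<\lambda_{1}(f_{\Omega})\leq\lambda$ furnishes the minimizer witnessing $\Lambda_{\mu}^{-}\neq\emptyset$.
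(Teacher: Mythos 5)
Your proof is correct and follows essentially the same route as the paper: the sign of the quartic coefficient is pinned down by the definition of $\Gamma_{0}$ (so that $\Theta_{\mu}^{\pm}(4)$ is empty or the whole sphere as appropriate), and the sign of the quadratic coefficient by the comparison of $\lambda$ with $\widetilde{\lambda}_{1,\mu}(f)$ via Lemma \ref{g3-0} and $(\ref{10})$, with $\varphi_{\mu}$ witnessing $\Lambda_{\mu}^{-}\neq\emptyset$ in case $(iii)$. The only difference is presentational: you carry out the biquadratic fibering-map computation explicitly (the identity $h_{u}^{\prime\prime}(1)=2\alpha(u)=-2\beta(u)$ on $\mathbf{N}_{\mu,\lambda}$) where the paper instead cites Lemmas \ref{g2-3} and \ref{g2-4}.
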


\begin{proof}
$\left( i\right) $ The proof is almost the same as Lemma \ref{g2-4}, and we
omit it here.\newline
$\left( ii\right) $ Since
\begin{equation*}
\int_{\mathbb{R}^{3}}Q|u|^{4}dx\leq \Gamma _{0}\left\Vert u\right\Vert
_{D^{1,2}}^{4}\text{ for all }u\in X,
\end{equation*}%
we can obtain%
\begin{equation}
\Phi _{p}\left( u\right) =\int_{\mathbb{R}^{3}}Q|u|^{4}dx-a\left\Vert
u\right\Vert _{D^{1,2}}^{4}<0  \label{3-3}
\end{equation}%
for all $a>\Gamma _{0}$ and $u\in X,$ this implies that $\Theta _{\mu
}^{+}\left( p\right) \cup \Theta _{\mu }^{0}\left( p\right) =\emptyset .$
Moreover, $\Lambda _{\mu }^{-}\cup \Lambda _{\mu }^{0}=\emptyset $ for $\mu
>0$ sufficiently large, by Lemma \ref{g2-2}, $\mathbf{N}_{\mu ,\lambda
}=\emptyset $ for $\mu >0$ sufficiently large.\newline
$\left( iii\right) $ By Lemma \ref{g3-0}, there exists a positive function $%
\varphi _{\mu }\in X$ such that $\int_{\mathbb{R}^{3}}f\varphi _{\mu
}^{2}dx=1$ and
\begin{equation*}
\widetilde{\lambda }_{1,\mu }\left( f\right) =\int_{\mathbb{R}%
^{3}}\left\vert \nabla \varphi _{\mu }\right\vert ^{2}+\mu V\varphi _{\mu
}^{2}dx<\lambda _{1}\left( f_{\Omega }\right) \text{ for }\mu >0\text{
sufficiently large.}
\end{equation*}%
If $\lambda \geq \lambda _{1}\left( f_{\Omega }\right) $, then%
\begin{equation}
\int_{\mathbb{R}^{3}}\left\vert \nabla \varphi _{\mu }\right\vert ^{2}+\mu
V\varphi _{\mu }^{2}dx-\lambda \int_{\mathbb{R}^{3}}f\varphi _{\mu }^{2}dx=%
\widetilde{\lambda }_{1,\mu }\left( f\right) -\lambda <0,  \label{3-6}
\end{equation}%
and so $\varphi _{\mu }\in \Lambda _{\mu }^{-}$ for $\mu >0$ sufficiently
large. Thus, by $\left( \ref{3-3}\right) ,$ $\Lambda _{\mu }^{-}\cap \Theta
_{\mu }^{-}\left( p\right) \neq \emptyset $ and $\Theta _{\mu }^{+}\left(
p\right) \cup \Theta _{\mu }^{0}\left( p\right) =\emptyset .$ Then by Lemma %
\ref{g2-3},
\begin{equation*}
\mathbf{N}_{\mu ,\lambda }=\mathbf{N}_{\mu ,\lambda }^{+}=\{t_{\min
}(u)u:u\in \Theta _{\mu }^{-}\left( p\right) \}
\end{equation*}%
for $\mu >0$ sufficiently large. This completes the proof.
\end{proof}

If $\lambda \geq \lambda _{1}\left( f_{\Omega }\right) $, then by $\left( %
\ref{3-6}\right) ,$ $\varphi _{\mu }\in \Lambda _{\mu }^{-}$ for $\mu >0$
sufficiently large. Moreover, if $\Phi _{p}\left( \phi _{1}\right) <0$, then
by Lemma \ref{g3-0}, $\Phi _{p}\left( \varphi _{\mu }\right) <0$ for $\mu >0$
sufficiently large, this implies that $\varphi _{\mu }\in \Lambda _{\mu
}^{-}\cap \Theta _{\mu }^{-}\left( p\right) $ and so $\mathbf{N}_{\mu
,\lambda }^{+}\neq \emptyset $ for $\mu >0$ sufficiently large. Thus, as
well shall see, $\mathbf{N}_{\mu ,\lambda }$ may consist of two distinct
components in this case which makes it possible to prove the existence of at
least two positive solutions by showing that $J_{\mu ,\lambda }$ has an
appropriate minimizer on each component.

Moreover, if $\lambda \geq \lambda _{1}\left( f_{\Omega }\right) $, then
roughly speaking $\left\Vert u\right\Vert _{\mu }^{2}\leq \lambda \int_{%
\mathbb{R}^{3}}fu^{2}dx$ if and only if $u$ is almost a multiple of $\phi
_{1}$ for $\mu $ sufficiently large. Thus, if $\phi _{1}\in \Theta _{\mu
}^{-}\left( p\right) $, it should follow that $\overline{\Lambda _{\mu }^{-}}%
\cap \overline{\Theta _{\mu }^{+}}\left( p\right) =\emptyset $ for $\mu >0$
sufficiently large. This is made precise in the following lemma and we show
subsequently that $\overline{\Lambda _{\mu }^{-}}\cap \overline{\Theta _{\mu
}^{+}}\left( p\right) =\emptyset $ is an important condition for
establishing the existence of minimizers.

\begin{theorem}
\label{g4-1}Suppose that $N=3,4\leq p<6$ and $\Phi _{p}\left( \phi
_{1}\right) <0.$ Then for each $a>0$ there exists $\delta _{0}>0$ such that
for every $\lambda _{1}\left( f_{\Omega }\right) \leq \lambda <\lambda
_{1}\left( f_{\Omega }\right) +\delta _{0},$ there holds $\overline{\Lambda
_{\mu }^{-}}\cap \overline{\Theta _{\mu }^{+}}\left( p\right) =\emptyset $
for $\mu >0$ sufficiently large.
\end{theorem}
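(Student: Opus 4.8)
The plan is to argue by contradiction, using the compactness supplied by Theorem \ref{g4-0} together with the variational characterization of $\lambda_1(f_\Omega)$. Suppose the conclusion fails for some $a>0$. Then there are sequences $\lambda_n\to\lambda_1(f_\Omega)$ with $\lambda_n\geq\lambda_1(f_\Omega)$, and $\mu_n\to\infty$, and functions $u_n\in\overline{\Lambda_{\mu_n}^-}\cap\overline{\Theta_{\mu_n}^+}(p)$ (with $\lambda=\lambda_n$). Unwinding the definitions, each $u_n$ satisfies $\|u_n\|_{\mu_n}=1$, $\int_{\mathbb{R}^3}fu_n^2\,dx\geq\lambda_n^{-1}$, and $\Phi_p(u_n)\geq0$. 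Since $\|u_n\|_{\mu_n}=1$ is bounded and $\mu_n\to\infty$, Theorem \ref{g4-0} gives, after passing to a subsequence, a limit $u_0\in H_0^1(\Omega)$ with $u_n\rightharpoonup u_0$ in $X$ and $u_n\to u_0$ in $L^r(\mathbb{R}^3)$ for all $2\leq r<6$.

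The first step is to identify $u_0$ as a normalized multiple of $\phi_1$. Using $(D_1)$ and the strong $L^r$ convergence, I pass to the limit in $\int fu_n^2\,dx$ to obtain $\int_\Omega f_\Omega u_0^2\,dx\geq\lambda_1^{-1}(f_\Omega)>0$; in particular $u_0\neq0$. Weak lower semicontinuity of the Dirichlet integral gives $\int_\Omega|\nabla u_0|^2\,dx\leq\liminf_n\|u_n\|_{\mu_n}^2=1$. Feeding these two inequalities into the Rayleigh quotient and the characterization $\lambda_1(f_\Omega)=\inf\{\int_\Omega|\nabla v|^2\,dx:\ v\in H_0^1(\Omega),\ \int_\Omega f_\Omega v^2\,dx=1\}$ yields
\[
\lambda_1(f_\Omega)\leq\frac{\int_\Omega|\nabla u_0|^2\,dx}{\int_\Omega f_\Omega u_0^2\,dx}\leq\frac{1}{\lambda_1^{-1}(f_\Omega)}=\lambda_1(f_\Omega),
\]
forcing equality throughout. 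Hence $\int_\Omega|\nabla u_0|^2\,dx=1$, $\int_\Omega f_\Omega u_0^2\,dx=\lambda_1^{-1}(f_\Omega)$, and $u_0$ realizes the infimum, so by simplicity of the principal eigenvalue $u_0=\pm\phi_1/\sqrt{\lambda_1(f_\Omega)}$.

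The second step is to contradict $\Phi_p(\phi_1)<0$ by passing to the limit in $\Phi_p(u_n)\geq0$. When $4<p<6$, strong $L^p$ convergence and $Q\in L^\infty$ give $\int Q|u_n|^p\,dx\to\int Q|u_0|^p\,dx=\lambda_1^{-p/2}(f_\Omega)\int_\Omega Q\phi_1^p\,dx=\lambda_1^{-p/2}(f_\Omega)\,\Phi_p(\phi_1)<0$, contradicting $\Phi_p(u_n)\geq0$. When $p=4$, I must also control the nonlocal term $a\|u_n\|_{D^{1,2}}^4$: since $\int_\Omega|\nabla u_0|^2\,dx=1$ while $\int_{\mathbb{R}^3}|\nabla u_n|^2\,dx\leq\|u_n\|_{\mu_n}^2=1$, weak lower semicontinuity squeezes $\int|\nabla u_n|^2\,dx\to1$ (equivalently $\mu_n\int Vu_n^2\,dx\to0$), whence $\Phi_4(u_n)\to\lambda_1^{-2}(f_\Omega)\int_\Omega Q\phi_1^4\,dx-a=\lambda_1^{-2}(f_\Omega)\,\Phi_4(\phi_1)<0$, again contradicting $\Phi_4(u_n)\geq0$.

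The main obstacle is the precise identification of the weak limit in the first step: I must ensure that the two one-sided estimates (lower semicontinuity of the gradient and convergence of the weighted $L^2$ mass) pinch to equalities, so that $u_0$ is forced onto the one-dimensional eigenspace spanned by $\phi_1$ rather than merely satisfying the eigenvalue inequality; this in turn relies on the strong $L^r$ convergence of Theorem \ref{g4-0} being available up to, but excluding, the critical exponent $2^\ast=6$, which is exactly why $p<6$ is assumed. The case $p=4$ is the more delicate one, since the subtracted nonlocal term is lower semicontinuous in the unfavorable direction, so the squeeze yielding $\int|\nabla u_n|^2\,dx\to1$ is indispensable and must be carried out before taking the limit of $\Phi_4(u_n)$.
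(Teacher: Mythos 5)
Your argument is correct and follows essentially the same route as the paper: argue by contradiction, extract a limit $u_0\in H_0^1(\Omega)$ via Theorem \ref{g4-0}, force $u_0$ onto the principal eigenspace, and contradict $\Phi_p(\phi_1)<0$. The only cosmetic difference is that you identify $u_0=\pm\phi_1/\sqrt{\lambda_1(f_\Omega)}$ by pinching the Rayleigh quotient between the two one-sided estimates, whereas the paper first proves $\int|\nabla u_n|^2\to\int|\nabla u_0|^2$ by a strict-lower-semicontinuity contradiction and then concludes $u_0=k\phi_1$ with $k\neq 0$; your explicit treatment of the $p=4$ nonlocal term (via $\int|\nabla u_n|^2\to 1$) carries out the case the paper leaves as ``essentially similar.''
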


\begin{proof}
We may divide the proof into the two cases: $\left( I\right) $ $4<p<6$ and $%
\left( II\right) $ $p=4.$\newline
Case $\left( I\right) :4<p<6.$ Suppose that the result is false. Then there
exist sequences $\{\mu _{n}\},\{\lambda _{n}\}$ and $\{u_{n}\}\subset X$
with $\lambda _{n}\rightarrow \lambda _{1}^{+}\left( f_{\Omega }\right) $
and $\mu _{n}\rightarrow \infty $ such that $\Vert u_{n}\Vert _{\mu _{n}}=1$
and%
\begin{equation}
\Vert u_{n}\Vert _{\mu _{n}}^{2}-\lambda _{n}\int_{\mathbb{R}%
^{3}}fu_{n}^{2}dx\leq 0,\quad \int_{\mathbb{R}^{3}}Q|u_{n}|^{p}dx\geq 0.
\label{e16}
\end{equation}%
By Theorem \ref{g4-0}, we may assume that there exists $u_{0}\in
H_{0}^{1}\left( \Omega \right) $ such that $u_{n}\rightharpoonup u_{0}$ in $%
X $ and $u_{n}\rightarrow u_{0}$ in $L^{r}\left( \mathbb{R}^{3}\right) $ for
all $2\leq r<6.$ Then by $\left( \ref{e16}\right) ,$%
\begin{equation}
\lim_{n\rightarrow \infty }\lambda _{n}\int_{\mathbb{R}^{3}}fu_{n}^{2}dx=%
\lambda _{1}\left( f_{\Omega }\right) \int_{\mathbb{R}^{3}}fu_{0}^{2}dx\geq 1
\label{e17}
\end{equation}%
and%
\begin{equation*}
\lim_{n\rightarrow \infty }\int_{\mathbb{R}^{3}}Q|u_{n}|^{p}dx=\int_{\mathbb{%
R}^{3}}Q|u_{0}|^{p}dx.
\end{equation*}%
Now, we show that $\lim_{n\rightarrow \infty }\int_{\mathbb{R}^{3}}|\nabla
u_{n}|^{2}dx=\int_{\Omega }|\nabla u_{0}|^{2}dx.$ Suppose on the contrary.
Then by $\left( \ref{e17}\right) ,$%
\begin{eqnarray*}
\int_{\Omega }\left( |\nabla u_{0}|^{2}-\lambda _{1}\left( f_{\Omega
}\right) f_{\Omega }u_{0}^{2}\right) dx &=&\int_{\mathbb{R}^{3}}\left(
|\nabla u_{0}|^{2}-\lambda _{1}\left( f_{\Omega }\right) fu_{0}^{2}\right) dx
\\
&<&\liminf_{n\rightarrow \infty }\left( \left\Vert u_{n}\right\Vert _{\mu
_{n}}^{2}-\lambda _{n}\int_{\mathbb{R}^{3}}fu_{n}^{2}dx\right) \leq 0,
\end{eqnarray*}%
which is impossible. Hence $\lim_{n\rightarrow \infty }\int_{\Omega }|\nabla
u_{n}|^{2}dx=\int_{\Omega }|\nabla u_{0}|^{2}dx.$ It follows that%
\begin{equation*}
(i)\int_{\Omega }(|\nabla u_{0}|^{2}-\lambda _{1}\left( f_{\Omega }\right)
f_{\Omega }u_{0}^{2})dx\leq 0,\quad (ii)\;\int_{\mathbb{R}%
^{3}}Q|u_{0}|^{p}dx\geq 0.
\end{equation*}%
But $(i)$ implies that $u_{0}=k\phi _{1}$ for some $k$ and then $(ii)$
implies that $k=0$ which is impossible as $\lambda _{1}\left( f_{\Omega
}\right) \int_{\mathbb{R}^{3}}fu_{0}^{2}dx\geq 1.$ Thus, there exists $%
\delta _{0}>0$ and $\widehat{\mu }_{0}\geq \mu _{0}$ such that $\overline{%
\Lambda _{\mu }^{-}}\cap \overline{\Theta _{\mu }^{+}}\left( p\right)
=\emptyset $ for all $\lambda _{1}\left( f_{\Omega }\right) \leq \lambda
<\lambda _{1}\left( f_{\Omega }\right) +\delta _{0}$ and $\mu >\widehat{\mu }%
_{0}.$ Moreover, if $\mathbf{N}_{\mu ,\lambda }^{0}\neq \emptyset ,$ then
there exists $u_{0}\in \mathbf{N}_{\mu ,\lambda }^{0}$ such that $\frac{u_{0}%
}{\Vert u_{0}\Vert _{\mu }}\in \Lambda _{\mu }^{0}\cap \Theta _{\mu
}^{0}\left( p\right) \subset \overline{\Lambda _{\mu }^{-}}\cap \overline{%
\Theta _{\mu }^{+}}\left( p\right) =\emptyset $ which is impossible.
Therefore, $\mathbf{N}_{\mu ,\lambda }^{0}=\emptyset $ for all $\lambda
_{1}\left( f_{\Omega }\right) \leq \lambda <\lambda _{1}\left( f_{\Omega
}\right) +\delta _{0}$ and for $\mu >0$ sufficiently large.\newline
Case $\left( II\right) :p=4.$ The proof of $p=4$ is essentially similar in
Case $\left( I\right) $, so we omit it here. This completes the proof.
\end{proof}

\begin{lemma}
\label{g4-3}Suppose that $N=3,4<p<6$ and $\int_{\Omega }Q\phi _{1}^{p}dx<0.$
Let $\delta _{0}>0$ be as in Theorem \ref{g4-1}. Then for each $a>0$ and $%
\lambda _{1}\left( f_{\Omega }\right) \leq \lambda <\lambda _{1}\left(
f_{\Omega }\right) +\delta _{0}$, there holds $\mathbf{N}_{\mu ,\lambda }=%
\mathbf{N}_{\mu ,\lambda }^{-}\cup \mathbf{N}_{\mu ,\lambda }^{+}$ for $\mu
>0$ sufficiently large. Furthermore, $\mathbf{N}_{\mu ,\lambda }^{\pm }$ are
nonempty sets for $\mu >0$ sufficiently large.
\end{lemma}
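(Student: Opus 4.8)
The plan is to deduce the statement by combining Theorem~\ref{g4-1}, Lemma~\ref{g3-0} and Lemma~\ref{g2-2}. Fix $a>0$ and $\lambda_{1}\left(f_{\Omega}\right)\le\lambda<\lambda_{1}\left(f_{\Omega}\right)+\delta_{0}$ with $\delta_{0}$ as in Theorem~\ref{g4-1}, and keep $\mu>0$ large. Since $4<p<6$ we have $\Phi_{p}\left(\phi_{1}\right)=\int_{\Omega}Q\phi_{1}^{p}\,dx<0$, so Theorem~\ref{g4-1} applies and gives $\overline{\Lambda_{\mu}^{-}}\cap\overline{\Theta_{\mu}^{+}}\left(p\right)=\emptyset$ together with $\mathbf{N}_{\mu,\lambda}^{0}=\emptyset$ for $\mu$ large. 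Because $\mathbf{N}_{\mu,\lambda}$ is by definition the disjoint union of $\mathbf{N}_{\mu,\lambda}^{+}$, $\mathbf{N}_{\mu,\lambda}^{0}$ and $\mathbf{N}_{\mu,\lambda}^{-}$, the emptiness of $\mathbf{N}_{\mu,\lambda}^{0}$ is exactly the first assertion $\mathbf{N}_{\mu,\lambda}=\mathbf{N}_{\mu,\lambda}^{-}\cup\mathbf{N}_{\mu,\lambda}^{+}$.

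To show $\mathbf{N}_{\mu,\lambda}^{+}\neq\emptyset$ I would use the minimizer $\varphi_{\mu}$ from Lemma~\ref{g3-0}, which satisfies $\int_{\mathbb{R}^{3}}f\varphi_{\mu}^{2}\,dx=1$ and $\left\Vert\varphi_{\mu}\right\Vert_{\mu}^{2}=\widetilde{\lambda}_{1,\mu}\left(f\right)<\lambda_{1}\left(f_{\Omega}\right)\le\lambda$. Hence $\left\Vert\varphi_{\mu}\right\Vert_{\mu}^{2}-\lambda\int_{\mathbb{R}^{3}}f\varphi_{\mu}^{2}\,dx=\widetilde{\lambda}_{1,\mu}\left(f\right)-\lambda<0$, so after scaling to the unit sphere $\varphi_{\mu}\in\Lambda_{\mu}^{-}$. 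Moreover $\varphi_{\mu}\to\phi_{1}$ in $L^{r}$ for $2\le r<6$ and $Q\in L^{\infty}$, whence $\int_{\mathbb{R}^{3}}Q\left\vert\varphi_{\mu}\right\vert^{p}\,dx\to\int_{\Omega}Q\phi_{1}^{p}\,dx<0$, giving $\varphi_{\mu}\in\Theta_{\mu}^{-}\left(p\right)$ for $\mu$ large. By Lemma~\ref{g2-2}(ii) the multiple $t_{\min}\left(\varphi_{\mu}\right)\varphi_{\mu}$ then lies in $\mathbf{N}_{\mu,\lambda}^{+}$.

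For $\mathbf{N}_{\mu,\lambda}^{-}\neq\emptyset$ the key observation is that the empty-intersection property forces $\Theta_{\mu}^{+}\left(p\right)\subset\Lambda_{\mu}^{+}$: writing $\overline{\Lambda_{\mu}^{-}}=\{u:\left\Vert u\right\Vert_{\mu}=1,\ \left\Vert u\right\Vert_{\mu}^{2}-\lambda\int fu^{2}\le0\}$ and $\overline{\Theta_{\mu}^{+}}\left(p\right)=\{u:\left\Vert u\right\Vert_{\mu}=1,\ \Phi_{p}\left(u\right)\ge0\}$, disjointness means that every unit vector with $\Phi_{p}\left(u\right)\ge0$ must satisfy $\left\Vert u\right\Vert_{\mu}^{2}-\lambda\int fu^{2}>0$. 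It therefore suffices to produce a single element of $\Theta_{\mu}^{+}\left(p\right)$, and this is where condition $(D_{2})$ enters: since $Q^{+}\not\equiv0$ in $\Omega$, the set $\{Q>0\}\cap\Omega$ has positive measure and I can choose $w\in H_{0}^{1}\left(\Omega\right)\subset X$ with $\int_{\mathbb{R}^{3}}Q\left\vert w\right\vert^{p}\,dx>0$. Then $w/\left\Vert w\right\Vert_{\mu}\in\Lambda_{\mu}^{+}\cap\Theta_{\mu}^{+}\left(p\right)$, and Lemma~\ref{g2-2}(i) delivers $t_{\max}\left(w\right)w\in\mathbf{N}_{\mu,\lambda}^{-}$.

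The routine parts are the homogeneity bookkeeping that lets one pass freely between $u$ and $u/\left\Vert u\right\Vert_{\mu}$ in the definitions of $\Lambda_{\mu}^{\pm}$ and $\Theta_{\mu}^{\pm}\left(p\right)$. The main obstacle is the limit $\int_{\mathbb{R}^{3}}Q\left\vert\varphi_{\mu}\right\vert^{p}\,dx\to\int_{\Omega}Q\phi_{1}^{p}\,dx$, which must be justified through the $L^{r}$-convergence $\varphi_{\mu}\to\phi_{1}$ of Lemma~\ref{g3-0} (itself based on Theorem~\ref{g4-0}) combined with $Q\in L^{\infty}$, so that the strict sign $\Phi_{p}\left(\varphi_{\mu}\right)<0$ is inherited for all large $\mu$; one must also check that a single threshold can be taken large enough for Theorem~\ref{g4-1}, the sign of $\Phi_{p}\left(\varphi_{\mu}\right)$, and the inequality $\widetilde{\lambda}_{1,\mu}\left(f\right)<\lambda$ to hold at once.
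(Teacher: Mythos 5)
Your proposal is correct and follows essentially the same route as the paper: invoke Theorem \ref{g4-1} to kill $\mathbf{N}_{\mu ,\lambda }^{0}$, use $\varphi _{\mu }\in \Lambda _{\mu }^{-}\cap \Theta _{\mu }^{-}\left( p\right) $ from Lemma \ref{g3-0} together with $\int_{\Omega }Q\phi _{1}^{p}dx<0$ for $\mathbf{N}_{\mu ,\lambda }^{+}\neq \emptyset $, and an element of $\Theta _{\mu }^{+}\left( p\right) $ supplied by $\left( D_{2}\right) $ for $\mathbf{N}_{\mu ,\lambda }^{-}\neq \emptyset $. Your observation that the disjointness $\overline{\Lambda _{\mu }^{-}}\cap \overline{\Theta _{\mu }^{+}}\left( p\right) =\emptyset $ forces $\Theta _{\mu }^{+}\left( p\right) \subset \Lambda _{\mu }^{+}$ is a clean justification of a step the paper states without detail.
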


\begin{proof}
Since $\int_{\Omega }Q\phi _{1}^{p}dx<0,$ by Lemma \ref{g3-0}, there exists
a positive function $\varphi _{\mu }\in X$ such that
\begin{equation*}
\widetilde{\lambda }_{1,\mu }\left( f\right) =\int_{\mathbb{R}%
^{3}}\left\vert \nabla \varphi _{\mu }\right\vert ^{2}+\mu V\varphi _{\mu
}^{2}dx<\lambda _{1}\left( f_{\Omega }\right)
\end{equation*}%
and%
\begin{equation*}
\varphi _{\mu }\rightarrow \phi _{1}\text{ in }X\text{ as }\mu \rightarrow
\infty .
\end{equation*}%
Hence, for $\mu >0$ large enough,%
\begin{equation*}
\int_{\mathbb{R}^{3}}Q|\varphi _{\mu }|^{p}dx<0,
\end{equation*}%
this implies that $\varphi _{\mu }\in \Theta _{\mu }^{-}\left( p\right) $
for $\mu >0$ sufficiently large. Moreover, $\left\Vert \varphi _{\mu
}\right\Vert _{\mu }^{2}<\lambda \int_{\mathbb{R}^{3}}f\varphi _{\mu }^{2}dx$
for all $\lambda \geq \lambda _{1}\left( f_{\Omega }\right) ,$ we have $%
\varphi _{\mu }\in \Lambda _{\mu }^{-}\cap \Theta _{\mu }^{-}\left( p\right)
$ for $\mu >0$ sufficiently large. Next, by conditions $\left( D_{1}\right) $
and $\left( D_{2}\right) ,$ we have $\Lambda _{\mu }^{+}\cap \Theta _{\mu
}^{+}\left( p\right) \neq \emptyset .$ Thus, by Lemma \ref{g2-2} $\left(
i\right) ,\left( ii\right) ,$ $\mathbf{N}_{\mu ,\lambda }^{\pm }\neq
\emptyset .$ Moreover, by Theorem \ref{g4-1}, $\overline{\Lambda _{\mu }^{-}}%
\cap \overline{\Theta _{\mu }^{+}}\left( p\right) =\emptyset $ for $\mu >0$
sufficiently large. Then by Lemma \ref{g2-2} $\left( iii\right) $ this
implies that $\mathbf{N}_{\mu ,\lambda }^{0}=\emptyset $ or $\mathbf{N}_{\mu
,\lambda }=\mathbf{N}_{\mu ,\lambda }^{-}\cup \mathbf{N}_{\mu ,\lambda }^{+}$
for $\mu >0$ sufficiently large. This completes the proof.
\end{proof}

\begin{lemma}
\label{g4-4}Suppose that $N=3,p=4$ and $\Phi _{p}\left( \phi _{1}\right) <0.$
Then there exists $\delta _{0}>0$ such that for every $\lambda _{1}\left(
f_{\Omega }\right) \leq \lambda <\lambda _{1}\left( f_{\Omega }\right)
+\delta _{0}$, there holds $\mathbf{N}_{\mu ,\lambda }=\mathbf{N}_{\mu
,\lambda }^{-}\cup \mathbf{N}_{\mu ,\lambda }^{+}$ for $\mu >0$ sufficiently
large. Furthermore, $\mathbf{N}_{\mu ,\lambda }^{\pm }$ are nonempty sets
for $\mu >0$ sufficiently large.
\end{lemma}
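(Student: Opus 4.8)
The plan is to run the argument of Lemma \ref{g4-3} in the $p=4$ setting, replacing Lemma \ref{g2-2} by Lemma \ref{g2-3} and the quantity $\int_{\mathbb{R}^3}Q|u|^p\,dx$ by the modified functional $\Phi_4(u)=\int_{\mathbb{R}^3}Q|u|^4\,dx-a\|u\|_{D^{1,2}}^4$. Since the hypothesis $\Phi_4(\phi_1)<0$ is exactly what Theorem \ref{g4-1} requires in its case $p=4$, I first let $\delta_0>0$ be the constant it provides; then for all $\lambda_1(f_\Omega)\le\lambda<\lambda_1(f_\Omega)+\delta_0$ and $\mu$ large one has $\overline{\Lambda_\mu^-}\cap\overline{\Theta_\mu^+}(4)=\emptyset$, and the same theorem yields $\mathbf{N}_{\mu,\lambda}^0=\emptyset$ (if $u_0\in\mathbf{N}_{\mu,\lambda}^0$ then, using that the three expressions for $h_{u_0}''(1)$ coincide on $\mathbf{N}_{\mu,\lambda}$, one gets $u_0/\|u_0\|_\mu\in\Lambda_\mu^0\cap\Theta_\mu^0(4)\subset\overline{\Lambda_\mu^-}\cap\overline{\Theta_\mu^+}(4)$, a contradiction). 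Hence $\mathbf{N}_{\mu,\lambda}=\mathbf{N}_{\mu,\lambda}^-\cup\mathbf{N}_{\mu,\lambda}^+$, which is the first assertion.

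For $\mathbf{N}_{\mu,\lambda}^+\neq\emptyset$ I would test with the positive eigenfunction $\varphi_\mu$ of Lemma \ref{g3-0}. From $\varphi_\mu\to\phi_1$ one obtains $\int_{\mathbb{R}^3}Q|\varphi_\mu|^4\,dx\to\int_\Omega Q\phi_1^4\,dx$ together with $\|\varphi_\mu\|_{D^{1,2}}^2\to\lambda_1(f_\Omega)$, so $\Phi_4(\varphi_\mu)\to\Phi_4(\phi_1)<0$ and therefore $\varphi_\mu\in\Theta_\mu^-(4)$ for $\mu$ large. Moreover $\|\varphi_\mu\|_\mu^2=\widetilde\lambda_{1,\mu}(f)<\lambda_1(f_\Omega)\le\lambda=\lambda\int_{\mathbb{R}^3}f\varphi_\mu^2\,dx$, so $\varphi_\mu\in\Lambda_\mu^-$. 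Thus $\varphi_\mu\in\Lambda_\mu^-\cap\Theta_\mu^-(4)$, and Lemma \ref{g2-3}$(ii)$ places a multiple of $\varphi_\mu$ in $\mathbf{N}_{\mu,\lambda}^+$.

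The genuinely new point, and the main obstacle, is $\mathbf{N}_{\mu,\lambda}^-\neq\emptyset$, for which Lemma \ref{g2-3}$(i)$ needs $\Lambda_\mu^+\cap\Theta_\mu^+(4)\neq\emptyset$. In contrast with the range $4<p<6$, nonemptiness of $\Theta_\mu^+(4)$ does not follow merely from $(D_2)$: because $\Phi_4$ is $4$-homogeneous, $\Theta_\mu^+(4)\neq\emptyset$ is equivalent to the existence of $w$ with $\int_{\mathbb{R}^3}Q|w|^4\,dx>a\|w\|_{D^{1,2}}^4$, i.e. to $a<\Gamma_0$. This upper bound on $a$ is precisely the one carried by the applications of this lemma (Theorem \ref{t4}), so I would assume $a<\Gamma_0$, pick such a $w$ and normalize it to $\|w\|_\mu=1$, obtaining $w\in\Theta_\mu^+(4)$. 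To place $w$ in $\Lambda_\mu^+$ I would reuse the disjointness of the first step: since $w\in\overline{\Theta_\mu^+}(4)$ and $\overline{\Lambda_\mu^-}\cap\overline{\Theta_\mu^+}(4)=\emptyset$, the point $w$ lies off $\overline{\Lambda_\mu^-}=\Lambda_\mu^-\cup\Lambda_\mu^0$ on the unit sphere $\{\|u\|_\mu=1\}$, hence in $\Lambda_\mu^+$. Then Lemma \ref{g2-3}$(i)$ produces a multiple of $w$ in $\mathbf{N}_{\mu,\lambda}^-$, completing the proof.

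The only computational care is the limit $\Phi_4(\varphi_\mu)\to\Phi_4(\phi_1)$: beyond the $L^4$-convergence supplied by Theorem \ref{g4-0}, one needs $\int_{\mathbb{R}^3}|\nabla\varphi_\mu|^2\,dx\to\lambda_1(f_\Omega)$. I would get this by squeezing that quantity between its weak-lower-semicontinuity lower bound $\int_\Omega|\nabla\phi_1|^2\,dx=\lambda_1(f_\Omega)$ and the upper bound $\int_{\mathbb{R}^3}|\nabla\varphi_\mu|^2\,dx\le\|\varphi_\mu\|_\mu^2=\widetilde\lambda_{1,\mu}(f)<\lambda_1(f_\Omega)$, forcing convergence to $\lambda_1(f_\Omega)$ and hence $\|\varphi_\mu\|_{D^{1,2}}^4\to\lambda_1^2(f_\Omega)$.
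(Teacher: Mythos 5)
Your proof is correct and follows the route the paper intends: the paper's own proof of Lemma \ref{g4-4} is a one-line deferral to Lemma \ref{g4-3}. The valuable part of your write-up is precisely where you observe that the analogy is not complete. In Lemma \ref{g4-3} the nonemptiness of $\Lambda _{\mu }^{+}\cap \Theta _{\mu }^{+}\left( p\right) $ is obtained from $\left( D_{1}\right) $--$\left( D_{2}\right) $ alone because $\Phi _{p}(u)=\int_{\mathbb{R}^{3}}Q|u|^{p}dx$ for $p\neq 4$; for $p=4$ the extra term $-a\left\Vert u\right\Vert _{D^{1,2}}^{4}$ makes $\Theta _{\mu }^{+}\left( 4\right) \neq \emptyset $ equivalent (by $4$-homogeneity of $\Phi _{4}$) to $a<\Gamma _{0}$, exactly as you say, and indeed Lemma \ref{g2-5} $\left( iii\right) $ shows that for $a>\Gamma _{0}$ one has $\mathbf{N}_{\mu ,\lambda }=\mathbf{N}_{\mu ,\lambda }^{+}$, so the conclusion $\mathbf{N}_{\mu ,\lambda }^{-}\neq \emptyset $ of the lemma as literally stated fails in that regime. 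The hypothesis $a<\Gamma _{0}$ you add is exactly what the only application carries: Theorem \ref{t4} assumes $\lambda _{1}^{-2}\left( f_{\Omega }\right) \int_{\Omega }Q\phi _{1}^{4}dx<a<\Gamma _{0}$, the lower bound being precisely $\Phi _{4}\left( \phi _{1}\right) <0$ since $\left\Vert \phi _{1}\right\Vert _{D^{1,2}}^{2}=\lambda _{1}\left( f_{\Omega }\right) $. Your remaining steps all check out: $\mathbf{N}_{\mu ,\lambda }^{0}=\emptyset $ from Theorem \ref{g4-1}; $\varphi _{\mu }\in \Lambda _{\mu }^{-}\cap \Theta _{\mu }^{-}\left( 4\right) $ via Lemma \ref{g3-0}, where your squeeze $\lambda _{1}\left( f_{\Omega }\right) \leq \liminf \left\Vert \varphi _{\mu }\right\Vert _{D^{1,2}}^{2}\leq \widetilde{\lambda }_{1,\mu }\left( f\right) <\lambda _{1}\left( f_{\Omega }\right) $ correctly supplies the gradient convergence needed for $\Phi _{4}\left( \varphi _{\mu }\right) \rightarrow \Phi _{4}\left( \phi _{1}\right) $; and the placement of the normalized $w$ in $\Lambda _{\mu }^{+}$ by the disjointness $\overline{\Lambda _{\mu }^{-}}\cap \overline{\Theta _{\mu }^{+}}\left( 4\right) =\emptyset $ is a clean way to finish, after which Lemma \ref{g2-3} yields both components nonempty.
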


\begin{proof}
The proof is almost the same as Lemma \ref{g4-3}, and we omit it here.
\end{proof}

Next, we consider the following nonlinear Schr\"{o}dinger equation%
\begin{equation}
\left\{
\begin{array}{ll}
-\Delta u=\lambda f_{\Omega }u+Q_{\Omega }|u|^{p-2}u & \text{ in }\Omega ,
\\
u\in H_{0}^{1}(\Omega ), &
\end{array}%
\right.  \tag*{$\left( E_{\lambda ,\Omega }\right) $}
\end{equation}%
where $N\geq 3,2<p<\min \left\{ 4,2^{\ast }\right\} ,0\leq \lambda <\lambda
_{1}\left( f_{\Omega }\right) $ and $Q_{\Omega }$ is a restriction of $Q$ on
$\overline{\Omega }.$

It is well-known that for each $0\leq \lambda <\lambda _{1}\left( f_{\Omega
}\right) ,$ Equation $\left( E_{\lambda ,\Omega }\right) $ has positive
ground state solution $w_{\lambda ,\Omega }$ such that
\begin{equation}
\inf_{u\in \mathbf{N}_{\lambda ,\Omega }^{\infty }}J_{\lambda ,\Omega
}^{\infty }\left( u\right) =J_{\lambda ,\Omega }^{\infty }\left( w_{\lambda
,\Omega }\right) =\alpha _{\lambda ,Q}^{\infty }(\Omega ),  \label{5-12}
\end{equation}%
and%
\begin{equation}
\int_{\Omega }\left\vert \nabla w_{\lambda ,\Omega }\right\vert
^{2}dx-\lambda \int_{\Omega }f_{\Omega }w_{\lambda ,\Omega
}^{2}dx=\int_{\Omega }Q_{\Omega }w_{\lambda ,\Omega }^{p}dx=\frac{2p}{p-2}%
\alpha _{\lambda ,Q}^{\infty }(\Omega )>0  \label{5-11}
\end{equation}%
where $J_{\lambda ,\Omega }^{\infty }$ is the energy functional of Equation $%
(E_{\lambda ,\Omega })$ in $H_{0}^{1}(\Omega )$ in the form
\begin{equation*}
J_{\lambda ,\Omega }^{\infty }\left( u\right) =\frac{1}{2}\left(
\int_{\Omega }\left\vert \nabla u\right\vert ^{2}dx-\lambda \int_{\Omega
}f_{\Omega }u^{2}dx\right) -\frac{1}{p}\int_{\Omega }Q_{\Omega }\left\vert
u\right\vert ^{p}dx,
\end{equation*}%
and
\begin{equation*}
\mathbf{N}_{\lambda ,\Omega }^{\infty }=\{u\in H_{0}^{1}(\Omega )\backslash
\left\{ 0\right\} \ |\ \left\langle \left( J_{\lambda ,\Omega }^{\infty
}\right) ^{\prime }\left( u\right) ,u\right\rangle =0\}.
\end{equation*}%
Clearly $\alpha _{\lambda ,Q}^{\infty }(\Omega )\leq \alpha _{0,Q}^{\infty
}(\Omega )\leq \frac{p-2}{2p}\left( \frac{S_{p}^{p}\left( \Omega \right) }{%
Q_{\Omega ,\min }}\right) ^{2/(p-2)}$ for all $0<\lambda <\lambda _{1}\left(
f_{\Omega }\right) $. Then we have the following nonempteness and properties
of submanifolds $\mathbf{N}_{\mu ,\lambda }^{+}$ and $\mathbf{N}_{\mu
,\lambda }^{-}.$

\begin{theorem}
\label{t6-1}Suppose that $N\geq 3$ and $2<p<\min \left\{ 4,2^{\ast }\right\}
.$ Then we have the following results.\newline
$\left( i\right) $ Let $0<\lambda <\lambda _{1}\left( f_{\Omega }\right) $
and let $w_{\lambda ,\Omega }$ be the ground state positive solution of
Equation $\left( E_{\lambda ,\Omega }\right) .$ If conditions $\left(
V_{1}\right) ,\left( V_{3}\right) ,\left( D_{1}\right) ,\left( D_{2}\right) $
and $\left( D_{4}\right) $ hold, then there exists $a_{0}>0$ independent of $%
\lambda ,\mu $ such that for every $0<a<a_{0},$ there exist two positive
constants $t_{a}^{-}$ and $t_{a}^{+}$ satisfying%
\begin{equation*}
1<t_{a}^{-}<\left( \frac{2}{4-p}\right) ^{1/(p-2)}<t_{a}^{+}
\end{equation*}%
and%
\begin{equation*}
t_{a}^{-}\rightarrow 1;t_{a}^{+}\rightarrow \infty \text{ as }a\rightarrow
0^{+}
\end{equation*}%
such that $t_{a}^{\pm }w_{\lambda ,\Omega }\in \mathbf{N}_{\mu ,\lambda
}^{\pm }.$ Furthermore, if $0<\lambda <\left[ 1-2\left( \frac{4-p}{4}\right)
^{2/p}\right] \lambda _{1}\left( f_{\Omega }\right) ,$ then%
\begin{equation*}
J_{\mu ,\lambda }\left( t_{a}^{-}w_{\Omega }\right) <\frac{p-2}{4p}\left(
\frac{\widetilde{\lambda }_{1,\mu }\left( f\right) -\lambda }{\widetilde{%
\lambda }_{1,\mu }\left( f\right) }\right) ^{p/\left( p-2\right) }\left(
\frac{2S_{p}^{p}\left( \Omega \right) }{Q_{\Omega ,\min }\left( 4-p\right) }%
\right) ^{2/(p-2)}
\end{equation*}%
for $\mu >0$ sufficiently large.\newline
$\left( ii\right) $ Let $\lambda \geq \lambda _{1}\left( f_{\Omega }\right) $
and let $\phi _{1}$ be positive principal eigenfunction of $-\Delta $ in $%
H_{0}^{1}\left( \Omega \right) $ with weight function $f_{\Omega }:=f|_{%
\overline{\Omega }}.$ Then for each $a>0$ there exists $t_{a}^{+}>0$ such
that $t_{a}^{+}\phi _{1}\in \mathbf{N}_{\mu ,\lambda }^{+}$ and
\begin{equation*}
J_{\mu ,\lambda }\left( t_{a}^{+}\phi _{1}\right) =\inf_{t\geq 0}J_{\mu
,\lambda }\left( t\phi _{1}\right) <0.
\end{equation*}%
In particular, $\mathbf{N}_{\mu ,\lambda }^{+}$ is nonempty and $\inf_{u\in
\mathbf{N}_{\mu ,\lambda }^{+}}J_{\mu ,\lambda }\left( u\right) <J_{\mu
,\lambda }\left( t_{a}^{+}\phi _{1}\right) .$
\end{theorem}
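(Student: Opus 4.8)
The plan is to reduce both parts to a one–dimensional analysis of the fibering map $h_u(t)=J_{\mu,\lambda}(tu)$ along a single well‑chosen ray: the ground state $w:=w_{\lambda,\Omega}$ of $\left(E_{\lambda,\Omega}\right)$ in part $(i)$, and the principal eigenfunction $\phi_1$ in part $(ii)$, each extended by zero to $\mathbb{R}^N$. The decisive simplification is that these functions are supported in $\overline{\Omega}$, where $V\equiv0$; hence $\left\Vert w\right\Vert_\mu^2=\int_\Omega|\nabla w|^2$, $\int fw^2=\int_\Omega f_\Omega w^2$ and $\int Q|w|^p=\int_\Omega Q_\Omega w^p$ are all independent of $\mu$ (likewise for $\phi_1$). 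Writing $\theta:=\left\Vert w\right\Vert_{D^{1,2}}^4$ and, by $(\ref{5-11})$, $\gamma:=\left\Vert w\right\Vert_\mu^2-\lambda\int fw^2=\int Q|w|^p=\tfrac{2p}{p-2}\alpha_{\lambda,Q}^\infty(\Omega)>0$, a multiple $tw$ lies in $\mathbf{N}_{\mu,\lambda}$ exactly when $g(t):=a\theta t^2+\gamma-\gamma t^{p-2}=0$.

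For part $(i)$ I would study $g$ on $(0,\infty)$: since $2<p<4$ it satisfies $g(0^+)=\gamma>0$ and $g(t)\to+\infty$, with a unique interior critical point $t_\ast=\big(\tfrac{(p-2)\gamma}{2a\theta}\big)^{1/(4-p)}$ which is a minimum. As $a\to0^+$ one has $t_\ast\to\infty$ and $g(t_\ast)=\gamma-\tfrac{4-p}{2}\gamma t_\ast^{p-2}\to-\infty$, so for all small $a$ the map $g$ has exactly two zeros $t_a^-<t_\ast<t_a^+$. Writing $\tau:=(\tfrac{2}{4-p})^{1/(p-2)}$ (so $\tau^{p-2}=\tfrac{2}{4-p}$), the evaluations $g(1)=a\theta>0$ and $g(\tau)=a\theta\tau^2-\tfrac{p-2}{4-p}\gamma<0$ (for $a$ small) pin down $1<t_a^-<\tau<t_a^+$, while the defining relation $a\theta t^2=\gamma(t^{p-2}-1)$ gives $t_a^-\to1$ and $t_a^+\to\infty$ as $a\to0^+$. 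Since $g$ crosses zero downward at $t_a^-$ and upward at $t_a^+$, we have $g'(t_a^-)<0<g'(t_a^+)$, hence $h_w''(t_a^\mp)=t_a^\mp g'(t_a^\mp)\lessgtr0$, so $t_a^\mp w\in\mathbf{N}_{\mu,\lambda}^\mp$.

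For the energy bound I would insert the Nehari identity $\gamma(t_a^-)^p=a\theta(t_a^-)^4+\gamma(t_a^-)^2$ into $h_w(t_a^-)$ to obtain
\[
J_{\mu,\lambda}(t_a^-w)=\frac{p-2}{2p}\gamma(t_a^-)^2-\frac{4-p}{4p}a\theta(t_a^-)^4\le\alpha_{\lambda,Q}^\infty(\Omega)\,(t_a^-)^2,
\]
using $\tfrac{p-2}{2p}\gamma=\alpha_{\lambda,Q}^\infty(\Omega)$ and discarding the negative correction. Combining this with $\alpha_{\lambda,Q}^\infty(\Omega)\le\tfrac{p-2}{2p}\big(\tfrac{S_p^p(\Omega)}{Q_{\Omega,\min}}\big)^{2/(p-2)}$ and with $t_a^-\to1$, the rest is a calibration of constants. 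The hypothesis $\lambda<\big[1-2(\tfrac{4-p}{4})^{2/p}\big]\lambda_1(f_\Omega)$ is equivalent to $\tfrac{\lambda_1(f_\Omega)-\lambda}{\lambda_1(f_\Omega)}>2(\tfrac{4-p}{4})^{2/p}$, and a direct computation yields the sharp identity $\tau^2\big[2(\tfrac{4-p}{4})^{2/p}\big]^{p/(p-2)}=2$, whence $\tau^2\big(\tfrac{\lambda_1-\lambda}{\lambda_1}\big)^{p/(p-2)}>2$; this is exactly what forces the claimed right‑hand side to exceed $\tfrac{p-2}{2p}\big(\tfrac{S_p^p(\Omega)}{Q_{\Omega,\min}}\big)^{2/(p-2)}\ge\alpha_{\lambda,Q}^\infty(\Omega)$ in the limit $\mu\to\infty$ (recall $\widetilde{\lambda}_{1,\mu}(f)\uparrow\lambda_1(f_\Omega)$ by Lemma \ref{g3-0}). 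Taking $a$ small and $\mu$ large then pushes $J_{\mu,\lambda}(t_a^-w)$ below the stated threshold. The only genuinely delicate point — the main obstacle — is this quantitative matching: keeping $(t_a^-)^2$ close enough to $1$ and reconciling $\widetilde{\lambda}_{1,\mu}(f)$ with $\lambda_1(f_\Omega)$ so that the strict inequality survives uniformly.

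For part $(ii)$ the same reduction applies to $\phi_1$, now with $\left\Vert\phi_1\right\Vert_\mu^2-\lambda\int f\phi_1^2=\lambda_1(f_\Omega)-\lambda\le0$ and $\int Q\phi_1^p=\int_\Omega Q_\Omega\phi_1^p>0$. The fibering map $h_{\phi_1}(t)=\tfrac a4\left\Vert\phi_1\right\Vert_{D^{1,2}}^4t^4+\tfrac12(\lambda_1-\lambda)t^2-\tfrac1p\big(\int_\Omega Q_\Omega\phi_1^p\big)t^p$ is strictly negative for small $t>0$ (the $t^2$‑term dominates when $\lambda>\lambda_1$, the $t^p$‑term when $\lambda=\lambda_1$, since $p<4$) and tends to $+\infty$; the associated $g_{\phi_1}$ starts nonpositive at $0^+$, has a unique interior minimum and tends to $+\infty$, hence possesses a single positive root $t_a^+$. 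Thus $t_a^+$ is the global minimizer of $h_{\phi_1}$ with $h_{\phi_1}''(t_a^+)>0$, giving $t_a^+\phi_1\in\mathbf{N}_{\mu,\lambda}^+$ and $J_{\mu,\lambda}(t_a^+\phi_1)=\inf_{t\ge0}J_{\mu,\lambda}(t\phi_1)<0$. In particular $\mathbf{N}_{\mu,\lambda}^+\neq\emptyset$ and $\inf_{u\in\mathbf{N}_{\mu,\lambda}^+}J_{\mu,\lambda}(u)\le J_{\mu,\lambda}(t_a^+\phi_1)<0$, which establishes the assertion.
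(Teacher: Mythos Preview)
Your treatment of part~(i) is correct and is essentially the paper's argument: both reduce to the one--dimensional fibering map along $w_{\lambda,\Omega}$ (extended by zero so that all quantities are $\mu$--independent), locate the two roots of the auxiliary function by an elementary study of $g(t)=h'_w(t)/t$, and then push the energy bound through via $\alpha_{\lambda,Q}^\infty(\Omega)\le\tfrac{p-2}{2p}(S_p^p(\Omega)/Q_{\Omega,\min})^{2/(p-2)}$, $t_a^-\to1$, and $\widetilde\lambda_{1,\mu}(f)\to\lambda_1(f_\Omega)$. Your explicit verification of the identity $\tau^2\bigl[2(\tfrac{4-p}{4})^{2/p}\bigr]^{p/(p-2)}=2$ is precisely the constant matching the paper hides behind the phrase ``we can conclude.''

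In part~(ii), however, there is a genuine gap: you assert $\int_\Omega Q_\Omega\phi_1^p>0$, but this is not among the hypotheses and is in general false --- indeed, Theorems~\ref{t2} and~\ref{t4} of the paper explicitly assume the opposite sign. Your description of $g_{\phi_1}$ (``has a unique interior minimum'') and your claim that the $t^p$--term forces $h_{\phi_1}(t)<0$ near $0$ when $\lambda=\lambda_1$ both rest on this unjustified sign. The paper avoids this by invoking Lemma~\ref{g2-6}(i), which covers both alternatives: when $\int_\Omega Q\phi_1^p\le0$ and $\lambda>\lambda_1(f_\Omega)$, one has $g_{\phi_1}(t)=a\theta t^2+(\lambda_1-\lambda)+\lvert\int Q\phi_1^p\rvert\,t^{p-2}$, which is strictly increasing from a negative value to $+\infty$ and hence still has a unique root with $g_{\phi_1}'>0$ there. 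You need to split into cases on the sign of $\int_\Omega Q\phi_1^p$; the negative/zero case is in fact simpler. (The borderline situation $\lambda=\lambda_1(f_\Omega)$ with $\int_\Omega Q\phi_1^p\le0$ admits no multiple of $\phi_1$ in $\mathbf N_{\mu,\lambda}$ at all; this is a small lacuna in the paper's own formulation as well.)
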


\begin{proof}
$\left( i\right) $ Define
\begin{equation*}
m\left( t\right) =t^{-2}\left( \left\Vert w_{\lambda ,\Omega }\right\Vert
_{\mu }^{2}-\lambda \int_{\Omega }f_{\Omega }w_{\lambda ,\Omega
}^{2}dx\right) -t^{p-4}\int_{\Omega }Q_{\Omega }\left\vert w_{\lambda
,\Omega }\right\vert ^{p}dx\text{ for }t>0.
\end{equation*}%
Clearly, $tw_{\lambda ,\Omega }\in \mathbf{N}_{\mu ,\lambda }$ if and only
if $m\left( t\right) +a\left( \int_{\Omega }\left\vert \nabla w_{\lambda
,\Omega }\right\vert ^{2}dx\right) ^{2}=0.$ Since $\left\Vert w_{\lambda
,\Omega }\right\Vert _{\mu }^{2}-\lambda \int_{\Omega }f_{\Omega }w_{\lambda
,\Omega }^{2}dx>0,$ similar to the arguments of \cite[Lemmas 2.4 and 2.5]%
{SW4}, there exist two positive constants $t_{a}^{-}$ and $t_{a}^{+}$
satisfying%
\begin{equation*}
1<t_{a}^{-}<\left( \frac{2}{4-p}\right) ^{1/(p-2)}<t_{a}^{+}
\end{equation*}%
and%
\begin{equation*}
t_{a}^{-}\rightarrow 1;t_{a}^{+}\rightarrow \infty \text{ as }a\rightarrow
0^{+}
\end{equation*}%
such that $t_{a}^{\pm }w_{\Omega }\in \mathbf{N}_{\mu ,\lambda }^{\pm },$%
\begin{equation*}
J_{\mu ,\lambda }\left( t_{a}^{-}w_{\lambda ,\Omega }\right) =\sup_{0\leq
t\leq t_{a}^{+}}J_{\mu ,\lambda }\left( tw_{\lambda ,\Omega }\right)
\end{equation*}%
and%
\begin{equation}
J_{\mu ,\lambda }\left( t_{a}^{+}w_{\lambda ,\Omega }\right) =\inf_{t\geq
t_{a}^{-}}J_{\mu ,\lambda }\left( tw_{\lambda ,\Omega }\right) =\inf_{t\geq
0}J_{\mu ,\lambda }\left( tw_{\lambda ,\Omega }\right) <0.  \label{5-13}
\end{equation}%
Furthermore, we get
\begin{eqnarray}
J_{\mu ,\lambda }\left( t_{a}^{-}w_{\lambda ,\Omega }\right) &=&\frac{\left(
t_{a}^{-}\right) ^{2}}{4}\left( \left\Vert w_{\lambda ,\Omega }\right\Vert
_{\mu }^{2}-\lambda \int_{\mathbb{R}^{N}}fw_{\lambda ,\Omega }^{2}dx\right) -%
\frac{\left( 4-p\right) \left( t_{a}^{-}\right) ^{p}}{4p}\int_{\mathbb{R}%
^{N}}Q\left\vert w_{\lambda ,\Omega }\right\vert ^{p}dx  \notag \\
&=&\frac{p}{2\left( p-2\right) }\left[ \left( t_{a}^{-}\right) ^{2}-\frac{%
\left( 4-p\right) \left( t_{a}^{-}\right) ^{p}}{p}\right] \alpha _{\lambda
,Q}^{\infty }(\Omega )  \notag \\
&<&\frac{1}{4}\left[ \left( t_{a}^{-}\right) ^{2}-\frac{\left( 4-p\right)
\left( t_{a}^{-}\right) ^{p}}{p}\right] \left( \frac{S_{p}^{p}\left( \Omega
\right) }{Q_{\Omega ,\min }}\right) ^{2/(p-2)}  \notag \\
&\rightarrow &\frac{p-2}{2p}\left( \frac{S_{p}^{p}\left( \Omega \right) }{%
Q_{\Omega ,\min }}\right) ^{2/(p-2)}\text{ as }a\rightarrow 0^{+}.
\label{5-14}
\end{eqnarray}%
Since $0<\lambda <\left[ 1-2\left( \frac{4-p}{4}\right) ^{2/p}\right]
\lambda _{1}\left( f_{\Omega }\right) ,$ by Lemma \ref{g3-0} and $\left( \ref%
{5-14}\right) ,$ we can conclude that there exists $a_{0}>0$ independent of $%
\lambda ,\mu $ such that for every $0<a<a_{0},$%
\begin{equation*}
J_{\mu ,\lambda }\left( t_{a}^{-}w_{\lambda }\right) <\frac{p-2}{4p}\left(
\frac{\widetilde{\lambda }_{1,\mu }\left( f\right) -\lambda }{\widetilde{%
\lambda }_{1,\mu }\left( f\right) }\right) ^{p/\left( p-2\right) }\left(
\frac{2S_{p}^{p}\left( \Omega \right) }{Q_{\Omega ,\min }\left( 4-p\right) }%
\right) ^{2/(p-2)}
\end{equation*}%
for $\mu >0$ sufficiently large.\newline
$\left( ii\right) $ Since $\lambda \geq \lambda _{1}\left( f_{\Omega
}\right) ,$ we have $\left\Vert \phi _{1}\right\Vert _{\mu }^{2}-\lambda
\int_{\Omega }f_{\Omega }\phi _{1}^{2}dx\leq 0,$ this implies that $\frac{%
\phi _{1}}{\left\Vert \phi _{1}\right\Vert _{\mu }}\in \overline{\Lambda
_{\mu }^{-}}.$ Then by, Lemma \ref{g2-6} $\left( i\right) ,$ for each $a>0$
there exists $t_{a}^{+}>0$ such that $t_{a}^{+}\phi _{1}\in \mathbf{N}_{\mu
,\lambda }^{+}.$ Moreover, $h_{\phi _{1}}^{\prime }\left( t\right) <0$ for
all $t\in \left( 0,t_{a}^{+}\right) $ and $h_{\phi _{1}}^{\prime }\left(
t\right) >0$ for all $t>t_{a}^{+},$ which leads to
\begin{equation*}
J_{\mu ,\lambda }\left( t_{a}^{+}\phi _{1}\right) =\inf_{t\geq 0}J_{\mu
,\lambda }\left( t\phi _{1}\right) <0.
\end{equation*}%
This completes the proof.
\end{proof}

\section{The case when $N=3$ and $4<p<6$}

\subsection{The subcase: $\protect\lambda <\protect\lambda _{1}\left(
f_{\Omega }\right) $}

We need the following results.

\begin{lemma}
\label{g3-1}Suppose that $4<p<6.$ Then for each $0<\lambda <\lambda
_{1}\left( f_{\Omega }\right) $ there exists $\overline{\mu }_{0}\left(
\lambda \right) \geq \mu _{0}$ with $\overline{\mu }_{0}\left( \lambda
\right) \rightarrow \infty $ as $\lambda \rightarrow \lambda _{1}^{-}\left(
f_{\Omega }\right) $\ such that for every $\mu >\overline{\mu }_{0}\left(
\lambda \right) ,$ the energy functional $J_{\mu ,\lambda }$ is coercive and
bounded below on $\mathbf{N}_{\mu ,\lambda }^{-}.$ Furthermore,
\begin{equation}
\inf_{u\in \mathbf{N}_{\mu ,\lambda }^{-}}J_{\mu ,\lambda }(u)\geq \frac{1}{4%
}\left( \frac{\widetilde{\lambda }_{1,\mu }\left( f\right) -\lambda }{%
\widetilde{\lambda }_{1,\mu }\left( f\right) }\right) ^{p/\left( p-2\right)
}\left( \frac{S^{p}}{\left\Vert Q\right\Vert _{\infty }\left\vert \left\{
V<c\right\} \right\vert ^{\frac{6-p}{6}}}\right) ^{2/\left( p-2\right) }>0
\label{13}
\end{equation}%
for all $u\in \mathbf{N}_{\mu ,\lambda }^{-}.$
\end{lemma}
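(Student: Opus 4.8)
The plan is to exploit the fact that, for $4<p<6$, restricting $J_{\mu,\lambda}$ to the Nehari constraint produces a manifestly positive expression. First I would use the defining identity of $\mathbf{N}_{\mu,\lambda}$, namely $a\Vert u\Vert_{D^{1,2}}^4+\Vert u\Vert_\mu^2=\int_{\mathbb{R}^N}Q|u|^p\,dx+\lambda\int_{\mathbb{R}^N}fu^2\,dx$, to eliminate the term $\int_{\mathbb{R}^N}Q|u|^p\,dx$ from $J_{\mu,\lambda}$. A direct substitution gives, for every $u\in\mathbf{N}_{\mu,\lambda}$,
\[
J_{\mu,\lambda}(u)=a\,\frac{p-4}{4p}\,\Vert u\Vert_{D^{1,2}}^4+\frac{p-2}{2p}\Big(\Vert u\Vert_\mu^2-\lambda\int_{\mathbb{R}^N}fu^2\,dx\Big),
\]
and since $4<p<6$ both coefficients $\frac{p-4}{4p}$ and $\frac{p-2}{2p}$ are strictly positive.

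Next, for $\mu>\overline{\mu}_0(\lambda)$ (the threshold furnished by the discussion following Lemma \ref{g3-0}, which also guarantees $\overline{\mu}_0(\lambda)\to\infty$ as $\lambda\to\lambda_1^-(f_\Omega)$ and $\lambda<\widetilde{\lambda}_{1,\mu}(f)<\lambda_1(f_\Omega)$), I invoke the eigenvalue estimate $(\ref{10})$ to bound $\Vert u\Vert_\mu^2-\lambda\int_{\mathbb{R}^N}fu^2\,dx$ below by $\kappa\Vert u\Vert_\mu^2$, where $\kappa:=\frac{\widetilde{\lambda}_{1,\mu}(f)-\lambda}{\widetilde{\lambda}_{1,\mu}(f)}\in(0,1)$. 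Dropping the nonnegative $D^{1,2}$ term then yields $J_{\mu,\lambda}(u)\geq\frac{p-2}{2p}\kappa\Vert u\Vert_\mu^2$, which immediately gives both coercivity and boundedness below of $J_{\mu,\lambda}$ on $\mathbf{N}_{\mu,\lambda}^-$.

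The final step upgrades the trivial bound $J_{\mu,\lambda}\geq 0$ to the explicit estimate $(\ref{13})$ by producing a uniform lower bound on $\Vert u\Vert_\mu$. Starting again from the Nehari identity and discarding the nonnegative term $a\Vert u\Vert_{D^{1,2}}^4$, I get $\Vert u\Vert_\mu^2-\lambda\int_{\mathbb{R}^N}fu^2\,dx\leq\int_{\mathbb{R}^N}Q|u|^p\,dx\leq\Vert Q\Vert_\infty\int_{\mathbb{R}^N}|u|^p\,dx$; applying $(\ref{10})$ on the left and the embedding inequality $(\ref{11})$ with $r=p$ (recall $2^\ast=6$ for $N=3$) on the right gives
\[
\kappa\,\Vert u\Vert_\mu^2\leq\Vert Q\Vert_\infty\,S^{-p}\,|\{V<c\}|^{(6-p)/6}\,\Vert u\Vert_\mu^p,
\]
hence $\Vert u\Vert_\mu^{p-2}\geq\kappa S^p\big(\Vert Q\Vert_\infty|\{V<c\}|^{(6-p)/6}\big)^{-1}$. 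Inserting this into $J_{\mu,\lambda}(u)\geq\frac{p-2}{2p}\kappa\Vert u\Vert_\mu^2$ and collecting the powers of $\kappa$ produces the bound $(\ref{13})$ with the coefficient $\frac{p-2}{2p}$; since $\frac{p-2}{2p}\geq\frac14$ for all $p\geq 4$, the claimed constant $\frac14$ follows.

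I expect the computation itself to be routine; the only genuine subtlety is organizational, namely recognizing that the coercivity estimate alone yields only $J_{\mu,\lambda}\geq 0$, so that the quantitative bound $(\ref{13})$ requires the separate Nehari-based lower bound on $\Vert u\Vert_\mu$, together with the elementary observation $\frac{p-2}{2p}\geq\frac14$. One should also verify that every constant entering $(\ref{13})$ is independent of $u$ (the $\mu$-dependence enters only through $\widetilde{\lambda}_{1,\mu}(f)$), which is what makes the infimum estimate hold uniformly over $\mathbf{N}_{\mu,\lambda}^-$.
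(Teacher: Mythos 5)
Your proposal is correct and follows essentially the same route as the paper: both arguments combine the eigenvalue estimate $(\ref{10})$ with the embedding $(\ref{11})$ to first get $J_{\mu,\lambda}(u)\gtrsim\kappa\Vert u\Vert_\mu^2$ on the Nehari set and then extract a uniform lower bound on $\Vert u\Vert_\mu$ from the Nehari identity. The only (harmless) difference is the algebraic decomposition used in the first step — you eliminate $\int Q|u|^p\,dx$ and keep the coefficient $\frac{p-2}{2p}\geq\frac14$, while the paper eliminates $a\Vert u\Vert_{D^{1,2}}^4$ and uses $\int_{\mathbb{R}^3}Q|u|^p\,dx\geq 0$ to write $J_{\mu,\lambda}(u)\geq\frac14\bigl(\Vert u\Vert_\mu^2-\lambda\int_{\mathbb{R}^3}fu^2\,dx\bigr)$; both yield the stated constant.
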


\begin{proof}
By $\left( \ref{11}\right) $ and $\left( \ref{10}\right) ,$ for each $\mu >%
\overline{\mu }_{0}\left( \lambda \right) $ and $u\in \mathbf{N}_{\mu
,\lambda }^{-},$ we obtain%
\begin{eqnarray*}
\frac{\widetilde{\lambda }_{1,\mu }\left( f\right) -\lambda }{\widetilde{%
\lambda }_{1,\mu }\left( f\right) }\left\Vert u\right\Vert _{\mu }^{2} &\leq
&\left\Vert u\right\Vert _{\mu }^{2}-\lambda \int_{\mathbb{R}%
^{3}}fu^{2}dx+a\left\Vert u\right\Vert _{D^{1,2}}^{4} \\
&=&\int_{\mathbb{R}^{3}}Q\left\vert u\right\vert ^{p}dx\leq \left\Vert
Q\right\Vert _{\infty }\left\vert \left\{ V<c\right\} \right\vert ^{\frac{6-p%
}{6}}S^{-p}\left\Vert u\right\Vert _{\mu }^{p},
\end{eqnarray*}%
which indicates that%
\begin{equation*}
\left\Vert u\right\Vert _{\mu }\geq \left( \frac{S^{p}\left( \widetilde{%
\lambda }_{1,\mu }\left( f\right) -\lambda \right) }{\widetilde{\lambda }%
_{1,\mu }\left( f\right) \left\Vert Q\right\Vert _{\infty }\left\vert
\left\{ V<c\right\} \right\vert ^{\frac{6-p}{6}}}\right) ^{1/\left(
p-2\right) }.
\end{equation*}%
Thus,%
\begin{eqnarray*}
J_{\mu ,\lambda }(u) &\geq &\frac{1}{4}\left( \left\Vert u\right\Vert _{\mu
}^{2}-\lambda \int_{\mathbb{R}^{3}}fu^{2}dx\right) \geq \frac{\widetilde{%
\lambda }_{1,\mu }\left( f\right) -\lambda }{4\widetilde{\lambda }_{1,\mu
}\left( f\right) }\left\Vert u\right\Vert _{\mu }^{2} \\
&\geq &\frac{1}{4}\left( \frac{\widetilde{\lambda }_{1,\mu }\left( f\right)
-\lambda }{\widetilde{\lambda }_{1,\mu }\left( f\right) }\right) ^{p/\left(
p-2\right) }\left( \frac{S^{p}}{\left\Vert Q\right\Vert _{\infty }\left\vert
\left\{ V<c\right\} \right\vert ^{\frac{6-p}{6}}}\right) ^{2/\left(
p-2\right) }>0,
\end{eqnarray*}%
this implies that the energy functional $J_{\mu ,\lambda }$ is coercive and
bounded below on $\mathbf{N}_{\mu ,\lambda }^{-}.$ This completes the proof.
\end{proof}

We now show that there exists a minimizer on $\mathbf{N}_{\mu ,\lambda }^{-}$
which is a critical point of $J_{\mu ,\lambda }(u)$ and so a nontrivial
solution of Equation $\left( E_{\mu ,\lambda }\right) .$ First, we define%
\begin{equation*}
\theta _{a,\lambda }(\Omega )=\inf_{u\in \mathbf{M}_{\mu ,\lambda }(\Omega
)}J_{\mu ,\lambda }|_{H_{0}^{1}(\Omega )\cap H^{2}(\mathbb{R}^{3})}(u),
\end{equation*}%
where
\begin{equation*}
\mathbf{M}_{\lambda }(\Omega )=\{u\in H_{0}^{1}(\Omega )\cap H^{1}(\mathbb{R}%
^{3}):\left\langle J_{\mu ,\lambda }^{\prime }|_{H_{0}^{1}(\Omega )}\left(
u\right) ,u\right\rangle =0\}.
\end{equation*}%
Note that
\begin{equation*}
J_{\mu ,\lambda }|_{H_{0}^{1}\left( \Omega \right) }(u)=\frac{a}{4}\left(
\int_{\Omega }\left\vert \nabla u\right\vert ^{2}dx\right) ^{2}+\frac{1}{2}%
\int_{\Omega }\left\vert \nabla u\right\vert ^{2}dx-\frac{1}{p}\int_{\Omega
}Q\left\vert u\right\vert ^{p}dx-\frac{\lambda }{2}\int_{\Omega }f_{\Omega
}u^{2}dx,
\end{equation*}%
a restriction of $J_{\mu ,\lambda }$ on $H_{0}^{1}(\Omega ),$ and $\theta
_{a,\lambda }(\Omega )$ independent of $\mu .$ Since $\max \left\{
Q,0\right\} \not\equiv 0$ in\ $\Omega ,$ $\mathbf{M}_{\lambda }(\Omega )\neq
\emptyset .$ Thus, similar to the argument of $\left( \ref{13}\right) $, we
can conclude that $J_{\mu ,\lambda }|_{H_{0}^{1}(\Omega )}$ is bounded below
on $\mathbf{M}_{\lambda }(\Omega ).$ Moreover, $H_{0}^{1}(\Omega )\subset
X_{\mu }$ for all $\mu >0$, one can see that
\begin{equation}
0<\eta \leq \inf_{u\in \mathbf{N}_{\mu ,\lambda }^{-}}J_{\mu ,\lambda
}(u)\leq \theta _{a,\lambda }(\Omega )\text{ for all }\mu \geq \overline{\mu
}_{0}\left( \lambda \right) .  \label{5-0}
\end{equation}%
Taking $D_{a,\lambda }>\theta _{a,\lambda }(\Omega ).$ Then we have
\begin{equation}
\inf_{u\in \mathbf{N}_{\mu ,\lambda }^{-}}J_{\mu ,\lambda }(u)\leq \theta
_{a,\lambda }(\Omega )<D_{a,\lambda }\text{ for all }\mu \geq \overline{\mu }%
_{0}\left( \lambda \right) .  \label{5-1}
\end{equation}

\bigskip

\textbf{We are now ready to prove Theorem \ref{t1}: }When $0<\lambda
<\lambda _{1}\left( f_{\Omega }\right) .$ By Lemma $\ref{g3-1},$ $\left( \ref%
{5-1}\right) $ and the Ekeland variational principle \cite{E}, for each $\mu
>\overline{\mu }_{0}\left( \lambda \right) $ there exists a minimizing
sequence $\left\{ u_{n}\right\} \subset \mathbf{N}_{\mu ,\lambda }^{-}$ such
that%
\begin{equation*}
D_{a,\lambda }>\lim_{n\rightarrow \infty }J_{\mu ,\lambda
}(u_{n})=\inf_{u\in \mathbf{N}_{\mu ,\lambda }^{-}}J_{\mu ,\lambda }(u)>0%
\text{ and }J_{\mu ,\lambda }^{\prime }(u_{n})=o\left( 1\right) .
\end{equation*}%
Since $\inf_{u\in \mathbf{N}_{\mu ,\lambda }^{-}}J_{\mu ,\lambda
}(u)<D_{a,\lambda },$ again using Lemma \ref{g3-1}, there exists $%
c_{a,\lambda }>0$ such that $\left\Vert u_{n}\right\Vert _{\mu }\leq
c_{a,\lambda }.$ By Proposition \ref{p1}, there exist a subsequence $\left\{
u_{n}\right\} $ and $u_{0}\in X$ such that $J_{\mu ,\lambda }^{\prime
}(u_{0})=0$ and $u_{n}\rightarrow u_{0}$ strongly in $X_{\mu }$ for $\mu >0$
sufficiently large, which implies that $J_{\mu ,\lambda }$ has minimizer $%
u_{0}$ in $\mathbf{N}_{\mu ,\lambda }=\mathbf{N}_{\mu ,\lambda }^{-}$ for $%
\mu $ sufficiently large. Since $J_{\mu ,\lambda }(u_{0})=J_{\mu ,\lambda
}(|u_{0}|),$ by Lemma \ref{g2-1}, we may assume that $u_{0}$ is a positive
solution of Equation $\left( E_{\mu ,\lambda }\right) $ such that $J_{\mu
,\lambda }(u_{0})=\inf_{u\in \mathbf{N}_{\mu ,\lambda }^{-}}J_{\mu ,\lambda
}(u)>0.$

\subsection{The subcase: $\protect\lambda \geq \protect\lambda _{1}\left(
f_{\Omega }\right) $}

By Theorem \ref{g4-1}, for each $a>0$ there exists $\delta _{0}>0$ such that
for every $\lambda _{1}\left( f_{\Omega }\right) \leq \lambda <\lambda
_{1}\left( f_{\Omega }\right) +\delta _{0},$ there holds $\overline{\Lambda
_{\mu }^{-}}\cap \overline{\Theta _{\mu }^{+}}\left( p\right) =\emptyset $
for $\mu >0$ sufficiently large, it is possible to obtain more information
about the nature of the Nehari manifold as follows.

\begin{lemma}
\label{g4-2}Suppose that $4<p<6$ and $\int_{\Omega }Q\phi _{1}^{p}dx<0.$ Let
$\delta _{0}>0$ be as in Theorem \ref{g4-1}. Then for every $\lambda
_{1}\left( f_{\Omega }\right) \leq \lambda <\lambda _{1}\left( f_{\Omega
}\right) +\delta _{0},$ we have the following results.\newline
$(i)$ There exists $c_{0}>0$ such that $\left\Vert u\right\Vert _{\mu }\geq
c_{0}$ for all $u\in \mathbf{N}_{\mu ,\lambda }^{-}$ and $\mu >0$
sufficiently large.\newline
$(ii)$ $\mathbf{N}_{\mu ,\lambda }^{-}\;$and$\;\mathbf{N}_{\mu ,\lambda
}^{+}\;$are\ separated for $\mu >0$ sufficiently large, i.e.$,\overline{%
\mathbf{N}_{\mu ,\lambda }^{-}}\cap \overline{\mathbf{N}_{\mu ,\lambda }^{+}}%
=\emptyset $ for $\mu >0$ sufficiently large.\newline
$(iii)$ $\mathbf{N}_{\mu ,\lambda }^{+}\;$is\ uniform bounded for $\mu >0$
sufficiently large.
\end{lemma}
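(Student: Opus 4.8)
The plan is to handle the three assertions in turn, in each case combining the $\mu\to\infty$ compactness of Theorem \ref{g4-0} with the separation $\overline{\Lambda_{\mu}^{-}}\cap\overline{\Theta_{\mu}^{+}}\left(p\right)=\emptyset$ supplied by Theorem \ref{g4-1}. For $(i)$ I would argue by contradiction, assuming there are $\mu_{n}\rightarrow\infty$ and $u_{n}\in\mathbf{N}_{\mu_{n},\lambda}^{-}$ with $\Vert u_{n}\Vert_{\mu_{n}}\rightarrow0$. Writing $v_{n}=u_{n}/\Vert u_{n}\Vert_{\mu_{n}}$, Lemma \ref{g2-2}$(i)$ gives $v_{n}\in\Lambda_{\mu_{n}}^{+}\cap\Theta_{\mu_{n}}^{+}\left(p\right)$ and $t_{n}:=\Vert u_{n}\Vert_{\mu_{n}}=t_{\max}(v_{n})$, so that $h_{v_{n}}^{\prime}(t_{n})=0$ yields
\[
a t_{n}^{2}\Vert v_{n}\Vert_{D^{1,2}}^{4}+\left(1-\lambda\int_{\mathbb{R}^{3}}fv_{n}^{2}dx\right)=t_{n}^{p-2}\int_{\mathbb{R}^{3}}Q|v_{n}|^{p}dx.
\]
Since $\Vert v_{n}\Vert_{D^{1,2}}^{4}\leq1$ and $\int_{\mathbb{R}^{3}}Q|v_{n}|^{p}dx$ is bounded by $(\ref{11})$, while $t_{n}\rightarrow0$ with $p-2>0$, both $t$-dependent terms vanish and $1-\lambda\int fv_{n}^{2}dx\rightarrow0$. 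By Theorem \ref{g4-0} I may pass to $v_{0}\in H_{0}^{1}(\Omega)$ with $v_{n}\rightarrow v_{0}$ in $L^{r}$; then $(D_{1})$ gives $\lambda\int_{\Omega}f_{\Omega}v_{0}^{2}dx=1$ (hence $v_{0}\neq0$), $\int_{\Omega}Q_{\Omega}|v_{0}|^{p}dx\geq0$, and weak lower semicontinuity gives $\int_{\Omega}|\nabla v_{0}|^{2}dx\leq1=\lambda\int_{\Omega}f_{\Omega}v_{0}^{2}dx$. Because $v_{0}$ is supported in $\Omega$, its $\mu$-norm is independent of $\mu$ and equals $\Vert v_{0}\Vert_{H_{0}^{1}(\Omega)}$, so $v_{0}/\Vert v_{0}\Vert_{\mu}$ is a fixed unit vector lying in $\overline{\Lambda_{\mu}^{-}}\cap\overline{\Theta_{\mu}^{+}}\left(p\right)$ for every $\mu$; taking $\mu$ large and invoking Theorem \ref{g4-1} produces the contradiction, which gives the uniform lower bound $\Vert u\Vert_{\mu}\geq c_{0}$.

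For $(ii)$ I would use $(i)$ together with $\mathbf{N}_{\mu,\lambda}^{0}=\emptyset$ (Lemma \ref{g4-3}) and the continuity of $u\mapsto h_{u}^{\prime\prime}(1)$. If some $w\in\overline{\mathbf{N}_{\mu,\lambda}^{-}}\cap\overline{\mathbf{N}_{\mu,\lambda}^{+}}$, then the bound from $(i)$ forces $\Vert w\Vert_{\mu}\geq c_{0}>0$, so $w\neq0$; since $h_{u}^{\prime}(1)=0$ throughout $\mathbf{N}_{\mu,\lambda}$ this persists in the limit and $w\in\mathbf{N}_{\mu,\lambda}$; finally letting the strict inequalities $h_{u}^{\prime\prime}(1)<0$ on $\mathbf{N}_{\mu,\lambda}^{-}$ and $h_{u}^{\prime\prime}(1)>0$ on $\mathbf{N}_{\mu,\lambda}^{+}$ pass to $w$ gives $h_{w}^{\prime\prime}(1)=0$, i.e. $w\in\mathbf{N}_{\mu,\lambda}^{0}=\emptyset$, a contradiction.

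For $(iii)$ I would argue by contradiction again: suppose $\mu_{n}\rightarrow\infty$, $u_{n}\in\mathbf{N}_{\mu_{n},\lambda}^{+}$ and $s_{n}:=\Vert u_{n}\Vert_{\mu_{n}}\rightarrow\infty$. With $w_{n}=u_{n}/s_{n}$, Lemma \ref{g2-2}$(ii)$ gives $w_{n}\in\Lambda_{\mu_{n}}^{-}\cap\Theta_{\mu_{n}}^{-}\left(p\right)$, so $B_{n}:=1-\lambda\int fw_{n}^{2}dx<0$ and $C_{n}:=\int Q|w_{n}|^{p}dx<0$, while $s_{n}=t_{\min}(w_{n})$ solves $a s_{n}^{2}\Vert w_{n}\Vert_{D^{1,2}}^{4}+B_{n}=C_{n}s_{n}^{p-2}$. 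From $B_{n}<0$ one gets $\int fw_{n}^{2}dx>1/\lambda$, and since $-B_{n}$ is bounded by $(\ref{11})$ while $C_{n}s_{n}^{p-2}\leq0$, the identity forces $a s_{n}^{2}\Vert w_{n}\Vert_{D^{1,2}}^{4}\leq-B_{n}$ to stay bounded, whence $\Vert w_{n}\Vert_{D^{1,2}}\rightarrow0$ as $s_{n}\rightarrow\infty$. By Theorem \ref{g4-0}, $w_{n}\rightarrow w_{0}\in H_{0}^{1}(\Omega)$ in $L^{r}$ with $\int_{\Omega}|\nabla w_{0}|^{2}dx\leq\liminf\Vert w_{n}\Vert_{D^{1,2}}^{2}=0$, so $w_{0}=0$; but then $\int fw_{n}^{2}dx\rightarrow0$ by $(D_{1})$, contradicting $\int fw_{n}^{2}dx>1/\lambda>0$.

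The delicate point throughout, and the main obstacle in $(i)$, is that for fixed $\lambda>\lambda_{1}(f_{\Omega})$ the weak limit need \emph{not} be a multiple of $\phi_{1}$, so one cannot conclude directly from the Rayleigh quotient that $\int_{\Omega}Q_{\Omega}|v_{0}|^{p}dx<0$. The resolution is to observe that the weak limit, being supported in $\Omega$, furnishes a $\mu$-independent element of $\overline{\Lambda_{\mu}^{-}}\cap\overline{\Theta_{\mu}^{+}}\left(p\right)$, so that its non-existence is \emph{exactly} the content of Theorem \ref{g4-1} (which in turn encodes the sign condition $\int_{\Omega}Q\phi_{1}^{p}dx<0$). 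The remaining work is the fibering-map bookkeeping, checking that the coefficients $\Vert v_{n}\Vert_{D^{1,2}}^{4}$, $\int Q|v_{n}|^{p}dx$ and $-B_{n}$ stay bounded so the scaling identities force the correct degeneracy, namely $1-\lambda\int fv_{n}^{2}dx\rightarrow0$ in $(i)$ and $\Vert w_{n}\Vert_{D^{1,2}}\rightarrow0$ in $(iii)$.
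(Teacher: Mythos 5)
Your proof is correct. Parts $(i)$ and $(ii)$ follow essentially the paper's own route: normalize, extract a weak limit $v_{0}\in H_{0}^{1}\left( \Omega \right) $ via Theorem \ref{g4-0}, check that $v_{0}\neq 0$ and that $v_{0}/\left\Vert v_{0}\right\Vert _{\mu }$ lies in $\overline{\Lambda _{\mu }^{-}}\cap \overline{\Theta _{\mu }^{+}}\left( p\right) $, and contradict Theorem \ref{g4-1}; separation then follows from $0\notin \overline{\mathbf{N}_{\mu ,\lambda }^{-}}$ together with $\mathbf{N}_{\mu ,\lambda }^{0}=\emptyset $. Part $(iii)$, however, is a genuinely different and shorter argument than the paper's. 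The paper divides the Nehari identity by $\left\Vert u_{n}\right\Vert _{\mu _{n}}^{2}$, uses the blow-up of the factor $\left\Vert u_{n}\right\Vert _{\mu _{n}}^{p-2}$ to conclude $\int_{\mathbb{R}^{3}}Q|v_{0}|^{p}dx=0$ and $\int_{\mathbb{R}^{3}}fv_{0}^{2}dx>0$, and then runs a two-branch (strong versus non-strong convergence) argument to land $v_{0}$ once more in $\overline{\Lambda _{\mu }^{-}}\cap \overline{\Theta _{\mu }^{+}}\left( p\right) $. You instead exploit the Kirchhoff term: since $\int_{\mathbb{R}^{3}}Q|w_{n}|^{p}dx<0$ (from Lemma \ref{g2-2}$(ii)$, which is where Theorem \ref{g4-1} still enters implicitly), the rescaled identity forces $as_{n}^{2}\left\Vert w_{n}\right\Vert _{D^{1,2}}^{4}$ to remain bounded by $-B_{n}$, hence $\left\Vert w_{n}\right\Vert _{D^{1,2}}\rightarrow 0$, the weak limit vanishes, and this contradicts $\lambda \int_{\mathbb{R}^{3}}fw_{n}^{2}dx>1$. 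This buys a cleaner, one-branch contradiction that does not require re-identifying an element of $\overline{\Lambda _{\mu }^{-}}\cap \overline{\Theta _{\mu }^{+}}\left( p\right) $; the trade-off is that it uses $p>4$ in an essential way (for $p=4$ the rescaled identity only yields $a\left\Vert w_{n}\right\Vert _{D^{1,2}}^{4}-\int Q|w_{n}|^{4}dx\rightarrow 0$ rather than $\left\Vert w_{n}\right\Vert _{D^{1,2}}\rightarrow 0$), which is why the paper's version is the one reused verbatim in the proof of Theorem \ref{t3}$(iii)$. All the supporting estimates you invoke (boundedness of $\int Q|v_{n}|^{p}dx$ and of $-B_{n}$ via $\left( \ref{11}\right) $ and the H\"{o}lder--Sobolev inequalities, and weak lower semicontinuity of the gradient norm) are available exactly as you use them.
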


\begin{proof}
$(i)$ Suppose on the contrary. Then there exist $\left\{ \mu _{n}\right\}
\subset \mathbb{R}^{+}$ and $\{u_{n}\}\subset \mathbf{N}_{\mu _{n},\lambda
}^{-}$ such that $\mu _{n}\rightarrow \infty $ and $\left\Vert
u_{n}\right\Vert _{\mu _{n}}\rightarrow 0.$ Hence, by $\left( \ref{11}%
\right) ,$%
\begin{eqnarray}
0 &<&\left\Vert u_{n}\right\Vert _{\mu _{n}}^{2}-\lambda \int_{\mathbb{R}%
^{3}}fu_{n}^{2}dx  \notag \\
&<&\left\Vert u_{n}\right\Vert _{\mu _{n}}^{2}-\lambda \int_{\mathbb{R}%
^{3}}fu_{n}^{2}dx+a\left\Vert u_{n}\right\Vert _{D^{1,2}}^{4}=\int_{\mathbb{R%
}^{3}}Q|u_{n}|^{p}dx\rightarrow 0\text{ as }n\rightarrow \infty .  \label{8}
\end{eqnarray}%
Let $v_{n}=\frac{u_{n}}{\Vert u_{n}\Vert _{\mu _{n}}}.$ Then, by Theorem $%
\ref{g4-0}$, there exist subsequence $\left\{ v_{n}\right\} $ and $v_{0}\in
H_{0}^{1}\left( \Omega \right) $ such that
\begin{equation}
v_{n}\rightharpoonup v_{0}\text{ in }X_{\mu };\text{ }v_{n}\rightarrow v_{0}%
\text{ in }L^{r}\left( \mathbb{R}^{3}\right) \text{ for all }2\leq r<6.
\label{12}
\end{equation}%
Thus, by $\left( \ref{8}\right) $ and $\left( \ref{12}\right) ,$%
\begin{equation}
\lim_{n\rightarrow \infty }\int_{\mathbb{R}^{3}}fv_{n}^{2}dx=\int_{\mathbb{R}%
^{3}}fv_{0}^{2}dx  \label{9}
\end{equation}%
and%
\begin{equation}
\left\Vert v_{n}\right\Vert _{\mu _{n}}^{2}-\lambda \int_{\mathbb{R}%
^{3}}fv_{n}^{2}dx\rightarrow 0\text{ as\ }n\rightarrow \infty .  \label{15}
\end{equation}%
Moreover, by $\left( \ref{9}\right) ,\left( \ref{15}\right) ,v_{0}\in
H_{0}^{1}\left( \Omega \right) $ and Fatou's Lemma, we can obtain that%
\begin{equation*}
0=\lim\limits_{n\rightarrow \infty }\left( \left\Vert v_{n}\right\Vert _{\mu
_{n}}^{2}-\lambda \int_{\mathbb{R}^{3}}fv_{n}^{2}dx\right) =1-\lambda \int_{%
\mathbb{R}^{3}}fv_{0}^{2}dx,
\end{equation*}%
and for every $\mu >0$%
\begin{equation*}
\left\Vert v_{0}\right\Vert _{\mu }^{2}-\lambda \int_{\mathbb{R}%
^{3}}fv_{0}^{2}dx\leq \liminf_{n\rightarrow \infty }\left( \left\Vert
v_{n}\right\Vert _{\mu _{n}}^{2}-\lambda \int_{\mathbb{R}^{3}}fv_{n}^{2}dx%
\right) =0,
\end{equation*}%
this implies that $v_{0}\neq 0$ and $\frac{v_{0}}{\Vert v_{0}\Vert _{\mu }}%
\in \overline{\Lambda _{\mu }^{-}}$ for all $\mu >0.\ $Since $\int_{\mathbb{R%
}^{3}}Q|v_{n}|^{p}dx>0$ and $v_{n}\rightarrow v_{0}$ in $L^{p}\left( \mathbb{%
R}^{3}\right) $, it follows that $\int_{\mathbb{R}^{3}}Q|v_{0}|^{p}dx\geq 0$
and so $\frac{v_{0}}{\Vert v_{0}\Vert _{\mu }}\in \overline{\Theta _{\mu
}^{+}}\left( p\right) $ for all $\mu >0.$ Hence, $\frac{v_{0}}{\Vert
v_{0}\Vert _{\mu }}\in \overline{\Lambda _{\mu }^{-}}\cap \overline{\Theta
_{\mu }^{+}}\left( p\right) $ for all $\mu >0,$ which a contradiction. Thus,
$0\notin \overline{\mathbf{N}_{\mu ,\lambda }^{-}}$ for $\mu >0$
sufficiently large. Moreover, by Lemma \ref{g4-3}, $\overline{\mathbf{N}%
_{\mu ,\lambda }^{-}}\subset \mathbf{N}_{\mu ,\lambda }^{-}\cup \{0\}.$
Since $0\notin \overline{\mathbf{N}_{\mu ,\lambda }^{-}}$, it follows that $%
\overline{\mathbf{N}_{\mu ,\lambda }^{-}}=\mathbf{N}_{\mu ,\lambda }^{-}$,
i.e., $\mathbf{N}_{\mu ,\lambda }^{-}$ is closed.\newline
$(ii)$ By Theorem \ref{g4-1} and part $(i),$%
\begin{eqnarray*}
\overline{\mathbf{N}_{\mu ,\lambda }^{-}}\cap \overline{\mathbf{N}_{\mu
,\lambda }^{+}} &\subseteq &\mathbf{N}_{\mu ,\lambda }^{-}\cap (\mathbf{N}%
_{\mu ,\lambda }^{+}\cup \mathbf{N}_{\mu ,\lambda }^{0})=\mathbf{N}_{\mu
,\lambda }^{-}\cap (\mathbf{N}_{\mu ,\lambda }^{+}\cup \emptyset ) \\
&=&(\mathbf{N}_{\mu ,\lambda }^{-}\cap \mathbf{N}_{\mu ,\lambda }^{+})\cup (%
\mathbf{N}_{\mu ,\lambda }^{-}\cap \emptyset )=\emptyset ,
\end{eqnarray*}%
and so $\mathbf{N}_{\mu ,\lambda }^{-}$ and $\mathbf{N}_{\mu ,\lambda }^{+}$
are separated for $\mu >0$ sufficiently large.\newline
$(iii)$ Suppose on the contrary. Then there exist sequences $\left\{ \mu
_{n}\right\} \subset \mathbb{R}^{+}$ and $\{u_{n}\}\subset \mathbf{N}_{\mu
_{n},\lambda }^{+}$ such that $\mu _{n}\rightarrow \infty $ and $\Vert
u_{n}\Vert _{\mu _{n}}\rightarrow \infty $ as $n\rightarrow \infty .$
Clearly,%
\begin{equation}
\left\Vert u_{n}\right\Vert _{\mu _{n}}^{2}-\lambda \int_{\mathbb{R}%
^{3}}fu_{n}^{2}dx+a\left\Vert u_{n}\right\Vert _{D^{1,2}}^{4}=\int_{\mathbb{R%
}^{3}}Q|u_{n}|^{p}dx<0.  \label{4-1}
\end{equation}%
Let $v_{n}=\frac{u_{n}}{\Vert u_{n}\Vert _{\mu _{n}}}.$ Then by Theorem \ref%
{g4-0}, we may assume that there exists $v_{0}\in H_{0}^{1}\left( \Omega
\right) $ such that
\begin{equation*}
v_{n}\rightharpoonup v_{0}\text{ in }X;v_{n}\rightarrow v_{0}\text{ in }%
L^{r}\left( \mathbb{R}^{3}\right) \text{ for all }2<r\leq 6.
\end{equation*}%
Thus,
\begin{equation}
\lim_{n\rightarrow \infty }\int_{\mathbb{R}^{3}}fv_{n}^{2}dx=\int_{\mathbb{R}%
^{3}}fv_{0}^{2}dx  \label{4-2}
\end{equation}%
and%
\begin{equation}
\lim_{n\rightarrow \infty }\int_{\mathbb{R}^{3}}Q\left\vert v_{n}\right\vert
^{p}dx=\int_{\mathbb{R}^{3}}Q\left\vert v_{0}\right\vert ^{p}dx.  \label{4-8}
\end{equation}%
Moreover, by Fatou's Lemma,%
\begin{equation}
\int_{\mathbb{R}^{3}}|\nabla v_{0}|^{2}dx\leq \liminf_{n\rightarrow \infty
}\int_{\mathbb{R}^{3}}|\nabla v_{n}|^{2}dx.  \label{4-4}
\end{equation}%
Dividing $\left( \ref{4-1}\right) $ by $\Vert u_{n}\Vert _{\mu _{n}}^{2}$
gives%
\begin{equation}
\left\Vert v_{n}\right\Vert _{\mu _{n}}^{2}-\lambda \int_{\mathbb{R}%
^{3}}fv_{n}^{2}dx+a\Vert u_{n}\Vert _{\mu _{n}}^{2}\left\Vert
v_{n}\right\Vert _{D^{1,2}}^{4}=\Vert u_{n}\Vert _{\mu _{n}}^{p-2}\int_{%
\mathbb{R}^{3}}Q|v_{n}|^{p}dx<0.  \label{4-5}
\end{equation}%
Since
\begin{equation*}
\lim_{n\rightarrow \infty }\left( \left\Vert v_{n}\right\Vert _{\mu
_{n}}^{2}-\lambda \int_{\mathbb{R}^{3}}fv_{n}^{2}dx\right) =1-\lambda
\lim_{n\rightarrow \infty }\int_{\mathbb{R}^{3}}fv_{n}^{2}dx=1-\lambda \int_{%
\mathbb{R}^{3}}fv_{0}^{2}dx
\end{equation*}%
and $\Vert u_{n}\Vert _{\mu _{n}}\rightarrow \infty ,$ it obtain that $\int_{%
\mathbb{R}^{3}}Q|v_{0}|^{p}dx=0$ and $\int_{\mathbb{R}^{3}}fv_{0}^{2}dx>0$
from the conclusions $\left( \ref{4-8}\right) $ and $\left( \ref{4-5}\right)
.$ Moreover, by $v_{0}\in H_{0}^{1}\left( \Omega \right) ,\left( \ref{4-2}%
\right) $ and $\left( \ref{4-4}\right) ,$ for every $\mu >0,$%
\begin{equation*}
\left\Vert v_{0}\right\Vert _{\mu }^{2}-\lambda \int_{\mathbb{R}%
^{3}}fv_{0}^{2}dx\leq \liminf_{n\rightarrow \infty }\left( \left\Vert
v_{n}\right\Vert _{\mu _{n}}^{2}-\lambda \int_{\mathbb{R}^{3}}fv_{n}^{2}dx%
\right) \leq 0.
\end{equation*}%
We now show that $v_{n}\rightarrow v_{0}$ in $X_{\mu }.$ Suppose on the
contrary. Then%
\begin{eqnarray*}
\left\Vert v_{0}\right\Vert _{\mu }^{2}-\lambda \int_{\mathbb{R}%
^{3}}fv_{0}^{2}dx &=&\int_{\mathbb{R}^{3}}|\nabla v_{0}|^{2}dx-\lambda \int_{%
\mathbb{R}^{3}}fv_{0}^{2}dx \\
&<&\liminf_{n\rightarrow \infty }\left( \left\Vert v_{n}\right\Vert _{\mu
_{n}}^{2}-\lambda \int_{\mathbb{R}^{3}}fv_{n}^{2}dx\right) \leq 0,
\end{eqnarray*}%
since $\int_{\mathbb{R}^{3}}Vv_{0}^{2}dx=0.$ Hence $\frac{v_{0}}{\left\Vert
v_{0}\right\Vert _{\mu }}\in \overline{\Lambda _{\mu }^{-}}\cap \overline{%
\Theta _{\mu }^{+}}\left( p\right) $ which is impossible. Since $%
v_{n}\rightarrow v_{0}$ in $X_{\mu }$, then $\left\Vert v_{0}\right\Vert
_{\mu }=1$. Hence $v_{0}\in \Theta _{\mu }^{0}\left( p\right) $ and so $%
v_{0}\in \overline{\Theta _{\mu }^{+}}\left( p\right) $. Moreover,%
\begin{equation*}
\left\Vert v_{0}\right\Vert _{\mu }^{2}-\lambda \int_{\mathbb{R}%
^{3}}fv_{0}^{2}dx=\lim\limits_{n\rightarrow \infty }\left( \left\Vert
v_{n}\right\Vert _{\mu }^{2}-\lambda \int_{\mathbb{R}^{3}}fv_{n}^{2}dx%
\right) \leq 0.
\end{equation*}%
and so $v_{0}\in \overline{\Lambda _{\mu }^{-}}.$ Thus, $v_{0}\in \overline{%
\Lambda _{\mu }^{-}}\cap \overline{\Theta _{\mu }^{+}}\left( p\right) $
which is impossible. Therefore, we can conclude that $\mathbf{N}_{\mu
,\lambda }^{+}\;$is\ uniform bounded for $\mu >0$ sufficiently large. This
completes the proof.
\end{proof}

When $\mathbf{N}_{\mu ,\lambda }^{+}$ and $\mathbf{N}_{\mu ,\lambda }^{-}$
are separated and $\mathbf{N}_{\mu ,\lambda }^{0}=\emptyset ,$ any non-zero
minimizer for $J_{\mu ,\lambda }$ on $\mathbf{N}_{\mu ,\lambda }^{+}$ (or on
$\mathbf{N}_{\mu ,\lambda }^{-}$ ) is also a local minimizer on $\mathbf{N}%
_{\mu ,\lambda }$ and so will be a critical point for $J_{\mu ,\lambda }$ on
$\mathbf{N}_{\mu ,\lambda }$ and a nontrivial solution of Equation $\left(
E_{\mu ,\lambda }\right) .$ Since $\int_{\mathbb{R}^{3}}Q\phi _{1}^{p}dx<0$,
we can obtain that $\Lambda _{\mu }^{-}\cap \Theta _{\mu }^{-}\left(
p\right) \neq \emptyset $ for all $\mu >0.$ Furthermore, we have the
following result.

\begin{theorem}
\label{t4-2}Suppose that $4<p<6$ and $\int_{\Omega }Q\phi _{1}^{p}dx<0.$
Then for each $\lambda _{1}\left( f_{\Omega }\right) \leq \lambda <\lambda
_{1}\left( f_{\Omega }\right) +\delta _{0},$ there exists a minimizer of $%
J_{\mu ,\lambda }(u)$ on $\mathbf{N}_{\mu ,\lambda }^{+}$ for $\mu >0$
sufficiently large.
\end{theorem}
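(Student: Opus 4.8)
The plan is to realise the constrained infimum $\alpha_{\mu,\lambda}^{+}:=\inf_{u\in\mathbf{N}_{\mu,\lambda}^{+}}J_{\mu,\lambda}(u)$ as a strictly negative minimum. Using the Nehari identity $h_u'(1)=0$ to eliminate $\int_{\mathbb{R}^3}Q|u|^p\,dx$, one has for every $u\in\mathbf{N}_{\mu,\lambda}$
\[
J_{\mu,\lambda}(u)=\frac{p-2}{2p}\left(\left\Vert u\right\Vert_{\mu}^{2}-\lambda\int_{\mathbb{R}^{3}}fu^{2}dx\right)+\frac{(p-4)a}{4p}\left\Vert u\right\Vert_{D^{1,2}}^{4},
\]
with both coefficients positive since $4<p<6$. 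By Lemma \ref{g2-2}$(i)$ every $u\in\mathbf{N}_{\mu,\lambda}^{-}$ has $\Vert u\Vert_{\mu}^{2}-\lambda\int_{\mathbb{R}^3}fu^{2}dx>0$, so $J_{\mu,\lambda}>0$ on $\mathbf{N}_{\mu,\lambda}^{-}$ and $\inf_{\mathbf{N}_{\mu,\lambda}^{-}}J_{\mu,\lambda}\geq 0$. On the other hand, the multiple of the eigenfunction $\varphi_\mu$ lying in $\mathbf{N}_{\mu,\lambda}^{+}$ produced in Lemma \ref{g4-3} has $J_{\mu,\lambda}<0$: since $\varphi_\mu\in\Lambda_\mu^{-}\cap\Theta_\mu^{-}(p)$ one has $\Vert\varphi_\mu\Vert_{\mu}^{2}-\lambda\int_{\mathbb{R}^3}f\varphi_\mu^{2}dx<0$ and $\int_{\mathbb{R}^3}Q|\varphi_\mu|^p dx<0$, so by the fibering discussion preceding Lemma \ref{g2-2} the map $h_{\varphi_\mu}(t)=J_{\mu,\lambda}(t\varphi_\mu)$ is negative for small $t>0$ and its unique interior minimiser has negative value. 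Thus $\alpha_{\mu,\lambda}^{+}<0$, and by the uniform boundedness of $\mathbf{N}_{\mu,\lambda}^{+}$ (Lemma \ref{g4-2}$(iii)$) the functional $J_{\mu,\lambda}$ is bounded below there, so $\alpha_{\mu,\lambda}^{+}$ is finite.

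Next I would realise this infimum through a Palais--Smale sequence. Because $\mathbf{N}_{\mu,\lambda}^{0}=\emptyset$ and the components $\mathbf{N}_{\mu,\lambda}^{+},\mathbf{N}_{\mu,\lambda}^{-}$ are separated (Lemma \ref{g4-2}$(ii)$), the argument given for $\mathbf{N}_{\mu,\lambda}^{-}$ in Lemma \ref{g4-2}$(i)$ applies verbatim to show $\overline{\mathbf{N}_{\mu,\lambda}^{+}}\subseteq\mathbf{N}_{\mu,\lambda}^{+}\cup\{0\}$: a nonzero limit of points of $\mathbf{N}_{\mu,\lambda}^{+}$ lies in $\mathbf{N}_{\mu,\lambda}=\mathbf{N}_{\mu,\lambda}^{+}\cup\mathbf{N}_{\mu,\lambda}^{-}$ but not in $\overline{\mathbf{N}_{\mu,\lambda}^{-}}$, hence in $\mathbf{N}_{\mu,\lambda}^{+}$. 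In particular $\overline{\mathbf{N}_{\mu,\lambda}^{+}}$ is complete, so the Ekeland variational principle on it produces a minimizing sequence $\{u_n\}\subset\mathbf{N}_{\mu,\lambda}^{+}$ with $J_{\mu,\lambda}(u_n)=\alpha_{\mu,\lambda}^{+}+o(1)$ and $J_{\mu,\lambda}'(u_n)=o(1)$; the passage from the constrained to the free gradient is exactly the mechanism of Lemma \ref{g2-1}, legitimate because $\mathbf{N}_{\mu,\lambda}^{0}=\emptyset$. Since $J_{\mu,\lambda}$ is continuous with $J_{\mu,\lambda}(0)=0$ and $\alpha_{\mu,\lambda}^{+}<0$, the sequence is bounded away from $0$, and by Lemma \ref{g4-2}$(iii)$ it is uniformly bounded in $X_\mu$.

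Finally I would invoke compactness. As $\alpha_{\mu,\lambda}^{+}<0\leq\widehat{D}_0$, Proposition \ref{p1} applies to $\{u_n\}$ and yields a subsequence converging strongly to some $u_0\in X_\mu$ for $\mu$ large. Then $u_0\in\overline{\mathbf{N}_{\mu,\lambda}^{+}}\subseteq\mathbf{N}_{\mu,\lambda}^{+}\cup\{0\}$ and $J_{\mu,\lambda}(u_0)=\alpha_{\mu,\lambda}^{+}<0=J_{\mu,\lambda}(0)$, so $u_0\neq 0$; therefore $u_0\in\mathbf{N}_{\mu,\lambda}^{+}$ attains $\inf_{\mathbf{N}_{\mu,\lambda}^{+}}J_{\mu,\lambda}$, which is the desired minimizer. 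The step I expect to be most delicate is the two-sided control of the minimizing sequence on the noncompact domain $\mathbb{R}^3$: one must keep the level below the Palais--Smale threshold $\widehat{D}_0$ — precisely where the steep-well truncation and concentration-compactness analysis behind Proposition \ref{p1} enter — while simultaneously ruling out collapse to $0$ and escape into $\overline{\mathbf{N}_{\mu,\lambda}^{-}}$. Both are forced here by the strict energy gap $\inf_{\mathbf{N}_{\mu,\lambda}^{-}}J_{\mu,\lambda}\geq 0>\alpha_{\mu,\lambda}^{+}$ together with the separation of the two components, so that no spurious limit can arise.
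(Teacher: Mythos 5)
Your proposal is correct and follows essentially the same route as the paper: nonemptiness of $\mathbf{N}_{\mu ,\lambda }^{+}$ together with a test function in $\Lambda _{\mu }^{-}\cap \Theta _{\mu }^{-}\left( p\right) $ forcing $\alpha _{\mu ,\lambda }^{+}<0$, uniform boundedness of $\mathbf{N}_{\mu ,\lambda }^{+}$ from Lemma \ref{g4-2}, boundedness of $J_{\mu ,\lambda }$ from below there, and then Ekeland plus Proposition \ref{p1} to extract a strongly convergent minimizing sequence whose limit cannot be $0$ or lie in $\mathbf{N}_{\mu ,\lambda }^{-}$ because of the energy gap. The only cosmetic differences are that the paper tests with $\phi _{1}$ itself (yielding a $\mu $-independent negative level $\kappa _{0}$) rather than with $\varphi _{\mu }$, and uses the identity $(\ref{4-3})$ in place of your equivalent Nehari reduction of $J_{\mu ,\lambda }$.
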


\begin{proof}
By Lemmas \ref{g4-3} and \ref{g4-2}, $\mathbf{N}_{\mu ,\lambda }^{+}\neq
\emptyset $ and $\mathbf{N}_{\mu ,\lambda }^{+}$ is uniformly bounded for $%
\mu >0$ sufficiently large. Then there exists $C_{a,\lambda }>0$ such that $%
\left\Vert u\right\Vert _{\mu }\leq C_{a,\lambda }$ for all $u\in \mathbf{N}%
_{\mu ,\lambda }^{+}$. Hence, making use of $\left( \ref{11}\right) $, for $%
u\in \mathbf{N}_{\mu ,\lambda }^{+}$ we have%
\begin{align}
J_{\mu ,\lambda }(u)& \geq -\frac{a}{4}\left\Vert u\right\Vert _{\mu }^{4}-%
\frac{\left( p-2\right) \left\Vert Q\right\Vert _{\infty }}{2p}\int_{\mathbb{%
R}^{3}}\left\vert u\right\vert ^{p}dx  \notag \\
& \geq -\frac{a}{4}C_{a,\lambda }^{4}-\frac{\left( p-2\right) \left\Vert
Q\right\Vert _{\infty }}{2pS^{p}}\left\vert \left\{ V<c\right\} \right\vert
^{\frac{6-p}{6}}C_{a,\lambda }^{p}.  \label{3-0}
\end{align}%
Thus, $J_{\mu ,\lambda }$ is bounded from below on $\mathbf{N}_{\mu ,\lambda
}^{+}$ and so $\inf_{u\in \mathbf{N}_{\mu ,\lambda }^{+}}J_{\mu ,\lambda
}(u) $ is finite. Since $\int_{\Omega }Q\phi _{1}^{p}dx<0$ and $\int_{\Omega
}\left\vert \nabla \phi _{1}\right\vert ^{2}dx-\lambda \int_{\Omega
}f_{\Omega }\phi _{1}^{2}dx<0,$ which indicates that the function $h_{\phi
_{1}}\left( t\right) =J_{\mu ,\lambda }\left( t\phi _{1}\right) $ have $%
t_{0}^{+}>0$ and $\kappa _{0}<0$ are independent of $\mu $ such that $%
t_{0}^{+}\phi _{1}\in \mathbf{N}_{\mu ,\lambda }^{+}$ and
\begin{equation*}
\inf_{0<t<\infty }h_{\phi _{1}}\left( t\right) =h_{\phi _{1}}\left(
t_{0}^{+}\right) =\kappa _{0}<0.
\end{equation*}%
This implies that
\begin{equation}
\inf_{u\in \mathbf{N}_{\mu ,\lambda }^{+}}J_{\mu ,\lambda }\left( u\right)
\leq \kappa _{0}<0\text{ for }\mu >0\text{ sufficiently large.}  \label{32}
\end{equation}%
Then by the Ekeland variational principle \cite{E}, there exists a
minimizing sequence $\left\{ u_{n}\right\} \subset \mathbf{N}_{\mu ,\lambda
}^{+}$ such that%
\begin{equation*}
\lim_{n\rightarrow \infty }J_{\mu ,\lambda }(u_{n})=\inf_{u\in \mathbf{N}%
_{\mu ,\lambda }^{+}}J_{\mu ,\lambda }(u)\leq \kappa _{0}\text{ and }J_{\mu
,\lambda }^{\prime }(u_{n})=o\left( 1\right) .
\end{equation*}%
Since $\left\Vert u_{n}\right\Vert _{\mu }\leq C_{a,\lambda }.$ Thus, by
Proposition \ref{p1}, there exist a subsequence $\left\{ u_{n}\right\} $ and
$u_{0}\in X$ such that $J_{\mu ,\lambda }^{\prime }(u_{0})=0$ and $%
u_{n}\rightarrow u_{0}$ strongly in $X_{\mu }$ for $\mu >0$ sufficiently
large, which implies that $J_{\mu ,\lambda }$ has minimizer $u_{0}$ in $%
\mathbf{N}_{\mu ,\lambda }^{+}$ for $\mu $ sufficiently large, and so
\begin{equation*}
J_{\mu ,\lambda }(u_{0})=\lim_{n\rightarrow \infty }J_{\mu ,\lambda
}(u_{n})=\inf_{u\in \mathbf{N}_{\mu ,\lambda }^{+}}J_{\mu ,\lambda }(u)\leq
\kappa _{0}<0,
\end{equation*}%
which implies that $u_{0}$ is a minimizer on $\mathbf{N}_{\mu ,\lambda }^{+}$
for $\mu >0$ sufficiently large.
\end{proof}

We now turn our attention to $\mathbf{N}_{\mu ,\lambda }^{-}.$ Since
\begin{equation}
J_{\mu ,\lambda }(u)=\frac{1}{4}\left( \left\Vert u\right\Vert _{\mu
}^{2}-\lambda \int_{\mathbb{R}^{3}}fu^{2}dx\right) +\left( \frac{1}{4}-\frac{%
1}{p}\right) \int_{\mathbb{R}^{3}}Q\left\vert u\right\vert ^{p}dx>0\text{
for all }u\in \mathbf{N}_{\mu ,\lambda }^{-},  \label{4-3}
\end{equation}%
we have $\inf_{u\in \mathbf{N}_{\mu ,\lambda }^{-}}J_{\mu ,\lambda }(u)\geq
0 $ for all $\mu >0.$ Since $\max \left\{ Q,0\right\} \not\equiv 0$ in\ $%
\Omega ,$ similar to the arguments in $\left( \ref{5-1}\right) ,$ there
exists $\overline{D}_{a,\lambda }>0$ independent of $\mu $ such that $%
\inf_{u\in \mathbf{N}_{\mu ,\lambda }^{-}}J_{\mu ,\lambda }(u)<\overline{D}%
_{a,\lambda }$ and the set
\begin{equation*}
\left\{ J_{\mu ,\lambda }<\overline{D}_{a,\lambda }\right\} :=\left\{ u\in
\mathbf{N}_{\mu ,\lambda }^{-}:J_{\mu ,\lambda }(u)<\overline{D}_{a,\lambda
}\right\} \neq \emptyset \text{ for }\mu >0\text{ sufficiently large.}
\end{equation*}%
Furthermore, we have the following results.

\begin{lemma}
\label{t4-1}Suppose that $4<p<6$ and $\int_{\Omega }Q\phi _{1}^{p}dx<0.$
Then for each $\lambda _{1}\left( f_{\Omega }\right) \leq \lambda <\lambda
_{1}\left( f_{\Omega }\right) +\delta _{0},$ we have the following results.%
\newline
$(i)$ There exists $C_{a,\lambda }>0$ such that $\left\Vert u\right\Vert
_{\mu }\leq C_{a,\lambda }$ for all $u\in \left\{ J_{\mu ,\lambda }<%
\overline{D}_{a,\lambda }\right\} $ and for $\mu >0$ sufficiently large.%
\newline
$(ii)$ We have
\begin{equation*}
\inf_{u\in \mathbf{N}_{\mu ,\lambda }^{-}}J_{\mu ,\lambda }(u)=\inf_{u\in
\left\{ J_{\mu ,\lambda }<\overline{D}_{a,\lambda }\right\} }J_{\mu ,\lambda
}(u)>0
\end{equation*}%
for $\mu >0$ sufficiently large.
\end{lemma}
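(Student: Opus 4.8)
The plan is to prove both parts by one common blow-up/contradiction scheme, exploiting the representation $(\ref{4-3})$, namely $J_{\mu,\lambda}(u)=\frac14\left(\|u\|_\mu^2-\lambda\int_{\mathbb R^3}fu^2dx\right)+\left(\frac14-\frac1p\right)\int_{\mathbb R^3}Q|u|^pdx$, in which, since $4<p<6$, both summands are nonnegative for $u\in\mathbf N_{\mu,\lambda}^-$. The two inputs that do the real work are Theorem \ref{g4-0} (which turns any $\|\cdot\|_{\mu_n}$-bounded sequence, along $\mu_n\to\infty$, into one converging in $L^r$, $2\le r<6$, with limit in $H_0^1(\Omega)$) and Theorem \ref{g4-1} (which gives $\overline{\Lambda_\mu^-}\cap\overline{\Theta_\mu^+}(p)=\emptyset$ for $\mu$ large, in the present $\lambda$-range). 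Because any limit produced by Theorem \ref{g4-0} lies in $H_0^1(\Omega)$, its membership in $\overline{\Lambda_\mu^-}$ and $\overline{\Theta_\mu^+}(p)$ is independent of $\mu$, which is what lets Theorem \ref{g4-1} be invoked.

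For part $(i)$ I would argue by contradiction: assume there are $\mu_n\to\infty$ and $u_n\in\mathbf N_{\mu_n,\lambda}^-$ with $J_{\mu_n,\lambda}(u_n)<\overline D_{a,\lambda}$ but $\|u_n\|_{\mu_n}\to\infty$. Since both terms in $(\ref{4-3})$ are nonnegative, the energy bound yields at once $\|u_n\|_{\mu_n}^2-\lambda\int fu_n^2dx<4\overline D_{a,\lambda}$ and $\int Q|u_n|^pdx<\frac{4p}{p-4}\overline D_{a,\lambda}$; in particular $\int Q|u_n|^pdx$ stays bounded. Setting $v_n=u_n/\|u_n\|_{\mu_n}$ and applying Theorem \ref{g4-0}, I get $v_0\in H_0^1(\Omega)$ with $v_n\rightharpoonup v_0$ in $X$ and $v_n\to v_0$ in $L^r$. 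Dividing the two bounds by $\|u_n\|_{\mu_n}^p$ and $\|u_n\|_{\mu_n}^2$ respectively, together with $\int fv_n^2dx\to\int fv_0^2dx$ (condition $(D_1)$), forces $\int Q|v_0|^pdx=0$ and $\lambda\int fv_0^2dx\ge1$, so $v_0\neq0$, exactly as in the proof of Lemma \ref{g4-2}$(iii)$. Weak lower semicontinuity then gives $\|v_0\|_\mu^2-\lambda\int fv_0^2dx\le0$, whence $v_0/\|v_0\|_\mu\in\overline{\Lambda_\mu^-}\cap\overline{\Theta_\mu^+}(p)$ for every $\mu>0$, contradicting Theorem \ref{g4-1} once $\mu_n$ is large. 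Consequently $\|u\|_\mu\le C_{a,\lambda}$ on $\{J_{\mu,\lambda}<\overline D_{a,\lambda}\}$ for $\mu$ large.

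For part $(ii)$ the equality of the two infima is immediate: since $\{J_{\mu,\lambda}<\overline D_{a,\lambda}\}\subset\mathbf N_{\mu,\lambda}^-$ we have $\inf_{\mathbf N_{\mu,\lambda}^-}J_{\mu,\lambda}\le\inf_{\{J_{\mu,\lambda}<\overline D_{a,\lambda}\}}J_{\mu,\lambda}$, while $\inf_{\mathbf N_{\mu,\lambda}^-}J_{\mu,\lambda}<\overline D_{a,\lambda}$ (recorded just before the lemma) means every minimizing sequence eventually lands in $\{J_{\mu,\lambda}<\overline D_{a,\lambda}\}$, giving the reverse inequality. For strict positivity I would again argue by contradiction, supposing the infimum is not bounded away from $0$ as $\mu\to\infty$, so that one may choose $\mu_n\to\infty$ and $u_n\in\mathbf N_{\mu_n,\lambda}^-$ with $J_{\mu_n,\lambda}(u_n)\to0$. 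Part $(i)$ bounds $\{u_n\}$ from above and Lemma \ref{g4-2}$(i)$ bounds it from below by $c_0>0$, so Theorem \ref{g4-0} again produces $u_0\in H_0^1(\Omega)$. Since both summands of $(\ref{4-3})$ tend to $0$, I get $\int Q|u_0|^pdx=0$ and $\|u_n\|_{\mu_n}^2\to\lambda\int fu_0^2dx$; the lower bound $\|u_n\|_{\mu_n}\ge c_0$ then forces $u_0\neq0$, and Fatou's lemma gives $\int_\Omega|\nabla u_0|^2dx-\lambda\int_\Omega f_\Omega u_0^2dx\le0$. Thus $u_0/\|u_0\|_\mu\in\overline{\Lambda_\mu^-}\cap\overline{\Theta_\mu^+}(p)$, contradicting Theorem \ref{g4-1}; hence $\inf_{\mathbf N_{\mu,\lambda}^-}J_{\mu,\lambda}>0$ for all large $\mu$.

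I expect the main obstacle to be the nondegeneracy of the weak limit in each contradiction, i.e.\ ruling out $v_0=0$ (resp.\ $u_0=0$): this is exactly where the normalization combined with $\lambda\ge\lambda_1(f_\Omega)$ in part $(i)$, and the uniform lower bound of Lemma \ref{g4-2}$(i)$ in part $(ii)$, are indispensable, for otherwise the limiting function could be trivial and Theorem \ref{g4-1} would say nothing. The remaining effort is bookkeeping: keeping all estimates uniform in $\mu$ so that Theorem \ref{g4-0} and the emptiness of $\overline{\Lambda_\mu^-}\cap\overline{\Theta_\mu^+}(p)$ in Theorem \ref{g4-1} can be applied simultaneously along $\mu_n\to\infty$.
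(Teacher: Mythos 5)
Your argument is correct, and for part $(i)$ it is essentially the paper's proof: the paper also normalizes $v_n=u_n/\Vert u_n\Vert_{\mu_n}$, invokes Theorem \ref{g4-0}, and lands the limit in $\overline{\Lambda _{\mu }^{-}}\cap \overline{\Theta _{\mu }^{+}}\left( p\right)$ to contradict Theorem \ref{g4-1}; your way of ruling out $v_0=0$ (directly from $\lambda\int fv_0^2\,dx\geq 1$) is a slight streamlining of the paper's dichotomy on whether $v_n\to v_0$ strongly. For part $(ii)$ you take a genuinely different route: the paper first runs the Ekeland variational principle and the Palais--Smale property (Proposition \ref{p1}) to produce an actual minimizer $u_0$ with $J_{\mu,\lambda}(u_0)=0$ for a fixed large $\mu$, and only then derives the contradiction, whereas you work directly with an almost-minimizing sequence along $\mu_n\to\infty$, using the upper bound from part $(i)$, the lower bound $c_0$ from Lemma \ref{g4-2}$(i)$, and Theorem \ref{g4-0} alone. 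Your version avoids the compactness machinery entirely and is arguably cleaner; the paper's version has the side benefit of exhibiting the minimizer it will need in Theorem \ref{t4-3} anyway.

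One point deserves more care than you give it: in the opening paragraph you assert that both summands of $(\ref{4-3})$ are nonnegative on $\mathbf{N}_{\mu ,\lambda }^{-}$ ``since $4<p<6$.'' That reasoning only covers the second summand (where $\tfrac14-\tfrac1p>0$ and $\int Q|u|^pdx>0$ by $(\ref{2.2})$). Since $\lambda\geq\lambda_1(f_\Omega)$ here, the nonnegativity of $\left\Vert u\right\Vert _{\mu }^{2}-\lambda \int fu^{2}dx$ on $\mathbf{N}_{\mu ,\lambda }^{-}$ is not automatic; it follows because $u/\Vert u\Vert_\mu\in\Theta_\mu^+(p)$ together with $\overline{\Lambda _{\mu }^{-}}\cap \overline{\Theta _{\mu }^{+}}\left( p\right)=\emptyset$ forces $u/\Vert u\Vert_\mu\in\Lambda_\mu^+$, i.e. it is itself a consequence of Theorem \ref{g4-1} and holds only for $\mu$ sufficiently large. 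Both of your key bounds ($\int Q|u_n|^pdx<\tfrac{4p}{p-4}\overline D_{a,\lambda}$ in part $(i)$, and ``each summand tends to $0$'' in part $(ii)$) rest on this, so the justification should be made explicit; once it is, the proof is complete.
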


\begin{proof}
$(i)$ Suppose on the contrary. Then there exist a sequences $\left\{ \mu
_{n}\right\} \subset \mathbb{R}^{+}$ with $\mu _{n}\rightarrow \infty $ and
a sequence $u_{n}\in \left\{ J_{\mu _{n},\lambda }<\overline{D}_{a,\lambda
}\right\} $ such that $\Vert u_{n}\Vert _{\mu _{n}}\rightarrow \infty $ as $%
n\rightarrow \infty .$ Let $v_{n}=\frac{u_{n}}{\Vert u_{n}\Vert _{\mu _{n}}}%
. $ Then by Theorem \ref{g4-0}, we may assume that there exist subsequence $%
\left\{ v_{n}\right\} $ and $v_{0}\in H_{0}^{1}\left( \Omega \right) $ such
that $v_{n}\rightharpoonup v_{0}$ in $X$ and $v_{n}\rightarrow v_{0}$ in $%
L^{r}\left( \mathbb{R}^{3}\right) $ for all $2\leq r<6.$ Then%
\begin{equation}
\lim_{n\rightarrow \infty }\int_{\mathbb{R}^{3}}Q|v_{n}|^{p}dx=\int_{\mathbb{%
R}^{3}}Q|v_{0}|^{p}dx  \label{4-9}
\end{equation}%
and%
\begin{equation}
\lim_{n\rightarrow \infty }\int_{\mathbb{R}^{3}}fv_{n}^{2}dx=\int_{\mathbb{R}%
^{3}}fv_{0}^{2}dx.  \label{4-6}
\end{equation}%
Dividing $\left( \ref{4-3}\right) $ by $\Vert u_{n}\Vert _{\mu _{n}}^{2}$
gives%
\begin{equation}
\frac{J_{\mu _{n},\lambda }(u_{n})}{\Vert u_{n}\Vert _{\mu _{n}}^{2}}=\frac{1%
}{4}\left( 1-\lambda \int_{\mathbb{R}^{3}}fv_{n}^{2}dx\right) +\left( \frac{1%
}{4}-\frac{1}{p}\right) \Vert u_{n}\Vert _{\mu _{n}}^{p-2}\int_{\mathbb{R}%
^{3}}Q\left\vert v_{n}\right\vert ^{p}dx.  \label{4-7}
\end{equation}%
Since $\Vert u_{n}\Vert _{\mu _{n}}\rightarrow +\infty $ and $\frac{J_{\mu
_{n},\lambda }(u_{n})}{\Vert u_{n}\Vert _{\mu _{n}}^{2}}\rightarrow 0$, by $%
\left( \ref{4-9}\right) -\left( \ref{4-7}\right) ,$ we have that $\int_{%
\mathbb{R}^{3}}Q|v_{n}|^{p}dx\rightarrow 0$ and so $\int_{\mathbb{R}%
^{3}}Q|v_{0}|^{p}dx=0.$ We now show that for each $\mu >0,$ we have $%
v_{n}\rightarrow v_{0}$ in $X_{\mu }$. Suppose otherwise, then by $\left( %
\ref{4-6}\right) $ and $\left( \ref{4-7}\right) ,$ there exists $\mu >0$
such that
\begin{eqnarray*}
\int_{\mathbb{R}^{3}}|\nabla v_{0}|^{2}dx-\lambda \int_{\mathbb{R}%
^{3}}fv_{0}^{2}dx &=&\Vert v_{0}\Vert _{\mu }^{2}-\lambda \int_{\mathbb{R}%
^{3}}fv_{0}^{2}dx \\
&<&\liminf_{n\rightarrow \infty }\left( \Vert v_{n}\Vert _{\mu
_{n}}^{2}-\lambda \int_{\mathbb{R}^{3}}fv_{n}^{2}dx\right) =0.
\end{eqnarray*}%
Thus, $v_{0}\neq 0$ and
\begin{equation*}
\frac{v_{0}}{\left\Vert v_{0}\right\Vert _{\mu }}=\frac{v_{0}}{\left(
\int_{\Omega }|\nabla v_{0}|^{2}dx\right) ^{1/2}}\in \overline{\Lambda _{\mu
}^{-}}\cap \overline{\Theta _{\mu }^{+}}\left( p\right) ,
\end{equation*}%
which is impossible. Hence $v_{n}\rightarrow v_{0}$ in $X_{\mu }.$ It
follows that $\Vert v_{0}\Vert _{\mu }=1,\int_{\mathbb{R}^{3}}Vv_{0}^{2}dx=0$
and%
\begin{equation*}
\Vert v_{0}\Vert _{\mu }^{2}-\lambda \int_{\mathbb{R}^{3}}fv_{0}^{2}dx+a%
\left\Vert v_{0}\right\Vert _{D^{1,2}}^{4}=\int_{\mathbb{R}%
^{3}}Q|v_{0}|^{p}dx=0.
\end{equation*}%
Thus, for every $\mu >0,$ there holds $v_{0}\in \Lambda _{\mu }^{0}\cap
\Theta _{\mu }^{0}\left( p\right) $ which is impossible as $\overline{%
\Lambda _{\mu }^{-}}\cap \overline{\Theta _{\mu }^{+}}\left( p\right)
=\emptyset .$ Hence there exists $C_{a,\lambda }>0$ such that $\left\Vert
u\right\Vert _{\mu }\leq C_{a,\lambda }$ for all $u\in \left\{ J_{\mu
,\lambda }<\overline{D}_{a,\lambda }\right\} $ and for $\mu >0$ sufficiently
large.\newline
$(ii)$ Since $\inf_{u\in \mathbf{N}_{\mu ,\lambda }^{-}}J_{\mu ,\lambda
}(u)\geq 0,$ by Lemma \ref{g4-2} and the Ekeland variational principle \cite%
{E}, there exists a minimizing sequence $\left\{ u_{n}\right\} \subset
\left\{ J_{\mu ,\lambda }<\overline{D}_{a,\lambda }\right\} \subset \mathbf{N%
}_{\mu ,\lambda }^{-}$ such that%
\begin{equation*}
\lim_{n\rightarrow \infty }J_{\mu ,\lambda }(u_{n})=\inf_{u\in \mathbf{N}%
_{\mu ,\lambda }^{-}}J_{\mu ,\lambda }(u)\text{ and }J_{\mu ,\lambda
}^{\prime }(u_{n})=o\left( 1\right) .
\end{equation*}%
By part $\left( i\right) $, there exists $C_{a,\lambda }>0$ such that $%
\left\Vert u_{n}\right\Vert _{\mu }\leq C_{a,\lambda }.$ Then by Proposition %
\ref{p1}, and so there exist a subsequence $\left\{ u_{n}\right\} $ and $%
u_{0}\in \mathbf{N}_{\mu ,\lambda }^{-}$ such that $u_{n}\rightarrow u_{0}$
in $X_{\mu }$ and $J_{\mu ,\lambda }^{\prime }(u_{0})=0$ for $\mu >0$
sufficiently large. If $\inf_{u\in \mathbf{N}_{\mu ,\lambda }^{-}}J_{\mu
,\lambda }(u)=0,$ then
\begin{equation*}
J_{\mu ,\lambda }(u_{0})=\lim_{n\rightarrow \infty }J_{\mu ,\lambda
}(u_{n})=\inf_{u\in \mathbf{N}_{\mu ,\lambda }^{-}}J_{\mu ,\lambda }(u)=0.
\end{equation*}%
It then follows exactly as in the proof in part $(i)$ that $\frac{u_{0}}{%
\Vert u_{0}\Vert _{\mu }}\in \Lambda _{\mu }^{0}\cap \Theta _{\mu
}^{0}\left( p\right) $ which is impossible as $\overline{\Lambda _{\mu }^{-}}%
\cap \overline{\Theta _{\mu }^{+}}\left( p\right) =\emptyset .$ This
completes the proof.
\end{proof}

\begin{theorem}
\label{t4-3}Suppose that $4<p<6$ and $\int_{\Omega }Q\phi _{1}^{p}dx<0.$
Then for each $\lambda _{1}\left( f_{\Omega }\right) \leq \lambda <\lambda
_{1}\left( f_{\Omega }\right) +\delta _{0},$ there exists a minimizer of $%
J_{\mu ,\lambda }(u)$ on $\mathbf{N}_{\mu ,\lambda }^{-}$ for $\mu >0$
sufficiently large.
\end{theorem}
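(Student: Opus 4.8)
The plan is to obtain the desired solution as a minimizer of $J_{\mu,\lambda}$ on $\mathbf{N}_{\mu,\lambda}^-$, following the same minimization scheme as in the proof of Theorem \ref{t1} but now feeding in the structural facts proved specifically for the regime $\lambda \geq \lambda_1(f_\Omega)$. First I would record, from identity $(\ref{4-3})$ together with Lemma \ref{t4-1}$(ii)$, that the candidate minimal value
\[
\beta := \inf_{u\in \mathbf{N}_{\mu,\lambda}^-} J_{\mu,\lambda}(u) = \inf_{u\in \{J_{\mu,\lambda}<\overline{D}_{a,\lambda}\}} J_{\mu,\lambda}(u) > 0
\]
is strictly positive and bounded above by the $\mu$-independent constant $\overline{D}_{a,\lambda}$. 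The strict positivity is the key qualitative input: it is precisely what will later prevent the weak/strong limit of a minimizing sequence from being the trivial function.

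Next I would invoke the Ekeland variational principle on the nonempty sublevel set $\{J_{\mu,\lambda}<\overline{D}_{a,\lambda}\}\subset \mathbf{N}_{\mu,\lambda}^-$ to produce a minimizing sequence $\{u_n\}$ with $J_{\mu,\lambda}(u_n)\to\beta$ and $J_{\mu,\lambda}'(u_n)=o(1)$. The step I expect to be the genuine obstacle is the \emph{boundedness} of this sequence. In the subcase $\lambda<\lambda_1(f_\Omega)$ boundedness came for free from the coercivity of $J_{\mu,\lambda}$ on $\mathbf{N}_{\mu,\lambda}^-$ (Lemma \ref{g3-1}), but for $\lambda\geq\lambda_1(f_\Omega)$ that coercivity fails and one must instead use Lemma \ref{t4-1}$(i)$ to secure a uniform bound $\|u_n\|_\mu\leq C_{a,\lambda}$. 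That bound ultimately rests on the separation $\overline{\Lambda_\mu^-}\cap\overline{\Theta_\mu^+}(p)=\emptyset$ of Theorem \ref{g4-1}, which is only available for $\lambda$ near $\lambda_1(f_\Omega)$ (hence the restriction $\lambda<\lambda_1(f_\Omega)+\delta_0$) and for $\mu$ large, so the whole argument is inherently confined to that parameter window.

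With boundedness established, I would apply Proposition \ref{p1}. Because $\beta<\overline{D}_{a,\lambda}$ with $\overline{D}_{a,\lambda}$ independent of $\mu$ and lying below the threshold $\widehat{D}_0$, the $(\mathrm{PS})_\beta$-condition holds for $\mu$ sufficiently large, so a subsequence converges strongly, $u_n\to u_0$ in $X_\mu$, with $J_{\mu,\lambda}'(u_0)=0$. Strong convergence forces $J_{\mu,\lambda}(u_0)=\beta>0$, whence $u_0\neq 0$; and since $\mathbf{N}_{\mu,\lambda}^-$ is closed for $\mu$ large (Lemma \ref{g4-2}$(i)$, combined with $\mathbf{N}_{\mu,\lambda}^0=\emptyset$ and the decomposition $\mathbf{N}_{\mu,\lambda}=\mathbf{N}_{\mu,\lambda}^-\cup\mathbf{N}_{\mu,\lambda}^+$ of Lemma \ref{g4-3}), the limit $u_0$ stays in $\mathbf{N}_{\mu,\lambda}^-$ rather than slipping into $\mathbf{N}_{\mu,\lambda}^0$ or $\mathbf{N}_{\mu,\lambda}^+$. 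Consequently $u_0$ is a minimizer of $J_{\mu,\lambda}$ on $\mathbf{N}_{\mu,\lambda}^-$.

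Finally, since $J_{\mu,\lambda}(u_0)=J_{\mu,\lambda}(|u_0|)$ and $\mathbf{N}_{\mu,\lambda}^0=\emptyset$, Lemma \ref{g2-1} upgrades this minimizer to a genuine critical point of $J_{\mu,\lambda}$ on $X_\mu$; replacing $u_0$ by $|u_0|$ and applying the maximum principle then yields a positive solution of Equation $(E_{\mu,\lambda})$ satisfying $J_{\mu,\lambda}(u_0)=\beta>0$ for $\mu>0$ sufficiently large, which is the assertion of the theorem.
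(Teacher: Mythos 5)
Your proposal is correct and follows essentially the same route as the paper's own proof: Ekeland's variational principle on the sublevel set $\left\{ J_{\mu ,\lambda }<\overline{D}_{a,\lambda }\right\} \subset \mathbf{N}_{\mu ,\lambda }^{-}$, the uniform bound from Lemma \ref{t4-1}$(i)$ (which is indeed where the separation result of Theorem \ref{g4-1} enters), strong convergence via Proposition \ref{p1}, and the strict positivity of the infimum from Lemma \ref{t4-1}$(ii)$ to keep the limit away from zero and inside $\mathbf{N}_{\mu ,\lambda }^{-}$. The concluding remarks on positivity of the solution go slightly beyond what Theorem \ref{t4-3} asserts (the paper defers that to the proof of Theorem \ref{t2}), but they are harmless.
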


\begin{proof}
By Lemmas \ref{g4-2}, \ref{t4-1} and the Ekeland variational principle \cite%
{E}, for each $\mu >\overline{\mu }_{0}\left( \lambda \right) $ there exists
a minimizing sequence $\left\{ u_{n}\right\} \subset \left\{ J_{\mu ,\lambda
}<\overline{D}_{a,\lambda }\right\} \subset \mathbf{N}_{\mu ,\lambda }^{-}$
such that%
\begin{equation*}
\lim_{n\rightarrow \infty }J_{\mu ,\lambda }(u_{n})=\inf_{u\in \mathbf{N}%
_{\mu ,\lambda }^{-}}J_{\mu ,\lambda }(u)>0\text{ and }J_{\mu ,\lambda
}^{\prime }(u_{n})=o\left( 1\right) .
\end{equation*}%
Since $\inf_{u\in \mathbf{N}_{\mu ,\lambda }^{-}}J_{\mu ,\lambda }(u)<%
\overline{D}_{a,\lambda },$ by Lemma \ref{t4-1} $\left( i\right) $, there
exists a positive constant $C_{a,\lambda }$ independent of $\mu $ such that $%
\left\Vert u_{n}\right\Vert _{\mu }\leq C_{a,\lambda }.$ Thus, by
Proposition \ref{p1}, there exist a subsequence $\left\{ u_{n}\right\} $ and
$u_{0}\in X$ such that $J_{\mu ,\lambda }^{\prime }(u_{0})=0$ and $%
u_{n}\rightarrow u_{0}$ strongly in $X_{\mu }$ for $\mu >0$ sufficiently
large, which implies that $J_{\mu ,\lambda }$ has minimizer $u_{0}$ in $\in
\mathbf{N}_{\mu ,\lambda }^{-}$ for $\mu $ sufficiently large, and so
\begin{equation*}
J_{\mu ,\lambda }(u_{0})=\lim_{n\rightarrow \infty }J_{\mu ,\lambda
}(u_{n})=\inf_{u\in \mathbf{N}_{\mu ,\lambda }^{-}}J_{\mu ,\lambda }(u)<%
\overline{D}_{a,\lambda },
\end{equation*}%
which implies that $u_{0}$ is a minimizer on $\mathbf{N}_{\mu ,\lambda
}^{-}. $ This completes the proof.
\end{proof}

\bigskip

\textbf{We are now ready to prove Theorem \ref{t2}: }By Theorems \ref{t4-2}
and \ref{t4-3}, there exists $\delta _{0}>0$ such that, when $\lambda
_{1}\left( f_{\Omega }\right) \leq \lambda <\lambda _{1}\left( f_{\Omega
}\right) +\delta _{0},J_{\mu ,\lambda }$ has minimizers in each of $\mathbf{N%
}_{\mu ,\lambda }^{+}$ and $\mathbf{N}_{\mu ,\lambda }^{-}$ for $\mu $
sufficiently large. Since $J_{\mu ,\lambda }(u)=J_{\mu ,\lambda }(|u|),$ we
may assume that these minimizers are positive. Moreover, by Lemma \ref{g4-3}
we may assume that $\mathbf{N}_{\mu ,\lambda }^{+}$ and $\mathbf{N}_{\mu
,\lambda }^{-}$ are separated and $\mathbf{N}_{\mu ,\lambda }^{0}=\emptyset $%
. It follows that the minimizers are local minimizers in $\mathbf{N}_{\mu
,\lambda }$ which do not lie in $\mathbf{N}_{\mu ,\lambda }^{0}$ and so by
Lemma \ref{g2-1}, they are positive solutions of Equation $\left( E_{\mu
,\lambda }\right) .$

\section{The case when $N=3$ and $p=4$}

\textbf{We are now ready to prove Theorem \ref{t3}: }$\left( i\right) $ When
$0<\lambda <\lambda _{1}\left( f_{\Omega }\right) .$ Similar to the argument
of proofs in Lemma \ref{g3-1} and Theorem \ref{t1}, $J_{\mu ,\lambda }$ has
minimizer $u_{0}$ in $\mathbf{N}_{\mu ,\lambda }=\mathbf{N}_{\mu ,\lambda
}^{-}$ for $\mu $ sufficiently large. Since $J_{\mu ,\lambda }(u)=J_{\mu
,\lambda }(|u|),$ by Lemma \ref{g2-1}, we may assume that $u_{0}$ is a
positive solution of Equation $\left( E_{\mu ,\lambda }\right) $ such that $%
J_{\mu ,\lambda }(u_{\mu }^{-})=\inf_{u\in \mathbf{N}_{\mu ,\lambda
}^{-}}J_{\mu ,\lambda }(u)>0.$

$\left( ii\right) $ Since $\Gamma _{0}<\infty ,$ by Lemma \ref{g2-5} $\left(
ii\right) $ for each $a>\Gamma _{0},$ we have $\mathbf{N}_{\mu ,\lambda
}=\emptyset $ for $\mu $ sufficiently large, this implies that for each $%
a>\Gamma _{0}$ and $0<\lambda <\lambda _{1}\left( f_{\Omega }\right) ,$
Equation $\left( E_{\mu ,\lambda }\right) $ does not admits nontrivial
solution.

$\left( iii\right) $ Since $\Gamma _{0}<\infty ,$ by Lemma \ref{g2-5} $%
\left( iii\right) ,$ for each $a>\Gamma _{0}$ and $\lambda \geq \lambda
_{1}\left( f_{\Omega }\right) ,$ we have $\mathbf{N}_{\mu ,\lambda }=\mathbf{%
N}_{\mu ,\lambda }^{+}$ and $\mathbf{N}_{\mu ,\lambda }^{+}=\{t_{\min
}(u)u:u\in \Theta _{\mu }^{-}\left( p\right) \}$ for $\mu >0$ sufficiently
large. Now, we will show that $\mathbf{N}_{\mu ,\lambda }^{+}\;$is\ uniform
bounded for $\mu >0$ sufficiently large. Suppose on the contrary. Then there
exist sequences $\left\{ \mu _{n}\right\} \subset \mathbb{R}$ and $%
\{u_{n}\}\subset \mathbf{N}_{\mu _{n},\lambda }^{+}$ such that $\mu
_{n}\rightarrow \infty $ and $\Vert u_{n}\Vert _{\mu _{n}}\rightarrow \infty
$ as $n\rightarrow \infty .$ Clearly,%
\begin{equation}
\left\Vert u_{n}\right\Vert _{\mu _{n}}^{2}-\lambda \int_{\mathbb{R}%
^{3}}fu_{n}^{2}dx=\int_{\mathbb{R}^{3}}Q|u_{n}|^{4}dx-a\left\Vert
u_{n}\right\Vert _{D^{1,2}}^{4}<0.  \label{5-2}
\end{equation}%
Let $v_{n}=\frac{u_{n}}{\Vert u_{n}\Vert _{\mu _{n}}}.$ Then by Theorem \ref%
{g4-0}, we may assume that there exists $v_{0}\in H_{0}^{1}\left( \Omega
\right) $ such that
\begin{equation*}
v_{n}\rightharpoonup v_{0}\text{ in }X;v_{n}\rightarrow v_{0}\text{ in }%
L^{r}\left( \mathbb{R}^{3}\right) \text{ for all }2<r\leq 6.
\end{equation*}%
Thus,
\begin{equation}
\lim_{n\rightarrow \infty }\int_{\mathbb{R}^{3}}fv_{n}^{2}dx=\int_{\mathbb{R}%
^{3}}fv_{0}^{2}dx  \label{5-3}
\end{equation}%
and%
\begin{equation}
\lim_{n\rightarrow \infty }\int_{\mathbb{R}^{3}}Q\left\vert v_{n}\right\vert
^{p}dx=\int_{\mathbb{R}^{3}}Q\left\vert v_{0}\right\vert ^{p}dx.  \label{5-4}
\end{equation}%
Moreover, by Fatou's Lemma,%
\begin{equation}
\int_{\mathbb{R}^{3}}|\nabla v_{0}|^{2}dx\leq \liminf_{n\rightarrow \infty
}\int_{\mathbb{R}^{3}}|\nabla v_{n}|^{2}dx.  \label{5-5}
\end{equation}%
Dividing $\left( \ref{5-2}\right) $ by $\Vert u_{n}\Vert _{\mu _{n}}^{2}$
gives%
\begin{equation}
\left\Vert v_{n}\right\Vert _{\mu _{n}}^{2}-\lambda \int_{\mathbb{R}%
^{3}}fv_{n}^{2}dx=\Vert u_{n}\Vert _{\mu _{n}}^{2}\left( \int_{\mathbb{R}%
^{3}}Q|v_{n}|^{4}dx-a\left\Vert v_{n}\right\Vert _{D^{1,2}}^{4}\right) <0.
\label{5-6}
\end{equation}%
Since
\begin{equation}
\lim_{n\rightarrow \infty }\left( \left\Vert v_{n}\right\Vert _{\mu
_{n}}^{2}-\lambda \int_{\mathbb{R}^{3}}fv_{n}^{2}dx\right) =1-\lambda
\lim_{n\rightarrow \infty }\int_{\mathbb{R}^{3}}fv_{n}^{2}dx=1-\lambda \int_{%
\mathbb{R}^{3}}fv_{0}^{2}dx  \label{5-7}
\end{equation}%
and $\Vert u_{n}\Vert _{\mu _{n}}\rightarrow \infty ,$ by $\left( \ref{5-4}%
\right) -\left( \ref{5-7}\right) ,$ it obtain that $\int_{\mathbb{R}%
^{3}}Q|v_{0}|^{4}dx-a\left\Vert v_{0}\right\Vert _{D^{1,2}}^{4}\geq 0$ and $%
\int_{\mathbb{R}^{3}}fv_{0}^{2}dx>0.$ Moreover, by $v_{0}\in H_{0}^{1}\left(
\Omega \right) ,\left( \ref{5-3}\right) $ and $\left( \ref{5-6}\right) ,$
for every $\mu >0,$%
\begin{eqnarray*}
\left\Vert v_{0}\right\Vert _{\mu }^{2}-\lambda \int_{\mathbb{R}%
^{3}}fv_{0}^{2}dx &=&\int_{\mathbb{R}^{3}}|\nabla v_{0}|^{2}dx-\lambda \int_{%
\mathbb{R}^{3}}fv_{0}^{2}dx \\
&\leq &\liminf_{n\rightarrow \infty }\left( \left\Vert v_{n}\right\Vert
_{\mu _{n}}^{2}-\lambda \int_{\mathbb{R}^{3}}fv_{n}^{2}dx\right) \leq 0.
\end{eqnarray*}%
We now show that $v_{n}\rightarrow v_{0}$ in $X_{\mu }.$ Suppose on the
contrary. Then%
\begin{eqnarray*}
\left\Vert v_{0}\right\Vert _{\mu }^{2}-\lambda \int_{\mathbb{R}%
^{3}}fv_{0}^{2}dx &=&\int_{\mathbb{R}^{3}}|\nabla v_{0}|^{2}dx-\lambda \int_{%
\mathbb{R}^{3}}fv_{0}^{2}dx \\
&<&\liminf_{n\rightarrow \infty }\left( \left\Vert v_{n}\right\Vert _{\mu
_{n}}^{2}-\lambda \int_{\mathbb{R}^{3}}fv_{n}^{2}dx\right) \leq 0,
\end{eqnarray*}%
since $\int_{\mathbb{R}^{3}}Vv_{0}^{2}dx=0.$ Hence $\frac{v_{0}}{\left\Vert
v_{0}\right\Vert _{\mu }}\in \overline{\Lambda _{\mu }^{-}}\cap \overline{%
\Theta _{\mu }^{+}}\left( p\right) $ which is impossible. Since $%
v_{n}\rightarrow v_{0}$ in $X_{\mu }$, then $\left\Vert v_{0}\right\Vert
_{\mu }=1$. Hence $v_{0}\in \Theta _{\mu }^{0}\left( p\right) $ and so $%
v_{0}\in \overline{\Theta _{\mu }^{+}}\left( p\right) $. Moreover,%
\begin{equation*}
\left\Vert v_{0}\right\Vert _{\mu }^{2}-\lambda \int_{\mathbb{R}%
^{3}}fv_{0}^{2}dx=\lim\limits_{n\rightarrow \infty }\left( \left\Vert
v_{n}\right\Vert _{\mu }^{2}-\lambda \int_{\mathbb{R}^{3}}fv_{n}^{2}dx%
\right) \leq 0.
\end{equation*}%
and so $v_{0}\in \overline{\Lambda _{\mu }^{-}}.$ Thus, $v_{0}\in \overline{%
\Lambda _{\mu }^{-}}\cap \overline{\Theta _{\mu }^{+}}\left( p\right) $
which is impossible. Therefore, we can conclude that $\mathbf{N}_{\mu
,\lambda }^{+}\;$is\ uniform bounded for $\mu >0$ sufficiently large. Then
similar to the argument of proof in Theorem \ref{t4-2}, $J_{\mu ,\lambda }$
has minimizer $u_{\mu }^{+}$ in $\mathbf{N}_{\mu ,\lambda }=\mathbf{N}_{\mu
,\lambda }^{+}$ for $\mu $ sufficiently large such that $J_{\mu ,\lambda
}(u_{\mu }^{+})<0$. Since $J_{\mu ,\lambda }(u_{\mu }^{+})=J_{\mu ,\lambda
}(|u_{\mu }^{+}|),$ by Lemma \ref{g2-1}, we may assume that $u_{\mu }^{+}$
is a positive solution of Equation $\left( E_{\mu ,\lambda }\right) .$

\bigskip

\textbf{We are now ready to prove Theorem \ref{t4}: }Since $\lambda
_{1}^{-2}\left( f_{\Omega }\right) \int_{\Omega }Q\phi _{1}^{4}dx<a<\Gamma
_{0},$%
\begin{equation*}
\Phi _{p}\left( \phi _{1}\right) =\int_{\Omega }Q|\phi _{1}|^{p}dx-a\left(
\int_{\Omega }\left\vert \nabla \phi _{1}\right\vert ^{2}dx\right) ^{2}<0%
\text{ for }p=4.
\end{equation*}%
By Lemma \ref{g4-4}, there exists $\delta _{0}>0$ such that for every $%
\lambda _{1}\left( f_{\Omega }\right) \leq \lambda <\lambda _{1}\left(
f_{\Omega }\right) +\delta _{0}$, $\mathbf{N}_{\mu ,\lambda }^{\pm }$ are
nonempty sets and $\mathbf{N}_{\mu ,\lambda }=\mathbf{N}_{\mu ,\lambda
}^{+}\cup \mathbf{N}_{\mu ,\lambda }^{-}$ for $\mu >0$ sufficiently large.
Then similar to the argument of proof in Theorem \ref{t2}, Equation $\left(
E_{\mu ,\lambda }\right) $ has two positive solutions $u_{\mu }^{-}$ and $%
u_{\mu }^{+}$ satisfying $J_{\mu ,\lambda }\left( u_{\mu }^{+}\right)
<0<J_{\mu ,\lambda }\left( u_{\mu }^{-}\right) $ for $\mu >0$ sufficiently
large.

\section{The case when $N\geq 3$ and $2<p<\min \left\{ 4,2^{\ast }\right\} $}

\subsection{The proof of Theorem \protect\ref{t5-0}}

\textbf{We are now ready to prove Theorem \ref{t5-0}:}

For $0<\lambda <\lambda _{1}\left( f\right) $ and $u\in X\backslash \left\{
0\right\} ,$ we know that $tu\in \mathbf{N}_{\mu }^{0}$ if and only if $%
h_{tu}^{\prime }\left( 1\right) =h_{tu}^{\prime \prime }\left( 1\right) =0,$
i.e., the following system of equations is satisfied:%
\begin{equation}
\left\{
\begin{array}{c}
at^{3}\left\Vert u\right\Vert _{D^{1,2}}^{4}+t\left( \left\Vert u\right\Vert
_{\mu }^{2}-\lambda \int_{\mathbb{R}^{N}}fu^{2}dx\right) -t^{p-1}\int_{%
\mathbb{R}^{N}}Q\left\vert u\right\vert ^{p}dx=0, \\
3at^{2}\left\Vert u\right\Vert _{D^{1,2}}^{4}+\left( \left\Vert u\right\Vert
_{\mu }^{2}-\lambda \int_{\mathbb{R}^{N}}fu^{2}dx\right) -(p-1)t^{p-2}\int_{%
\mathbb{R}^{N}}Q\left\vert u\right\vert ^{p}dx=0.%
\end{array}%
\right.  \label{15-2}
\end{equation}%
By solving the system $(\ref{15-2})$ with respect to the variables $t$ and $%
a,$ we have%
\begin{equation*}
t(u)=\left( \frac{2\left( \left\Vert u\right\Vert _{\mu }^{2}-\lambda \int_{%
\mathbb{R}^{N}}fu^{2}dx\right) }{(4-p)\int_{\mathbb{R}^{N}}Q\left\vert
u\right\vert ^{p}dx}\right) ^{1/(p-2)}
\end{equation*}%
and%
\begin{equation*}
a(u)=\frac{p-2}{4-p}\left( \frac{4-p}{2}\right) ^{2/(p-2)}\overline{A}%
_{\lambda }(u),
\end{equation*}%
where%
\begin{equation}
\overline{A}_{\lambda }(u)=\frac{\left( \int_{\mathbb{R}^{N}}Q|u|^{p}dx%
\right) ^{2/(p-2)}}{\left\Vert u\right\Vert _{D^{1,2}}^{4}\left( \left\Vert
u\right\Vert _{\mu }^{2}-\lambda \int_{\mathbb{R}^{N}}fu^{2}dx\right)
^{(4-p)/(p-2)}}.  \label{15-1}
\end{equation}%
We conclude that $a(u)$ is the unique parameter $a>0$ for which the fibering
map $h_{u}$ has a critical point with second derivative zero at $t(u)$.
Hence, if $a>a(u)$, then $h_{u}$ is increasing on $(0,\infty )$ and has no
critical point. Moreover, for $0<\lambda <\lambda _{1}\left( f\right) ,$ we
define%
\begin{equation}
\overline{\mathbf{A}}_{\lambda }=\frac{p-2}{4-p}\left( \frac{4-p}{p}\right)
^{2/(p-2)}\sup_{u\in X\backslash \left\{ 0\right\} }\overline{A}_{\lambda
}(u).  \label{15-0}
\end{equation}%
Note that by $\left( \ref{10}\right) $ and the H\"{o}lder and Sobolev
inequalities,%
\begin{eqnarray*}
\overline{A}_{\lambda }(u) &\leq &\frac{\left( \left\Vert Q\right\Vert
_{\infty }\left( \int_{\left\{ V\geq c\right\} }u^{2}dx+\int_{\left\{
V<c\right\} }u^{2}dx\right) ^{\frac{2^{\ast }-p}{2^{\ast }-2}}\left( \frac{%
\left\Vert u\right\Vert _{D^{1,2}}^{2^{\ast }}}{S^{2^{\ast }}}\right) ^{%
\frac{p-2}{2^{\ast }-2}}\right) ^{2/(p-2)}}{\left\Vert u\right\Vert
_{D^{1,2}}^{4}\left( \left\Vert u\right\Vert _{\mu }^{2}-\lambda \int_{%
\mathbb{R}^{N}}fu^{2}dx\right) ^{(4-p)/(p-2)}} \\
&\leq &\left( \frac{\widetilde{\lambda }_{1,\mu }\left( f\right) }{%
\widetilde{\lambda }_{1,\mu }\left( f\right) -\lambda }\right)
^{(4-p)/(p-2)}\left( \frac{\left\Vert Q\right\Vert _{\infty }\left\vert
\left\{ V<c\right\} \right\vert ^{\frac{2^{\ast }-p}{2^{\ast }}}\left\Vert
u\right\Vert _{D^{1,2}}^{\frac{N\left( p-2\right) }{2}}\left\Vert
u\right\Vert _{\mu }^{\frac{\left( N-2\right) \left( 2^{\ast }-p\right) }{2}}%
}{S^{p}\left\Vert u\right\Vert _{D^{1,2}}^{2\left( p-2\right) }\left\Vert
u\right\Vert _{\mu }^{4-p}}\right) ^{2/\left( p-2\right) } \\
&\leq &\left( \frac{\widetilde{\lambda }_{1,\mu }\left( f\right) }{%
\widetilde{\lambda }_{1,\mu }\left( f\right) -\lambda }\right)
^{(4-p)/(p-2)}\left( \frac{\left\Vert Q\right\Vert _{\infty }\left\vert
\left\{ V<c\right\} \right\vert ^{\frac{2^{\ast }-p}{2^{\ast }}}}{S^{p}}%
\right) ^{2/(p-2)}\text{ for al }\mu >\overline{\mu }_{0}\left( \lambda
\right) ,
\end{eqnarray*}%
which implies that for each $0<\lambda <\lambda _{1}\left( f\right) ,$%
\begin{equation*}
\overline{\mathbf{A}}_{\lambda }<\frac{1}{2}\left( \frac{\left( 4-p\right)
\lambda _{1}\left( f_{\Omega }\right) }{p\left( \lambda _{1}\left( f_{\Omega
}\right) -\lambda \right) }\right) ^{(4-p)/(p-2)}\left( \frac{\left\Vert
Q\right\Vert _{\infty }\left\vert \left\{ V<c\right\} \right\vert ^{\frac{%
2^{\ast }-p}{2^{\ast }}}}{S^{p}}\right) ^{2/(p-2)},
\end{equation*}%
for $\mu >0$ sufficiently large. Hence, the energy functional $J_{\mu
,\lambda }$ has no any nontrivial critical points for $a>\overline{\mathbf{A}%
}_{\lambda }$ for $\mu >0$ sufficiently large. Consequently, we complete the
proof.

\subsection{The proofs of Theorems \protect\ref{t5}, \protect\ref{t5-2}}

First, we define
\begin{equation*}
\alpha _{\mu ,\lambda }^{+}=\inf_{u\in \mathbf{N}_{\mu ,\lambda }^{+}}J_{\mu
,\lambda }\left( u\right) .
\end{equation*}%
Then we have the following results.

\begin{proposition}
\label{p2}Suppose that $N=3,2<p<4$ and conditions $\left( V_{1}\right)
-\left( V_{3}\right) $ and $\left( D_{1}\right) -\left( D_{3}\right) $ hold.
Then the following statements are true.\newline
$\left( i\right) $ For each $\lambda >0$ and $a>0$, we have $\mathbf{N}_{\mu
,\lambda }^{+}$ is uniformly bounded for $\mu >0$ sufficiently large;\newline
$\left( ii\right) $ For each $\lambda >0$ and $a>0,$there exist two numbers $%
d_{0},D_{0}>0$ such that
\begin{equation*}
\inf_{u\in \mathbf{N}_{\mu ,\lambda }^{-}\cup \mathbf{N}_{\mu ,\lambda
}^{0}}J_{\mu ,\lambda }(u)\geq 0>-d_{0}>\alpha _{\mu ,\lambda }^{+}>-D_{0}%
\text{ for }\mu >0\text{ sufficiently large.}
\end{equation*}
\end{proposition}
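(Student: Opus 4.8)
The plan is to read everything off two algebraic consequences of the Nehari constraint. For $u\in\mathbf{N}_{\mu,\lambda}$, eliminating the quadratic part through $a\|u\|_{D^{1,2}}^{4}+\big(\|u\|_{\mu}^{2}-\lambda\int_{\mathbb{R}^{3}}fu^{2}dx\big)=\int_{\mathbb{R}^{3}}Q|u|^{p}dx$ gives the working form
\[
J_{\mu,\lambda}(u)=-\frac{a}{4}\|u\|_{D^{1,2}}^{4}+\frac{p-2}{2p}\int_{\mathbb{R}^{3}}Q|u|^{p}dx,
\]
while (\ref{2.2}) reads $h_{u}''(1)=2a\|u\|_{D^{1,2}}^{4}-(p-2)\int_{\mathbb{R}^{3}}Q|u|^{p}dx$. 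On $\mathbf{N}_{\mu,\lambda}^{-}\cup\mathbf{N}_{\mu,\lambda}^{0}$ one has $h_{u}''(1)\le 0$, i.e. $\int_{\mathbb{R}^{3}}Q|u|^{p}dx\ge\frac{2a}{p-2}\|u\|_{D^{1,2}}^{4}$; inserting this into the working form yields $J_{\mu,\lambda}(u)\ge\frac{(4-p)a}{4p}\|u\|_{D^{1,2}}^{4}\ge 0$ because $p<4$. This settles the first inequality of (ii) for all $a,\lambda,\mu$, with no compactness needed.

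For the upper estimate $\alpha_{\mu,\lambda}^{+}<-d_{0}$ I would use a single $\mu$-independent competitor. Theorem \ref{t6-1} furnishes a function $w\in H_{0}^{1}(\Omega)$ — namely $t_{a}^{+}w_{\lambda,\Omega}$ when $\lambda<\lambda_{1}(f_{\Omega})$ (and $0<a<a_{0}$), or $t_{a}^{+}\phi_{1}$ when $\lambda\ge\lambda_{1}(f_{\Omega})$ — lying in $\mathbf{N}_{\mu,\lambda}^{+}$ with $J_{\mu,\lambda}(t_{a}^{+}w)=\inf_{t\ge0}J_{\mu,\lambda}(tw)<0$. Since $w$ is supported in $\overline{\Omega}=\{V=0\}$, the number $\kappa_{0}:=J_{\mu,\lambda}(t_{a}^{+}w)$ is independent of $\mu$, so $\alpha_{\mu,\lambda}^{+}\le\kappa_{0}<0$ and any $0<d_{0}<|\kappa_{0}|$ gives $\alpha_{\mu,\lambda}^{+}<-d_{0}$ uniformly in $\mu$.

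The substance of the proposition is (i), which also yields the last bound $\alpha_{\mu,\lambda}^{+}>-D_{0}$. I would argue by contradiction: assume $\mu_{n}\to\infty$ and $u_{n}\in\mathbf{N}_{\mu_{n},\lambda}^{+}$ with $s_{n}:=\|u_{n}\|_{\mu_{n}}\to\infty$, put $v_{n}=u_{n}/s_{n}$, and invoke Theorem \ref{g4-0} to extract $v_{n}\rightharpoonup v_{0}\in H_{0}^{1}(\Omega)$ with $v_{n}\to v_{0}$ in $L^{r}(\mathbb{R}^{3})$, $2\le r<6$. Dividing the Nehari identity by $s_{n}^{2}$ gives
\[
a\,s_{n}^{2}\|v_{n}\|_{D^{1,2}}^{4}+\Big(1-\lambda\int_{\mathbb{R}^{3}}fv_{n}^{2}dx\Big)=s_{n}^{p-2}\int_{\mathbb{R}^{3}}Q|v_{n}|^{p}dx .
\]
Testing the sign of $\int_{\mathbb{R}^{3}}Q|v_{0}|^{p}dx$ against the growth rate $s_{n}^{p-2}$ (using $p<4$) excludes both a positive and a negative value, forcing $\int_{\mathbb{R}^{3}}Q|v_{0}|^{p}dx=0$; the contradiction is then reached by a balance-of-rates argument in the scaled identity, provided one knows that the nonlinear term decays strictly faster than $s_{n}^{-(p-2)}$. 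Granting this, the scaled identity forces $\|v_{n}\|_{D^{1,2}}\to0$ and $v_{0}=0$, making its left- and right-hand sides incompatible, so $\mathbf{N}_{\mu,\lambda}^{+}$ is uniformly bounded, $\|u\|_{\mu}\le C_{a,\lambda}$. Feeding this and (\ref{11}) into the working form of $J_{\mu,\lambda}$ then gives a $\mu$-independent lower bound $\alpha_{\mu,\lambda}^{+}\ge-D_{0}$, completing the chain $0>-d_{0}>\alpha_{\mu,\lambda}^{+}>-D_{0}$.

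The core difficulty, and exactly where the decay hypothesis $(D_{3})$ and the restriction $N=3$ are indispensable, is the decay $\int_{\mathbb{R}^{3}}Q|v_{n}|^{p}dx=o\big(s_{n}^{-(p-2)}\big)$. I would obtain it by splitting $\mathbb{R}^{3}$ into $\{|x|\le R_{\ast}\}$, $\{|x|>R_{\ast},\,V\ge c\}$ and $\{|x|>R_{\ast},\,V<c\}$: on the first, global $L^{p}$-convergence to $v_{0}$ together with $\int Q|v_{0}|^{p}=0$ gives vanishing; on $\{V\ge c\}$, the bound $\int_{\{V\ge c\}}v_{n}^{2}\le(\mu_{n}c)^{-1}$ combined with Hölder and (\ref{11}) produces a factor $\mu_{n}^{-(6-p)/4}\to0$; and on $\{|x|>R_{\ast},\,V<c\}$ the hypothesis $(D_{3})$ yields $Q\le c_{\ast}c^{4-p}|x|^{-(p-2)}$, so that a Hardy--Sobolev inequality in $\mathbb{R}^{3}$ bounds this piece by a positive power of $\|v_{n}\|_{D^{1,2}}\to0$. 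I expect this last estimate to be the principal obstacle: the admissible Hardy--Sobolev exponent $2^{\ast}(s)=2(3-s)$ with $s=p-2$ is critical only at $p=10/3$, so the ranges $2<p\le 10/3$ and $10/3<p<4$ must be treated separately (the latter by exploiting the cutoff $|x|>R_{\ast}$ and interpolating with the $L^{2}$-estimate from (\ref{11})). This is precisely the ingredient that the higher-dimensional Theorem \ref{t5-2} can dispense with, where the sharper Sobolev embedding makes $(D_{3})$ unnecessary.
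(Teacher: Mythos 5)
Your handling of the easy parts of $(ii)$ is fine: the identity $J_{\mu ,\lambda }(u)=-\frac{a}{4}\left\Vert u\right\Vert _{D^{1,2}}^{4}+\frac{p-2}{2p}\int_{\mathbb{R}^{3}}Q|u|^{p}dx$ on $\mathbf{N}_{\mu ,\lambda }$ combined with $h_{u}^{\prime \prime }(1)=2a\left\Vert u\right\Vert _{D^{1,2}}^{4}-(p-2)\int_{\mathbb{R}^{3}}Q|u|^{p}dx\leq 0$ gives $J_{\mu ,\lambda }\geq \frac{a(4-p)}{4p}\left\Vert u\right\Vert _{D^{1,2}}^{4}\geq 0$ on $\mathbf{N}_{\mu ,\lambda }^{-}\cup \mathbf{N}_{\mu ,\lambda }^{0}$, which is the paper's computation in a different form, and the $\mu $-independent competitor from Theorem \ref{t6-1} for $\alpha _{\mu ,\lambda }^{+}<-d_{0}$ is also what the paper uses. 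The problem is part $(i)$, which you correctly call the substance of the proposition, and there your argument has a genuine gap at exactly the point you flag. After normalizing $v_{n}=u_{n}/s_{n}$ with $s_{n}=\Vert u_{n}\Vert _{\mu _{n}}\rightarrow \infty $, the scaled Nehari identity together with $\Vert v_{n}\Vert _{D^{1,2}}\rightarrow 0$ and $v_{n}\rightarrow 0$ in $L^{r}$ forces $s_{n}^{p-2}\int_{\mathbb{R}^{3}}Q|v_{n}|^{p}dx\geq 1+o(1)$, so the contradiction requires the quantitative rate $\int_{\mathbb{R}^{3}}Q|v_{n}|^{p}dx=o\left( s_{n}^{-(p-2)}\right) $, not mere vanishing. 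Your treatment of $\{|x|\leq R_{\ast }\}$ only yields $\int_{|x|\leq R_{\ast }}|v_{n}|^{p}dx=o(1)$ from $L^{p}$-convergence, which is useless against the diverging factor $s_{n}^{p-2}$; and on $\{V\geq c\}$ the factor $\mu _{n}^{-(6-p)/4}$ cannot absorb $s_{n}^{p-2}$ because $\mu _{n}$ and $s_{n}$ are unrelated sequences. One could try to repair this by bootstrapping the a priori rate $\Vert v_{n}\Vert _{D^{1,2}}\leq Cs_{n}^{(p-4-\gamma _{k})/4}$ extracted from the identity, but nothing of the sort appears in your sketch, and your conditional phrasing (\textquotedblleft provided one knows that the nonlinear term decays strictly faster\textquotedblright ) concedes that the key estimate is unproven.

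The paper avoids all of this by arguing directly on a fixed $u\in \mathbf{N}_{\mu ,\lambda }^{+}$, without extracting sequences. From $h_{u}^{\prime \prime }(1)>0$ it first gets $\left\Vert u\right\Vert _{\mu }^{2}<\frac{a(4-p)}{p-2}\left\Vert u\right\Vert _{D^{1,2}}^{4}+\frac{\lambda \left\Vert f\right\Vert _{L^{3/2}}}{S^{2}}\left\Vert u\right\Vert _{D^{1,2}}^{2}$, and then uses the Nehari identity in the form $1<\left( \int_{\mathbb{R}^{3}}Q|u|^{p}dx+\lambda \int_{\mathbb{R}^{3}}fu^{2}dx\right) /\left( a\left\Vert u\right\Vert _{D^{1,2}}^{4}\right) $. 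The ratio $\int Q|u|^{p}dx/\left\Vert u\right\Vert _{D^{1,2}}^{4}$ is controlled by interpolating $\int Q|u|^{p}dx\leq \left( \int Q^{4/(6-p)}u^{2}dx\right) ^{(6-p)/4}\left( S^{-6}\left\Vert u\right\Vert _{D^{1,2}}^{6}\right) ^{(p-2)/4}$, applying $\left( D_{3}\right) $ pointwise on $\{|x|>R_{\ast }\}$ together with the plain Hardy (Caffarelli--Kohn--Nirenberg) inequality --- so no Hardy--Sobolev embedding and no case split at $p=10/3$ --- and, crucially, the steep-well bound $\int_{|x|>R_{\ast }}Vu^{2}dx\leq \mu ^{-1}\left\Vert u\right\Vert _{\mu }^{2}$, which combined with the first display makes $\int Vu^{2}dx/\left\Vert u\right\Vert _{D^{1,2}}^{4}=O(\mu ^{-1})$. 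Letting $\left\Vert u\right\Vert _{D^{1,2}}\rightarrow \infty $ then drives the right-hand side below $1$ for $\mu $ large, yielding the uniform bound $\left\Vert u\right\Vert _{D^{1,2}}\leq d_{1}$ outright; the lower bound $\alpha _{\mu ,\lambda }^{+}>-D_{0}$ follows by feeding this into $J_{\mu ,\lambda }$. I recommend replacing your contradiction scheme by this direct quotient estimate.
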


\begin{proof}
$\left( i\right) $ Let $u\in \mathbf{N}_{\mu ,\lambda }^{+}.$ Then by $%
\left( \ref{2.2}\right) $ and the H\"{o}lder and Sobolev inequalities,%
\begin{equation}
\left\Vert u\right\Vert _{\mu }^{2}<\frac{a\left( 4-p\right) }{\left(
p-2\right) }\left( \int_{\mathbb{R}^{3}}\left\vert \nabla u\right\vert
^{2}dx\right) ^{2}+\frac{\lambda \left\Vert f\right\Vert _{L^{3/2}}}{S^{2}}%
\int_{\mathbb{R}^{3}}\left\vert \nabla u\right\vert ^{2}dx.  \label{8-14}
\end{equation}%
Moreover, using the Sobolev, H\"{o}lder and Hardy inequalities, condition $%
\left( D_{3}\right) $ and $\left( \ref{8-14}\right) $ gives%
\begin{eqnarray*}
1 &=&\frac{\int_{\mathbb{R}^{3}}Q\left\vert u\right\vert ^{p}dx+\lambda
\int_{\mathbb{R}^{3}}fu^{2}dx}{\left\Vert u\right\Vert _{\mu
}^{2}+a\left\Vert u\right\Vert _{D^{1,2}}^{4}}<\frac{\int_{\mathbb{R}%
^{3}}Q\left\vert u\right\vert ^{p}dx+\lambda \int_{\mathbb{R}^{3}}fu^{2}dx}{%
a\left\Vert u\right\Vert _{D^{1,2}}^{4}} \\
&=&\frac{1}{aS^{\frac{3(p-2)}{2}}}\left[ \frac{\int_{\left\vert x\right\vert
>R_{\ast }}Q^{\frac{4}{6-p}}u^{2}dx+\left\Vert Q\right\Vert _{\infty }^{%
\frac{4}{6-p}}\left\vert B_{R_{\ast }}\left( 0\right) \right\vert ^{\frac{2}{%
3}}S^{-2}\int_{\mathbb{R}^{3}}\left\vert \nabla u\right\vert ^{2}dx}{%
\left\Vert u\right\Vert _{D^{1,2}}^{\frac{2\left( 14-3p\right) }{6-p}}}%
\right] ^{\frac{6-p}{4}}+\frac{\lambda \left\Vert f\right\Vert _{L^{3/2}}}{%
aS^{2}\left\Vert u\right\Vert _{D^{1,2}}^{2}} \\
&\leq &\frac{1}{aS^{\frac{3(p-2)}{2}}}\left[ \frac{\int_{\left\vert
x\right\vert >R_{\ast }}\left( Vu^{2}\right) ^{\frac{2\left( 4-p\right) }{6-p%
}}\left( \frac{\left\vert u\right\vert }{\left\vert x\right\vert }\right)
^{^{\frac{2\left( p-2\right) }{6-p}}}dx}{\left\Vert u\right\Vert _{D^{1,2}}^{%
\frac{2\left( 14-3p\right) }{6-p}}}+\frac{\left\Vert Q\right\Vert _{\infty
}^{\frac{4}{6-p}}\left\vert B_{R_{\ast }}\left( 0\right) \right\vert ^{\frac{%
2}{3}}S^{-2}}{\left\Vert u\right\Vert _{D^{1,2}}^{\frac{4\left( 4-p\right) }{%
6-p}}}\right] ^{\frac{6-p}{4}}+\frac{\lambda \left\Vert f\right\Vert
_{L^{3/2}}}{aS^{2}\left\Vert u\right\Vert _{D^{1,2}}^{2}} \\
&\leq &\frac{1}{aS^{\frac{3(p-2)}{2}}}\left[ \overline{C}_{0}\left( \frac{%
\int_{\left\vert x\right\vert >R_{\ast }}V\left( x\right) u^{2}dx}{%
\left\Vert u\right\Vert _{D^{1,2}}^{4}}\right) ^{\frac{2\left( 4-p\right) }{%
6-p}}+\frac{\left\Vert Q\right\Vert _{\infty }^{\frac{4}{6-p}}\left\vert
B_{R_{\ast }}\left( 0\right) \right\vert ^{\frac{2}{3}}S^{-2}}{\left\Vert
u\right\Vert _{D^{1,2}}^{\frac{4\left( 4-p\right) }{6-p}}}\right] ^{\frac{6-p%
}{4}}+\frac{\lambda \left\Vert f\right\Vert _{L^{3/2}}}{aS^{2}\left\Vert
u\right\Vert _{D^{1,2}}^{2}} \\
&<&\frac{1}{aS^{\frac{3(p-2)}{2}}}\left[ \overline{C}_{0}\left( \frac{%
a\left( 4-p\right) }{\mu \left( p-2\right) }+\frac{\lambda \left\Vert
f\right\Vert _{L^{3/2}}}{\mu S^{2}\left\Vert u\right\Vert _{D^{1,2}}^{2}}%
\right) ^{\frac{2\left( 4-p\right) }{6-p}}+\frac{Q_{\max }^{\frac{4}{6-p}%
}\left\vert B_{R_{\ast }}\left( 0\right) \right\vert ^{\frac{2}{3}}}{%
S^{2}\left\Vert u\right\Vert _{D^{1,2}}^{\frac{4\left( 4-p\right) }{6-p}}}%
\right] ^{\frac{6-p}{4}}+\frac{\lambda \left\Vert f\right\Vert _{L^{3/2}}}{%
aS^{2}\left\Vert u\right\Vert _{D^{1,2}}^{2}}
\end{eqnarray*}%
where $\overline{C}_{0}$ is the sharp constant of Caffarelli-Kohn-Nirenberg
inequality. This implies that there exists a constant $d_{1}>0,$ dependent
on $a$ and $\lambda $ such that
\begin{equation}
\int_{\mathbb{R}^{3}}\left\vert \nabla u\right\vert ^{2}dx\leq d_{1}\text{
for all }u\in \mathbf{N}_{\mu ,\lambda }^{+}\text{ and for }\mu >0\text{
sufficiently large.}  \label{8-13}
\end{equation}%
Thus, by $(\ref{8-14})$ and $(\ref{8-13}),$ we have%
\begin{equation*}
\left\Vert u\right\Vert _{\mu }^{2}<\frac{a(4-p)}{p-2}d_{1}^{2}+\frac{%
\lambda \left\Vert f\right\Vert _{L^{3/2}}}{S^{2}}d_{1}\text{ for all }u\in
\mathbf{N}_{\mu ,\lambda }^{+}.
\end{equation*}%
$\left( ii\right) $ By Theorem \ref{t6-1} $\left( ii\right) ,$ there exists $%
d_{0}>0$ such that $\alpha _{\mu ,\lambda }^{+}<-d_{0}:=J_{\mu ,\lambda
}\left( t_{a}^{+}\phi _{1}\right) .$ Next, we prove that there exist
constants $D_{0},\mu _{2}>0$ such that
\begin{equation*}
\alpha _{\mu ,\lambda }^{+}>-D_{0}\text{ for all }\mu \geq \mu _{2}\text{
and }a>0.
\end{equation*}%
Let $u\in \mathbf{N}_{\mu ,\lambda }^{+}.$ Similar to $(\ref{8-14})$, we
obtain%
\begin{equation*}
\int_{\mathbb{R}^{3}}fu^{2}dx\leq \frac{\lambda \left\Vert f\right\Vert
_{L^{3/2}}}{S^{2}}\int_{\mathbb{R}^{3}}\left\vert \nabla u\right\vert ^{2}dx
\end{equation*}%
and%
\begin{equation*}
\int_{\mathbb{R}^{3}}Q\left\vert u\right\vert ^{p}dx\leq \frac{\overline{C}%
_{0}^{\frac{6-p}{4}}}{S^{\frac{3(p-2)}{2}}}\left( \frac{a\left( 4-p\right) }{%
2\lambda \left( p-2\right) }\right) ^{\frac{4-p}{2}}\left\Vert u\right\Vert
_{D^{1,2}}^{4}+\frac{\left\Vert Q\right\Vert _{\infty }\left\vert B_{R_{\ast
}}\left( 0\right) \right\vert ^{\frac{6-p}{6}}}{S^{p}}\left\Vert
u\right\Vert _{D^{1,2}}^{p}.
\end{equation*}%
Using the above inequalities gives%
\begin{eqnarray*}
J_{\mu ,\lambda }\left( u\right) &=&\frac{1}{2}\left( \left\Vert
u\right\Vert _{\mu }^{2}-\lambda \int_{\mathbb{R}^{3}}fu^{2}dx\right) +\frac{%
a}{4}\left\Vert u\right\Vert _{D^{1,2}}^{4}-\frac{1}{p}\int_{\mathbb{R}%
^{3}}Q|u|^{p}dx \\
&>&\left[ \frac{a}{4}-\frac{\overline{C}_{0}^{\frac{6-p}{4}}}{S^{\frac{3(p-2)%
}{2}}}\left( \frac{a\left( 4-p\right) }{2\lambda \left( p-2\right) }\right)
^{\frac{4-p}{2}}\right] \left\Vert u\right\Vert _{D^{1,2}}^{4}-\frac{%
\left\Vert Q\right\Vert _{\infty }\left\vert B_{R_{\ast }}\left( 0\right)
\right\vert ^{\frac{6-p}{6}}}{S^{p}}\left\Vert u\right\Vert _{D^{1,2}}^{p} \\
&&-\frac{\lambda \left\Vert f\right\Vert _{L^{3/2}}}{2S^{2}}\left\Vert
u\right\Vert _{D^{1,2}}^{2}.
\end{eqnarray*}%
This implies that there exists a constant $D_{a,\lambda }>0$ such that $%
\alpha _{\mu ,\lambda }^{+}>-D_{a,\lambda }$ for $\mu >0$ sufficiently
large. Moreover, for $u\in \mathbf{N}_{\mu ,\lambda }^{-}\cup \mathbf{N}%
_{\mu ,\lambda }^{0},$ by $\left( \ref{2.2}\right) ,$
\begin{eqnarray*}
J_{\mu ,\lambda }(u) &=&\frac{1}{4}\left( \left\Vert u\right\Vert _{\mu
}^{2}-\lambda \int_{\mathbb{R}^{3}}fu^{2}dx\right) -\frac{4-p}{4p}\int_{%
\mathbb{R}^{3}}Q|u|^{p}dx \\
&\geq &\frac{\left( 4-p\right) \left( p-2\right) }{8p}\int_{\mathbb{R}%
^{3}}Q|u|^{p}dx>0.
\end{eqnarray*}%
Therefore,
\begin{equation*}
\inf_{u\in \mathbf{N}_{\mu ,\lambda }^{-}\cup \mathbf{N}_{\mu ,\lambda
}^{0}}J_{\mu ,\lambda }(u)\geq 0>-d_{0}>\alpha _{\mu ,\lambda
}^{+}>-D_{a,\lambda },
\end{equation*}%
for $\mu >0$ sufficiently large. This completes the proof.
\end{proof}

\begin{proposition}
\label{p3}Suppose that $N\geq 4,2<p<2^{\ast }$ and conditions $\left(
V_{1}\right) -\left( V_{3}\right) $ and $\left( D_{1}\right) -\left(
D_{2}\right) $ hold. Then the following statements are true.\newline
$\left( i\right) $ For each $\lambda >0$ and $a>0$, we have $\mathbf{N}_{\mu
,\lambda }^{+}$ is uniformly bounded for $\mu >0$ sufficiently large;\newline
$\left( ii\right) $ For each $\lambda >0$ and $a>0,$there exist two numbers $%
d_{0},D_{0}>0$ such that
\begin{equation*}
\inf_{u\in \mathbf{N}_{\mu ,\lambda }^{-}\cup \mathbf{N}_{\mu ,\lambda
}^{0}}J_{\mu ,\lambda }(u)\geq 0>-d_{0}>\alpha _{\mu ,\lambda }^{+}>-D_{0}%
\text{ for }\mu >0\text{ sufficiently large.}
\end{equation*}
\end{proposition}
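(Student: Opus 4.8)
The plan is to reproduce the scheme of Proposition \ref{p2} almost verbatim, the only structural change being that every appeal to the Hardy and Caffarelli--Kohn--Nirenberg inequalities and to condition $\left( D_{3}\right) $ is replaced by the plain subcritical embedding $(\ref{11})$. This is legitimate precisely because $N\geq 4$ forces $2<p<2^{\ast }\leq 4$, so that the quartic nonlocal term $a\Vert u\Vert _{D^{1,2}}^{4}$ dominates the $L^{p}$--term without any decay hypothesis on $Q$. As before, I would prove $\left( i\right) $ first, since $\left( ii\right) $ uses it.

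For part $\left( i\right) $, take $u\in \mathbf{N}_{\mu ,\lambda }^{+}$. Since $h_{u}^{\prime \prime }(1)>0$, the first line of $(\ref{2.2})$ gives
\begin{equation*}
\left\Vert u\right\Vert _{\mu }^{2}-\lambda \int_{\mathbb{R}^{N}}fu^{2}dx<\frac{a(4-p)}{p-2}\left\Vert u\right\Vert _{D^{1,2}}^{4},
\end{equation*}
and, using $\int fu^{2}\leq \Vert f\Vert _{L^{N/2}}S^{-2}\Vert u\Vert _{D^{1,2}}^{2}$ (Hölder plus Sobolev), this controls $\Vert u\Vert _{\mu }$ by a polynomial in $\Vert u\Vert _{D^{1,2}}$, playing the role of $(\ref{8-14})$. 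Dividing the Nehari identity by $a\Vert u\Vert _{D^{1,2}}^{4}$ yields
\begin{equation*}
1<\frac{\int_{\mathbb{R}^{N}}Q|u|^{p}dx+\lambda \int_{\mathbb{R}^{N}}fu^{2}dx}{a\left\Vert u\right\Vert _{D^{1,2}}^{4}}.
\end{equation*}
I would estimate $\int Q|u|^{p}$ by splitting $\mathbb{R}^{N}=\{V<c\}\cup \{V\geq c\}$: on $\{V<c\}$ Hölder and Sobolev produce a term $\lesssim \Vert u\Vert _{D^{1,2}}^{p}$, while on $\{V\geq c\}$ the interpolation of $(\ref{8-6})$ contributes a factor $(\mu c)^{-(2^{\ast }-p)(N-2)/4}\rightarrow 0$. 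After replacing $\Vert u\Vert _{\mu }$ by its control through $\Vert u\Vert _{D^{1,2}}$, the right-hand side is dominated by $a\Vert u\Vert _{D^{1,2}}^{4}$ as $\Vert u\Vert _{D^{1,2}}\rightarrow \infty $ once $\mu $ is large, because every competing power is $<4$ (here $p<4$ and $2<4$ are decisive) or carries a vanishing $\mu ^{-\theta }$ coefficient. This forces $\Vert u\Vert _{D^{1,2}}\leq d_{1}$, and feeding this back gives the uniform bound on $\Vert u\Vert _{\mu }$. The argument needs no restriction on $\lambda $, since $(\ref{10})$ is not invoked. (Equivalently, one may rescale $v_{n}=u_{n}/\Vert u_{n}\Vert _{\mu _{n}}$ and use the global $L^{r}$--compactness of Theorem \ref{g4-0}; I prefer the ratio estimate as it makes the role of $2^{\ast }\leq 4$ transparent.)

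For part $\left( ii\right) $ I would establish the three inequalities separately. The bound $\inf_{\mathbf{N}_{\mu ,\lambda }^{-}\cup \mathbf{N}_{\mu ,\lambda }^{0}}J_{\mu ,\lambda }\geq 0$ is purely algebraic: for such $u$ the second line of $(\ref{2.2})$ gives $\int Q|u|^{p}\geq \frac{2a}{p-2}\Vert u\Vert _{D^{1,2}}^{4}>0$, and substituting the Nehari identity into the energy yields $J_{\mu ,\lambda }(u)\geq \frac{(p-2)(4-p)}{8p}\int Q|u|^{p}\geq 0$, with $4-p>0$ guaranteed by $p<2^{\ast }\leq 4$. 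The strict bound $\alpha _{\mu ,\lambda }^{+}<-d_{0}$ comes from Theorem \ref{t6-1}, whose construction exhibits an element of $\mathbf{N}_{\mu ,\lambda }^{+}$ with strictly negative energy ($t_{a}^{+}\phi _{1}$ when $\lambda \geq \lambda _{1}\left( f_{\Omega }\right) $, the rescaled ground state $t_{a}^{+}w_{\lambda ,\Omega }$ otherwise). Finally $\alpha _{\mu ,\lambda }^{+}>-D_{0}$ follows from part $\left( i\right) $: the uniform bound $\Vert u\Vert _{\mu }\leq C$ on $\mathbf{N}_{\mu ,\lambda }^{+}$, together with $(\ref{11})$ to estimate $\frac{1}{p}\int Q|u|^{p}$ and $\frac{\lambda }{2}\int fu^{2}$ and the discarding of the nonnegative terms $\frac{a}{4}\Vert u\Vert _{D^{1,2}}^{4}$ and $\frac{1}{2}\Vert u\Vert _{\mu }^{2}$, bounds $J_{\mu ,\lambda }$ from below.

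The main obstacle is part $\left( i\right) $, namely the uniform-in-$\mu $ control of the portion of $\Vert u\Vert _{L^{p}}^{p}$ carried on the region $\{V\geq c\}$, where the well is not yet deep. This is exactly the step where the $N=3$ argument required $\left( D_{3}\right) $ together with the Hardy and Caffarelli--Kohn--Nirenberg inequalities; for $N\geq 4$ the subcriticality $p<2^{\ast }\leq 4$ lets the interpolation of $(\ref{8-6})$ absorb that region into a factor vanishing as $\mu \rightarrow \infty $, so the only genuine competition is between the powers $\Vert u\Vert _{D^{1,2}}^{p}$ and $\Vert u\Vert _{D^{1,2}}^{4}$, won by the latter. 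Once $\left( i\right) $ is in hand, part $\left( ii\right) $ is routine.
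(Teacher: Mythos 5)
Your proposal is correct and follows essentially the same route as the paper: for part $\left( i\right)$ you reproduce the paper's own argument (control $\left\Vert u\right\Vert _{\mu }$ by $\left\Vert u\right\Vert _{D^{1,2}}$ via $h_{u}^{\prime \prime }(1)>0$, divide the Nehari identity by $a\left\Vert u\right\Vert _{D^{1,2}}^{4}$, and win the exponent comparison using $p<2^{\ast }\leq 4$ for $N\geq 4$), the only cosmetic difference being that you split $\mathbb{R}^{N}$ into $\left\{ V<c\right\} $ and $\left\{ V\geq c\right\} $ before interpolating, whereas the paper interpolates the full $L^{p}$-norm between $L^{2}$ and $L^{2^{\ast }}$ and only then splits the $L^{2}$-integral. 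Part $\left( ii\right) $ coincides with the paper's (which simply defers to Proposition \ref{p2}$\left( ii\right) $), and you are in fact slightly more explicit than the paper in noting that for $0<\lambda <\lambda _{1}\left( f_{\Omega }\right) $ the negative-energy competitor must come from Theorem \ref{t6-1}$\left( i\right) $ rather than $\left( ii\right) $.
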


\begin{proof}
$\left( i\right) $ Let $u\in \mathbf{N}_{\mu ,\lambda }^{+}.$ Then by $%
\left( \ref{2.2}\right) $ and the H\"{o}lder and Sobolev inequalities,%
\begin{equation}
\left\Vert u\right\Vert _{\mu }^{2}<\frac{a\left( 4-p\right) }{\left(
p-2\right) }\left\Vert u\right\Vert _{D^{1,2}}^{4}+\frac{\lambda \left\Vert
f\right\Vert _{L^{N/2}}}{S^{2}}\int_{\mathbb{R}^{N}}\left\vert \nabla
u\right\vert ^{2}dx.  \label{8-15}
\end{equation}%
Moreover, using the Sobolev and H\"{o}lder inequalities and $\left( \ref%
{8-15}\right) $ gives%
\begin{eqnarray*}
1 &=&\frac{\int_{\mathbb{R}^{N}}Q\left\vert u\right\vert ^{p}dx+\lambda
\int_{\mathbb{R}^{N}}fu^{2}dx}{\left\Vert u\right\Vert _{\mu
}^{2}+a\left\Vert u\right\Vert _{D^{1,2}}^{4}}<\frac{\int_{\mathbb{R}%
^{N}}Q\left\vert u\right\vert ^{p}dx+\lambda \int_{\mathbb{R}^{N}}fu^{2}dx}{%
a\left\Vert u\right\Vert _{D^{1,2}}^{4}} \\
&\leq &\frac{\left\Vert Q\right\Vert _{\infty }\left( \frac{1}{\mu c}%
\left\Vert u\right\Vert _{\mu }^{2}+\frac{\left\vert \left\{ V<c\right\}
\right\vert ^{2/N}}{S^{2}}\int_{\mathbb{R}^{N}}\left\vert \nabla
u\right\vert ^{2}dx\right) ^{\frac{2p-N\left( p-2\right) }{4}}}{aS^{N\left(
p-2\right) /2}\left\Vert u\right\Vert _{D^{1,2}}^{\frac{8-N\left( p-2\right)
}{2}}}+\frac{\lambda \left\Vert f\right\Vert _{L^{N/2}}}{aS^{2}\left\Vert
u\right\Vert _{D^{1,2}}^{2}} \\
&<&\frac{\left\Vert Q\right\Vert _{\infty }\left[ \frac{a\left( 4-p\right) }{%
\mu c\left( p-2\right) }\left\Vert u\right\Vert _{D^{1,2}}^{4}+\left( \frac{%
\lambda \left\Vert f\right\Vert _{L^{N/2}}}{\mu cS^{2}}+\frac{\left\vert
\left\{ V<c\right\} \right\vert ^{2/N}}{S^{2}}\right) \left\Vert
u\right\Vert _{D^{1,2}}^{2}\right] ^{\frac{2p-N\left( p-2\right) }{4}}}{%
aS^{N\left( p-2\right) /2}\left\Vert u\right\Vert _{D^{1,2}}^{\frac{%
8-N\left( p-2\right) }{2}}}+\frac{\lambda \left\Vert f\right\Vert _{L^{N/2}}%
}{aS^{2}\left\Vert u\right\Vert _{D^{1,2}}^{2}}.
\end{eqnarray*}%
Since%
\begin{equation*}
\frac{8-N\left( p-2\right) }{2}\geq 2p-N\left( p-2\right) \text{ for }N\geq
4,
\end{equation*}%
this implies that there exists a constant $d_{1}>0,$ dependent on $a$ and $%
\lambda $ such that
\begin{equation}
\left\Vert u\right\Vert _{D^{1,2}}\leq d_{1}\text{ for all }u\in \mathbf{N}%
_{\mu ,\lambda }^{+}\text{ and for }\mu >0\text{ sufficiently large.}
\label{8-16}
\end{equation}%
Thus, by $(\ref{8-15})$ and $(\ref{8-16}),$ we have%
\begin{equation*}
\left\Vert u\right\Vert _{\mu }^{2}<\frac{a(4-p)}{p-2}d_{1}^{4}+\frac{%
\lambda \left\Vert f\right\Vert _{L^{N/2}}}{S^{2}}d_{1}^{2}\text{ for all }%
u\in \mathbf{N}_{\mu ,\lambda }^{+}.
\end{equation*}%
$\left( ii\right) $ The proof is essentially same as that in Proposition \ref%
{p2} $\left( ii\right) $, so we omit it here.
\end{proof}

\bigskip

\textbf{We are now ready to prove Theorem \ref{t5}: }$\left( i\right) $ By
the Ekeland variational principle \cite{E}, Lemma \ref{g7} and Proposition %
\ref{p2}, for each $0<\lambda <\lambda _{1}\left( f_{\Omega }\right) $ and $%
0<a<a_{0}$ there exists a bounded sequence $\{u_{n}\}\subset \mathbf{N}_{\mu
,\lambda }^{+}$ such that%
\begin{equation*}
J_{\mu ,\lambda }\left( u_{n}\right) =\alpha _{\mu ,\lambda }^{+}+o(1)\text{
and }J_{\mu ,\lambda }^{\prime }\left( u_{n}\right) =o(1)\text{ in }X_{\mu
}^{-1}.
\end{equation*}%
It follows from Propositions \ref{p1}, \ref{p2} that $J_{\mu ,\lambda }$
satisfies the (PS)$_{\alpha _{\mu ,\lambda }^{+}}$--condition in $\mathbf{N}%
_{\mu ,\lambda }^{+}$ for $\mu >0$ sufficiently large. Thus, there exist a
subsequence $\left\{ u_{n}\right\} $ and $u_{\mu ,\lambda }^{+}\in \mathbf{N}%
_{\mu ,\lambda }^{+}$ such that $u_{n}\rightarrow u_{\mu ,\lambda }^{+}$
strongly in $X_{\mu }$ for $\mu >0$ sufficiently large. Note that $\alpha
_{\mu ,\lambda }^{+}=J_{\mu ,\lambda }\left( u_{\mu ,\lambda }^{+}\right)
<0. $ Hence, $u_{\mu ,\lambda }^{+}\in \mathbf{N}_{\mu ,\lambda }^{+}$ is a
minimizer for $J_{\mu ,\lambda }$ on $\mathbf{N}_{\mu ,\lambda }^{+}.$ Since
$|u_{\mu ,\lambda }^{+}|\in \mathbf{N}_{\mu ,\lambda }^{+}$ and $J_{\mu
,\lambda }\left( |u_{\mu ,\lambda }^{+}|\right) =J_{\mu ,\lambda }\left(
u_{\mu ,\lambda }^{+}\right) =\alpha _{\mu ,\lambda }^{+}<0,$ one can see
that $u_{\mu ,\lambda }^{+}$ is a positive solution for Equation $(E_{\mu
,\lambda })$ by Lemma $\ref{g1}.$\newline
$\left( ii\right) $ By the Ekeland variational principle \cite{E}, Theorem %
\ref{t6-1} $\left( ii\right) $ and Proposition \ref{p2}, for each $a>0$ and $%
\lambda \geq \lambda _{1}\left( f_{\Omega }\right) $ there exists a bounded
sequence $\{u_{n}\}\subset \mathbf{N}_{\mu ,\lambda }^{+}$ with $J_{\mu
,\lambda }\left( u_{n}\right) <-d_{0}<\inf_{u\in \mathbf{N}_{\mu ,\lambda
}^{-}\cup \mathbf{N}_{\mu ,\lambda }^{0}}J_{\mu ,\lambda }(u)$ such that%
\begin{equation*}
J_{\mu ,\lambda }\left( u_{n}\right) =\alpha _{\mu ,\lambda }^{+}+o(1)\text{
and }J_{\mu ,\lambda }^{\prime }\left( u_{n}\right) =o(1)\text{ in }X_{\mu
}^{-1}.
\end{equation*}%
By Proposition \ref{p1}, we can establish a compactness conclusion for $%
\left\{ u_{n}\right\} ,$ this means that there exist a subsequence $\left\{
u_{n}\right\} $ and $u_{\mu ,\lambda }^{+}\in \mathbf{N}_{\mu ,\lambda }^{+}$
such that $u_{n}\rightarrow u_{\mu ,\lambda }^{+}$ strongly in $X_{\mu }$
for $\mu >0$ sufficiently large. In fact that $u_{\mu ,\lambda }^{+}$ is a
positive solution for Equation $(E_{\mu ,\lambda }).$

\bigskip

\textbf{We are now ready to prove Theorem \ref{t5-2}: }The proof is
essentially same as that in Theorem \ref{t5}, so we omit it here.

\subsection{The proof of Theorem \protect\ref{t6}}

Note that $u\in \mathbf{N}_{\mu ,\lambda }$ if and only if $a\left\Vert
u\right\Vert _{D^{1,2}}^{4}+\left\Vert u\right\Vert _{\mu }^{2}=\int_{%
\mathbb{R}^{N}}Q|u|^{p}dx+\lambda \int_{\mathbb{R}^{N}}fu^{2}dx.$ It follows
from Lemma \ref{g3-0}, $\left( \ref{10}\right) $ and the Sobolev inequality
that%
\begin{eqnarray*}
\frac{\widetilde{\lambda }_{1,\mu }\left( f\right) -\lambda }{\widetilde{%
\lambda }_{1,\mu }\left( f\right) }\left\Vert u\right\Vert _{\mu }^{2} &\leq
&\left\Vert u\right\Vert _{\mu }^{2}-\lambda \int_{\mathbb{R}%
^{N}}fu^{2}dx+a\left\Vert u\right\Vert _{D^{1,2}}^{4}=\int_{\mathbb{R}%
^{N}}Q|u|^{p}dx \\
&\leq &\left\Vert Q\right\Vert _{\infty }S^{-p}\left\vert \left\{
V<c\right\} \right\vert ^{\frac{6-p}{6}}\left\Vert u\right\Vert _{\mu }^{p}%
\text{ for all }u\in \mathbf{N}_{\mu ,\lambda }\text{ and }\mu >\overline{%
\mu }_{0}\left( \lambda \right) .
\end{eqnarray*}%
Thus, it leads to
\begin{equation}
\frac{\widetilde{\lambda }_{1,\mu }\left( f\right) }{\widetilde{\lambda }%
_{1,\mu }\left( f\right) -\lambda }\int_{\mathbb{R}^{N}}Q|u|^{p}dx\geq
\left\Vert u\right\Vert _{\mu }^{2}\geq \left( \frac{S^{p}\left( \widetilde{%
\lambda }_{1,\mu }\left( f\right) -\lambda \right) }{\widetilde{\lambda }%
_{1,\mu }\left( f\right) \left\Vert Q\right\Vert _{\infty }\left\vert
\left\{ V<c\right\} \right\vert ^{\frac{2^{\ast }-p}{2^{\ast }}}}\right)
^{2/\left( p-2\right) }  \label{5-8}
\end{equation}%
for all $u\in \mathbf{N}_{\mu ,\lambda }$ and $\mu >\overline{\mu }%
_{0}\left( \lambda \right) .$ Moreover, by $\left( \ref{2.2}\right) $ and $%
\left( \ref{5-8}\right) ,$%
\begin{eqnarray*}
J_{\mu ,\lambda }(u) &=&\frac{1}{4}\left( \left\Vert u\right\Vert _{\mu
}^{2}-\lambda \int_{\mathbb{R}^{N}}fu^{2}dx\right) -\frac{4-p}{4p}\int_{%
\mathbb{R}^{N}}Q|u|^{p}dx \\
&\geq &\frac{\left( p-2\right) \left( \widetilde{\lambda }_{1,\mu }\left(
f\right) -\lambda \right) }{4p\widetilde{\lambda }_{1,\mu }\left( f\right) }%
\left\Vert u\right\Vert _{\mu }^{2} \\
&\geq &\frac{p-2}{4p}\left( \frac{S^{p}}{\left\Vert Q\right\Vert _{\infty
}\left\vert \left\{ V<c\right\} \right\vert ^{\frac{2^{\ast }-p}{2^{\ast }}}}%
\right) ^{2/\left( p-2\right) }\left( \frac{\widetilde{\lambda }_{1,\mu
}\left( f\right) -\lambda }{\widetilde{\lambda }_{1,\mu }\left( f\right) }%
\right) ^{p/\left( p-2\right) }\text{ for all }u\in \mathbf{N}_{\mu ,\lambda
}^{-}.
\end{eqnarray*}%
Hence, the following statement is true.

\begin{lemma}
\label{g1}Suppose that $2<p<\min \left\{ 4,2^{\ast }\right\} $ and condition
$(V1)$ hold. Then $J_{\mu ,\lambda }$ is coercive and bounded below on $%
\mathbf{N}_{\mu ,\lambda }^{-}.$ Furthermore, for all $u\in \mathbf{N}_{\mu
,\lambda }^{-},$ there holds
\begin{equation*}
J_{\mu ,\lambda }(u)>d_{\mu }:=\frac{\left( p-2\right) K^{p}\left( \mu
\right) }{4p}\left( \frac{S^{p}}{\left\Vert Q\right\Vert _{\infty
}\left\vert \left\{ V<c\right\} \right\vert ^{\frac{2^{\ast }-p}{2^{\ast }}}}%
\right) ^{2/\left( p-2\right) },
\end{equation*}%
where $K\left( \mu \right) :=\left( \frac{\widetilde{\lambda }_{1,\mu
}\left( f\right) -\lambda }{\widetilde{\lambda }_{1,\mu }\left( f\right) }%
\right) ^{1/\left( p-2\right) }\leq \left( \frac{\lambda _{1}\left(
f_{\Omega }\right) -\lambda }{\lambda _{1}\left( f_{\Omega }\right) }\right)
^{1/\left( p-2\right) }$ for all $\mu \geq \mu _{0}.$
\end{lemma}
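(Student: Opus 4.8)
The plan is to read off both assertions from the Nehari constraint combined with the coercivity estimate (\ref{10}), following the computation already displayed just above the lemma. First I would record the two structural facts that make $J_{\mu,\lambda}$ easy to control on $\mathbf{N}_{\mu,\lambda}^{-}$. Eliminating the nonlocal term via the defining relation $a\|u\|_{D^{1,2}}^{4}+\|u\|_{\mu}^{2}=\int_{\mathbb{R}^{N}}Q|u|^{p}dx+\lambda\int_{\mathbb{R}^{N}}fu^{2}dx$ rewrites the energy as
\begin{equation*}
J_{\mu,\lambda}(u)=\frac14\Big(\|u\|_{\mu}^{2}-\lambda\int_{\mathbb{R}^{N}}fu^{2}dx\Big)-\frac{4-p}{4p}\int_{\mathbb{R}^{N}}Q|u|^{p}dx,
\end{equation*}
while membership $u\in\mathbf{N}_{\mu,\lambda}^{-}$, that is $h_{u}''(1)<0$ read through the third form of (\ref{2.2}), gives $(4-p)\int_{\mathbb{R}^{N}}Q|u|^{p}dx<2\big(\|u\|_{\mu}^{2}-\lambda\int_{\mathbb{R}^{N}}fu^{2}dx\big)$.

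Next I would combine these: since $2<p<4$ the coefficient $\tfrac{4-p}{4p}$ is positive, so substituting the last inequality into the energy identity yields $J_{\mu,\lambda}(u)>\tfrac{p-2}{4p}\big(\|u\|_{\mu}^{2}-\lambda\int_{\mathbb{R}^{N}}fu^{2}dx\big)$. Here (\ref{10}) enters: for $\mu$ large enough that $\lambda<\widetilde{\lambda}_{1,\mu}(f)$ (guaranteed by Lemma \ref{g3-0}) one has $\|u\|_{\mu}^{2}-\lambda\int_{\mathbb{R}^{N}}fu^{2}dx\geq\tfrac{\widetilde{\lambda}_{1,\mu}(f)-\lambda}{\widetilde{\lambda}_{1,\mu}(f)}\|u\|_{\mu}^{2}$, whence
\begin{equation*}
J_{\mu,\lambda}(u)\geq\frac{(p-2)\big(\widetilde{\lambda}_{1,\mu}(f)-\lambda\big)}{4p\,\widetilde{\lambda}_{1,\mu}(f)}\|u\|_{\mu}^{2}.
\end{equation*}
As this prefactor is a fixed positive constant for large $\mu$, both coercivity and boundedness below on $\mathbf{N}_{\mu,\lambda}^{-}$ follow immediately.

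For the explicit threshold $d_{\mu}$ I would feed in the uniform lower bound on $\|u\|_{\mu}$ valid on all of $\mathbf{N}_{\mu,\lambda}$, namely the right-hand inequality of (\ref{5-8}), which itself comes from testing the Nehari relation against the Sobolev embedding (\ref{11}). Inserting that bound for $\|u\|_{\mu}^{2}$ and collecting the powers of $\tfrac{\widetilde{\lambda}_{1,\mu}(f)-\lambda}{\widetilde{\lambda}_{1,\mu}(f)}$ (power $1$ from the prefactor and $\tfrac{2}{p-2}$ from (\ref{5-8}), total $\tfrac{p}{p-2}$) reproduces exactly $d_{\mu}=\tfrac{(p-2)K^{p}(\mu)}{4p}\big(\tfrac{S^{p}}{\|Q\|_{\infty}|\{V<c\}|^{(2^{\ast}-p)/2^{\ast}}}\big)^{2/(p-2)}$ with $K(\mu)=\big(\tfrac{\widetilde{\lambda}_{1,\mu}(f)-\lambda}{\widetilde{\lambda}_{1,\mu}(f)}\big)^{1/(p-2)}$. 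The stated comparison then follows from Lemma \ref{g3-0}, since $\widetilde{\lambda}_{1,\mu}(f)\leq\lambda_{1}(f_{\Omega})$ and $t\mapsto 1-\lambda/t$ is increasing, giving $K(\mu)\leq\big(\tfrac{\lambda_{1}(f_{\Omega})-\lambda}{\lambda_{1}(f_{\Omega})}\big)^{1/(p-2)}$.

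I expect the only genuinely delicate point to be the bookkeeping of the $\mu$-dependence, ensuring every estimate holds uniformly for $\mu$ large. One must first invoke Lemma \ref{g3-0} to secure $\lambda<\widetilde{\lambda}_{1,\mu}(f)<\lambda_{1}(f_{\Omega})$ for $\mu\geq\overline{\mu}_{0}(\lambda)$, so that the coercivity coefficient and $d_{\mu}$ remain positive (and, via the $K(\mu)$ comparison, stay suitably bounded), and then use that same threshold throughout. The algebra itself is routine once (\ref{2.2}), (\ref{10}) and (\ref{5-8}) are in hand.
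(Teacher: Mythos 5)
Your proposal is correct and follows essentially the same route as the paper: the paper's own argument is exactly the computation displayed just before the lemma, namely rewriting $J_{\mu,\lambda}$ on the Nehari manifold via (\ref{2.2}), using $h_u''(1)<0$ together with (\ref{10}) to get $J_{\mu,\lambda}(u)>\frac{(p-2)(\widetilde{\lambda}_{1,\mu}(f)-\lambda)}{4p\,\widetilde{\lambda}_{1,\mu}(f)}\left\Vert u\right\Vert_{\mu}^{2}$ (whence coercivity), and then inserting the lower bound on $\left\Vert u\right\Vert_{\mu}$ from (\ref{5-8}) to obtain $d_{\mu}$, with the comparison for $K(\mu)$ coming from $\widetilde{\lambda}_{1,\mu}(f)\leq\lambda_{1}(f_{\Omega})$ as in Lemma \ref{g3-0}. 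Your explicit use of the third form of (\ref{2.2}) to justify the middle inequality, and your remark that everything requires $\lambda<\widetilde{\lambda}_{1,\mu}(f)$ for $\mu$ large, only make explicit steps the paper leaves implicit.
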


Let $C\left( p\right) :=\left( \frac{2S_{p}^{p}\left( \Omega \right) }{%
Q_{\Omega ,\min }\left( 4-p\right) }\right) ^{2/(p-2)}.$ Then for any $u\in
\mathbf{N}_{\mu ,\lambda }$ with $J_{\mu ,\lambda }(u)<\frac{p-2}{4p}C\left(
p\right) K^{p}\left( \mu \right) ,$ we deduce that%
\begin{eqnarray}
\frac{p-2}{4p}C\left( p\right) K^{p}\left( \mu \right) &>&J_{\mu ,\lambda
}(u)  \notag \\
&=&\frac{1}{2}\left( \left\Vert u\right\Vert _{\mu }^{2}-\lambda \int_{%
\mathbb{R}^{N}}fu^{2}dx\right) +\frac{a}{4}\left\Vert u\right\Vert
_{D^{1,2}}^{4}-\frac{1}{p}\int_{\mathbb{R}^{N}}Q|u|^{p}dx  \notag \\
&\geq &\frac{\left( p-2\right) \left( \widetilde{\lambda }_{1,\mu }\left(
f\right) -\lambda \right) }{2p\widetilde{\lambda }_{1,\mu }\left( f\right) }%
\left\Vert u\right\Vert _{\mu }^{2}-\frac{a(4-p)}{4p}\left\Vert u\right\Vert
_{D^{1,2}}^{4}  \notag \\
&\geq &\frac{\left( p-2\right) \left( \widetilde{\lambda }_{1,\mu }\left(
f\right) -\lambda \right) }{2p\widetilde{\lambda }_{1,\mu }\left( f\right) }%
\left\Vert u\right\Vert _{\mu }^{2}-\frac{a(4-p)}{4p}\left\Vert u\right\Vert
_{\mu }^{4}.  \label{6-4}
\end{eqnarray}%
It implies that if $0<a<C^{-1}\left( p\right) ,$ then there exist two
positive numbers $\widehat{D}_{1}\left( \mu \right) $ and $\widehat{D}%
_{2}\left( \mu \right) $ satisfying%
\begin{equation*}
0<\widehat{D}_{1}\left( \mu \right) <C\left( p\right) K\left( \mu \right) <%
\widehat{D}_{2}\left( \mu \right)
\end{equation*}%
such that%
\begin{equation}
\left\Vert u\right\Vert _{\mu }<\widehat{D}_{1}\left( \mu \right) \text{ or }%
\left\Vert u\right\Vert _{\mu }>\widehat{D}_{2}\left( \mu \right) .
\label{6-6}
\end{equation}%
Thus, we have
\begin{eqnarray}
\mathbf{N}_{\mu ,\lambda }\left[ \frac{p-2}{4p}C\left( p\right) K^{p}\left(
\mu \right) \right] &=&\left\{ u\in \mathbf{N}_{\mu ,\lambda }\text{ }|\text{
}J_{\mu ,\lambda }\left( u\right) <\frac{p-2}{4p}C\left( p\right)
K^{p}\left( \mu \right) \right\}  \notag \\
&=&\mathbf{N}_{\mu ,\lambda }^{(1)}\cup \mathbf{N}_{\mu ,\lambda }^{(2)},
\label{6-5}
\end{eqnarray}%
where
\begin{equation*}
\mathbf{N}_{\mu ,\lambda }^{(1)}:=\left\{ u\in \mathbf{N}_{\mu ,\lambda }%
\left[ \frac{p-2}{4p}C\left( p\right) K^{p}\left( \mu \right) \right]
:\left\Vert u\right\Vert _{\mu }<\widehat{D}_{1}\left( \mu \right) \right\}
\end{equation*}%
and
\begin{equation*}
\mathbf{N}_{\mu ,\lambda }^{(2)}:=\left\{ u\in \mathbf{N}_{\mu ,\lambda }%
\left[ \frac{p-2}{4p}C\left( p\right) K^{p}\left( \mu \right) \right]
:\left\Vert u\right\Vert _{\mu }>\widehat{D}_{2}\left( \mu \right) \right\} .
\end{equation*}%
For $0<a<a_{0}:=\frac{p-2}{4-p}C^{-1}\left( p\right) ,$ we further have
\begin{equation}
\left\Vert u\right\Vert _{\mu }<\widehat{D}_{1}\left( \mu \right)
<C^{1/2}\left( p\right) K\left( \mu \right) \text{ for all }u\in \mathbf{N}%
_{\mu ,\lambda }^{(1)}  \label{5-9}
\end{equation}%
and%
\begin{equation}
\left\Vert u\right\Vert _{\mu }>\widehat{D}_{2}\left( \mu \right)
>C^{1/2}\left( p\right) K\left( \mu \right) \text{ for all }u\in \mathbf{N}%
_{\mu ,\lambda }^{(2)}.  \label{5-10}
\end{equation}%
Using $(\ref{2.2}),(\ref{6-6}),$ condition $\left( D_{4}\right) $ and the
Sobolev inequality gives%
\begin{eqnarray*}
h_{u}^{\prime \prime }\left( 1\right) &=&-2\left( \left\Vert u\right\Vert
_{\mu }^{2}-\lambda \int_{\mathbb{R}^{N}}fu^{2}dx\right) +(4-p)\int_{\mathbb{%
R}^{N}}Q|u|^{p}dx \\
&\leq &\frac{-2\left( \widetilde{\lambda }_{1,\mu }\left( f\right) -\lambda
\right) }{\widetilde{\lambda }_{1,\mu }\left( f\right) }\left\Vert
u\right\Vert _{\mu }^{2}+\frac{\left\Vert Q\right\Vert _{\infty
}(4-p)\left\vert \left\{ V<c\right\} \right\vert ^{\left( 2^{\ast }-p\right)
/2^{\ast }}}{S^{p}}\left\Vert u\right\Vert _{\mu }^{p} \\
&<&0\text{ for all }u\in \mathbf{N}_{\mu ,\lambda }^{(1)}.
\end{eqnarray*}%
By $\left( \ref{6-4}\right) ,$ one has
\begin{eqnarray*}
\frac{1}{4}\left( \left\Vert u\right\Vert _{\mu }^{2}-\lambda \int_{\mathbb{R%
}^{N}}fu^{2}dx\right) -\frac{4-p}{4p}\int_{\mathbb{R}^{N}}Q|u|^{p}dx
&=&J_{\mu ,\lambda }\left( u\right) <\frac{p-2}{4p}C\left( p\right)
K^{p}\left( \mu \right) \\
&<&\frac{p-2}{4p}\left( \left\Vert u\right\Vert _{\mu }^{2}-\lambda \int_{%
\mathbb{R}^{N}}fu^{2}dx\right)
\end{eqnarray*}%
for all $u\in \mathbf{N}_{\mu ,\lambda }^{(2)},$ which implies that%
\begin{equation}
2\left( \left\Vert u\right\Vert _{\mu }^{2}-\lambda \int_{\mathbb{R}%
^{N}}fu^{2}dx\right) <\left( 4-p\right) \int_{\mathbb{R}^{N}}Q|u|^{p}dx\text{
for all }u\in \mathbf{N}_{\mu ,\lambda }^{(2)}.  \label{eqq16}
\end{equation}%
Applying (\ref{2.2}) and (\ref{eqq16}) leads to
\begin{equation*}
h_{u}^{\prime \prime }\left( 1\right) =-2\left( \left\Vert u\right\Vert
_{\mu }^{2}-\lambda \int_{\mathbb{R}^{N}}fu^{2}dx\right) +(4-p)\int_{\mathbb{%
R}^{N}}Q|u|^{p}dx>0\text{ for all }u\in \mathbf{N}_{\mu ,\lambda }^{(2)}.
\end{equation*}%
Moreover, by Theorem \ref{t6-1} $\left( i\right) $, there exist $t_{a}^{\pm
}>0$ such that $t_{a}^{-}w_{\lambda ,\Omega }\in \mathbf{N}_{\mu ,\lambda
}^{\left( 1\right) }$ and $t_{a}^{+}w_{\lambda ,\Omega }\in \mathbf{N}_{\mu
,\lambda }^{(2)}.$ Namely, $\mathbf{N}_{\mu ,\lambda }^{\left( i\right) }$
are nonempty. Hence, we obtain the following result.

\begin{lemma}
\label{g7}Suppose that $2<p<\min \left\{ 4,2^{\ast }\right\} $ and
conditions $\left( V_{1}\right) -\left( V_{3}\right) ,\left( D_{1}\right)
-\left( D_{2}\right) $ and $\left( D_{4}\right) $ hold. Then there exists $%
a_{0}>0$ such that for every $0<a<a_{0}$ and $0<\lambda <\left[ 1-2\left(
\frac{4-p}{4}\right) ^{2/p}\right] \lambda _{1}\left( f_{\Omega }\right) ,$ $%
\mathbf{N}_{\mu ,\lambda }^{(1)}\subset \mathbf{N}_{\mu ,\lambda }^{-}$ and $%
\mathbf{N}_{\mu ,\lambda }^{(2)}\subset \mathbf{N}_{\mu ,\lambda }^{+}$ are $%
C^{1}$ nonempty sub-manifolds. Furthermore, each local minimizer of the
functional $J_{\mu ,\lambda }$ in the sub-manifolds $\mathbf{N}_{\mu
,\lambda }^{(1)}$ and $\mathbf{N}_{\mu ,\lambda }^{(2)}$ is a critical point
of $J_{\mu ,\lambda }$ in $X_{\mu }.$
\end{lemma}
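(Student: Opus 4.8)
The inclusions $\mathbf{N}_{\mu,\lambda}^{(1)}\subset\mathbf{N}_{\mu,\lambda}^{-}$ and $\mathbf{N}_{\mu,\lambda}^{(2)}\subset\mathbf{N}_{\mu,\lambda}^{+}$, as well as the non-emptiness of both sets, have already been obtained in the discussion preceding the statement: the sign of $h_u''(1)$ on each set places it inside $\mathbf{N}_{\mu,\lambda}^{-}$ or $\mathbf{N}_{\mu,\lambda}^{+}$, and Theorem \ref{t6-1}$(i)$ supplies the points $t_a^{-}w_{\lambda,\Omega}$ and $t_a^{+}w_{\lambda,\Omega}$. Hence the plan is to establish only the two remaining assertions: that each $\mathbf{N}_{\mu,\lambda}^{(i)}$ is a $C^1$ sub-manifold, and that a local minimizer of $J_{\mu,\lambda}$ constrained to $\mathbf{N}_{\mu,\lambda}^{(i)}$ is a free critical point.

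First I would record the constraint functional $G(u):=\langle J_{\mu,\lambda}'(u),u\rangle = a\|u\|_{D^{1,2}}^4+\|u\|_\mu^2-\int_{\mathbb{R}^N}Q|u|^p\,dx-\lambda\int_{\mathbb{R}^N}fu^2\,dx$, which is $C^1$ on $X_\mu$ and cuts out $\mathbf{N}_{\mu,\lambda}=\{u\in X_\mu\setminus\{0\}:G(u)=0\}$. Differentiating and pairing with $u$ gives
\begin{equation*}
\langle G'(u),u\rangle = 4a\|u\|_{D^{1,2}}^4+2\|u\|_\mu^2-p\int_{\mathbb{R}^N}Q|u|^p\,dx-2\lambda\int_{\mathbb{R}^N}fu^2\,dx.
\end{equation*}
Substituting the Nehari identity $\int_{\mathbb{R}^N}Q|u|^p\,dx=a\|u\|_{D^{1,2}}^4+\|u\|_\mu^2-\lambda\int_{\mathbb{R}^N}fu^2\,dx$, valid for $u\in\mathbf{N}_{\mu,\lambda}$, collapses the right-hand side to $h_u''(1)$ in view of $(\ref{2.2})$. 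Since $h_u''(1)<0$ throughout $\mathbf{N}_{\mu,\lambda}^{(1)}$ and $h_u''(1)>0$ throughout $\mathbf{N}_{\mu,\lambda}^{(2)}$, we get $\langle G'(u),u\rangle\neq0$, so $G'(u)\neq0$ at every point of these sets; by the regular level set theorem $\mathbf{N}_{\mu,\lambda}$ is a $C^1$ sub-manifold of $X_\mu$ near each such point. Because the extra defining conditions (the energy bound $J_{\mu,\lambda}(u)<\frac{p-2}{4p}C(p)K^p(\mu)$ and the strict norm bounds $\|u\|_\mu<\widehat{D}_1(\mu)$, respectively $\|u\|_\mu>\widehat{D}_2(\mu)$) are open, each $\mathbf{N}_{\mu,\lambda}^{(i)}$ is a relatively open subset of $\mathbf{N}_{\mu,\lambda}$ and therefore inherits the $C^1$ sub-manifold structure.

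For the natural-constraint property I would use exactly this relative openness. If $u_0$ is a local minimizer of $J_{\mu,\lambda}$ on $\mathbf{N}_{\mu,\lambda}^{(i)}$, then since $\mathbf{N}_{\mu,\lambda}^{(i)}$ is open in $\mathbf{N}_{\mu,\lambda}$, $u_0$ is a local minimizer of $J_{\mu,\lambda}$ on the whole Nehari manifold $\mathbf{N}_{\mu,\lambda}$. Moreover $u_0\notin\mathbf{N}_{\mu,\lambda}^{0}$, because $\langle G'(u_0),u_0\rangle=h_{u_0}''(1)\neq0$. Lemma \ref{g2-1} then applies verbatim and yields $J_{\mu,\lambda}'(u_0)=0$, so $u_0$ is a critical point of $J_{\mu,\lambda}$ in $X_\mu$.

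All steps are computational except one conceptual point, which is also the crux: the identity $\langle G'(u),u\rangle=h_u''(1)$ on $\mathbf{N}_{\mu,\lambda}$. This single identity does double duty — it furnishes the non-degeneracy $G'(u)\neq0$ needed for the manifold structure and the vanishing of the Lagrange multiplier needed for the natural-constraint conclusion — so the care I would take is in substituting the Nehari identity correctly before invoking $(\ref{2.2})$ and Lemma \ref{g2-1}.
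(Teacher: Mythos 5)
Your proposal is correct and follows essentially the same route as the paper: the inclusions and non-emptiness are exactly the content of the displayed computations preceding the lemma (the sign of $h_u''(1)$ on $\mathbf{N}_{\mu ,\lambda }^{(1)}$ and $\mathbf{N}_{\mu ,\lambda }^{(2)}$, plus Theorem \ref{t6-1}$(i)$), and the natural-constraint conclusion rests on Lemma \ref{g2-1} as in the paper. Your identity $\langle G'(u),u\rangle =h_u''(1)$ on $\mathbf{N}_{\mu ,\lambda }$ and the relative-openness argument merely make explicit the standard regular-value and Lagrange-multiplier details that the paper leaves implicit, and they check out.
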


Define
\begin{equation*}
\alpha _{\mu ,\lambda }^{-}=\inf_{u\in \mathbf{N}_{\mu ,\lambda
}^{(1)}}J_{\mu ,\lambda }\left( u\right) =\inf_{u\in \mathbf{N}_{\mu
,\lambda }}J_{\mu ,\lambda }\left( u\right) .
\end{equation*}%
It follows from Lemma \ref{g1} and $(\ref{6-5})$ that
\begin{equation}
0<d_{\mu }<\alpha _{\mu ,\lambda }^{-}<\frac{p-2}{4p}C\left( p\right)
K^{p}\left( \mu \right) \leq \frac{p-2}{4p}C\left( p\right) \left( \frac{%
\lambda _{1}\left( f_{\Omega }\right) -\lambda }{\lambda _{1}\left(
f_{\Omega }\right) }\right) ^{p/\left( p-2\right) }.  \label{8-11}
\end{equation}%
By the Ekeland variational principle \cite{E}, there exists a sequence $%
\{u_{n}\}\subset \mathbf{N}_{\mu ,\lambda }^{(1)}$ such that%
\begin{equation}
J_{\mu ,\lambda }\left( u_{n}\right) =\alpha _{\mu ,\lambda }^{-}+o(1)\text{
and }J_{\mu ,\lambda }^{\prime }\left( u_{n}\right) =o(1)\text{ in }X_{\mu
}^{-1}.  \label{8-12}
\end{equation}

\bigskip

\textbf{We are now ready to prove Theorem \ref{t6}: }By $\left( \ref{6-6}%
\right) ,(\ref{8-11}),(\ref{8-12})$ and Proposition \ref{p1}, for each $%
0<a<a_{0}$ we can obtain that $J_{\mu ,\lambda }$ satisfies the (PS)$%
_{\alpha _{\mu ,\lambda }^{-}}$--condition in $X_{\mu }$ for $\mu >0$
sufficiently large. Thus, there exist a subsequence $\left\{ u_{n}\right\} $
and $u_{\mu ,\lambda }^{-}\in X_{\mu }$ such that $u_{n}\rightarrow u_{\mu
,\lambda }^{-}$ strongly in $X_{\mu }$ for $\mu >0$ sufficiently large.
Hence, $u_{\mu ,\lambda }^{-}$ is a minimizer for $J_{\mu ,\lambda }$ on $%
\mathbf{N}_{\mu ,\lambda }^{\left( 1\right) }.$ Note that
\begin{equation*}
0<\alpha _{\mu ,\lambda }^{-}=J_{\mu ,\lambda }\left( u_{\mu ,\lambda
}^{-}\right) <\frac{p-2}{4p}C\left( p\right) \left( \frac{\lambda _{1}\left(
f_{\Omega }\right) -\lambda }{\lambda _{1}\left( f_{\Omega }\right) }\right)
^{p/\left( p-2\right) },
\end{equation*}%
which implies that $u_{\mu ,\lambda }^{-}\in \mathbf{N}_{\mu ,\lambda
}^{\left( 1\right) }.$ Since $|u_{\mu ,\lambda }^{-}|\in \mathbf{N}_{\mu
,\lambda }^{\left( 1\right) }$ and $J_{\mu ,\lambda }\left( \left\vert
u_{\mu ,\lambda }^{-}\right\vert \right) =J_{\mu ,\lambda }\left( u_{\mu
,\lambda }^{-}\right) =\alpha _{\mu ,\lambda }^{-},$ one can see that $%
u_{\mu ,\lambda }^{-}$ is a positive solution for Equation $(E_{\mu ,\lambda
})$ by Lemma $\ref{g1}.$

\section{Acknowledgments}

T. F. Wu was supported in part by the Ministry of Science and Technology,
Taiwan (Grant No. 108-2115-M-390-007-MY2) and the National Center for
Theoretical Sciences, Taiwan. T. Li was supported by the National Natural
Science Foundation of China (Grant No. 11971105).

\end{document}